\newtheorem{thm}{Theorem}[section]
\newtheorem{prop}[thm]{Proposition}
\newtheorem{lem}[thm]{Lemma}
\newtheorem{cor}[thm]{Corollary}
\newtheorem{conj}[thm]{Conjecture}
\theoremstyle{definition}
\newtheorem{defn}[thm]{Definition}
\newtheorem{rem}[thm]{Remark}
\newtheorem{conv}[thm]{Convention}
\newtheorem{ques}[thm]{Question}
\newcommand{\abs}[1]{\lvert{#1}\rvert}
\renewcommand{\bar}[1]{\overline{#1}}
\newcommand{\set}[2]{\{\,{#1} \mid {#2} \,\}}
\newcommand{\bigset}[2]{ \bigl\{ \, {#1} \bigm| {#2} \, \bigr\} }
\renewcommand{\emptyset}{\varnothing}
\newcommand{\field}[1]{\mathbb{#1}}
\newcommand{\PP}{\field{P}}
\newcommand{\OO}{\field{O}}
\newcommand{\NN}{\field{N}}
\newcommand{\RR}{\field{R}}
\newcommand{\inclusion}{\hookrightarrow}
\renewcommand{\implies}{\Rightarrow}
\DeclareMathOperator{\CAT}{CAT}
\DeclareMathOperator{\Div}{Div}
\DeclareMathOperator{\diam}{diam}
\begin{document}
\font\myfont=cmr12 at 8.5pt

\title[On strongly quasiconvex subgroups]{On strongly quasiconvex subgroups}

\author{Hung Cong Tran}
\address{Department of Mathematics\\
 The University of Georgia\\
1023 D. W. Brooks Drive\\
Athens, GA 30605\\
USA}
\email{hung.tran@uga.edu}

\date{\today}

\begin{abstract}
We develop a theory of \emph{strongly quasiconvex subgroups} of an arbitrary finitely generated group. Strong quasiconvexity generalizes quasiconvexity in hyperbolic groups and is preserved under quasi-isometry. We show that strongly quasiconvex subgroups are also more reflexive of the ambient groups geometry than the stable subgroups defined by Durham-Taylor, while still having many analogous properties to those of quasiconvex subgroups of hyperbolic groups. We characterize strongly quasiconvex subgroups in terms of the lower relative divergence of ambient groups with respect to them.

We also study strong quasiconvexity and stability in relatively hyperbolic groups, right-angled Coxeter groups, and right-angled Artin groups. We give complete descriptions of strong quasiconvexity and stability in relatively hyperbolic groups and we characterize strongly quasiconvex special subgroups and stable special subgroups of two dimensional right-angled Coxeter groups. In the case of right-angled Artin groups, we prove that two notions of strong quasiconvexity and stability are equivalent when the right-angled Artin group is one-ended and the subgroups have infinite index. We also characterize non-trivial strongly quasiconvex subgroups of infinite index (i.e. non-trivial stable subgroups) in right-angled Artin groups by quadratic lower relative divergence, expanding the work of Koberda-Mangahas-Taylor on characterizing purely loxodromic subgroups of right-angled Artin groups.
\end{abstract}

\subjclass[2000]{%
20F67, 
20F65} 
\maketitle

\section{Introduction}
One method to understand the structure of a group $G$ is to investigate subgroups of $G$. To utilize this method in the study of the geometry of the word metric on finitely generated groups, one needs to investigate subgroups which reflect the geometry of $G$ and are invariant under choices of finite generating set for $G$. Quasiconvex subgroups of hyperbolic groups are a primary example of such subgroups. A quasiconvex subgroup of a hyperbolic group is also hyperbolic. Further, the invariance of quasiconvexity under quasi-isometries between hyperbolic spaces ensures that the quasiconvexity of a subgroup is independent of choice of finite generating set for the ambient group.


Unfortunately, quasiconvexity is not as useful for arbitrary finitely generated groups. In non-hyperbolic spaces, quasiconvexity is not preserved under quasi-isometry. Therefore the quasiconvex subgroups of a non-hyperbolic group $G$ will depend upon the choice of finite generating set for $G$. In \cite{MR3426695}, Durham-Taylor introduce a strong notion of quasiconvexity in finitely generated groups, called \emph{stability}, which is preserved under quasi-isometry, and agrees with quasiconvexity when the ambient group is hyperbolic. Moreover, stable subgroups of mapping class groups are precisely the \emph{convex cocompact subgroups} defined by Farb-Mosher in \cite{MR1914566} and the such subgroups of mapping class groups is a primary motivation for the concept of stable subgroups of arbitrary finitely generated groups (see \cite{MR3426695}). Stable subgroups have many similar properties to quasiconvex subgroups of hyperbolic groups and the study of stable subgroups has received much attention in recent years (see \cite{KMT}, \cite{ADT}, \cite{AMST}, \cite{ABD}, \cite{MR3664526}).


However, a stable subgroup of a finitely generated group is always hyperbolic regardless of the geometry of the ambient group (see \cite{MR3426695}). Thus, the geometry of a stable subgroup does not completely reflect that of the ambient group. Therefore, we introduce another concept of quasiconvexity, called \emph{strong quasiconvexity}, which is strong enough to be preserved under quasi-isometry and relaxed enough to capture the geometry of ambient groups.

\begin{defn}
Let $G$ be a finite generated group and $H$ a subgroup of $G$. We say $H$ is \emph{strongly quasiconvex} in $G$ if for every $K \geq 1,C \geq 0$ there is some $M = M(K,C)$ such that every $(K,C)$--quasi–geodesic in $G$ with endpoints on $H$ is contained in the $M$--neighborhood of $H$.
\end{defn}

Outside hyperbolic setting, there are many strongly quasiconvex subgroups that are not stable. For example, non-hyperbolic peripheral subgroups of a relatively hyperbolic group and non-hyperbolic hyperbolically embedded subgroups of a finitely generated group are non-stable strongly quasiconvex subgroups (see \cite{MR2153979} and \cite{MR3519976}). In Section~\ref{storacgs}, we also provide many examples of non-stable strongly quasiconvex subgroups in non-relatively hyperbolic right-angled Coxeter groups. 

However, there is a strong connection between strong quasiconvexity and stability. More precisely, we prove that a subgroup is stable if and only if it is strongly quasiconvex and hyperbolic (see Theorem \ref{th3}). As a result, our study of strongly quasiconvex subgroup yields new results for stable subgroups of relatively hyperbolic groups, right-angled Coxeter groups, and right-angled Artin groups (See Corollary~\ref{intro_rel_hyp_cor}, Corollary~\ref{b1}, and Theorem~\ref{intro_raags} below). 

In this paper, we prove that strongly quasiconvex subgroups of any finitely generated group have many of the same properties as quasiconvex subgroups in hyperbolic groups. We also characterize a strongly quasiconvex subgroup via the completely super linear lower relative divergence of the ambient group with respect to the subgroup. Finally, we study strongly quasiconvex subgroups of relatively hyperbolic groups, right-angled Artin groups, and right-angled Coxeter groups.

The concept of strongly quasiconvex subgroups was also introduced independently in \cite{Genevois2017} by Genevois under the name \emph{Morse subgroups}. In that paper, Genevois characterizes strongly quasiconvex subgroups in cubulable groups and then he uses strongly quasiconvex subgroups to give a characterization of hyperbolically embedded subgroups in the same group collection. He also independently studies strongly quasiconvex subgroups in right-angled Artin groups and right-angled Coxeter groups.

\subsection{Some properties of strongly quasiconvex subgroups}

A quasiconvex subgroup of a hyperbolic group is always finitely generated, undistorted, and has finite index in its commensurator (see \cite{MR1170365} and \cite{MR1426902}). Further, the intersection between two quasiconvex subgroups of a hyperbolic group is also a quasiconvex subgroup (see \cite{MR1170365}) and any collection of quasiconvex subgroups of a hyperbolic group has finite height, finite width, and bounded packing (see \cite{MR1389776} and \cite{MR2497315}). In this paper, we prove that strongly quasiconvex subgroups of arbitrary finitely generated groups have analogous properties. 

\begin{thm}
\label{vuiqua}
Let $G$ be an arbitrary finitely generated group. Then:
\begin{enumerate}
\item If $H$ is a strongly quasiconvex subgroup of $G$, then $H$ is finitely generated, undistorted subgroup and $H$ has finite index in its commensurator. 
\item If $H_1$ and $H_2$ are arbitrary strongly quasiconvex subgroups of $G$, then $H_1\cap H_2$ is strongly quasiconvex in $G$, $H_1$, and $H_2$.
\item If $\mathcal{H} = \{H_1,\cdots,H_\ell\}$ is a finite collection of strongly quasiconvex subgroups of $G$, then $\mathcal{H}$ has finite height, finite width, and bounded packing. (We refer the reader to Definitions \ref{hw} and \ref{bp} for the concepts of finite height, finite width, and bounded packing of a finite collection of subgroups.) 
\end{enumerate}
\end{thm}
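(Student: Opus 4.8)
The plan is to handle the three parts in turn, extracting from the definition a single coarse-geometric estimate about how cosets of Morse subgroups can overlap, which will drive parts (2), (3), and the commensurator statement in (1). Throughout I use that $G$ is finitely generated, so its Cayley graph is locally finite and every ball $\ball{1}{D}$ is finite.

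\textbf{Finite generation and undistortion in (1).} Applying the definition with $(K,C)=(1,0)$ shows $H$ is quasiconvex in the usual sense: every geodesic of $G$ with endpoints on $H$ lies in $\nbd{H}{M_0}$, where $M_0=M(1,0)$. I would then run the standard coset-tracking argument. Let $S=H\cap\ball{1}{2M_0+1}$, a finite set. Given $h\in H$, choose a geodesic $1=x_0,x_1,\dots,x_n=h$ with $n=\abs{h}_G$, pick $h_i\in H$ with $d(x_i,h_i)\le M_0$ (and $h_0=1$, $h_n=h$), and note $h_i^{-1}h_{i+1}\in S$ since $d(h_i,h_{i+1})\le 2M_0+1$. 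Writing $h=\prod_i h_i^{-1}h_{i+1}$ exhibits $h$ as a product of $n$ elements of $S$, so $S$ generates $H$ and $\abs{h}_S\le\abs{h}_G$; the reverse inequality is automatic, so $H\inclusion G$ is a quasi-isometric embedding.

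\textbf{The coarse-intersection estimate (main obstacle).} The technical heart is the following, which I would prove for Morse subgroups sharing a common gauge $M(\cdot,\cdot)$: for every $r$ there is $R=R(r)$ such that whenever two cosets $aP,bQ$ satisfy $\diam\bigl(\nbd{aP}{r}\cap\nbd{bQ}{r}\bigr)>R$, the ``thick part'' of the overlap lies uniformly close to a single coset of $P\cap Q$. The mechanism: a point in both neighborhoods, together with a distant second point in the thick overlap, bounds a $G$--geodesic whose endpoints are $r$--close to each of $aP$ and $bQ$; the Morse property drives this geodesic into $\nbd{aP}{M_0}\cap\nbd{bQ}{M_0}$, and translating one endpoint to the identity converts ``close to two cosets'' into ``close to the intersection'' by connecting up through an actual common element. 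I expect this uniform quadrilateral estimate to be the decisive step; everything below is bookkeeping on top of it.

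\textbf{Intersections (part (2)).} Taking $a=b=1$, $P=H_1$, $Q=H_2$: a $(K,C)$--quasigeodesic with endpoints on $H_1\cap H_2$ lies in $\nbd{H_1}{M_1}$ and in $\nbd{H_2}{M_2}$ by strong quasiconvexity of each $H_i$, and because its endpoints already lie in $H_1\cap H_2$ the whole quasigeodesic sits in the thick overlap, hence in $\nbd{H_1\cap H_2}{M}$ by the estimate; this is exactly strong quasiconvexity of $H_1\cap H_2$ in $G$. Strong quasiconvexity in each $H_i$ then follows from part (1): since $H_i\inclusion G$ is a quasi-isometric embedding, an $H_i$--quasigeodesic with endpoints on $H_1\cap H_2$ is a $G$--quasigeodesic with the same endpoints, so the $G$--statement applies and transfers back through undistortion.

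\textbf{Commensurator in (1) and the collection properties (3).} For the commensurator I would show $\mathrm{Comm}_G(H)\subseteq\nbd{H}{D}$ for a uniform $D$; then each $g\in\mathrm{Comm}_G(H)$ lies in $H\cdot\ball{1}{D}$ with $\ball{1}{D}$ finite, forcing finitely many cosets $gH$, i.e.\ finite index. The uniform bound comes from the estimate: for $g\in\mathrm{Comm}_G(H)$ the subgroup $H\cap gHg^{-1}$ is infinite, so there are arbitrarily long geodesics with endpoints on it lying in $\nbd{H}{M_0}\cap\nbd{gHg^{-1}}{M_0}$ (conjugation is an isometry, so $gHg^{-1}$ inherits the gauge of $H$), and the estimate upgrades this to a uniform Hausdorff bound between $H$ and $gHg^{-1}$ that also locates $g$ near $H$. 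Finite height, finite width, and bounded packing in (3) follow from the same estimate applied with a common gauge for the finite collection $\mathcal H$: an infinite intersection of conjugates (height) or an infinite-diameter mutual neighborhood (width) forces the cosets involved to share an arbitrarily long common Morse segment, and the uniform estimate together with local finiteness of the Cayley graph bounds how many distinct cosets can do so; bounded packing is the analogous count for cosets pairwise within a fixed Hausdorff distance.
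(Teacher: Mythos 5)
Your architecture is reasonable, and two pieces of it are essentially right: the coset-tracking argument for finite generation and undistortion is standard and matches the paper, and part (2) really does follow from a coset-overlap lemma applied to the single pair $H_1,H_2$ together with the transfer-through-undistortion argument (the paper uses Hruska's Lemma~\ref{ll1}, which holds for arbitrary subgroups with a constant depending on the pair of cosets, plus Propositions~\ref{pp1} and~\ref{pp3}). The first genuine gap is the uniformity you need from your coarse-intersection estimate for the commensurator and finite-height claims. If conjugates $g_iHg_i^{-1}$ have infinite common intersection $I$, then $I$ only lies in the $\abs{g_i}$--neighborhood of the coset $g_iH$, so the radius $r$ at which the overlaps occur grows with the conjugating elements. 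Your mechanism produces a geodesic whose endpoints are merely $r$--close to each coset; extending it to have endpoints on the coset makes it a $(1,2r)$--quasigeodesic, and the Morse property only puts it in the $M(1,2r)$--neighborhood, not the $M_0$--neighborhood you write. The local-finiteness count then gives no bound, because the radius of the ball all the cosets must meet would depend on the $g_i$. What is needed is that the \emph{deep} part of such a geodesic or ray enters a neighborhood whose radius depends only on the Morse gauge and not on $r$; the paper isolates exactly this as Proposition~\ref{vqvqvq1} (via Lemma~\ref{cvcvcv1}), and it is the step where the Morse hypothesis genuinely enters (finite height is false for general subgroups). You need to state and prove this ingredient; it is not a formal consequence of the estimate as you have set it up.

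The second gap is bounded packing, and with it finite width. Bounded packing asks you to bound the number of cosets that are \emph{pairwise} within distance $D$, and pairwise closeness does not localize the cosets near a common ball: the witnessing pairs of nearby points may be spread all over each coset, so ``the analogous count together with local finiteness'' is not an argument --- if it were, bounded packing for quasiconvex subgroups of hyperbolic groups would be a triviality, and it is not. The paper follows Hruska--Wise: an induction on the already-established finite height, in which a coarse Helly lemma for triples of pairwise-close Morse subsets (Lemma~\ref{dacbiet}) produces a point close to $H$, $h_1gH$, and $h_2gH$ simultaneously, and Lemma~\ref{ll1} then converts $D$--closeness of $h_1gH$ and $h_2gH$ to $H$ into $D'$--closeness of the cosets $h_1K$ and $h_2K$ of $K=H\cap gHg^{-1}$ inside $H$, where the inductive hypothesis applies. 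Nothing in your proposal plays the role of this induction or of the Helly step. Note also that finite width concerns pairwise infinite intersections rather than a common infinite intersection, so it reduces to bounded packing rather than to the height argument, and that bounded packing is phrased in terms of the distance between cosets, not Hausdorff distance.
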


In \cite{AMST}, Antol\'in-Mj-Sisto-Taylor prove that if $\mathcal{H}$ is a finite collection of stable subgroups of a finitely generated group, then $\mathcal{H}$ has finite height, finite width, and bounded packing. The above theorem strengthens their work to strongly quasiconvex subgroups. 
Combining Theorem \ref{vuiqua} with the work of Sageev \cite{MR1347406,MR1438181} and Hruska-Wise \cite{MR2497315}, we have the following as an immediate corollary:

\begin{cor}
Suppose $H$ is a strongly quasiconvex codimension 1 subgroup of a finitely generated group $G$. Then the corresponding $\CAT(0)$ cube complex is finite dimensional.
\end{cor}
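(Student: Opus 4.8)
The plan is to feed the bounded packing property from Theorem~\ref{vuiqua}(3) into the Sageev--Hruska--Wise machinery relating packing of a codimension~1 subgroup to the dimension of its dual cube complex. Since strong quasiconvexity of $H$ already supplies all the hypotheses, the corollary should follow essentially by quotation, with no serious new work.

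First I would recall Sageev's construction \cite{MR1347406, MR1438181}: a codimension~1 subgroup $H \leq G$, together with a choice of deep component of the complement of a neighborhood of $H$ in a Cayley graph of $G$, endows $G$ with a wallspace structure whose walls are indexed by the cosets $gH$. Sageev associates to this data a $\CAT(0)$ cube complex $X$ carrying a $G$--action, and the essential structural fact is that $\dim X$ equals the supremum of the cardinalities of families of walls that pairwise cross. Thus finite dimensionality is equivalent to a uniform bound on the size of a pairwise--crossing family of walls.

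The key step is therefore to produce such a bound, and this is where packing enters. Two walls coming from cosets $g_1 H$ and $g_2 H$ can cross only if these cosets come within some uniformly bounded distance $r$ of one another, where $r$ depends only on the codimension~1 data and not on the particular pair. Hence any collection of pairwise crossing walls yields a collection of cosets of $H$ that pairwise lie within distance $r$. By Theorem~\ref{vuiqua}(3), applied to the one--element collection $\{H\}$, the subgroup $H$ has bounded packing, so there is a uniform bound $N(r)$ on the number of cosets of $H$ that pairwise come within distance $r$. This bounds the number of pairwise crossing walls by $N(r)$, whence $\dim X \leq N(r) < \infty$; translating the metric packing bound into this dimension bound is precisely the content of the Hruska--Wise finiteness result \cite{MR2497315}.

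The only genuinely substantive input is that strong quasiconvexity forces bounded packing, and that is exactly Theorem~\ref{vuiqua}(3); once it is in hand there is no real obstacle. The remaining point is merely to match the metric notion of bounded packing with the combinatorial notion of pairwise crossing hyperplanes through the uniform crossing distance $r$, which is built into Sageev's correspondence and the Hruska--Wise dimension estimate.
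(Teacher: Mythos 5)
Your proposal is correct and follows exactly the route the paper intends: the paper states this corollary as an immediate consequence of Theorem~\ref{vuiqua} (specifically the bounded packing conclusion) combined with Sageev's construction and the Hruska--Wise result that bounded packing of a codimension~1 subgroup forces the dual cube complex to be finite dimensional. You have simply unpacked the quotation the paper leaves implicit, so there is nothing to correct.
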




In the hyperbolic setting, the inclusion map $H\inclusion G$ of a quasiconvex subgroup $H$ into $G$ induces a topological embedding of the Gromov boundary of $H$ into the Gromov boundary of $G$. The image of this embedding is also the limit set, $\Lambda H$, of $H$ in the Gromov boundary of $G$. Moreover, if $H_1$ and $H_2$ are two quasiconvex subgroups of a hyperbolic group $G$, then the limit set $\Lambda (H_1\cap H_2)$ is the intersection $\Lambda H_1\cap \Lambda H_2$ (see \cite{MR1253544}). 

In order to prove analogies of the above properties for strongly quasiconvex subgroups, we need a quasi-isometry invariant boundary for any finitely generated group. In \cite{MC1}, Cordes introduced the \emph{Morse boundary} of proper geodesic metric spaces which generalizes the contracting boundary of CAT(0) spaces (see \cite{MR3339446}). Critically, the Morse boundary is a quasi-isometry invariant and it agrees with Gromov boundary on hyperbolic spaces (see \cite{MC1}). In this paper, we prove strongly quasiconvex subgroups interact with the Morse boundary similarly to how quasiconvex subgroups of hyperbolic groups interact with the Gromov boundary.

\begin{thm}
\label{intro_Morse_boundary}
Let $G$ be a finitely generated group. Then
\begin{enumerate}
\item If $H$ is a strongly quasiconvex subgroup of $G$, then the inclusion $i:H\inclusion G$ induces a topological embedding $\hat{i}\!:\partial_M H\to \partial_M G$ of the Morse boundary of $H$ into the Morse boundary of $G$ such that $\hat{i}(\partial_M H)=\Lambda H$, where $\Lambda H$ is the limit set of $H$ in the Morse boundary $\partial_M G$ of $G$. 
\item If $H_1$ and $H_2$ are strongly quasiconvex subgroups of $G$, then $H_1\cap H_2$ is strongly quasiconvex in $G$ and $\Lambda (H_1\cap H_2)=\Lambda H_1\cap \Lambda H_2$.
\end{enumerate}
\end{thm}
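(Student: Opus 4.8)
The plan is to handle the two parts in sequence, in each case converting strong quasiconvexity into a uniform Morse bound and then exploiting the quasi-isometry invariance of the Morse boundary. For part~(1) I would first record the reformulation of strong quasiconvexity that is actually used: since $H$ is strongly quasiconvex it is undistorted (Theorem~\ref{vuiqua}(1)), so $i\colon H\inclusion G$ is a quasi-isometric embedding, and moreover there is a single Morse gauge $N_0$ so that every geodesic of $G$ with endpoints in $H$ is $N_0$-Morse. It follows that $i$ carries each $N$-Morse (quasi-)geodesic ray of $H$ to a quasi-geodesic ray of $G$ that is $N'$-Morse, with $N'=N'(N)$ depending only on $N$, the quasi-isometry constants, and the function $M$ from the definition of strong quasiconvexity. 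This lets me define $\hat i$ on representatives by $\hat i([\alpha]):=[i(\alpha)]$, the point of $\partial_M G$ represented by the Morse quasi-geodesic ray $i(\alpha)$.

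I would then verify the required properties stratum by stratum, using that $\partial_M H=\varinjlim_N \partial_M^N H$ carries the direct limit topology. Well-definedness and injectivity both reduce to the fact that $i$ is a quasi-isometric embedding: two Morse rays of $H$ are asymptotic precisely when they remain at bounded Hausdorff distance, and a quasi-isometric embedding distorts Hausdorff distance by only bounded amounts in both directions, the reverse inequality being exactly undistortion. Continuity I would check on each stratum through the standard description of convergence in $\partial_M^N$ as uniform convergence on compact sets of the rays issued from the basepoint; since $i$ is coarsely Lipschitz and maps $\partial_M^N H$ into a fixed $\partial_M^{N'}G$, it preserves such convergence, and the universal property of the direct limit promotes this to continuity of $\hat i$. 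Running the same comparison backwards, via the quasi-isometric embedding property, shows $\hat i$ is a homeomorphism onto its image. For the identity $\hat i(\partial_M H)=\Lambda H$, the inclusion $\subseteq$ is immediate because the vertices of $i(\alpha)$ lie in $H$ and converge to $\hat i([\alpha])$; for $\supseteq$, given $\xi=\lim h_n$ with $h_n\in H$, strong quasiconvexity forces the $G$-geodesics $[1,h_n]$ into $\nbd{H}{M}$, and passing to $H$-geodesics $[1,h_n]_H$ and invoking Arzel\`a--Ascoli in the locally finite Cayley graph of $H$ extracts a Morse ray $\beta$ of $H$ with $i(\beta)$ asymptotic to $\xi$, so $\hat i([\beta])=\xi$; the uniform bound $N_0$ is what guarantees the limiting ray $\beta$ is Morse with controlled gauge.

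For part~(2), the strong quasiconvexity of $H_1\cap H_2$ is Theorem~\ref{vuiqua}(2), and $\Lambda(H_1\cap H_2)\subseteq\Lambda H_1\cap\Lambda H_2$ is trivial from $H_1\cap H_2\le H_i$. For the reverse inclusion I would take $\xi\in\Lambda H_1\cap\Lambda H_2$ and a Morse ray $\gamma$ to $\xi$; by part~(1) and the fact that Morse rays sharing an endpoint fellow-travel, $\gamma$ lies in $\nbd{H_1}{r}\cap\nbd{H_2}{r}$ for a suitable $r$. Everything then reduces to the coarse-intersection statement $\nbd{H_1}{r}\cap\nbd{H_2}{r}\subseteq\nbd{H_1\cap H_2}{\rho(r)}$: granting it, the points of $H_1\cap H_2$ shadowing $\gamma$ converge to $\xi$, so $\xi\in\Lambda(H_1\cap H_2)$. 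I expect this coarse-intersection lemma to be the main obstacle. A naive pigeonhole along $\gamma$ only produces points lying in $H_1\cap gH_2g^{-1}$ for a bounded element $g$, rather than in $H_1\cap H_2$, and eliminating the nontrivial-conjugate case is precisely where one must combine the hypothesis that $\xi$ is a single common boundary point with bounded packing (Theorem~\ref{vuiqua}(3)) to bound the number of relevant cosets.
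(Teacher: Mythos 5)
Your architecture matches the paper's: define $\hat{i}$ stratum by stratum via a Morse-preserving map, use the direct limit topology, realize points of $\Lambda H$ by an Arzel\`a--Ascoli limiting argument, and reduce part (2) to a coarse-intersection statement. Two steps need repair, however. First, your claim that there is a single Morse gauge $N_0$ such that \emph{every} geodesic of $G$ with endpoints in $H$ is $N_0$-Morse is false: any group is strongly quasiconvex in itself, while no geodesic of, say, $\Z^2$ is Morse. The true statement, which is what Proposition \ref{ppp1} (via the proof of Proposition \ref{pp1}(2)) supplies, is that an $N$-Morse geodesic ray $\gamma$ of $H$ pushes forward to an $N'$-Morse quasi-geodesic of $G$; the mechanism is not that the image is a priori Morse in $G$, but that a $(K,C)$-quasi-geodesic of $G$ with endpoints on $i(\gamma)$ lies in $N_{M(K,C)}(H)$ by strong quasiconvexity, hence pulls back by undistortion to a quasi-geodesic of $H$ with controlled constants, which stays near $\gamma$ because $\gamma$ is Morse \emph{in $H$}. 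Your conclusion is correct but your stated justification is not, and note also that the limiting argument replacing the Morse quasi-geodesic $i(\alpha)$ by a genuine geodesic ray of $G$ is already needed to define $\hat{i}([\alpha])$, since points of $\partial_M G$ are classes of geodesic rays.

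Second, the step you flag as the ``main obstacle'' in part (2), namely $N_r(H_1)\cap N_r(H_2)\subseteq N_{r'}(H_1\cap H_2)$, is the paper's Lemma \ref{ll1} (Hruska, Proposition 9.4 of \cite{Hruska10}): it holds for \emph{arbitrary} subgroups of any finitely generated group, with no quasiconvexity, no bounded packing, and no use of the common boundary point. Your worry about only reaching $H_1\cap gH_2g^{-1}$ dissolves once the pigeonhole is applied to pairs of points rather than to a single point: writing $z=ha=kb$ and $z'=h'a'=k'b'$ with $h,h'\in H_1$, $k,k'\in H_2$, $\abs{a},\abs{b},\abs{a'},\abs{b'}\leq r$, and $h^{-1}k=(h')^{-1}k'=g\in B(e,2r)$, one gets $h(h')^{-1}=k(k')^{-1}\in H_1\cap H_2$, so every point in a fixed pigeonhole class lies at uniformly bounded distance from $H_1\cap H_2$; there are at most $\abs{B(e,2r)}$ classes. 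The detour through Theorem \ref{vuiqua}(3) you propose is therefore unnecessary and, as sketched, not a complete argument. With Lemma \ref{ll1} in hand, part (2) collapses to the paper's short argument: a Morse ray representing a point of $\Lambda H_1\cap\Lambda H_2$ lies in finite neighborhoods of both subgroups, hence in a finite neighborhood of $H_1\cap H_2$.
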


\subsection{Strong quasiconvexity and lower relative divergence}

\emph{Lower relative divergence} was introduced by the author in \cite{MR3361149} to study geometric connection between a geodesic space and its subspaces. Lower relative divergence generalizes the lower divergence of quasi-geodesic studied in \cite{MR3339446} and \cite{ACGH} to any subset of a geodesic space. Roughly speaking, relative divergence measures the distance distortion of the complement of the $r$--neighborhood of a subspace in the whole space when $r$ increases. Since lower relative divergence is preserved under quasi-isometry (see Proposition 4.9 in \cite{MR3361149}), we can define the lower relative divergence of a pair of groups $(G,H)$, where $G$ is finitely generated and $H\leq G$. We refer the reader to Section \ref{prelim} for precise definitions.


In \cite{MR3361149} the lower relative divergence of a hyperbolic group with respect to a quasiconvex subgroup is at least exponential. For strongly quasiconvex subgroups of arbitrary finitely generated groups, the lower relative divergence must always be completely super linear.

\begin{thm}
\label{intro2}
Let $G$ be a finitely generated group and $H$ infinite subgroup of $G$. Then $H$ is strongly quasiconvex in $G$ if and only if the lower relative divergence of $G$ with respect to $H$ is completely super linear.
\end{thm}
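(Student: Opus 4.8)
The plan is to prove both implications by contraposition, using the definition of strong quasiconvexity as a \emph{Morse-type} condition (every $(K,C)$--quasi-geodesic with endpoints on $H$ lies in $\nbd{H}{M(K,C)}$) together with the family $\{\delta^n_\rho\}$ of functions defining the lower relative divergence of $(G,H)$ from Section~\ref{prelim}; here complete super linearity means that $\delta^n_\rho(r)/r\to\infty$ for every $\rho\in(0,1]$ and every integer $n\ge 2$. Throughout I fix a word metric on $G$ and work in its Cayley graph, using that $H$ is infinite so that there are points of $H$, and points at distance $r$ from $H$, at every scale $r$.

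\emph{Strong quasiconvexity $\Rightarrow$ complete super linearity.} I argue the contrapositive. If the divergence is not completely super linear, then there are constants $\rho$, $n$, $L$ and a sequence $r_m\to\infty$ with $\delta^n_\rho(r_m)\le L r_m$. Unwinding the definition, for each $m$ I obtain points $x_1^m,x_2^m$ with $d(x_i^m,H)\ge r_m$ satisfying the required $r_m$--scale separation, and a path $\gamma_m$ avoiding $\nbd{H}{\rho r_m}$ from $x_1^m$ to $x_2^m$ of length at most $L r_m$; I take $\gamma_m$ to realize (up to a factor) the infimum, so that $\gamma_m$ is length-minimizing among such avoiding paths. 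Let $a_m,b_m\in H$ be nearest-point projections of $x_1^m,x_2^m$, and form the concatenation $P_m=[a_m,x_1^m]\cdot\gamma_m\cdot[x_2^m,b_m]$, a path with endpoints on $H$. Every point of $\gamma_m$ is at distance $\ge\rho r_m$ from $H$, so $P_m$ escapes $\nbd{H}{\rho r_m}$, and $\rho r_m\to\infty$. The key claim is that the $P_m$ are $(K,C)$--quasi-geodesics for constants $K,C$ independent of $m$; granting this, strong quasiconvexity forces $P_m\subseteq\nbd{H}{M(K,C)}$, contradicting $\rho r_m\to\infty$ for large $m$.

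\emph{Complete super linearity $\Rightarrow$ strong quasiconvexity.} Again by contraposition: if $H$ is not strongly quasiconvex there are constants $K,C$ and, for every $m$, a $(K,C)$--quasi-geodesic $\gamma_m$ with endpoints on $H$ containing a point at distance $>m$ from $H$. Let $t_m$ be the maximal distance from $H$ attained on $\gamma_m$ (so $t_m\to\infty$) and set $r_m=t_m/2$. Cutting $\gamma_m$ at the last and first times the height $d(\cdot,H)$ equals $r_m$ on either side of the peak produces points $x_1^m,x_2^m$ with $d(x_i^m,H)=r_m$ such that the sub-arc $\eta_m$ between them stays at height $\ge r_m$, hence avoids $\nbd{H}{\rho r_m}$ for all $\rho\le 1$. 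Since $\gamma_m$ is a $(K,C)$--quasi-geodesic, $\ell(\eta_m)\le K\,d(x_1^m,x_2^m)+C$, which one bounds linearly in $r_m$; after checking that $(x_1^m,x_2^m)$ meets the separation requirement at scale $r_m$, the arc $\eta_m$ witnesses $\delta^n_\rho(r_m)\lesssim r_m$ for a fixed $n,\rho$, contradicting complete super linearity.

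The main obstacle is the quasi-geodesic claim in the first direction. A path of bounded length is a priori very far from being a quasi-geodesic, so one must use the structure of $\gamma_m$ crucially: the separation hypothesis at scale $r_m$ should guarantee that $a_m$ and $b_m$ are a definite distance apart (comparable to $r_m$), which controls $P_m$ at the global scale, while length-minimality of $\gamma_m$ among paths avoiding $\nbd{H}{\rho r_m}$ must be used to control subsegments: any $G$--shortcut between two points of $\gamma_m$ that stayed outside $\nbd{H}{\rho r_m}$ would contradict minimality, so a shortcut must dip into $\nbd{H}{\rho r_m}$, and one quantifies this dip to bound the sub-arc length by the $G$--distance of its endpoints. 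Reconciling the three scales in play --- distance to $H$, mutual distance of $x_1^m,x_2^m$, and detour length --- is exactly the point at which the parameters $\rho$ and $n$ are chosen, and it is also the delicate step (the scale-matching in the cut) in the converse direction.
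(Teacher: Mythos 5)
Your overall architecture (two contrapositives, cutting a wayward quasi-geodesic at an intermediate height in one direction, promoting a short avoiding path to a quasi-geodesic with endpoints on $H$ in the other) matches the strategy of Propositions~\ref{p1} and~\ref{p2}, but the step you yourself flag as the main obstacle is a genuine gap, and the fix you sketch does not work. The concatenation $P_m=[a_m,x_1^m]\cdot\gamma_m\cdot[x_2^m,b_m]$ is in general \emph{not} a $(K,C)$--quasi-geodesic for constants independent of $m$: a path of length at most $Lr_m$ joining points $nr_m$ apart is efficient only at the global scale, and nothing prevents it from containing detours of length comparable to $r_m$ between points that are close in $G$. Length-minimality of $\gamma_m$ among paths avoiding $N_{\rho r_m}(H)$ only says that $\gamma_m$ is a geodesic for the induced length metric $d_{\rho r_m}$ on the complement, and that metric can differ arbitrarily from $d$ because the complement may have bottlenecks; ``quantifying the dip'' of a shortcut into $N_{\rho r_m}(H)$ cannot recover a uniform multiplicative--additive bound, precisely because the additive error one gets this way is proportional to $r_m\to\infty$. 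The paper's resolution is different in kind: it does not try to show $\gamma_m$ is a quasi-geodesic, but instead runs a surgery argument (Lemmas~\ref{l1} and~\ref{l2}, following Arzhantseva--Cashen--Gruber--Hume) producing a genuine $(L,0)$--quasi-geodesic $\alpha_m$ with the same endpoints on $H$ that stays within $\rho_0 r_m/2$ of $\bar\gamma_m$, and then uses a disjoint-balls argument to show $\alpha_m$ must still pass within $\rho_0 r_m/2$ of $\gamma_m$, hence at distance at least $\rho_0 r_m/2$ from $H$ --- contradicting the Morse property. Some version of this approximation lemma is unavoidable; without it the first implication does not close.

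There is also a quantitative problem in your converse direction. Cutting at height $r_m=t_m/2$ does not meet the separation requirement: the climb from height $r_m$ to the peak at $2r_m$ and back only forces $d(x_1^m,x_2^m)\gtrsim 2r_m/K$, which is less than $nr_m$ for every admissible $n\ge 2$ once $K>1$. The paper instead cuts at height $m/(2K_1n_0)$, i.e.\ at a scale divided by both the quasi-geodesic constant and the parameter $n_0$, which is exactly what makes $d(c(t_1),c(t_2))\ge n_0\cdot m/(2K_1n_0)$ come out right; it also needs a second case (truncating the subarc to length $4K_1m$ and reattaching to $\partial N_{m_1}(A)$ by short geodesics) to handle the possibility that the subarc between the cut points is long compared to $r_m$, which your bound $\ell(\eta_m)\le K\,d(x_1^m,x_2^m)+C$ does not by itself convert into a bound linear in $r_m$. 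These are repairable by rescaling and case analysis, but as written the witness pair you produce need not satisfy the definition of $\sigma^n_\rho$.
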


Theorem \ref{intro2} is actually a corollary of the following more general statement about the lower relative divergence of geodesic spaces with respect to Morse subsets.

\begin{thm}
\label{intro1}
Let $X$ be a geodesic space and $A$ a subset of $X$ of infinite diameter. Then $A$ is Morse in $X$ if and only if the lower relative divergence of $X$ with respect to $A$ is completely super linear.
\end{thm}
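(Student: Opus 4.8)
The plan is to prove the equivalence by contraposition in each direction, trading between two geometric descriptions. On one side, $A$ being Morse means that for every $K \ge 1$, $C \ge 0$ there is a gauge $M(K,C)$ confining every $(K,C)$--quasi-geodesic with endpoints on $A$ to $\nbd{A}{M(K,C)}$. On the other, the lower relative divergence failing to be completely super linear means that for some fixed $\rho \in (0,1]$ there are a constant $\lambda$ and arbitrarily large scales $n$ for which two points at distance $n$ from $A$, separated by ambient distance at least $n$, are joined by a path of length at most $\lambda n$ lying outside $\nbd{A}{\rho n}$. Writing $\mathrm{depth}(x) = d(x,A)$, the whole argument is the exchange between an efficient ``avoiding path'' and a quasi-geodesic with endpoints on $A$ that strays to depth $\asymp \rho n$; this parallels the quasi-geodesic case treated in \cite{ACGH} and \cite{MR3339446}, and the new content is carrying it out for an arbitrary subset $A$.

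I would do the easier direction first: \emph{not Morse $\Rightarrow$ not completely super linear}. Fix witnesses $K,C$ to the failure of the Morse property, so that for every $M$ there is a tame $(K,C)$--quasi-geodesic $\gamma$ with endpoints $a,b \in A$ reaching some depth $D \ge M$ at a point $p$. The key observation is that deep excursions of such a quasi-geodesic must be \emph{wide}: a thin spike returning near its starting height would have arc length at least $\asymp D$ while joining nearby points, violating the quasi-geodesic inequality, so the two points $y_1,y_2$ where $\gamma$ last crosses depth $\tfrac12 D$ on either side of $p$ satisfy $d(y_1,y_2)\asymp D$, and the subpath $\gamma'$ between them has length $\asymp d(y_1,y_2)\asymp D$ and stays at depth $\ge \tfrac12 D$, hence outside $\nbd{A}{\rho\, (D/2)}$ for every $\rho \le 1$. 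Setting $n = \lfloor D/2\rfloor$ this witnesses $\sigma_\rho(n) \le \lambda n$ with $\lambda = \lambda(K,C)$; since $M$, and hence $D$, can be taken arbitrarily large, the bound holds at a cofinal set of scales, so $\sigma_\rho$ is not super linear and completeness fails. Here the word ``completely'' is exactly what I am contradicting: the construction only controls one value of $\rho$, so super linearity must be asserted for \emph{every} $\rho$.

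For the reverse and main direction, \emph{not completely super linear $\Rightarrow$ not Morse}, I would start from a single bad $\rho$, a constant $\lambda$, and scales $n_i \to \infty$ with avoiding paths $\gamma_i$ from $x_1^i$ to $x_2^i$ of length $\le \lambda n_i$, every point at depth $\ge \rho n_i$, with $\mathrm{depth}(x_j^i) = n_i$ and $d(x_1^i,x_2^i)\ge n_i$. Capping $\gamma_i$ by geodesics $[a_j^i,x_j^i]$ to nearest points $a_j^i\in A$ produces a path $\delta_i$ with endpoints on $A$, length $\le (\lambda+2)n_i$, that reaches depth $\ge \rho n_i \to \infty$. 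If each $\delta_i$ were a $(K,C)$--quasi-geodesic with $i$--independent constants, the Morse gauge $M(K,C)$ would be contradicted and $A$ would fail to be Morse, finishing the proof.

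The step I expect to be the \textbf{main obstacle} is precisely promoting $\delta_i$ to a genuine quasi-geodesic: a short path need not be quasi-geodesic, it may backtrack, and capping to nearest points only gives $d(a_1^i,a_2^i)\ge d(x_1^i,x_2^i)-2n_i$, which the weak separation $d(x_1^i,x_2^i)\ge n_i$ does not keep positive, so the endpoints of $\delta_i$ on $A$ might collide. I would handle this by first upgrading the separation --- using the definition's requirement that $x_1^i,x_2^i$ sit on opposite sides of $A$ (so that a geodesic between them meets $A$ and $d(x_1^i,x_2^i)\gtrsim n_i$ forces $d(a_1^i,a_2^i)\gtrsim n_i$) --- which makes $\delta_i$ a path between $A$--points at distance $\asymp n_i$ of length $\asymp n_i$, i.e. uniformly efficient. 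Then I would replace $\delta_i$ by a taut representative and subdivide it at a scale comparable to $n_i$, taking ambient geodesics between the subdivision points; the efficiency forces each geodesic link to make definite progress, so a local-to-global argument produces an honest $(K,C)$--quasi-geodesic with endpoints on $A$ whose middle still attains depth $\asymp \rho n_i$. Establishing this straightening while preserving the deep excursion is the technical heart of the argument; once it is in place, the uniform constants feed into the Morse gauge and yield the contradiction that closes the equivalence.
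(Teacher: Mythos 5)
Your architecture is the paper's own (Propositions \ref{p1} and \ref{p2}, each recast as a contrapositive), and you correctly isolate the crux of the harder direction; but the mechanism you propose for that crux does not work. Subdividing an efficient path (length at most $Cr$, endpoints at distance at least $r$) at a scale comparable to $r$ and joining the subdivision points by ambient geodesics does not produce a quasi-geodesic: global efficiency only controls \emph{average} progress, so a consecutive pair of geodesic links can still nearly backtrack, and there is no local-to-global principle for quasi-geodesics in a general geodesic space (that is a hyperbolicity phenomenon). The paper's Lemma \ref{l1} does something genuinely different: given a concatenation of two quasi-geodesics, it locates the \emph{maximal} subsegment on which the $(L',0)$--inequality degenerates to an equality and replaces that entire subsegment by a single geodesic, then verifies the quasi-geodesic inequality case by case; Lemma \ref{l2} subdivides at scale $\rho r/4$ and merges the pieces pairwise, iterating so that the perturbations at successive stages sum geometrically and the output stays in the $\rho r$--neighborhood of the original path. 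Without this surgery (or an equivalent one) your capped path $\delta_i$ remains merely a short path, and the Morse hypothesis tells you nothing about it. One also still needs the paper's final connectivity step: the straightened quasi-geodesic is only known to lie near $\bar{\gamma}_i$, and one must argue (via the disjointness of the balls around the two caps) that it actually passes near the deep middle portion.

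Two further problems. In the same direction you repair the weak endpoint separation by appealing to ``the definition's requirement that $x_1^i,x_2^i$ sit on opposite sides of $A$'' --- no such requirement appears in the definition of $\sigma^n_\rho$, and it would be meaningless for an arbitrary subset $A$. The correct fix is to exploit the parameter $n$: it suffices to establish complete super linearity for a single $n_0$, and the paper takes $n_0\geq 5$ so that after capping, $d(u_m,v_m)\geq d(x_m,y_m)-4r_m\geq (n_0-4)r_m\geq r_m$. In the forward direction, your assertion that the deep subpath $\gamma'$ has length $\asymp D$ is unjustified: $d(y_1,y_2)$ has no a priori upper bound in terms of $D$, which is precisely why Proposition \ref{p1} needs its Case 2, truncating a long excursion to a piece of length exactly $4K_1m$ and re-capping it to $\partial N_{m_1}(A)$. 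Moreover, to contradict $\sigma^{n_0}_{\rho}$ for a given $n_0$ you must place your witnesses on $\partial N_r(A)$ with $r\asymp D/n_0$ (as in the paper's choice $m_1=m/(2K_1n_0)$) rather than $r=D/2$, since the definition demands $d(x_1,x_2)\geq n_0 r$.
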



In \cite{Tran1}, the author proves that the lower relative divergence of a non-hyperbolic group with respect to a stable subgroup can be any polynomials. In this paper, we prove that the same result holds for non-stable strongly quasiconvex subgroups. 

\begin{thm}
\label{intro3}
For each $d\geq 2$ there is a non-hyperbolic group $G_d$ together with a non-stable strongly quasiconvex subgroup whose lower relative divergence is a polynomial of degree $d$.
\end{thm}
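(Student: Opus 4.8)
The plan is to reduce the whole statement to a single relative-divergence computation and then import a known example. By Theorem~\ref{th3} a subgroup is stable if and only if it is strongly quasiconvex and hyperbolic, so producing a \emph{non-stable} strongly quasiconvex subgroup amounts to producing a strongly quasiconvex subgroup that is \emph{not} hyperbolic. Moreover, by Theorem~\ref{intro2} an infinite subgroup is strongly quasiconvex precisely when the lower relative divergence is completely super linear, and a polynomial of degree $d\geq 2$ is completely super linear. Hence it suffices to exhibit, for each $d\geq 2$, a group $G_d$ and a \emph{non-hyperbolic} infinite subgroup $H_d$ such that the lower relative divergence of $G_d$ with respect to $H_d$ is a polynomial of degree $d$. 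Such a $G_d$ will automatically be non-hyperbolic: once $H_d$ is shown to be strongly quasiconvex it is undistorted by Theorem~\ref{vuiqua}(1), and a hyperbolic group contains no undistorted non-hyperbolic subgroup.

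To build the pair I would start from \cite{Tran1}, which supplies a non-hyperbolic group $\Gamma_d$ together with a stable subgroup $S_d$ for which the lower relative divergence of $(\Gamma_d, S_d)$ is a polynomial of degree $d$. Since this divergence is nonconstant, $S_d$ is infinite, hence an infinite hyperbolic group and so contains an element of infinite order. Now set
\[
G_d = \Gamma_d \times \Z, \qquad H_d = S_d \times \Z .
\]
Because $S_d$ is infinite, $H_d$ contains an undistorted copy of $\Z^2$ (generated by an infinite-order element of $S_d$ together with the new $\Z$ factor), so $H_d$ is not hyperbolic. It remains to check that the lower relative divergence of $(G_d, H_d)$ is again a polynomial of degree $d$.

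The computation rests on one elementary observation. Equip $G_d$ with the word metric of the union of finite generating sets of $\Gamma_d$ and $\Z$, so that $d\bigl((g,m),(h,n)\bigr) = d_{\Gamma_d}(g,h) + \abs{m-n}$. Then $d\bigl((g,m), H_d\bigr) = d_{\Gamma_d}(g, S_d)$ for every $(g,m)$, and consequently
\[
\nbd{H_d}{r} = \nbd{S_d}{r} \times \Z
\]
is a full cylinder. Thus the extra $\Z$ factor provides no way to ``go around'' the neighborhood of $H_d$: any detour must be made in the $\Gamma_d$ coordinate. For the upper bound I would lift the efficient detour paths realizing the divergence of $(\Gamma_d, S_d)$ into $G_d$ by keeping the $\Z$-coordinate constant, obtaining detours of length $\lesssim r^d$. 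For the lower bound I would take points witnessing the divergence of $(\Gamma_d, S_d)$, regard them as points of $G_d$ with $\Z$-coordinate $0$ (which preserves their mutual distances and keeps them outside $\nbd{H_d}{r}$), and project any path in $G_d \setminus \nbd{H_d}{\rho r}$ to $\Gamma_d$. The projection is $1$-Lipschitz, so it does not increase length; it lands in $\Gamma_d \setminus \nbd{S_d}{\rho r}$ and joins the chosen points, so its length is $\gtrsim r^d$, whence the original path has length $\gtrsim r^d$.

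Combining the two bounds shows the lower relative divergence of $(G_d, H_d)$ is a polynomial of degree $d$; by Theorem~\ref{intro2} the subgroup $H_d$ is then strongly quasiconvex, and being non-hyperbolic it is non-stable by Theorem~\ref{th3}, as required. The only genuinely delicate point is the lower bound in the divergence computation, i.e.\ verifying that the $\Z$ factor creates no shortcuts; this is exactly where the cylindrical description $\nbd{H_d}{r} = \nbd{S_d}{r} \times \Z$ together with the $1$-Lipschitz projection to $\Gamma_d$ does the work. One must also take mild care to match the distance normalizations in the definition of lower relative divergence when transporting witnessing points between $\Gamma_d$ and $G_d$, but since distances in the $\Gamma_d$ coordinate are preserved exactly, this bookkeeping is routine.
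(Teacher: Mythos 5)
There is a genuine gap, and it is fatal to the construction itself, not just to the write-up. The problem is the lower bound on $div(G_d,H_d)$. The quantity $\sigma^n_{\rho}(r)$ is an \emph{infimum} over \emph{all} pairs $u,v\in\partial N_r(H_d)$ with $d_r(u,v)<\infty$ and $d(u,v)\geq nr$; exhibiting the lifted witnesses $(x_1,0),(x_2,0)$ only gives an \emph{upper} bound, while a lower bound must handle every admissible pair. Your projection argument breaks precisely on the pairs you do not choose: since $d\bigl((g,m),(h,n)\bigr)=d_{\Gamma_d}(g,h)+\abs{m-n}$, the condition $d(u,v)\geq nr$ can be satisfied entirely in the $\Z$--coordinate. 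Concretely, take $x\in\partial N_r(S_d)$ and set $u=(x,0)$, $v=(x,\lceil nr\rceil)$. Both lie on $\partial N_r(H_d)$, $d(u,v)\geq nr$, and the vertical segment $t\mapsto(x,t)$ joins them while staying at distance exactly $r\geq\rho r$ from $H_d$. Hence $\sigma^n_{\rho}(r)\leq nr+1$ for every $r$ with $\partial N_r(S_d)\neq\emptyset$, so $div(G_d,H_d)$ is \emph{linear}, not a polynomial of degree $d$. By Theorem \ref{th2} this means $H_d=S_d\times\Z$ is not even strongly quasiconvex in $G_d=\Gamma_d\times\Z$; one can also see this directly, since for $s\in S_d$ a closest point to $x$ the L-shaped concatenation $(s,0)\to(x,0)\to(x,r)\to(s,r)$ is a $(3,0)$--quasi-geodesic with endpoints on $H_d$ that wanders distance $r$ away. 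The direct product with $\Z$ is exactly the wrong move: the new direction commutes with everything, flattening $H_d$ against the ambient group.

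The paper avoids this by a more delicate thickening inside right-angled Coxeter groups. For each $d\geq 2$ it takes the Dani--Thomas graph $\Gamma_d$, whose group contains a periodic geodesic $\alpha_d$ (the Cayley graph of $I=\langle a_d,b_d\rangle$) of divergence $r^d$, and forms $\Omega_d$ by coning off only the two non-adjacent vertices $a_d,b_d$ with a new vertex $t$. Because $t$ commutes only with $I$ and not with all of $G_{\Gamma_d}$, Lemma \ref{lelele1} shows that the closest-point projections to $G_{\Gamma_d}$ of the endpoints of any path avoiding $N_1(G_{\Gamma_d})$ lie in a common coset of $I$; this is precisely what rules out the vertical shortcut above. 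Proposition \ref{pipipi1} then sandwiches $div(G_{\Omega_d},G_{\Gamma_d})$ between the divergences of $\alpha_d$ in $\Sigma^{(1)}_{\Omega_d}$ and in $\Sigma^{(1)}_{\Gamma_d}$, both equivalent to $r^d$, and non-hyperbolicity of $G_{\Gamma_d}$ comes from the induced $4$--cycle in $\Gamma_d$. If you want to salvage your reduction strategy, the extra direction you attach must have small (here, cyclic-like) commensurator-type interaction with the subgroup, not a global direct-product splitting.
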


The above theorem shows that the lower relative divergence with respect to strongly quasiconvex subgroups are potentially diverse. Since lower relative divergence is a quasi-isometry invariant, Theorem \ref{intro3} suggests that classifying strongly quasiconvex subgroups using lower relative divergence would produce a rich source of new quasi-isomerty invariants.

Theorem \ref{intro1} is a generalization of the work of Charney-Sultan \cite{MR3339446} and Arzhantseva-Cashen-Gruber-Hume \cite{ACGH} where they characterize Morse quasi-geodesics as those which have completely super linear lower divergence. In fact, Charney-Sultan showed in the setting of $\CAT(0)$ spaces, that Morse quasi-geodesic are actually characterized by quadratic lower divergence. Thus Theorem \ref{intro1} suggests the following interesting question

\begin{ques}
Is the lower relative divergence of a $\CAT(0)$ group with respect to an infinite strongly quasiconvex subgroup at least quadratic?
\end{ques}

\subsection{Strong quasiconvexity and stability in relatively hyperbolic groups}

In this paper, we fully describe strongly quasiconvex subgroups and stable subgroups of relatively hyperbolic groups. We remark that strongly quasiconvex subgroups and stable subgroups of relatively hyperbolic groups are distinct from relatively quasiconvex subgroups defined by Dahmani \cite{MR2026551} and Osin \cite{Osin06}. We first come up with the description of strongly quasiconvex subgroups of relatively hyperbolic groups. 

\begin{thm}
Let $(G,\PP)$ be a finitely generated relatively hyperbolic group and $H$ finitely generated undistorted subgroup of $G$. Then the following are equivalent:
\begin{enumerate}
\item The subgroup $H$ is strongly quasiconvex in $G$.
\item The subgroup $H\cap P^g$ is strongly quasiconvex in $P^g$ for each conjugate $P^g$ of peripheral subgroup in $\PP$.
\item The subgroup $H\cap P^g$ is strongly quasiconvex in $G$ for each conjugate $P^g$ of peripheral subgroup in $\PP$. 
\end{enumerate}
\end{thm}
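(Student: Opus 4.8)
The plan is to establish the cycle $(1)\Rightarrow(2)$, $(1)\Rightarrow(3)$, $(3)\Rightarrow(2)$, and $(2)\Rightarrow(1)$, with the last implication carrying essentially all the content. Throughout I would use two standard facts about $(G,\PP)$: every conjugate $P^g$ of a peripheral subgroup is undistorted in $G$, and every such $P^g$ is strongly quasiconvex in $G$ (the peripheral examples recalled in the introduction, together with the observation that a conjugate of a strongly quasiconvex subgroup is again strongly quasiconvex, since conjugation is realized by a composition of an isometry and a bounded-displacement quasi-isometry of the Cayley graph). Given these, the implications out of $(1)$ are immediate: if $H$ is strongly quasiconvex then $H$ and $P^g$ are two strongly quasiconvex subgroups of $G$, so Theorem~\ref{vuiqua}(2) yields that $H\cap P^g$ is strongly quasiconvex in $G$ (this is $(3)$) and in $P^g$ (this is $(2)$). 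The implication $(3)\Rightarrow(2)$ is a soft nested-subgroup argument: a $(K,C)$--quasi-geodesic of $P^g$ with endpoints in $A:=H\cap P^g$ is, because $P^g$ is undistorted in $G$, a $(K',C')$--quasi-geodesic of $G$ with the same endpoints; by $(3)$ it lies in an $M$--neighborhood of $A$ in the $G$--metric, and undistortedness of $P^g$ converts this into a bounded neighborhood in the intrinsic metric of $P^g$. Once $(2)\Rightarrow(1)$ and $(1)\Rightarrow(3)$ are in hand, the cycle closes and all three conditions are equivalent.

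The heart of the theorem is $(2)\Rightarrow(1)$. First I would establish that, because $H$ is finitely generated and undistorted, $H$ is relatively quasiconvex in $(G,\PP)$; equivalently, the orbit $H\cdot x_0$ is quasiconvex in the Gromov hyperbolic cusped space $X=X(G,\PP)$, and the induced peripheral structure of $H$ is exactly $\{H\cap P^g\}$. Now fix $K\ge 1$, $C\ge 0$ and a $(K,C)$--quasi-geodesic $\gamma$ in $G$ with endpoints $a,b\in H$. Electrifying, the image of $\gamma$ in $X$ is a uniform quasi-geodesic between points of $H\cdot x_0$; by hyperbolicity of $X$, the Morse lemma, and quasiconvexity of $H\cdot x_0$, this image stays within a uniform neighborhood of $H\cdot x_0$ in $X$. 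The remaining task is to upgrade this electric control to control in the word metric of $G$.

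To do this I would decompose $\gamma$ into the pieces that penetrate some peripheral coset $gP$ deeply and the complementary hyperbolic pieces. On the hyperbolic pieces the word metric and the electric metric are coarsely comparable, so the electric estimate already places them in a bounded neighborhood of $H$. For a piece $\gamma'$ that penetrates a coset $gP$ deeply, the Bounded Coset Penetration property forces a $G$--geodesic $[a,b]$ to penetrate the same coset deeply with comparable entry and exit points; relative quasiconvexity of $H$ then shadows this penetration by a coset of some $H\cap P^g$ contained in $H$ (here $gP=hP$ for some $h\in H$, and the relevant conjugate is the associated $P^g$), so the entry and exit points of $\gamma'$ lie in a bounded neighborhood of that coset. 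Viewing $\gamma'$ as a quasi-geodesic with endpoints near a coset of $H\cap P^g$ and lying near $gP$, and applying hypothesis $(2)$ — strong quasiconvexity of $H\cap P^g$ inside $P^g$, transported to the coset $gP$ — keeps $\gamma'$ in a bounded neighborhood of $H\cap P^g\subseteq H$. Concatenating the estimates over all pieces produces a single $M=M(K,C)$ with $\gamma\subseteq\mathcal N_M(H)$, which is exactly strong quasiconvexity of $H$ in $G$.

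The main obstacle is this de-electrification: making the passage from the cusped space back to the word metric uniform, and in particular verifying that each deep peripheral penetration is genuinely shadowed by the corresponding coset of $H\cap P^g$ with constants independent of the coset. Establishing relative quasiconvexity of $H$ from finite generation and undistortedness, and controlling the interaction between entry/exit points and the subgroups $H\cap P^g$, is where relative hyperbolicity (BCP and the geometry of $X$) is essential; everything else is soft. An alternative route to $(2)\Leftrightarrow(1)$ would bypass quasi-geodesics and instead compare lower relative divergences: by Theorem~\ref{intro2} it suffices to show that the lower relative divergence of $G$ with respect to $H$ is completely super linear if and only if that of each $P^g$ with respect to $H\cap P^g$ is, the point again being that in a relatively hyperbolic group the only sub-linear divergence directions live inside peripheral cosets.
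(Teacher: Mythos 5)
Your global strategy coincides with the paper's: all implications except one are soft (the paper obtains $(1)\Rightarrow(2)\Rightarrow(3)$ from Proposition \ref{pp2}, Proposition \ref{prp1}, and Lemma \ref{lemma2}, exactly the facts you invoke, and your $(3)\Rightarrow(2)$ is Proposition \ref{pp2}(1)), and all the content sits in one hard direction. The paper's hard direction is $(3)\Rightarrow(1)$ rather than your $(2)\Rightarrow(1)$, but this is an inessential reorganization: your application of hypothesis (2) ``transported to the coset $gP$'' secretly factors through $(2)\Rightarrow(3)$ (strong quasiconvexity of $H\cap P^g$ in $P^g$ composed with strong quasiconvexity of $P^g$ in $G$, via Proposition \ref{pp2}(2) and Lemma \ref{lemma2}), after which the two arguments agree in outline: use relative quasiconvexity of the finitely generated undistorted subgroup $H$ to place checkpoints of a $(K,C)$--quasi-geodesic near $H$, and control the subsegments travelling inside a peripheral coset by the Morse property of $H\cap P^g$.

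However, two specific claims in your de-electrification step are false as stated. First, the image in the coned-off (or cusped) space of a $(K,C)$--quasi-geodesic of $G$ is \emph{not} a uniform quasi-geodesic: the quasi-geodesic can spend arbitrarily long inside a single peripheral coset, so the lower quasi-isometric-embedding inequality fails for the electrified parametrization. Second, electric proximity to $H$ does \emph{not} yield word-metric proximity on the ``hyperbolic pieces'': a short electric path from a point $p$ of $\gamma$ to $H$ may shortcut through a cone over some coset $gP$, entering and exiting at points that are far apart in $d_S$, so a bound on the electric distance from $p$ to $H$ gives no bound on $d_S(p,H)$. The paper avoids both problems by never electrifying $\gamma$: it takes a genuine geodesic $c$ of the coned-off graph between the endpoints, uses Theorem \ref{thhk}(1) to put the $G$--vertices $s_0,\dots,s_n$ of $c$ within $d_S$--distance $A_0$ of $H$, and uses Lemma \ref{lh} to put each $s_i$ within $d_S$--distance $A$ of a point $\alpha(t_i)$ of the quasi-geodesic. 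Consecutive checkpoints are then either $d_S$--adjacent (so the intervening subsegment is uniformly short) or lie in a common peripheral coset, in which case, after translating by an element of $H$, the subsegment has endpoints in $N_A(tP)\cap N_{A+A_0}(H)\subset N_{A_1}(tPt^{-1}\cap H)$ with $\abs{t}_S<A_0$ by Lemma \ref{ll1}, and the Morse property of $tPt^{-1}\cap H$ in $G$ finishes, with uniformity coming from the finiteness of $\PP$ and of the ball of radius $A_0$. Your BCP-based tracking of deep penetrations can be made rigorous, but the correct substitute for the two false claims is precisely this pair of lemmas of Hruska; as written, the passage from electric control back to the word metric is a genuine gap.
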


In \cite{ADT}, Aougab-Durham-Taylor characterize stability in relatively hyperbolic groups whose peripheral subgroups are one-ended and have linear divergence. In this paper, we give a complete characterization of stable subgroups of arbitrary relatively hyperbolic groups.

\begin{cor}
\label{intro_rel_hyp_cor}
Let $(G,\PP)$ be a finitely generated relatively hyperbolic group and $H$ finitely generated undistorted subgroup of $G$. Then the following are equivalent:
\begin{enumerate}
\item The subgroup $H$ is stable in $G$.
\item The subgroup $H\cap P^g$ is stable in $P^g$ for each conjugate $P^g$ of peripheral subgroup in $\PP$.
\item The subgroup $H\cap P^g$ is stable in $G$ for each conjugate $P^g$ of peripheral subgroup in $\PP$. 
\end{enumerate}
\end{cor}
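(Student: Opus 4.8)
The plan is to deduce this corollary from the preceding theorem characterizing strong quasiconvexity in relatively hyperbolic groups, together with Theorem~\ref{th3}, which asserts that a subgroup is stable if and only if it is strongly quasiconvex and hyperbolic. The strategy is to show that each of the three conditions is equivalent to the single conjunction: $H$ is strongly quasiconvex in $G$ and every infinite intersection $H \cap P^g$ is a hyperbolic group. Once this is established, the strong quasiconvexity halves of the three conditions are interchangeable by the preceding theorem, and the only genuine work is to relate the hyperbolicity of $H$ to the hyperbolicity of the intersections $H \cap P^g$.

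First I would record the formal reductions. Applying Theorem~\ref{th3} inside $G$ gives that (1) holds iff $H$ is strongly quasiconvex in $G$ and $H$ is hyperbolic. Applying Theorem~\ref{th3} inside each $P^g$ (resp. inside $G$) gives that (2) (resp. (3)) holds iff, for every conjugate, $H \cap P^g$ is strongly quasiconvex in $P^g$ (resp. in $G$) and $H \cap P^g$ is hyperbolic. Since finite subgroups are automatically stable, strongly quasiconvex, and hyperbolic, in all three conditions only the infinite intersections carry information. By the preceding theorem the three strong quasiconvexity requirements --- that $H$ be strongly quasiconvex in $G$, that each $H \cap P^g$ be strongly quasiconvex in $P^g$, and that each $H \cap P^g$ be strongly quasiconvex in $G$ --- are mutually equivalent. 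Thus the corollary reduces to the following claim, to be proved under the standing hypothesis that $H$ is strongly quasiconvex in $G$: the group $H$ is hyperbolic if and only if every infinite intersection $H \cap P^g$ is hyperbolic.

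For the forward direction of the claim, recall that the peripheral subgroups, and hence their conjugates $P^g$, are strongly quasiconvex in $G$. Since $H$ is also strongly quasiconvex, Theorem~\ref{vuiqua}(2) shows that $H \cap P^g$ is strongly quasiconvex in $H$. When $H$ is hyperbolic, strong quasiconvexity in $H$ coincides with ordinary quasiconvexity, because every quasi-geodesic in a hyperbolic space is Morse; hence $H \cap P^g$ is a quasiconvex subgroup of a hyperbolic group and is therefore itself hyperbolic.

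The reverse direction is the main obstacle, and is where the relatively hyperbolic machinery enters. Here I would use that a strongly quasiconvex subgroup of $(G,\PP)$ is relatively quasiconvex, so that by Hruska's induced peripheral structure theorem $H$ is hyperbolic relative to the collection $\{H \cap P^g\}$ of its infinite peripheral intersections. If every such $H \cap P^g$ is hyperbolic, then $H$ is hyperbolic relative to a finite collection of hyperbolic subgroups, and a group that is hyperbolic relative to hyperbolic peripheral subgroups is itself hyperbolic; this yields the hyperbolicity of $H$ and completes the claim. The delicate points to verify carefully are precisely that strong quasiconvexity implies relative quasiconvexity in this setting and that the peripheral structure induced on $H$ is exactly $\{H \cap P^g\}$ up to conjugacy. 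Assembling the formal reductions of the second paragraph with this claim then shows that (1), (2), and (3) are all equivalent to the master conjunction, which proves the corollary.
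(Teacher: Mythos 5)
Your proposal is correct and follows essentially the same route as the paper: reduce stability to strong quasiconvexity plus hyperbolicity via Theorem~\ref{th3}, interchange the strong quasiconvexity conditions via Theorem~\ref{thfstqg}, get hyperbolicity of the intersections from quasiconvexity in the hyperbolic group $H$, and get hyperbolicity of $H$ from the induced relative hyperbolicity (Theorem~\ref{thhk}(2)) together with Theorem~\ref{tmkr}. The one ``delicate point'' you flag --- that strong quasiconvexity implies relative quasiconvexity so that Hruska's theorem applies --- is unnecessary here, since the paper's Theorem~\ref{thhk}(2) is already stated for finitely generated undistorted subgroups, which is a standing hypothesis of the corollary.
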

 
\subsection{Strong quasiconvexity, stability and the Morse boundary of right-angled Coxeter groups}

In this paper, we establish a characterization of strongly quasiconvex special subgroups of two-dimensional right-angled Coxeter groups. Independent work of Genevois has recently partially expanded this characterization to special subgroups of right-angled Coxeter groups of arbitrary dimension (see Proposition 4.9 in \cite{Genevois2017}).

\begin{thm}
\label{h1}
Let $\Gamma$ be a simplicial, triangle free graph with vertex set $S$ and $K$ subgroup of $G_\Gamma$ generated by some subset $S_1$ of $S$. Then the following conditions are equivalent:
\begin{enumerate}
\item The subgroup $K$ is strongly quasiconvex in $G_\Gamma$.
\item If $S_1$ contains two non-adjacent vertices of an induced 4--cycle $\sigma$, then $S_1$ contains all vertices of $\sigma$.
\item Either $\abs{K}<\infty$ or the lower relative divergence of $G_\Gamma$ with respect to $K$ is at least quadratic. 
\end{enumerate}
\end{thm}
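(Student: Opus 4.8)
The plan is to reduce to the case that $K$ is infinite and then prove the cycle $(3)\Rightarrow(1)\Rightarrow(2)\Rightarrow(3)$, working throughout in the Davis complex $\Sigma_\Gamma$, which (since $\Gamma$ is triangle free) is a $2$--dimensional $\CAT(0)$ square complex on which $G_\Gamma$ acts properly and cocompactly. First I would dispose of the finite case: $K=\gen{S_1}$ is finite exactly when the induced subgraph on $S_1$ is a clique, which under the triangle free hypothesis forces $\abs{S_1}\le 2$; in that situation $S_1$ cannot contain two non-adjacent vertices (non-adjacent vertices already generate an infinite dihedral group), so $(2)$ holds vacuously, $(3)$ holds by its first clause, and $(1)$ holds because finite subgroups are always strongly quasiconvex. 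Hence I may assume $K$ is infinite, equivalently that $S_1$ contains two non-adjacent vertices. The implication $(3)\Rightarrow(1)$ is then immediate from Theorem~\ref{intro2}: an at least quadratic lower relative divergence is in particular completely super linear, so $K$ is strongly quasiconvex.

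For $(1)\Rightarrow(2)$ I would argue by contraposition. Suppose $(2)$ fails, so there is an induced $4$--cycle with vertices $a,b,c,d$ (cyclically adjacent) such that one diagonal, say $\{a,c\}$, lies in $S_1$ while not all four vertices do; without loss of generality $b\notin S_1$. Because the cycle is induced, $\gen{a,b,c,d}=\gen{a,c}\times\gen{b,d}\cong D_\infty\times D_\infty$, and the commuting infinite order elements $u=ac$ and $v=bd$ span a flat $\gen{u,v}\cong\Z^2$ in $G_\Gamma$ with $\gen{u}\le K$. For each $n$ I would take the endpoints $x_n=u^{-n}$ and $y_n=u^{n}$, which lie in $\gen{a,c}\le K$, and form the detour $x_n\to v^{n}x_n\to v^{n}y_n\to y_n$ running along three sides of a square in this flat. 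This is a $(K_0,C_0)$--quasi-geodesic for uniform constants independent of $n$, yet its middle portion passes through points such as $v^{n}$ whose distance to $K$ grows linearly in $n$: since special subgroups are convex in $\Sigma_\Gamma$ and $b\notin S_1$, the nearest point projection of $v^n$ onto $K$ cannot cancel the $b$--syllables of $(bd)^n$, forcing $d(v^n,K)\to\infty$. Thus no single $M=M(K_0,C_0)$ can contain all these quasi-geodesics in $\nbd{K}{M}$, so $K$ is not strongly quasiconvex and $(1)$ fails.

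The substantive implication is $(2)\Rightarrow(3)$, which I expect to be the main obstacle. Assume $(2)$ and that $K$ is infinite; I want to bound the lower relative divergence of $G_\Gamma$ with respect to $K$ below by a quadratic function. The conceptual content of $(2)$ is that no flat escapes $K$: whenever $K$ contains a line bounding a flat (a diagonal of an induced $4$--cycle), it already contains the entire $D_\infty\times D_\infty$ producing that flat, so the flat lies within a bounded neighborhood of $K$. I would show that in a $2$--dimensional $\CAT(0)$ square complex the only mechanism allowing a path to join two points of $K$ by a \emph{linear} length detour outside $\nbd{K}{r}$ is precisely such an escaping flat half-plane; condition $(2)$ forbids these, so every detour in the complement of $\nbd{K}{r}$ must wrap around $K$ at cost growing with both the separation $n$ of the endpoints and the avoidance radius $r$. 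Concretely, I would use convexity of the special subgroup $K$ and its nearest point projection $\pi_K$, together with the hyperplane/wall combinatorics, to produce disjoint families of hyperplanes crossed by any such detour whose cardinalities grow with $n$ and with $r$, and to argue, via the no-escaping-flat reformulation of $(2)$, that these crossings cannot be amortized along a single flat. Assembling these estimates gives the quadratic lower bound and closes the cycle.

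The delicate, and genuinely new, point is this quantitative non-amortization claim, which is exactly the upgrade from ``completely super linear'' (guaranteed by Theorem~\ref{intro2} once $(1)$ is known) to ``at least quadratic.'' The general question of whether an infinite strongly quasiconvex subgroup of a $\CAT(0)$ group forces quadratic lower relative divergence is raised as open in the introduction; here the extra rigidity of the triangle free hypothesis and of special subgroups, encoded combinatorially by condition $(2)$, is what makes the quadratic estimate achievable, and I expect the careful bookkeeping of hyperplane crossings relative to $\pi_K$ to be where the real work lies.
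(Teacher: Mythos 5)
Your overall architecture is the same as the paper's: dispose of the finite case, get $(3)\Rightarrow(1)$ from Theorem~\ref{th2}, prove $(1)\Rightarrow(2)$ by contraposition using the flat spanned by the two diagonals of the offending $4$--cycle, and prove $(2)\Rightarrow(3)$ by a hyperplane-counting argument in the Davis complex. The finite case and $(3)\Rightarrow(1)$ are fine. Your $(1)\Rightarrow(2)$ is a correct minor variant of Proposition~\ref{pip1}: the paper exhibits linear lower relative divergence by translating a point $x=(b_1b_2)^k$ at distance $m$ from $K$ along the $(a_1a_2)$--direction (using Lemma~\ref{ld} to pin down $d_S((b_1b_2)^k,K)=2k$ exactly), whereas you exhibit a uniform-constant quasi-geodesic running around three sides of a rectangle in the flat; both work, and your appeal to convexity of the special subgroup $\gen{a,b,c,d}$ to get uniform quasi-geodesic constants is legitimate, though your justification that $d(v^n,K)\to\infty$ (``the projection cannot cancel the $b$--syllables'') should be made precise along the lines of Lemmas~\ref{lleemmaa1} and~\ref{ld}.

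The genuine gap is in $(2)\Rightarrow(3)$, which you correctly identify as the substantive implication but do not actually prove: ``produce disjoint families of hyperplanes whose cardinalities grow with $n$ and with $r$'' and ``argue that these crossings cannot be amortized along a single flat'' is a statement of the goal, not an argument, and the ``no escaping flat half-plane'' reformulation of $(2)$ is never established nor shown to yield a quantitative bound. The missing combinatorial step, which is where the paper's Proposition~\ref{pip2} does its work, is the following. Join the projections $k_1,k_2\in K$ of the two endpoints by a geodesic $\alpha=e_1\cdots e_m$ in the Cayley graph of $K$ (so $m\geq r$); each dual hyperplane $H_i$ must cross the detour $\gamma$ because, by Lemma~\ref{lleemmaa1}, it crosses neither of the two shortest geodesics from the endpoints to $K$. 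For each $i$ one builds a path $\beta'_i$ in the support of $H_i$ from its exit point $h'_i\in K$ out to $\gamma$, necessarily of length $m_i\geq\rho r$, whose dual hyperplanes miss $\Sigma^{(1)}_{\Gamma_1}$. The crux is then: at most \emph{one} hyperplane dual to $\beta'_i$ can also cross $\beta'_{i+1}$, because two such hyperplanes would be dual to edges labelled by distinct commuting vertices $a,b\in\Lk(s_i)\cap\Lk(s_{i+1})$, and since $\Gamma$ is triangle free and $s_i,s_{i+1}$ are non-adjacent, $\{a,s_i,b,s_{i+1}\}$ spans an induced $4$--cycle with the non-adjacent pair $s_i,s_{i+1}$ in $S_1$ and $a\notin S_1$ --- exactly the configuration that $(2)$ forbids. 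Hence at least $m_i-1\geq\rho r-1$ hyperplanes dual to $\beta'_i$ must cross the subsegment of $\gamma$ between its meetings with $H_i$ and $H_{i+1}$, giving $\ell(\gamma)\geq(m-1)(\rho r-1)\geq(r-1)(\rho r-1)$. Without this specific ``one shared hyperplane at most'' lemma --- which is precisely where hypothesis $(2)$ and triangle-freeness enter --- your sketch does not close, and indeed you concede as much by deferring ``the real work'' to unspecified bookkeeping.
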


An immediate corollary of the above theorem is a characterization of stable special subgroups of two-dimensional right-angled Coxeter groups.

\begin{cor}
\label{b1}
Let $\Gamma$ be a simplicial, triangle free graph with vertex set $S$ and $K$ subgroup of $G_\Gamma$ generated by some subset $S_1$ of $S$. Then the following conditions are equivalent:
\begin{enumerate}
\item The subgroup $K$ is stable in $G_\Gamma$.
\item The set $S_1$ does not contain a pair of non-adjacent vertices of an induced 4--cycle in $\Gamma$.
\end{enumerate}
\end{cor}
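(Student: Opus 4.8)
The plan is to deduce Corollary~\ref{b1} by combining Theorem~\ref{h1} with the characterization of stability as strong quasiconvexity together with hyperbolicity (Theorem~\ref{th3}), plus the standard hyperbolicity criterion for right-angled Coxeter groups. First I would record that the special subgroup $K$ generated by $S_1$ is itself the right-angled Coxeter group $G_{\Gamma_1}$, where $\Gamma_1$ denotes the full subgraph of $\Gamma$ induced on $S_1$; this is the standard fact that special subgroups of $G_\Gamma$ are the right-angled Coxeter groups on induced subgraphs. Since $\Gamma$ is triangle free, so is $\Gamma_1$, and I would invoke the well-known criterion that a right-angled Coxeter group is word hyperbolic if and only if its defining graph contains no induced $4$--cycle (an induced $4$--cycle is exactly what produces a $\Z^2$ subgroup). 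Thus $K$ is hyperbolic if and only if $\Gamma_1$ has no induced $4$--cycle.

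By Theorem~\ref{th3}, $K$ is stable if and only if $K$ is both strongly quasiconvex and hyperbolic. By Theorem~\ref{h1}, the strong quasiconvexity of $K$ is equivalent to condition (2) of that theorem: whenever $S_1$ contains two non-adjacent vertices of an induced $4$--cycle $\sigma$ of $\Gamma$, it contains all of $\sigma$. Hence the task reduces to the purely combinatorial verification that the conjunction of ``Theorem~\ref{h1}(2) holds'' and ``$\Gamma_1$ has no induced $4$--cycle'' is equivalent to the single condition that $S_1$ contains no pair of non-adjacent vertices of any induced $4$--cycle of $\Gamma$.

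For the backward direction, assume $S_1$ meets no non-adjacent diagonal pair of an induced $4$--cycle of $\Gamma$. Then Theorem~\ref{h1}(2) holds vacuously, so $K$ is strongly quasiconvex; moreover $\Gamma_1$ can have no induced $4$--cycle, since such a cycle, being induced in the induced subgraph $\Gamma_1$, would be induced in $\Gamma$ and would place a non-adjacent pair inside $S_1$, contrary to hypothesis, so $K$ is hyperbolic and hence stable. For the forward direction, suppose $K$ is stable but, for contradiction, that $S_1$ does contain two non-adjacent vertices $a,c$ of an induced $4$--cycle $\sigma=(a,b,c,d)$ of $\Gamma$. Strong quasiconvexity and Theorem~\ref{h1}(2) force $b,d\in S_1$ as well, so $\sigma$ is an induced $4$--cycle of $\Gamma_1$, contradicting the hyperbolicity of $K$. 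This establishes the equivalence.

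There is essentially no serious obstacle here: all of the geometric content already resides in Theorems~\ref{th3} and~\ref{h1}. The only points needing care are the two bookkeeping facts, namely identifying $K$ with $G_{\Gamma_1}$ together with the no-square hyperbolicity criterion, and checking that ``induced in an induced subgraph'' passes correctly to ``induced in $\Gamma$'' so that the diagonal pair of a square in $\Gamma_1$ is genuinely a diagonal pair of a square in $\Gamma$. Once these are in place, the corollary follows by the short logical argument above, which is why it can legitimately be called immediate.
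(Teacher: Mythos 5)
Your proposal is correct and matches the paper's own argument essentially verbatim: both deduce the corollary from Theorem~\ref{th3} (stable $=$ strongly quasiconvex $+$ hyperbolic), Theorem~\ref{h1} for the strong quasiconvexity condition, and the no-induced-square hyperbolicity criterion for right-angled Coxeter groups (the paper cites Corollary 12.6.3 of Davis's book for the latter). The combinatorial bookkeeping about induced 4--cycles passing between $\Gamma_1$ and $\Gamma$ is handled the same way in both.
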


Behrstock produced the first example of $\mathcal{CFS}$ right-angled Coxeter group whose Morse boundary was not totally disconnected \cite{B}. Using Corollary \ref{b1} and Theorem \ref{intro_Morse_boundary}, we prove sufficient condition for the Morse boundary of a two-dimensional right-angled Coxeter group to be not totally disconnected. This provides an alternative proof that the Morse boundary of the example in \cite{B} is not totally disconnected


\begin{cor}
\label{b2}
If the simplicial, triangle free graph $\Gamma$ contains an induced loops $\sigma$ of length greater than $4$ such that the vertex set of $\sigma$ does not contain a pair of non-adjacent vertices of an induced 4--cycle in $\Gamma$, then the Morse boundary of right-angled Coxeter group $G_\Gamma$ is not totally disconnected.
\end{cor}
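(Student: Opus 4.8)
The plan is to produce inside $\partial_M G_\Gamma$ a connected, non-degenerate subset by exhibiting a stable special subgroup whose own Morse boundary is a circle, and then transporting it into $\partial_M G_\Gamma$ via Theorem~\ref{intro_Morse_boundary}. Let $\sigma$ be the induced loop of length $\ell>4$ guaranteed by the hypothesis, let $S_1$ denote its vertex set, and let $K=G_{S_1}$ be the special subgroup generated by $S_1$. Since $\sigma$ is an induced loop and $\Gamma$ is triangle free, the subgraph of $\Gamma$ induced on $S_1$ is exactly the $\ell$--cycle, so $K$ is isomorphic to the right-angled Coxeter group on an $\ell$--cycle with $\ell\geq 5$.

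The first step is to invoke Corollary~\ref{b1}. The hypothesis of the corollary states precisely that $S_1$ contains no pair of non-adjacent vertices of an induced $4$--cycle of $\Gamma$, which is condition (2) of Corollary~\ref{b1}; hence $K$ is stable in $G_\Gamma$, and by Theorem~\ref{th3} it is therefore both strongly quasiconvex and hyperbolic. The second step is to identify $\partial_M K$. Because $\ell\geq 5$, the right-angled Coxeter group on the $\ell$--cycle is the reflection group of a right-angled hyperbolic $\ell$--gon, so it is a cocompact Fuchsian group; in particular it is a one-ended hyperbolic group whose Gromov boundary is a circle. Since the Morse boundary of a hyperbolic group coincides with its Gromov boundary, $\partial_M K$ is homeomorphic to $S^1$, which is connected and has more than one point.

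Finally, I would apply Theorem~\ref{intro_Morse_boundary}(1) to the strongly quasiconvex subgroup $K$: the inclusion $i\colon K\inclusion G_\Gamma$ induces a topological embedding $\hat i\colon\partial_M K\to\partial_M G_\Gamma$ with image the limit set $\Lambda K$. Thus $\Lambda K$ is a subset of $\partial_M G_\Gamma$ homeomorphic to $S^1$, hence a connected subset containing more than one point. A topological space possessing a connected subset of cardinality at least two cannot be totally disconnected, so $\partial_M G_\Gamma$ is not totally disconnected, as desired.

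I expect essentially all of the difficulty to be concentrated in the geometric input of the second step, namely the classical identification of the right-angled Coxeter group on an $\ell$--cycle with $\ell\geq 5$ as a one-ended cocompact Fuchsian group with circle boundary; once this is in hand, everything else is a direct application of Corollary~\ref{b1}, Theorem~\ref{th3}, and Theorem~\ref{intro_Morse_boundary}. I would also take care to note that the two clauses of the hypothesis play distinct roles: the length being at least five is what guarantees hyperbolicity, and hence connectedness of the boundary, of the subgroup, while the absence of a non-adjacent pair from an induced $4$--cycle is exactly what is needed to apply the stability criterion of Corollary~\ref{b1}.
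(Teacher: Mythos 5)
Your proposal is correct and follows essentially the same route as the paper: verify via Corollary~\ref{b1} that the special subgroup on the vertex set of $\sigma$ is stable, identify it as a hyperbolic group whose Gromov (hence Morse) boundary is a circle, and embed that circle into $\partial_M G_\Gamma$ via Theorem~\ref{intro_Morse_boundary}. The paper phrases the middle step by calling $G_\sigma$ a two-dimensional hyperbolic orbifold group rather than a cocompact Fuchsian reflection group, but this is the same identification.
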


Corollary \ref{b2} inspires the following conjecture on the connectedness of the Morse boundary of right-angled Coxeter groups.


\begin{conj}
The Morse boundary of a right-angled Coxeter group $G_\Gamma$ is not totally disconnected if and only if the defining graph $\Gamma$ contains an induced loops $\sigma$ of length greater than $4$ such that the vertex set of $\sigma$ does not contain a pair of non-adjacent vertices of an induced 4--cycle in $\Gamma$.
\end{conj}

\subsection{Strong quasiconvexity, stability, and lower relative divergence in right-angled Artin groups}

As discussed above, strong quasiconvexity and stability are equivalent in the hyperbolic setting, but different in general. It is natural to ask about the existence of a non-hyperbolic ambient group in which the two notions of strongly quasiconvex subgroup of infinite index and stable subgroup are equivalent:

\begin{ques}
Is there any non-hyperbolic ambient group setting in which the two notions of strongly quasiconvex subgroup of infinite index and stable subgroup are equivalent? In other words, is there any non-hyperbolic ambient group setting in which all strongly quasiconvex subgroups of infinite index are hyperbolic?
\end{ques}

The following theorem answers the above question:

\begin{thm}
\label{intro_raags}
Let $\Gamma$ be a simplicial, finite, connected graph such that $\Gamma$ does not decompose as a nontrivial join. Let $H$ be a non-trivial, infinite index subgroup of the right-angled Artin group $A_\Gamma$. Then the following are equivalent:
\begin{enumerate}
\item $H$ is strongly quasiconvex.
\item $H$ is stable.
\item The lower relative divergence of $A_\Gamma$ with respect to $H$ is quadratic.
\item The lower relative divergence of $A_\Gamma$ with respect to $H$ is completely super linear.
\end{enumerate}
\end{thm}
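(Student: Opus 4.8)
The plan is to prove the four conditions equivalent by a cycle, isolating the genuine right-angled Artin group geometry in the implications $(1)\Rightarrow(2)$ and $(2)\Rightarrow(3)$. First I would dispatch the formal implications. Because $A_\Gamma$ is torsion-free, the nontrivial subgroup $H$ is infinite, so Theorem \ref{intro2} applies and yields $(1)\Leftrightarrow(4)$. Theorem \ref{th3}, identifying stable subgroups with the strongly quasiconvex hyperbolic ones, gives $(2)\Rightarrow(1)$, and $(3)\Rightarrow(4)$ is immediate since a quadratic function is completely super linear. Thus everything reduces to $(1)\Rightarrow(2)$ and $(2)\Rightarrow(3)$.

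The substantive step, which I expect to be the main obstacle, is $(1)\Rightarrow(2)$: a strongly quasiconvex subgroup of infinite index must be hyperbolic, hence stable by Theorem \ref{th3}. I would prove this by showing $H$ is \emph{purely loxodromic} for the action of $A_\Gamma$ on its extension graph and then invoking the theorem of Koberda--Mangahas--Taylor \cite{KMT} that a finitely generated purely loxodromic subgroup is free, undistorted and stable; recall $H$ is finitely generated by Theorem \ref{vuiqua}. Suppose instead some $1\neq g\in H$ is not loxodromic. After replacing $H$ by a conjugate (which preserves strong quasiconvexity and infinite index), $g$ lies in a star subgroup $\langle v\rangle\times A_{\Lk(v)}$ and commutes with a vertex $v$, so $\langle g,v\rangle$ spans an isometrically embedded flat $F\cong\Z^2$ with $\langle g\rangle\subseteq H$ running along one axis $L_g$. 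I would then exploit $F$ to build cheap detours around $H$: the segment of $F$ at height $n$ above $L_g$ joins the points $v^n g^{\pm N}$ by a path of length linear in the distance between its endpoints while staying distance $n$ from $L_g$. The crux is the estimate $d(v^n g^k,H)\gtrsim n$, which guarantees that these paths avoid a $\rho n$--neighborhood of all of $H$, not merely of its cyclic part; this is exactly where the hypotheses bite. I would prove it using that $H$ is undistorted together with the $\CAT(0)$ geometry of $A_\Gamma$, separating the case $F\cap H=L_g$ (undistortion forces a nearest point of $H$ onto $L_g$) from the case that $H$ absorbs a finite index subgroup of $F$ (here I would use that $\Gamma$ is not a join to escape $F$ into a transverse direction, and infinite index to forbid $H$ from containing the whole flat). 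In either case the lower relative divergence of $A_\Gamma$ with respect to $H$ is only linear, contradicting $(1)$ via Theorem \ref{intro2}. Hence $H$ is purely loxodromic and therefore stable.

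Finally, for $(2)\Rightarrow(3)$ I would upgrade ``completely super linear'' to ``quadratic.'' Since $\Gamma$ is connected and not a join, $A_\Gamma$ has quadratic divergence by Behrstock--Charney, and a comparison bounding the lower relative divergence of a pair by the divergence of the ambient space then caps the lower relative divergence of $(A_\Gamma,H)$ above by a quadratic; this comparison I would either extract from \cite{MR3361149} or verify directly, and I do not expect it to be the difficulty. For the matching quadratic lower bound I would use any infinite-order $g\in H$: since $H$ is stable, $\langle g\rangle$ is undistorted and its axis $L_g$ is a Morse geodesic in $A_\Gamma$, so by the Charney--Sultan characterization \cite{MR3339446} of Morse geodesics in $\CAT(0)$ spaces it has quadratic lower divergence, which I would transfer to a quadratic lower bound for the pair $(A_\Gamma,H)$. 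Combining the two bounds gives $(3)$ and closes the cycle $(1)\Rightarrow(2)\Rightarrow(3)\Rightarrow(4)\Rightarrow(1)$. The delicate work is concentrated entirely in the distance estimate $d(v^n g^k,H)\gtrsim n$ of the previous paragraph, which is what makes the non-join and infinite index hypotheses indispensable.
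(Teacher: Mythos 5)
Your skeleton matches the paper's (same cycle, same reduction to ``strongly quasiconvex of infinite index $\Rightarrow$ purely loxodromic'' plus a quadratic computation for purely loxodromic subgroups), but both substantive steps have real gaps. For $(1)\Rightarrow(2)$, your crux estimate $d(v^n g^k, H)\gtrsim n$ fails in exactly the case you cannot avoid. Since the flat $F=\langle g,v\rangle$ is undistorted, Proposition \ref{pp3} makes $H\cap F$ strongly quasiconvex in $F\cong\Z^2$, and the only such subgroups are the trivial and finite-index ones; as $H\cap F\supseteq\langle g\rangle$ is nontrivial, $H$ \emph{always} contains a finite-index subgroup of $F$, so some $v^m\in H$ and then $d(v^n g^k,H)\le d(v^ng^k, v^{jm}h_0)\le d(g^k,H)+m$ is bounded in $n$. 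Your first case is vacuous and your second case --- ``escape $F$ into a transverse direction'' --- is not an argument: the difficulty is precisely that $H$ could meet many conjugates of star subgroups in finite-index subgroups without itself having finite index, and ruling this out is where the paper deploys its heaviest machinery: Lemma \ref{sonice} (a strongly quasiconvex subgroup of $A_K\times\langle v\rangle$ is trivial or of finite index, proved via the finite-height Theorem \ref{lacloi1}), Lemma \ref{verynice} (any two conjugates of a star subgroup are joined by a chain with infinite consecutive intersections), and the propagation argument of Proposition \ref{kaka}, which pushes ``finite index in one star subgroup'' to all conjugates and then contradicts infinite index by finite height again. None of this is recoverable from local flat geometry or lower relative divergence estimates alone.

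For $(2)\Rightarrow(3)$ both of your transfers are unjustified. The upper bound does not follow from the quadratic divergence of $A_\Gamma$: a path avoiding a ball of radius $\rho r$ about a point need not avoid $N_{\rho r}(H)$, and certifying that a candidate detour misses a neighborhood of \emph{all} of $H$ is the actual content of the paper's Proposition \ref{p'1}, which rests on Lemma \ref{llll1} --- a quantitative consequence of the bounded-join-subword characterization of purely loxodromic subgroups (Theorem \ref{th}(3)) that you never invoke. Symmetrically, the lower bound does not transfer from the Morse axis $L_g$: the pair of points on $\partial N_r(H)$ realizing $\sigma^n_\rho(r)$ need not lie anywhere near $L_g$, and since $L_g\subseteq H$ the relative divergences of the two pairs are not comparable in the direction you need (enlarging the subset changes both the admissible endpoints and the forbidden region). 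The paper's Proposition \ref{p'2} instead runs a direct hyperplane-counting argument along a geodesic between far-apart points of $H$, again powered by the bounded-join-subword property. In short, the purely loxodromic structure theorem of Koberda--Mangahas--Taylor must enter the divergence computation itself, not only the identification of $H$ as free.
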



In \cite{KMT}, Koberda-Mangahas-Taylor give several characterizations of purely loxodromic subgroups in right-angled Artin groups. One of these is the equivalence between purely loxodromic subgroups and stable subgroups. The above theorem builds on the work of Koberda-Mangahas-Taylor in \cite{KMT} to provide several new characterizations of purely loxodromic subgroups in one-ended right-angled Artin groups.

\begin{cor}
\label{coooool}
Let $\Gamma$ be a simplicial, finite, connected graph such that $\Gamma$ does not decompose as a nontrivial join. Let $H$ be a non-trivial, finitely generated, infinite index subgroup of the right-angled Artin group $A_\Gamma$. Then the following are equivalent:
\begin{enumerate}
\item $H$ is purely loxodromic.
\item $H$ is strongly quasiconvex.
\item The lower relative divergence of $A_\Gamma$ with respect to $H$ is quadratic.
\item The lower relative divergence of $A_\Gamma$ with respect to $H$ is completely super linear.
\end{enumerate}
\end{cor}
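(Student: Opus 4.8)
The plan is to deduce this corollary directly from Theorem~\ref{intro_raags} together with the characterization of purely loxodromic subgroups proved by Koberda--Mangahas--Taylor in \cite{KMT}. The hypotheses imposed on $\Gamma$ (simplicial, finite, connected, and not decomposing as a nontrivial join) are precisely those of Theorem~\ref{intro_raags}, so no additional work is needed to verify that the theorem applies; the genuinely new ingredient in the corollary is the appearance of condition~(1), that $H$ be purely loxodromic.

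First I would record that, since $H$ is a non-trivial, finitely generated subgroup of infinite index, Theorem~\ref{intro_raags} applies verbatim. Its conclusion yields the equivalence of the statements ``$H$ is strongly quasiconvex'', ``$H$ is stable'', ``the lower relative divergence of $A_\Gamma$ with respect to $H$ is quadratic'', and ``the lower relative divergence of $A_\Gamma$ with respect to $H$ is completely super linear''. In particular, conditions~(2), (3), and~(4) of the corollary are mutually equivalent, and each of them is equivalent to the stability of $H$.

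Next I would invoke \cite{KMT}: one of the characterizations of purely loxodromic subgroups obtained by Koberda--Mangahas--Taylor is exactly the equivalence, for a finitely generated subgroup $H \leq A_\Gamma$, between $H$ being purely loxodromic and $H$ being stable. This supplies the bridge between the new condition~(1) and condition~(2), namely $H$ purely loxodromic $\iff$ $H$ stable $\iff$ $H$ strongly quasiconvex. Chaining this with the previous paragraph closes the cycle and establishes the equivalence of all four conditions.

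The one point demanding care --- and the main, if modest, obstacle --- is to confirm that the notion of stability used in \cite{KMT} coincides with the one appearing in Theorem~\ref{intro_raags} and that the standing hypotheses line up. Since stability is a quasi-isometry invariant defined intrinsically through the word metric on $A_\Gamma$, the two usages agree, and the assumptions that $\Gamma$ is connected and not a join guarantee that $A_\Gamma$ is one-ended and directly indecomposable, the setting in which the purely loxodromic characterization of \cite{KMT} is meaningful and in which the class of infinite-index purely loxodromic subgroups is genuinely rich. Finally, I would note that the finite generation hypothesis, while automatic once $H$ is strongly quasiconvex by Theorem~\ref{vuiqua}, is stated explicitly here because the equivalence imported from \cite{KMT} is phrased for finitely generated subgroups and because condition~(1) is the natural starting point, from which finite generation is not a priori available.
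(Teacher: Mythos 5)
Your proposal is correct and follows exactly the route the paper intends: Corollary~\ref{coooool} is obtained by combining Theorem~\ref{intro_raags} (equivalence of strong quasiconvexity, stability, and the two divergence conditions for non-trivial infinite-index subgroups) with the Koberda--Mangahas--Taylor equivalence of purely loxodromic and stable for finitely generated subgroups, which is recorded in the paper as Theorem~\ref{th}. Your remark on why the finite generation hypothesis is added matches the paper's setup as well.
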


Genevois provides an alternative proof for the equivalence among strongly quasiconvex, stable, and purely loxodromic subgroups of one-ended right-angled Artin groups in Theorem B.1 of \cite{Genevois2017}. However, Corollary~\ref{coooool} also provides a classification of the possible lower relative divergences for any infinite index subgroup of a one-ended right-angled Artin group. In \cite{Kim2017}, Kim has recently proved a similar characterizations for convex cocompact subgroups of mapping class groups.

 

\subsection{Some questions}

Hierarchically hyperbolic groups were recently introduced by Behrstock-Hagen-Sisto \cite{BHS} to provide a uniform framework in which to study many important families of groups, including mapping class groups, right-angled Coxeter groups, most 3-manifold groups, and right-angled Artin groups. In \cite{ABD}, Abbott-Behrstock-Durham prove a characterization of stability in hierarchically hyperbolic groups. We expect a similar characterization of strong quasiconvexity in hierarchically hyperbolic groups is possible.


\begin{ques}
Characterize strong quasiconvexity in hierarchically hyperbolic groups.
\end{ques}

We also hope there is a connection between strong quasiconvexity and stability in some hierarchically hyperbolic groups which is analogous to the one demonstrated in Theorem \ref{intro_raags} for right-angled Artin groups.

\begin{ques}
What are conditions on a hierarchically hyperbolic group so that all strongly quasiconvex subgroups of infinite index are stable? 
\end{ques}




Since quasiconvex subgroups of hyperbolic groups are hyperbolic, it is natural to ask about the existence of hierarchically hyperbolic structures on a strongly quasiconvex subgroup of a hierarchically hyperbolic group.

\begin{ques}
Is a strongly quasiconvex subgroup of a hierarchically hyperbolic group hierarchically hyperbolic? 
\end{ques}

Moving on from hierarchically hyperbolic groups, we wonder about further connections between strongly quasiconvex subgroups and hyperbolically embedded subgroups. In particular, Dahmani-Guirardel-Osin and Sisto prove that a hyperbolically embedded subgroup is almost malnormal and strongly quasiconvex (see \cite{DGO} and \cite{MR3519976}) and Genevois shows that the converse holds in the case of cubulable groups \cite{Genevois2017}. We ask about other conditions of a group where one can obtain a converse statement.

\begin{ques}
Under what other conditions of a finitely generated group are all almost malnormal, strongly quasiconvex subgroups hyperbolically embedded?
\end{ques}

In \cite{MR3430352}, Osin characterizes acylindrically hyperbolic groups using hyperbolically embedded subgroups. Therefore, the answer to the above question may give some connection between two notions of strongly quasiconvex subgroups and acylindrically hyperbolic groups.

\subsection*{Acknowledgments}
I want to thank Christopher Hruska, Ruth Charney, Jason Behrstock, Thomas Koberda, Fran\c{c}ois Dahmani, Hoang Thanh Nguyen, Matthew Haulmark, Anthony Genevois, Jacob Russell, and Adam Saltz for their very helpful conversations. I also thank the referee for a careful reading and advice that improved the exposition of the paper.

\section{Preliminaries}
\label{prelim}
In this section, we review some well-known concepts in geometric group theory: geodesic spaces, geodesics, quasigeodesics, quasi-isometry, quasi-isometric embedding, and Morse subsets. We also discuss some recently developed concepts: lower relative divergence, geodesic divergence, and geodesic lower divergence.

\begin{defn}
Let $(X,d_X)$ and $(Y,d_Y)$ be two metric spaces and the map $\Phi$ from $X$ to $Y$ a \emph{$(K,L)$--quasi-isometric embedding} if for all $x_1, x_2$ in $X$ the following inequality holds:\[({1}/{K})\,d_X(x_1,x_2)-L\leq d_Y\bigl(\Phi(x_1),\Phi(x_2)\bigr)\leq K\,d_X(x_1,x_2)+L.\]
If, in addition, $N_L\bigl(\Phi(X)\bigr)=Y$, then $\Phi$ is called a \emph{$(K,L)$--quasi-isometry}. Two spaces $X$ and $Y$ are quasi-isometric if there is a $(K,L)$--quasi-isometry form $X$ to $Y$.

The special case of a quasi-isometric embedding where the domain is a connected interval in $\RR$ (possibly all of $\RR$ is called a \emph{$(K,L)$--quasi-geodesic}. A \emph{geodesic} is a $(1,0)$--quasi-geodesic. The metric space $X$ is a \emph{geodesic space} if any pair of points in $X$ can be joined by a geodesic segment.
\end{defn}


\begin{defn}
Let $X$ be a geodesic space and $A$ a subset of $X$. Let $r$ be any positive number.
\begin{enumerate}
\item $N_r(A)=\bigset{x \in X}{d_X(x, A)<r}$
\item $\partial N_r(A)=\bigset{x \in X}{d_X(x, A)=r}$ 
\item $C_r(A)=X-N_r(A)$.
\item Let $d_{r,A}$ be the induced length metric on the complement of the $r$--neighborhood of $A$ in $X$. If the subspace $A$ is clear from context, we can use the notation $d_r$ instead of using $d_{r,A}$. 
\end{enumerate}
\end{defn}

\begin{defn}
A subset $A$ of a geodesic metric space $X$ is \emph{Morse} if for every $K \geq 1,C \geq 0$ there is some $M = M(K,C)$ such that every $(K,C)$--quasi–geodesic with endpoints on $A$ is contained in the $M$--neighborhood of $A$. We call the function $M$ a \emph{Morse gauge}.
\end{defn}

Before we define the concepts of lower relative divergence, geodesic divergence, and geodesic lower divergence, we need to build the notions of domination and equivalence.

\begin{defn}
Let $\mathcal{M}$ be the collection of all functions from $[0,\infty)$ to $[0,\infty]$. Let $f$ and $g$ be arbitrary elements of $\mathcal{M}$. \emph{The function $f$ is dominated by the function $g$}, denoted \emph{$f\preceq g$}, if there are positive constants $A$, $B$, $C$ and $D$ such that $f(x)\leq Ag(Bx)+Cx$ for all $x>D$. Two functions $f$ and $g$ are \emph{equivalent}, denoted \emph{$f\sim g$}, if $f\preceq g$ and $g\preceq f$. \emph{The function $f$ is strictly dominated by the function $g$}, denoted \emph{$f\prec g$}, if $f$ is dominated by $g$ and they are not equivalent. 

We say a function $f$ in $\mathcal{M}$ is \emph{completely super linear} if for every choice of $C>0$ the collection of $x \in [0,\infty)$ such that $f(x)\leq Cx$ is bounded.
\end{defn}

\begin{rem}
The relations $\preceq$ and $\prec$ are transitive. The relation $\sim$ is an equivalence relation on the set $\mathcal{M}$. If $f\preceq g$ in $\mathcal{M}$ and $f$ is completely super linear, then $g$ is also completely super linear. Therefore, if $f\sim g$ in $\mathcal{M}$ and one of them is completely super linear, then the other is also completely super linear. 

It is clear that two polynomial functions of degree 0 and 1 are always equivalent. Now we let $f$ and $g$ be two polynomial functions of degree at least 1 in the family $\mathcal{M}$. We observe that $f$ is dominated by $g$ if and only if the degree of $f$ is less than or equal to the degree of $g$. Also two functions $f$ and $g$ are equivalent if and only if they have the same degree. All exponential functions of the form $a^{bx+c}$, where $a>1, b>0$ are equivalent. Therefore, a function $f$ in $\mathcal{M}$ is \emph{linear, quadratic or exponential...} if $f$ is respectively equivalent to any polynomial with degree one, two or any function of the form $a^{bx+c}$, where $a>1, b>0$.
\end{rem}

\begin{defn}
Let $\{\delta^n_{\rho}\}$ and $\{\delta'^n_{\rho}\}$ be two families of functions of $\mathcal{M}$, indexed over $\rho \in (0,1]$ and positive integers $n\geq 2$. \emph{The family $\{\delta^n_{\rho}\}$ is dominated by the family $\{\delta'^n_{\rho}\}$}, denoted \emph{$\{\delta^n_{\rho}\}\preceq \{\delta'^n_{\rho}\}$}, if there exists constant $L\in (0,1]$ and a positive integer $M$ such that $\delta^n_{L\rho}\preceq \delta'^{Mn}_{\rho}$. Two families $\{\delta^n_{\rho}\}$ and $\{\delta'^n_{\rho}\}$ are \emph{equivalent}, denoted \emph{$\{\delta^n_{\rho}\}\sim \{\delta'^n_{\rho}\}$}, if $\{\delta^n_{\rho}\}\preceq \{\delta'^n_{\rho}\}$ and $\{\delta'^n_{\rho}\}\preceq \{\delta^n_{\rho}\}$.

We say a family $\{\delta^n_{\rho}\}$ of functions in $\mathcal{M}$ is \emph{completely super linear} if there is some $n_0\geq 3$ such that $\delta^{n_0}_{\rho}$ are completely super linear for all $\rho \in (0,1]$.
\end{defn}

\begin{rem}
A family $\{\delta^n_{\rho}\}$ is dominated by (or dominates) a function $f$ in $\mathcal{M}$ if $\{\delta^n_{\rho}\}$ is dominated by (or dominates) the family $\{\delta'^n_{\rho}\}$ where $\delta'^n_{\rho}=f$ for all $\rho$ and $n$. The equivalence between a family $\{\delta^n_{\rho}\}$ and a function $f$ in $\mathcal{M}$ can be defined similarly. Thus, a family $\{\delta^n_{\rho}\}$ is linear, quadratic, exponential, etc. if $\{\delta^n_{\rho}\}$ is equivalent to the function $f$ where $f$ is linear, quadratic, exponential, etc..

If $\{\delta^n_{\rho}\}\preceq \{\delta'^n_{\rho}\}$ and $\{\delta^n_{\rho}\}$ is completely super linear, then $\{\delta'^n_{\rho}\}$ is also completely super linear. Therefore, if $\{\delta^n_{\rho}\}\sim \{\delta'^n_{\rho}\}$ and one of them is completely super linear, then the other is also completely super linear. 
\end{rem}

We now review the concept of lower relative divergence which was originally introduced by the author in \cite{MR3361149}.

\begin{defn}[Lower relative divergence in spaces]
Let $X$ be a geodesic space and $A$ a subset of $X$ of infinite diameter. For each $\rho \in (0,1]$ and positive integer $n\geq 2$, we define function $\sigma^n_{\rho}\!:[0, \infty)\to [0, \infty]$ as follows: 

For each positive $r$, if there is no pair of $x_1, x_2 \in \partial N_r(A)$ such that $d_r(x_1, x_2)<\infty$ and $d(x_1,x_2)\geq nr$, we define $\sigma^n_{\rho}(r)=\infty$. Otherwise, we define $\sigma^n_{\rho}(r)=\inf d_{\rho r}(x_1,x_2)$ where the infimum is taken over all $x_1, x_2 \in \partial N_r(A)$ such that $d_r(x_1, x_2)<\infty$ and $d(x_1,x_2)\geq nr$. 


The family of functions $\{\sigma^n_{\rho}\}$ is \emph{the lower relative divergence} of $X$ with respect $A$, denoted $div(X,A)$.
\end{defn}

\begin{conv}
Let $X$ be a geodesic space and $A$ a subset of $X$ of infinite diameter. Let $\{\sigma^n_{\rho}\}$ be the lower relative divergence of $X$ with respect to $A$. Assume that $N_{r_0}(A)=X$ for some $r_0$. Therefore, $\partial N_r(A)=\emptyset$ for each $r>r_0$. This implies that for each $\rho \in (0,1]$ and positive integer $n\geq 2$ $\sigma^n_{\rho}(r)=\infty$ for $r>r_0$.
\end{conv}

\begin{defn}[Lower relative divergence in groups] 
Let $G$ be a finitely generated group and $H$ its subgroup. We define \emph{the lower relative divergence} of $G$ with respect to $H$, denoted \emph{$div(G,H)$}, to be the lower relative divergence of the Cayley graph $\Gamma(G,S)$ with respect to $H$ for some (any) finite generating set $S$. 

\end{defn}

\begin{rem}
We first remark that the subgroup $H$ in the above definition is not required to be finitely generated. Moreover, the lower relative divergence is a pair quasi-isometry invariant (see Proposition 4.9 in \cite{MR3361149}). This implies that the lower relative divergence on a finitely generated group does not depend on the choice of finite generating sets of the ambient group. 
\end{rem}

We now define geodesic divergence and geodesic lower divergence. 

\begin{defn}[Geodesic divergence]
The \emph{divergence} of two geodesic rays $\alpha$ and $\beta$ with the same initial point $x_0$ in a geodesic space $X$, denoted $\Div_{\alpha,\beta}$, is a function $g: (0,\infty)\to(0,\infty]$ defined as follows. For each positive $r$, if there is no path outside the open ball with radius $r$ about $x_0$ connecting $\alpha(r)$ and $\beta(r)$, we define $g(r)=\infty$. Otherwise, we define $g(r)$ is the infimum on the lengths of all paths outside the open ball with radius $r$ about $x_0$ connecting $\alpha(r)$ and $\beta(r)$.

The \emph{divergence of a bi-infinite geodesic} $\gamma$, denoted $\Div_{\gamma}$, is the divergence of the two geodesic rays obtained from $\gamma$ with the initial point $\gamma(0)$.
\end{defn}

\begin{defn}[Geodesic lower divergence]
Let $\alpha$ be a bi-infinite geodesic. For any $r>0$ and $t\in \RR$, if there is no path from $\alpha(t-r)$ to $\alpha(t+r)$ that lies outside the open ball of radius $r$ about $\alpha(t)$, we define $\rho_{\alpha}(r,t)=\infty$. Otherwise, we let $\rho_{\alpha}(r,t)$ denote the infimum of the lengths of all paths from $\alpha(t-r)$ to $\alpha(t+r)$ that lies outside the open ball of radius $r$ about $\alpha(t)$. Define the \emph{lower divergence} of $\alpha$ to be the growth rate of the following function: \[ldiv_{\alpha}(r)=\inf_{t\in \RR}\rho_{\alpha}(r,t)\] 
\end{defn}

\begin{rem}
\label{gddv}
The geodesic divergence is a classical notion related to curvature and the geodesic lower divergence was originally introduced by Charney-Sultan in \cite{MR3339446}. In general, these concepts measure different aspects of a bi-infinite geodesic in a geodesic space. More precisely, let $\alpha$ be a bi-infinite geodesic in a geodesic space $X$. It is not hard to see that $ldiv_{\alpha}\leq Div_{\alpha}$ and two functions are not the same in general. However, $ldiv_{\alpha}\sim Div_{\alpha}$ if $\alpha$ is a periodic geodesic (i.e. there is an isometry $g$ of $X$ such that $g\alpha=\alpha$). 

Also, the lower divergence $ldiv_{\alpha}$ of $\alpha$ in $X$ is equivalent to the lower relative divergence $div(X,\alpha)$ of $X$ with respect to $\alpha$. The proof of this fact is similar to the proof of Proposition 6.6 in \cite{MR3361149} and we leave it to the reader. 
\end{rem}

\section{Characterizing Morse subsets via lower relative divergence}
In most parts of this section, we characterize Morse subsets with infinite diameter in terms of the lower divergence related to them. This result will be used to characterize strongly quasiconvex subgroups defined in Section \ref{haha}. We also study the behavior of geodesic rays in a finite neighborhood of some Morse subsets and this result will be applied to study the height of strongly quasiconvex subgroups later.

\begin{prop}[Completely super linear lower relative divergence implies Morse]
\label{p1}
Let $X$ be a geodesic space and $A$ a subset of $X$ of infinite diameter. If the lower relative divergence $\{\sigma^n_{\rho}\}$ of $X$ with respect to $A$ is completely super linear, then $A$ is Morse in $X$.
\end{prop}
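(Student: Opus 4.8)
The plan is to prove the contrapositive: if $A$ is not Morse, then the family $\{\sigma^n_\rho\}$ fails to be completely super linear, i.e.\ for every integer $n_0 \ge 3$ there is some $\rho \in (0,1]$ with $\sigma^{n_0}_\rho$ not completely super linear. So I fix $n_0 \ge 3$. Since $A$ is not Morse, there are constants $K \ge 1$, $C \ge 0$ and a sequence of $(K,C)$--quasi-geodesics $\gamma_j$ with endpoints on $A$ that leave $N_j(A)$. After replacing each $\gamma_j$ by a tame (continuous, rectifiable) quasi-geodesic with the same endpoints and controlled constants — at the cost of enlarging $(K,C)$ and shrinking the escape depth by a bounded amount — I may assume each $\gamma_j\colon [a_j, b_j] \to X$ is continuous, satisfies $\mathrm{length}(\gamma_j|_{[s,t]}) \le K|t-s| + C$, and attains maximal distance $D_j := \max_t d(\gamma_j(t), A) \to \infty$ at some parameter $t_j$, with $p_j := \gamma_j(t_j)$.

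Fix one such $\gamma = \gamma_j$, write $D = D_j$, $p = p_j$, $t_0 = t_j$, and let $\phi(t) = d(\gamma(t), A)$, a continuous function vanishing at the endpoints with $\phi(t_0) = D$. For $r \in (0, D)$ let $[u_r, v_r]$ be the connected component of $\{\,t : \phi(t) \ge r\,\}$ containing $t_0$; then $\phi(u_r) = \phi(v_r) = r$, so $x_1 := \gamma(u_r)$ and $x_2 := \gamma(v_r)$ lie on $\partial N_r(A)$, and the sub-arc $\gamma|_{[u_r, v_r]}$ stays at distance $\ge r$ from $A$, hence lies in $C_r(A) \subseteq C_{\rho r}(A)$ for every $\rho \le 1$. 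Writing $\psi(r) = d(x_1, x_2)$, two estimates drive the argument. First, since the sub-arc reaches depth $D$ while its endpoints are at depth $r$, the quasi-geodesic inequalities give a separation bound $\psi(r) \ge \tfrac{2(D-r)}{K^2} - C'$ for a constant $C'$ depending only on $K, C$. Second, because $\gamma$ is tame, any sub-arc has length at most $K^2$ times the distance between its endpoints plus a constant, so $d_{\rho r}(x_1, x_2) \le \mathrm{length}(\gamma|_{[u_r, v_r]}) \le K^2 \psi(r) + C''$. Consequently it suffices to locate a radius $r^*$ (depending on $j$) with $n_0 r^* \le \psi(r^*) \le C_2 r^*$ for some fixed $C_2 > n_0$: the left inequality guarantees that $(x_1, x_2)$ is counted in the definition of $\sigma^{n_0}_\rho$, and the two estimates then give $\sigma^{n_0}_1(r^*) \le K^2 C_2 r^* + C'' \le \widehat{C}\, r^*$ with $\widehat C$ independent of $j$. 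Moreover the separation bound forces $D \le \bigl(\tfrac{K^2 C_2}{2}+1\bigr) r^* + C'$, so $r^* \to \infty$ as $D_j \to \infty$; feeding this into $\sigma^{n_0}_1$ contradicts its complete super linearity, and since $n_0$ was arbitrary the family is not completely super linear.

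It remains to produce the balanced radius $r^*$, and this is where the main difficulty lies. I would consider the ratio $r \mapsto \psi(r)/r$. By the separation bound it tends to $+\infty$ as $r \to 0^+$ (for $D$ large), while as $r \to D^-$ the component $[u_r, v_r]$ shrinks toward $t_0$, so $\psi(r) \to 0$ and the ratio tends to $0$. If $\psi$ were continuous, an intermediate value argument would immediately yield $r^*$ with $\psi(r^*)/r^* \in [n_0, C_2]$. The obstruction is that $\psi$ need not be continuous: the depth profile $\phi$ wiggles, so as $r$ increases the endpoints $u_r, v_r$ move monotonically toward $t_0$ but may jump whenever a side-excursion of $\gamma$ detaches from the main component, producing a downward jump of $\psi$ that can skip the target band $[n_0 r, C_2 r]$.

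I expect this scale-selection step to be the crux of the proof. The way I would handle it is to observe that a jump of $\psi$ at a level $r_1$ overshooting the band is caused by a detaching sub-arc $E$ whose endpoints lie on $\partial N_{r_1}(A)$, whose endpoint distance exceeds $(C_2 - n_0) r_1$, and which itself remains at depth $\ge r_1$ while reaching some depth $D_E > r_1$. Such an $E$ is again a deep excursion, of strictly smaller length than $\gamma$, to which the same construction applies at levels $\ge r_1$. Recursing into the innermost such excursion — the process terminating because the length strictly decreases at each step and only finitely many disjoint excursions can carry endpoint distance comparable to their base radius within the fixed length budget of $\gamma$ — one reaches an excursion on which the relevant crossing is continuous, where the intermediate value argument applies and delivers a balanced radius $r^* \ge r_1 \to \infty$. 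With $r^*$ in hand, the two estimates above complete the contradiction, so $A$ must be Morse.
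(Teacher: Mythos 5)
Your overall frame (contrapositive, the separation bound $\psi(r)\gtrsim (D-r)/K^2$ from the depth of the excursion, and the length bound $d_{r}(x_1,x_2)\le K^2\psi(r)+C''$ from tameness) is sound, and if a balanced radius $r^*$ with $n_0 r^*\le\psi(r^*)\le C_2 r^*$ existed, the rest of your argument would indeed contradict complete super linearity of $\sigma^{n_0}_1$. But the scale-selection step, which you correctly identify as the crux, is not actually established. The termination argument you give for the recursion does not work: the excursions produced by successive recursion steps are \emph{nested}, not disjoint, so the observation that only finitely many disjoint excursions fit in the length budget of $\gamma$ says nothing about them; and ``the length strictly decreases at each step'' is compatible with an infinite recursion, since the lengths are bounded below by $C_2 s_1>0$. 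One can arrange depth profiles with infinitely many nested shoulders accumulating at a level $s_\infty$, in which case your recursion never reaches an excursion on which the intermediate value argument applies; handling the limit (where the nested arcs converge to an arc lying entirely on a level set $\partial N_{s_\infty}(A)$, which must then be subdivided by yet another argument) is genuinely needed and absent. There are also unaddressed degenerate cases, e.g.\ when $\psi(r)$ does not tend to $0$ as $r\to D^-$ because $\phi$ attains its maximum on a nondegenerate interval.

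The paper avoids this entire difficulty with a truncation trick, and I would urge you to adopt it: fix the test radius once and for all as a definite fraction of the maximal depth, $r=m/(2K_1 n_0)$, and take the sub-arc of the (tamed) quasi-geodesic between its two crossings of $\partial N_r(A)$ around the deepest point. If that sub-arc has length at most $4K_1 m$, its endpoints are automatically at distance $\ge n_0 r$ and it is itself the required short connecting path. If it is longer, cut out a piece of length exactly $4K_1 m$; being a tame quasi-geodesic arc, its endpoints are at distance at least $4m-L_1/K_1$, and connecting each of them down to $\partial N_r(A)$ by a geodesic toward $A$ of length at most $m$ (stopped at the first crossing of $\partial N_r(A)$) yields a pair of points on $\partial N_r(A)$ at distance at least $n_0 r$ joined outside $N_r(A)$ by a path of length at most $(4K_1+2)m$. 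This gives the linear bound $\sigma^{n_0}_{\rho_0}(m/(2K_1n_0))\le(4K_1+2)m$ directly, with no continuity, scale-selection, or termination issues. Your estimates then finish the argument exactly as you intended.
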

\begin{proof}
Let $n_0\geq 3$ and $\rho_0 \in (0,1]$ such that $\sigma^{n_0}_{\rho_0}$ is a completely super linear function. Let $\gamma\!:[a,b]\to X$ be an arbitrary $(K,L)$--quasi-geodesic in $X$ connecting two points in $A$. By Lemma 1.11 of \cite{MR1744486} III.H, we can find a continuous quasi-geodesic $c\!:[a,b]\to X$ such that:
\begin{enumerate}
\item $c(a)=\gamma(a)$ and $c(b)=\gamma(b)$.
\item $\ell(c|_{[t,t']})\leq K_1 d\bigl(c(t),c(t')\bigr)+L_1$, where $K_1\geq 1$, $L_1\geq 0$ only depend on $K$ and $L$.
\item The Hausdorff distance between $\gamma$ and $c$ is less than $C$, where $C$ only depends on $K$ and $L$.
\end{enumerate}

Therefore, it is sufficient to prove that $c$ lies in some $M$--neighborhood of $A$, where $M$ only depends on $K_1$, $L_1$, $n_0$ and the function $\sigma^{n_0}_{\rho_0}$. Let $t_0 \in [a,b]$ such that $c(t_0)$ is the farthest point from $A$ and let $m=d\bigl(c(t_0),A\bigr)$. If $m\leq 2L_1$, then $c$ obviously lies in some $M$--neighborhood of $A$, where $M$ only depends on $L_1$. Therefore, we can assume that $m>2L_1$. We claim that if $m>2L_1$, then $\sigma^{n_0}_{\rho_0}\bigl(\frac{m}{2K_1n_0}\bigr)\leq (4K_1+2)m$. Let $m_1=\frac{m}{2K_1n_0}$, $t_1\in [a,t_0]$ and $t_2\in [t_0,b]$ such that $c(t_1)$, $c(t_2)$ both lie in $\partial N_{m_1}(A)$ and $c[t_1,t_2]$ lies outside $N_{m_1}(A)$. We consider two cases. 

\textbf{Case 1:} $\ell(c|_{[t_1,t_2]})\leq 4K_1m$

Since $d\bigl(c(t_0), A)=m$ and $d\bigl(c(t_1), A)=\frac{m}{2K_1n_0}$, then the length $\ell(c|_{[t_1,t_0]})$ is bounded below $\bigl(1-\frac{1}{2K_1n_0}\bigr)m$. Similarly, the length $\ell(c|_{[t_0,t_2]})$ is bounded below $\bigl(1-\frac{1}{2K_1n_0}\bigr)m$. Therefore, the length $\ell(c|_{[t_1,t_2]})$ is bounded below by $\bigl(2-\frac{1}{K_1n_0}\bigr)m$. This implies that

\begin{align*}
d\bigl(c(t_1),c(t_2)\bigr)&\geq \frac{1}{K_1}\ell(c|_{[t_1,t_2]})-\frac{L_1}{K_1}\\&\geq \frac{1}{K_1}\bigl(2-\frac{1}{K_1n_0}\bigr)m-\frac{L_1}{K_1}\\&\geq \frac{m}{K_1}+\bigl(1-\frac{1}{K_1n_0}\bigr)\bigl(\frac{m}{K_1}\bigr)-\frac{L_1}{K_1}\\&\geq \frac{m}{K_1}+\frac{m}{2K_1}-\frac{L_1}{K_1}\geq \frac{m}{K_1}\geq n_0\bigl(\frac{m}{2K_1n_0}\bigr)
\end{align*}

Also, the path $c([t_1,t_2])$ lies outside $\bigl(\frac{m}{2K_1n_0}\bigr)$--neighborhood of $A$. Therefore,
\[\sigma^{n_0}_{\rho_0}\bigl(\frac{m}{2K_1n_0}\bigr)\leq \ell(c|_{[t_1,t_2]})\leq 4K_1m.\] 

\textbf{Case 2:} $\ell(c|_{[t_1,t_2]})> 4K_1m$

We can choose $t_3$ and $t_4$ in $[t_1,t_2]$ ($t_3<t_4$) such that $\ell(c|_{[t_3,t_4]})= 4K_1m$. Since $m_1\leq d\bigl(c(t_3),A\bigr)\leq m$, we can choose a point $u$ in $\partial N_{m_1}(A)$ such that the geodesic $\alpha_1$ connecting $u$, $c(t_3)$ lies outside $N_{m_1}(A)$ and its length is bounded above by $m$. Similarly, we can choose a point $v$ in $\partial N_{m_1}(A)$ such that the geodesic $\alpha_2$ connecting $v$, $c(t_4)$ lies outside $N_{m_1}(A)$ and its length is bounded above by $m$. Let $\beta=\alpha_1\bigcup c[t_3,t_4]\bigcup \alpha_2$. Then $\beta$ is a path outside $N_{m_1}(A)$ that connects two points $u$ and $v$ on $\partial N_{m_1}(A)$. Moreover, the length of $\beta$ is bounded above by $(4K_1+2)m$ by the construction. Also,

\begin{align*}
d(u,v)&\geq d\bigl(c(t_3), c(t_4)\bigr)-2m\geq \frac{1}{K_1}\ell(c|_{[t_3,t_4]})-\frac{L_1}{K_1}-2m\\&\geq 4m-\frac{L_1}{K_1}-2m\geq 2m-\frac{L_1}{K_1}\geq m\geq n_0\bigl(\frac{m}{2K_1n_0}\bigr).
\end{align*}

Therefore,
\[\sigma^{n_0}_{\rho_0}\bigl(\frac{m}{2K_1n_0}\bigr)\leq \ell(\beta)\leq (4K_1+2)m.\] 

Since $\sigma^{n_0}_{\rho_0}$ is a completely super linear function, then there is an upper bound on $m$ depending on $K_1$ and $n_0$. Therefore, $c$ lies in some $M$--neighborhood of $A$, where $M$ only depends on $K_1$, $L_1$, $n_0$ and the function $\sigma^{n_0}_{\rho_0}$.
\end{proof} 

The following two lemmas will be used to compute the lower relative divergence of geodesic spaces with respect to their Morse subsets. Most techniques for the proof of these lemmas were studied in \cite{ACGH}. 

\begin{lem}
\label{l1}
Suppose $\gamma$ is a concatenation of two $(L,0)$--quasi-geodesics $\gamma_1$ and $\gamma_2$, where $\gamma_1$, $\gamma_2$, and $\gamma$ are all parametrized by arc length. Let $C>1$ and $r>0$ such that the distance between two endpoints $a$, $c$ of $\gamma$ is at least $r$ and the length of $\gamma$ is at most $Cr$. For each $\rho \in (0,1]$ and $L'>L+C+C/\rho+1$, there exist an $(L',0)$--quasi-geodesic $\alpha$ parametrized by arc length connecting two points $a$, $c$ such that the image of $\alpha$ lies $\rho r$--neighborhood the image of $\gamma$ and the length of $\alpha$ is bounded above by the length of $\gamma$. 
\end{lem}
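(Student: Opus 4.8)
The plan is to leave $\gamma$ unchanged wherever the two quasi-geodesics $\gamma_1$ and $\gamma_2$ stay well separated, and to excise the single ``backtracking wedge'' in which they come within $\rho r$ of one another near the junction $b=\gamma_1(\ell_1)=\gamma_2(0)$, replacing that wedge by a short geodesic shortcut. Write $\gamma\colon[0,\ell_1+\ell_2]\to X$ with $\gamma(0)=a$, $\gamma(\ell_1)=b$, $\gamma(\ell_1+\ell_2)=c$ and $\ell(\gamma)=\ell_1+\ell_2\le Cr$. Because $\gamma_1,\gamma_2$ are parametrized by arc length they are $1$--Lipschitz, hence continuous, so $\gamma$ is continuous and all the infima and suprema below are attained; no appeal to a continuous-representative lemma is needed.

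For the construction I would first set $s^*=\inf\bigset{s\in[0,\ell_1]}{d\bigl(\gamma(s),\gamma([\ell_1,\ell_1+\ell_2])\bigr)\le \rho r}$ and then $t^*=\sup\bigset{t\in[\ell_1,\ell_1+\ell_2]}{d\bigl(\gamma(s^*),\gamma(t)\bigr)\le \rho r}$; both sets are nonempty and closed, since the shared point $b$ forces $\ell_1$ into the first set. Let $g$ be a geodesic from $\gamma(s^*)$ to $\gamma(t^*)$, parametrized by arc length from its \emph{near} endpoint $\gamma(s^*)$, and put $\alpha=\gamma|_{[0,s^*]}\cdot g\cdot \gamma|_{[t^*,\,\ell_1+\ell_2]}$, reparametrized by arc length. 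The two cheap requirements are then immediate: by construction $\ell(g)=d\bigl(\gamma(s^*),\gamma(t^*)\bigr)\le \ell\bigl(\gamma|_{[s^*,t^*]}\bigr)=t^*-s^*$, whence $\ell(\alpha)\le\ell(\gamma)$; and every point of $g$ lies within $\ell(g)\le\rho r$ of $\gamma(s^*)\in\gamma$ while the two retained arcs lie on $\gamma$ itself, so $\alpha\subseteq\nbd{\gamma}{\rho r}$.

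It remains to show $\alpha$ is an $(L',0)$--quasi-geodesic. Since $\alpha$ is parametrized by arc length the upper bound $d(\alpha(u),\alpha(u'))\le |u-u'|\le L'|u-u'|$ is automatic, so only the lower bound $d(\alpha(u),\alpha(u'))\ge |u-u'|/L'$ must be checked, and this splits into three cases. If $\alpha(u),\alpha(u')$ lie on the same retained arc, the $(L,0)$--property of $\gamma_1$ or $\gamma_2$ gives the bound with constant $L$, and if both lie on $g$ the constant is $1$. If they lie on the two \emph{different} retained arcs, then the defining property of $s^*$ (when the first parameter is $<s^*$) and of $t^*$ (when the image of the second parameter exceeds $t^*$) force $d(\alpha(u),\alpha(u'))>\rho r$, while $|u-u'|\le\ell(\alpha)\le Cr=(C/\rho)\,\rho r$; hence the ratio is at most $C/\rho$. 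This is the source of the $C/\rho$ term in the hypothesis on $L'$.

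The hard part will be the remaining case, where one point lies on a retained arc and the other on the inserted geodesic $g$; the two points may be genuinely close, so a single triangle inequality does not suffice. Say $\alpha(u)=\gamma(u)$ with $u\le s^*$ and $\alpha(u')=g(q)$ with $q\le\ell(g)\le\rho r$ (the case of $g$ against the $\gamma_2$--arc is symmetric). The key point is that the cut was defined precisely so that $g$, which lives within $\rho r$ of the frontier points, cannot return near the retained arc: comparing through the near endpoint gives $d(\gamma(u),g(q))\ge d(\gamma(u),\gamma(s^*))-q\ge (s^*-u)/L-q$, while comparing through the far endpoint $\gamma(t^*)$ and using $u\le s^*\Rightarrow d(\gamma(u),\gamma(t^*))>\rho r$ together with $\ell(g)\le\rho r$ gives $d(\gamma(u),g(q))>q$. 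I would then feed these two lower bounds, together with $|u-u'|=(s^*-u)+q$, into a short elementary optimization; it bounds the ratio $|u-u'|/d(\alpha(u),\alpha(u'))$ by a constant built from $L$, and once this is combined with the constants $L$ and $C/\rho$ from the easy cases, any $L'$ exceeding $L+C+C/\rho+1$ dominates all of them. The only routine residue is that inequality chain, which I would record in a couple of lines.
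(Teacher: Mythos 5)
Your construction is genuinely different from the paper's, and the difference is exactly where the argument breaks: the deferred ``short elementary optimization'' in the mixed case does not produce a constant dominated by $L+C+C/\rho+1$. From your two bounds $d(\gamma(u),g(q))\ge (s^*-u)/L-q$ and $d(\gamma(u),g(q))> q$, writing $p=s^*-u$, the best available conclusion is $d(\gamma(u),g(q))\ge\max\bigl(p/L-q,\,q\bigr)$, and the supremum of $(p+q)/\max\bigl(p/L-q,\,q\bigr)$ over $p,q\ge 0$ equals $2L+1$, attained at $q=p/(2L)$. So your path is only guaranteed to be a $\bigl(\max(2L+1,\,C/\rho),0\bigr)$--quasi-geodesic, and $2L+1>L+C+C/\rho+1$ whenever $L>C+C/\rho$ (e.g.\ $L=10$, $C=2$, $\rho=1$). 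The worst case is realizable in principle -- take $d(\gamma(u),\gamma(s^*))=p/L$ tight and let $g$ initially head back toward $\gamma(u)$ -- so this is not loose bookkeeping: the lemma with the stated threshold is not established. (A minor additional slip: the implication $u\le s^*\Rightarrow d(\gamma(u),\gamma(t^*))>\rho r$ fails at $u=s^*$, where the definition of $t^*$ gives $\le\rho r$; that boundary case is harmless but must be split off.)

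The paper avoids this by choosing the excised subsegment in terms of the target constant $L'$ rather than in terms of the $\rho r$--neighborhood: it takes a maximal subsegment $[x,y]_\gamma$ on which $d(x,y)=d_\gamma(x,y)/L'$ and replaces it by a geodesic. Maximality gives $d(u,y)>d_\gamma(u,y)/L'$ for $u$ outside the cut, and since $v$ lies on the geodesic $[x,y]$ one has $d(x,y)-d(v,y)=d(x,v)$; these two facts make the mixed case close up with constant exactly $L'$, and the containment in the $\rho r$--neighborhood is then what has to be verified, via $d(x,y)=d_\gamma(x,y)/L'<Cr/(C/\rho)=\rho r$. If all you need is some constant depending only on $(L,C,\rho)$ -- which is all Lemma~\ref{l2} actually uses -- your version would suffice after weakening the statement; but to prove the lemma as written you should adapt the cut to $L'$ as the paper does.
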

\begin{proof}
For each path $\beta$ parametrized by arc length we will abuse notation to identify $\beta$ to its image. Moreover, for each pair of points $u$ and $v$ on $\beta$ we denote $d_\beta(u,v)$ to be the length of the subpath of $\beta$ connecting $u$ and $v$. If $\gamma$ is an $(L',0)$--quasi-geodesic, then $\alpha=\gamma$ is a desired quasi-geodesic. We now assume that $\gamma$ is not an $(L',0)$--quasi-geodesic. 

Since $d(a,c)\geq r\geq d_\gamma(a,c)/C>d_\gamma(a,c)/L'$, then there is a maximal subsegment $[x,y]_\gamma$ of $\gamma$ such that $d(x,y)=d_{\gamma}(x,y)/L'$ by the continuity of $\gamma$. It is obvious that $x$ and $y$ can not both lie in the same path $\gamma_1$ or $\gamma_2$. Therefore, we can assume $x\in \gamma_1$ and $y\in \gamma_2$. Let $[x,y]$ be a geodesic connecting $x$, $y$ and $\alpha=[a,x]_{\gamma_1}\cup [x,y] \cup [y,c]_{\gamma_2}$. We parametrized $\alpha$ by arc length and we will prove that $\alpha$ is an $(L',0)$--quasi-geodesic.

Let $u$ and $v$ be an arbitrary two points in $\alpha$. If $u$ and $v$ both lie in the same one of these segments $[a,x]_{\gamma_1}$, $[x,y]$, or $[y,c]_{\gamma_2}$, then $d_{\alpha}(u,v)\leq L' d(u,v)$. We now consider the remaining three cases.

\textbf{Case 1:} One of these points $u$, $v$ lies in $[a,x]_{\gamma_1}$ and the other lies in $[x,y]$. 

We can assume that $u$ lies in $[a,x]_{\gamma_1}$ and $v$ lies in $[x,y]$. We observe that $d(u,y)>d_{\gamma}(u,y)/L'$ by the maximality of $[x,y]_\gamma$ and the continuity of $\gamma$. Therefore,

\begin{align*}
d(u,v)&\geq d(u,y)-d(v,y)\geq \frac{d_{\gamma}(u,y)}{L'}-d(v,y)=\\&=\frac{d_{\gamma}(u,x)+d_{\gamma}(x,y)}{L'}-d(v,y)=\frac{d_{\gamma}(u,x)}{L'}+d(x,y)-d(v,y)=\\&=\frac{d_{\gamma}(u,x)}{L'}+d(x,v)\geq \frac{d_{\gamma}(u,x)+d(x,v)}{L'}=\frac{d_{\alpha}(u,v)}{L'}
\end{align*}

or $d_{\alpha}(u,v)\leq L'd(u,v)$.

\textbf{Case 2:} One of these points $u$, $v$ lies in $[y,c]_{\gamma_2}$ and the other lies in $[x,y]$. Using symmetric argument as in Case 1, we can prove $d_{\alpha}(u,v)\leq L'd(u,v)$.

\textbf{Case 3:} One of these points $u$, $v$ lies in $[a,x]_{\gamma_1}$ and the other lies in $[y,c]_{\gamma_2}$. We can assume that $u$ lies in $[a,x]_{\gamma_1}$ and $v$ lies in $[y,c]_{\gamma_2}$. We observe again that $d(u,v)>d_{\gamma}(u,v)/L'$ by the maximality of $[x,y]_\gamma$ and the continuity of $\gamma$. Also, it is not hard to see $d_{\alpha}(u,v)\leq d_{\gamma}(u,v)$. This implies that $d(u,v)>d_{\alpha}(u,v)/L'$ or $d_{\alpha}(u,v)\leq L'd(u,v)$.

 Therefore, $\alpha$ is an $(L',0)$-quasi-geodesic parametrized by arc length connecting $a$ and $c$. It is not hard to see the length of $\alpha$ is less than or equal to the length of $\gamma$. In order to see that the path $\alpha$ lies completely inside $\rho r$--neighborhood of $\gamma$, we observe that \[d(x,y)=\frac{d_{\gamma}(x,y)}{L'}< \frac{Cr}{C/\rho}=\rho r.\] 
\end{proof}

\begin{lem}
\label{l2}
For each $C>1$ and $\rho \in (0,1]$ there is a constant $L=L(C,\rho)\geq 1$ such that the following holds. Let $r$ be an arbitrary positive number and $\gamma$ a continuous path with the length less than $Cr$. Assume the distance between two endpoints $x$ and $y$ is at least $r$. Then there is an $(L,0)$--quasi-geodesic $\alpha$ connecting two points $x$, $y$ such that the image of $\alpha$ lies in the $\rho r$--neighborhood of $\gamma$ and the length of $\alpha$ is less than or equal to the length of $\gamma$.
\end{lem}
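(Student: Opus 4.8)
The plan is to generalize the surgery used in Lemma \ref{l1} from a two-piece concatenation to an arbitrary continuous path by iterating it. Parametrize $\gamma\colon[0,T]\to X$ by arc length, so $T=\ell(\gamma)<Cr$ while $d\bigl(\gamma(0),\gamma(T)\bigr)\ge r$. I would choose $L=L(C,\rho)$ with $L>C/\rho$ (in particular $L>C>1$); the purpose of this choice is that whenever we replace a subsegment of a path of length at most $T$ by a geodesic whose length is smaller by a factor $1/L$, that geodesic has length at most $T/L<Cr/L\le\rho r$ and, sharing an endpoint with $\gamma$, lies in $N_{\rho r}(\gamma)$. As in Lemma \ref{l1}, the basic move is: if the current path is not an $(L,0)$--quasi-geodesic, then since its endpoints satisfy $d(x,y)\ge r$, which exceeds $1/L$ times the current length, continuity produces a maximal subsegment $[x,y]$ of the current path with $d(x,y)=d_{\mathrm{cur}}(x,y)/L$ containing a genuinely inefficient pair, and we replace $[x,y]$ by a geodesic.

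The key new point, which keeps the iteration under control, is that the surgeries never interfere with one another. I would first establish a \textbf{no-straddling claim}: after replacing a maximal inefficient subsegment $[x,y]$ by its geodesic chord $g$, no subsegment of the new path that straddles $g$ is inefficient. This follows from maximality exactly as in Cases 1--3 of Lemma \ref{l1}: for endpoints $u,v$ flanking $g$ one has $d_{\mathrm{new}}(u,v)=d_{\mathrm{cur}}(u,x)+|g|+d_{\mathrm{cur}}(y,v)\le d_{\mathrm{cur}}(u,v)$ since $|g|=d_{\mathrm{cur}}(x,y)/L\le d_{\mathrm{cur}}(x,y)$, while maximality gives $d(u,v)>d_{\mathrm{cur}}(u,v)/L\ge d_{\mathrm{new}}(u,v)/L$; the case where $u$ or $v$ lies in the interior of $g$ is handled by the same triangle-inequality estimate used in Case 1. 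Consequently every later inefficient subsegment lies inside an untouched portion of the \emph{original} $\gamma$, so each chord we ever insert is the chord of a genuine sub-arc of $\gamma$, has length $<\rho r$, and hence lies in $N_{\rho r}(\gamma)$ with \emph{no accumulation of error}. The replaced sub-arcs have pairwise disjoint parameter spans, and since geodesic chords are never longer than the arcs they replace, the path length is non-increasing and stays at most $\ell(\gamma)$ throughout.

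With these two facts in hand, the remaining work is the global quasi-geodesic estimate: once all maximal inefficient sub-arcs have been straightened, the resulting path $\alpha$ satisfies $d_\alpha(u,v)\le L\,d(u,v)$ for every pair $u,v$, verified by the same case split as in Lemma \ref{l1} according to whether $u$ and $v$ lie on inserted chords or on surviving portions of $\gamma$, invoking maximality for the cross terms. Since $\alpha$ is a concatenation of geodesics and sub-arcs of $\gamma$, it is a continuous $(L,0)$--quasi-geodesic from $x$ to $y$ of length at most $\ell(\gamma)$ contained in $N_{\rho r}(\gamma)$, which is exactly the assertion.

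I expect the main obstacle to be the termination of the iteration. For a general continuous path there may be infinitely many maximal inefficient sub-arcs, with spans shrinking to zero, so the surgeries need not stop after finitely many steps. I would handle this by replacing the whole pairwise-disjoint family of maximal inefficient sub-arcs simultaneously: because the inserted chords have lengths bounded by their spans, which are summable with sum at most $T$, the modification is a uniform limit of continuous paths, and the limit $\alpha$ is again continuous and contained in $N_{\rho r}(\gamma)$. The delicate points are then to check that no inefficient subsegment survives in the limit (using the no-straddling claim to confine them to untouched regions, which have themselves been exhausted) and that the inequality $d_\alpha(u,v)\le L\,d(u,v)$, being a closed condition, passes to the limit. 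Packaging this limiting argument cleanly, rather than any single estimate, is where the real care is needed.
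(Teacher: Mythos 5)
Your strategy---direct surgery on the continuous path, excising maximal inefficient sub-arcs and replacing them by geodesic chords---is genuinely different from the paper's proof, and the two places where you yourself sense trouble are exactly where it breaks. First, the claim that ``the replaced sub-arcs have pairwise disjoint parameter spans'' is false in general: two maximal sub-arcs $[x_1,y_1]$ and $[x_2,y_2]$ with $d(x_i,y_i)=d_\gamma(x_i,y_i)/L$ can overlap with neither containing the other, because a union of inefficient sub-arcs need not be inefficient. For instance, take the unit-speed path in $\RR$ through $0,\,1,\,0.5,\,1.5,\,1$ and $L=3$: the two backtracking spans (parameter intervals $[0,1.5]$ and $[1,2.5]$) each realize ratio exactly $3$ and are maximal with that property, they overlap, and their union has ratio $5/3<3$. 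So the ``simultaneous replacement of the whole family'' is not well defined, and if you instead process the sub-arcs one at a time, each surgery changes the path and you are back to an a priori unbounded iteration. Second, the limiting argument you defer to the last paragraph is the entire content of the lemma: after passing to a uniform limit you must re-verify that no inefficient subsegment survives, and your justification (``confine them to untouched regions, which have themselves been exhausted'') presupposes the very termination you are trying to establish. As written the proof is incomplete.

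The paper sidesteps both problems by bounding the combinatorial complexity \emph{before} doing any surgery: it marks points $x_0=x,x_1,\dots,x_n=y$ along $\gamma$ at mutual distance roughly $\rho r/4$, so that $n\leq 4C/\rho$, replaces $\gamma$ by the piecewise geodesic $[x_0,x_1][x_1,x_2]\cdots[x_{n-1},x_n]$ (already inside $N_{\rho r/2}(\gamma)$ and no longer than $\gamma$), and then repeatedly applies Lemma \ref{l1} to adjacent pairs of pieces, merging two quasi-geodesics into one at each step. The number of rounds is at most $n$, hence controlled by $C$ and $\rho$ alone; the quasi-geodesic constant is allowed to degrade at each round (this is precisely why Lemma \ref{l1} outputs $L'>L+C+C/\rho+1$ rather than preserving $L$), and the neighborhood errors telescope as $\rho r/2+\rho r/4+\cdots<\rho r$. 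If you want to salvage your approach, the cleanest fix is exactly this discretization step, which converts the continuous path into an object on which only a bounded number of surgeries are ever needed.
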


\begin{proof}
Let $x_0=x$ and $x_1$ be the last point on $\gamma$ such that $d(x_0,x_1)=\rho r/4$. Similarly, we define $x_i=y$ if $d(x_{i-1},y)<\rho r/2$ or to be the last point on $\gamma$ such that $d(x_{i-1},x_i)=\rho r/4$. By construction, we observe that $d(x_i,x_j)\geq\rho r/4$ if $i\neq j$ and $y=x_n$ for some $n\leq 4C/\rho$. Let $\alpha_1$ be a concatenation of geodesics $[x_0,x_1][x_1,x_2]\cdots[x_{n-1},x_n]$. Then $\alpha_1$ connects two points $x$, $y$, the length of $\alpha_1$ is less than or equal to the length of $\gamma$, and $\alpha_1$ lies completely inside $(\rho r/2)$--neighborhood of $\gamma$.

Using Lemma \ref{l1} for each integer $1\leq i\leq \lfloor n/2\rfloor$, there are $(L_2,0)$--quasi-geodesics (where $L_2$ only depends on $C$ and $\rho$) from $x_{2(i-1)}$ to $x_{2i}$ such that the concatenation $\alpha_2$ of these with subpath $[x_{2\lfloor n/2\rfloor}, x_n]_{\alpha_1}$ of $\alpha_1$ satisfying the following conditions:
\begin{enumerate}
\item $\alpha_2$ lies completely inside $(\rho r/4)$--neighborhood of $\alpha_1$.
\item The length of $\alpha_2$ is less than or equal to the length of $\alpha_1$.
\end{enumerate} 

Repeating this process at most $d$ times ($d\leq n$) until we get an $(L_d,0)$--quasi-geodesic $\alpha_d$ (where $L_d$ only depends on $C$ and $\rho$) connecting two points $x$ and $y$ satisfying the following conditions:
\begin{enumerate}
\item $\alpha_d$ lies completely inside $(\rho r/2^d)$--neighborhood of $\alpha_{d-1}$.
\item The length of $\alpha_d$ is less than or equal to the length of $\alpha_{d-1}$.
\end{enumerate}

Therefore, the length of the $(L_d,0)$--quasi-geodesic $\alpha_d$ is less than the length of $\gamma$. Also,
\[\frac{\rho r}{2}+\frac{\rho r}{4}+\cdots+\frac{\rho r}{2^d}<\rho r.\]
This implies that $\alpha_d$ lies in the $\rho r$--neighborhood of $\gamma$. Therefore, $L=L_d$ is the desired number only depending on $C$ and $\rho$.
\end{proof}

\begin{prop}[Morse implies completely super linear lower relative divergence]
\label{p2}
Let $X$ be a geodesic space and $A$ a subset of $X$ of infinite diameter. If $A$ is Morse in $X$, then the lower relative divergence $\{\sigma^n_{\rho}\}$ of $X$ with respect to $A$ is completely super linear.
\end{prop}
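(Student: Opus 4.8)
The plan is to exhibit a single index $n_0$ (any sufficiently large integer works; I will take $n_0=5$) and to show that $\sigma^{n_0}_{\rho}$ is completely super linear for every $\rho\in(0,1]$, which by definition makes the whole family $div(X,A)=\{\sigma^n_{\rho}\}$ completely super linear. So I fix $\rho\in(0,1]$ and $C>0$ and aim to bound the set of $r$ with $\sigma^{n_0}_{\rho}(r)\le Cr$. For such an $r$ the value $\sigma^{n_0}_{\rho}(r)$ is finite, so I may choose $x_1,x_2\in\partial N_r(A)$ with $d(x_1,x_2)\ge n_0 r$ together with a path $\beta$ lying outside $N_{\rho r}(A)$, joining $x_1$ to $x_2$, of length at most $Cr+1$. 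Capping $\beta$ with geodesics $[a_1,x_1]$ and $[x_2,a_2]$ to near-closest points $a_1,a_2\in A$ (so each cap has length within $1$ of $r$), I obtain a continuous path $\eta=[a_1,x_1]\cup\beta\cup[x_2,a_2]$ whose endpoints lie on $A$, whose length is at most $(C+3)r$ (for $r\ge 1$; small $r$ are bounded trivially), and whose endpoints satisfy $d(a_1,a_2)\ge n_0 r-2r-2=3r-2$.

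Next I would feed $\eta$ into Lemma \ref{l2} with constant $C+3$ and with $\rho'=\rho/8$ playing the role of its neighborhood parameter. This produces an $(L,0)$--quasi-geodesic $\alpha$ from $a_1$ to $a_2$, where $L=L(C+3,\rho/8)$ depends only on $C$ and $\rho$, such that $\alpha$ is contained in the $\rho' r$--neighborhood of $\eta$. Since $\alpha$ has both endpoints on $A$ and $A$ is Morse, $\alpha$ is contained in $N_M(A)$ for $M=M(L,0)$, again a constant depending only on $C$ and $\rho$.

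The crux, and the step I expect to be the main obstacle, is to locate a point of $\alpha$ that is far from $A$; this is delicate because Lemma \ref{l2} controls $\alpha$ by $\eta$ but not conversely. The key observation is that $\eta\cap N_{\rho r}(A)$ is very small. Indeed $\beta$ avoids $N_{\rho r}(A)$ entirely, and along each cap $[a_i,x_i]$ the distance to $A$ of a point at arclength $t$ from $a_i$ equals $t$ up to the error from choosing $a_i$, so only an initial segment of arclength about $\rho r$ meets $N_{\rho r}(A)$. Hence $\eta\cap N_{\rho r}(A)$ splits into two clusters $P_1,P_2$ lying in balls of radius roughly $\rho r$ about $a_1,a_2$, and with $n_0=5$ and $\rho\le 1$ these satisfy $d(P_1,P_2)\ge d(a_1,a_2)-2\rho r\gtrsim 3r-2r=r>2\rho' r$. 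Now if every point of $\alpha$ were within $\rho r-\rho' r$ of $A$, then each $p\in\alpha$, being within $\rho' r$ of some $q\in\eta$ with $d(q,A)<\rho r$, would lie within $\rho' r$ of $P_1\cup P_2$; thus the connected set $\alpha$ would be covered by the two disjoint open sets $N_{\rho' r}(P_1)$ and $N_{\rho' r}(P_2)$, each meeting $\alpha$ (at $a_1\in P_1$ and $a_2\in P_2$), which is impossible. Therefore $\alpha$ contains a point $p$ with $d(p,A)\ge \rho r-\rho' r=(7\rho/8)r$.

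Finally, combining $d(p,A)\ge(7\rho/8)r$ with $p\in\alpha\subseteq N_M(A)$ yields $(7\rho/8)r\le M$, so $r\le 8M/(7\rho)$. Since $M$ depends only on $C$ and $\rho$, the set $\{r:\sigma^{n_0}_{\rho}(r)\le Cr\}$ is bounded, which is precisely what complete super linearity of $\sigma^{n_0}_{\rho}$ requires. As $\rho\in(0,1]$ was arbitrary and $n_0=5\ge 3$, the family $div(X,A)$ is completely super linear. The separation estimate for $P_1,P_2$ (hence the choice of a large $n_0$ and a small $\rho'$) together with the connectedness argument is the heart of the proof; the rest is bookkeeping with the constants produced by Lemma \ref{l2} and the Morse gauge.
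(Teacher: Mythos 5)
Your proposal is correct and follows essentially the same route as the paper's proof: cap the offending path with geodesics to $A$, straighten the result into an $(L,0)$--quasi-geodesic via Lemma \ref{l2}, and use a connectedness/separation argument to locate a point of that quasi-geodesic at distance comparable to $\rho r$ from $A$, contradicting the Morse gauge bound. The only differences are cosmetic — you argue directly rather than via an unbounded sequence $r_m$, and you separate the near-$A$ portion of $\eta$ into two clusters rather than showing the quasi-geodesic must enter a neighborhood of the middle segment — but the underlying mechanism is identical.
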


\begin{proof}
We will prove the stronger result that $\sigma^n_{\rho}$ is completely super linear for all $n\geq 5$ and $\rho \in (0,1]$. Let $\mu$ be the Morse gauge of $A$ and assume for the contradiction that $\sigma^{n_0}_{\rho_0}$ is not completely super linear for some $n_0\geq 5$ and $\rho_0 \in (0,1]$. Then there is $C>0$ for which there is an unbounded sequence of numbers $r_m$ and paths $\gamma_m$ satisfying
\begin{enumerate}
\item Each path $\gamma_m$ lies outsides the $(\rho_0r_m)$--neighborhood of $A$ and their endpoints $x_m$, $y_m$ lie in $\partial N_{r_m}(A)$.
\item $d(x_m,y_m)\geq n_0r_m$ and $\ell(\gamma_m)\leq Cr_m$.
\end{enumerate} 

For each $m$, let $\gamma'_m$ be a geodesic of length less than $2r_m$ that connects $x_m$ and some point $u_m$ in $A$. Similarly, let $\gamma''_m$ be a geodesic of length less than $2r_m$ that connects $y_m$ and some point $v_m$ in $A$. Let $\bar{\gamma}_m=\gamma'_m\cup\gamma_m\cup\gamma''_m$. Then $\bar{\gamma}_m$ is a path of length at most $(C+4)r_m$ that connects two points $u_m$, $v_m$ on $A$ with $d(u_m,v_m)\geq (n_0-4)r_m\geq r_m$. 

By Lemma \ref{l2}, for each $m$ there is an $(L,0)$--quasi-geodesic $\alpha_m$ ($L$ does not depend on $m$) connecting two points $u_m$, $v_m$ satisfying
\begin{enumerate}
\item $\ell(\alpha_m)\leq \ell(\bar{\gamma}_m)\leq (C+4)r_m$.
\item $\alpha_m$ lies in the $(\rho_0 r_m/2)$--neighborhood of $\bar{\gamma}_m$.
\end{enumerate}

By triangle inequality, each $(\rho_0 r_m/2)$--neighborhood of $\gamma'_m$ is contained in the open ball $B(x_m,2r_m+\rho_0 r_m/2)$ and $(\rho _0r_m/2)$--neighborhood of $\gamma''_m$ is contained in the open ball $B(y_m,2r_m+\rho_0 r_m/2)$. Since $d(x_m,y_m)\geq n_0r_m\geq 5 r_m$, the intersection between two open balls $B(x_m,2r_m+\rho_0 r_m/2)$ and $B(y_m,2r_m+\rho_0 r_m/2)$ is empty. Therefore, two $(\rho_0 r_m/2)$--neighborhoods of $\gamma'_m$ and $\gamma''_m$ have an empty intersection. Since $\alpha_m$ is connected and $\alpha_m$ intersects with both $(\rho_0 r_m/2)$--neighborhoods of $\gamma'_m$ and $\gamma''_m$, $\alpha_m$ can not lie completely inside the $(\rho_0 r_m/2)$--neighborhood of $\gamma'_m\cup\gamma''_m$. This implies that there is a point $s_m$ on $\alpha_m$ that lies in the $(\rho_0 r_m/2)$--neighborhood of $\gamma_m$. Since each path $\gamma_m$ lies outside the $(\rho_0 r_m)$--neighborhood of $A$, the point $s_m$ must lie outside the $(\rho_0 r_m/2)$--neighborhood of $A$. 

Recall that each $\alpha_m$ is an $(L,0)$--quasi--geodesic with endpoints on $A$ and $A$ is Morse in $X$ with Morse gauge $\mu$. Each path $\gamma_m$ must lie completely inside the $\mu(L,0)$--neighborhood of $A$. In particular, each point $s_m$ must lie inside the $\mu(L,0)$--neighborhood of $A$. Therefore, $\mu(L,0)\geq \rho_0r_m/2$ for all $m$ which contradicts to the choice of sequence $r_m$. Therefore, $\sigma^n_{\rho}$ is completely super linear for all $n\geq 5$ and $\rho \in (0,1]$.

\end{proof}

The following theorem is deduced from Propositions \ref{p1} and \ref{p2}.

\begin{thm}[Characterization of Morse subsets]
\label{th1}
Let $X$ be a geodesic space and $A$ a subset of $X$ of infinite diameter. Then $A$ is Morse in $X$ if and only if the lower relative divergence of $X$ with respect to $A$ is completely super linear.
\end{thm}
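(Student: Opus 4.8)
The plan is to read the biconditional directly off the two propositions established immediately above; no new argument is needed beyond assembling them. The statement asserts that $A$ is Morse in $X$ if and only if the lower relative divergence $div(X,A)$ is completely super linear, and each of the two preceding propositions supplies exactly one of the two implications.

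First I would record the ``if'' direction. Proposition \ref{p1} states precisely that whenever the lower relative divergence $\{\sigma^n_\rho\}$ of $X$ with respect to $A$ is completely super linear, the set $A$ is Morse in $X$. Thus the hypothesis that $div(X,A)$ is completely super linear yields that $A$ is Morse. Next I would record the ``only if'' direction: Proposition \ref{p2} states precisely that if $A$ is Morse in $X$, then $div(X,A)$ is completely super linear. Combining the two implications gives the equivalence, and the proof is complete.

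The only genuine content of the theorem lies in the two propositions themselves, so I do not expect any obstacle at the level of Theorem \ref{th1}; the work has already been carried out. For context, the substance of Proposition \ref{p1} is the tameness reduction of Lemma 1.11 of \cite{MR1744486} (replacing an arbitrary $(K,C)$--quasi-geodesic with a continuous one of comparable length), together with a direct length estimate that feeds the completely-super-linear hypothesis into an upper bound on the farthest distance of the quasi-geodesic from $A$. The substance of Proposition \ref{p2} is the use of Lemmas \ref{l1} and \ref{l2} to manufacture controlled $(L,0)$--quasi-geodesics lying in prescribed neighborhoods of a given path, which then contradict the Morse gauge of $A$. Once those two propositions are in hand, Theorem \ref{th1} follows at once.
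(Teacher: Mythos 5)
Your proposal is correct and matches the paper exactly: Theorem \ref{th1} is stated there as an immediate consequence of Propositions \ref{p1} and \ref{p2}, with one proposition supplying each implication, precisely as you assemble them. Nothing further is needed.
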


We now study the behavior of geodesic rays in a finite neighborhood of some Morse subset. We first start with a technical lemma that basically amounts to ``quasigeodesic approximation'' of some certain concatenations of geodesics. 

\begin{lem}
\label{cvcvcv1}
Let $\gamma=\gamma_1\gamma_2\gamma_3$ be the concatenation of three geodesics $\gamma_1$, $\gamma_2$, and $\gamma_3$. If there is a number $r>0$ such that $\ell(\gamma_1)<r$, $\ell(\gamma_2)=20r$, and $\ell(\gamma_3)<r$, then there is a $(5,0)$--quasigeodesic $\alpha$ with the same endpoints of $\gamma$ and $\alpha\cap\gamma_2\neq \emptyset$.
\end{lem}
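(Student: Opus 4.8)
The plan is to take $\alpha$ to be $\gamma$ itself, altered only near its two corners, namely the point $b$ where $\gamma_1$ meets $\gamma_2$ and the point $c$ where $\gamma_2$ meets $\gamma_3$. First I would parametrize $\gamma$ by arc length, write $a,d$ for its endpoints, and record the basic estimates: since $\ell(\gamma_1),\ell(\gamma_3)<r$ and $\ell(\gamma_2)=20r$ we have $\ell(\gamma)<22r$, while the reverse triangle inequality gives $d(a,d)\ge d(b,c)-\ell(\gamma_1)-\ell(\gamma_3)>18r$. Since any subsegment of $\gamma$ lying entirely in $\gamma_2$ is a genuine geodesic, the only way $\gamma$ can fail to be a $(5,0)$--quasi-geodesic is through backtracking at $b$ or at $c$; a pair of points realizing a bad ratio must have one point on $\gamma_1$ (resp.\ $\gamma_3$) and the other on $\gamma_2$. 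If $\gamma$ already happens to be a $(5,0)$--quasi-geodesic, I simply set $\alpha=\gamma$, which contains $\gamma_2$ and we are done.

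Otherwise I would repair each offending corner exactly as in the proof of Lemma \ref{l1}, applied to the sub-concatenations $\gamma_1\gamma_2$ and $\gamma_2\gamma_3$: near $b$ replace a maximal subsegment $[x,y]_\gamma$ with $x\in\gamma_1$, $y\in\gamma_2$ and $d(x,y)=d_\gamma(x,y)/5$ by the geodesic $[x,y]$, and symmetrically produce $[x',y']$ near $c$ with $x'\in\gamma_3$, $y'\in\gamma_2$. The maximal segment exists by the intermediate value theorem since, at the endpoints of $\gamma_1\gamma_2$, one has $d(a,c)>18r>\ell(\gamma_1\gamma_2)/5$. The decisive quantitative point, obtained by the same reverse-triangle computation as in Lemma \ref{l1}, is that these repairs are confined to the corners: writing $f=d(b,y)$ and $e=\ell(\gamma_1)-d(a,x)<r$, the relations $d(x,y)=d_\gamma(x,y)/5=(e+f)/5$ and $d(x,y)\ge f-e$ force $f\le\tfrac{3}{2}e<\tfrac{3}{2}r$, and likewise at $c$. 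Hence both shortcuts lie within the $\tfrac{3}{2}r$--neighborhoods of $b$ and of $c$ along $\gamma_2$, so the part of $\gamma_2$ outside these neighborhoods, which is a genuine sub-geodesic of length at least $20r-3r=17r$, survives untouched inside the resulting path $\alpha=[a,x]_{\gamma_1}\cup[x,y]\cup\gamma_2|_{[t_y,t_{y'}]}\cup[y',x']\cup[x',d]_{\gamma_3}$. In particular $\alpha\cap\gamma_2\neq\emptyset$.

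It then remains to check that $\alpha$ is a $(5,0)$--quasi-geodesic. For two points in a single one of the five pieces there is nothing to prove, and for two points straddling one shortcut the verification is verbatim the three-case analysis in the proof of Lemma \ref{l1}, using the maximality of $[x,y]$ (resp.\ $[x',y']$); equivalently, the restriction of $\alpha$ from $a$ to $c$ is exactly the Lemma \ref{l1} output for $\gamma_1\gamma_2$, and from $b$ to $d$ the output for $\gamma_2\gamma_3$. The genuinely new case is a pair $u,v$ on opposite sides of the long central geodesic, say $u\in[x,y]$ and $v\in[y',x']$: here $d_\alpha(u,v)\le d(u,y)+d(y,y')+d(y',v)\le d(y,y')+r$ while $d(u,v)\ge d(y,y')-d(u,y)-d(v,y')\ge d(y,y')-r\ge 16r$, so the ratio stays near $1$. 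The main obstacle is precisely this bookkeeping of the constant: one must verify that the $O(r)$ contributions from the two corners never push the quasi-geodesic constant above $5$, and — what forces a single direct argument rather than two black-box applications of Lemma \ref{l1} — that both corners can be straightened with the \emph{same} constant $5$, since iterating Lemma \ref{l1} would degrade the constant well past $5$ before both corners are handled.
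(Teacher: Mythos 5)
Your proposal is correct and follows essentially the same route as the paper: straighten each corner via the maximal-subsegment construction of Lemma \ref{l1} applied to $\gamma_1\gamma_2$ and $\gamma_2\gamma_3$, keep the untouched middle of $\gamma_2$ to guarantee $\alpha\cap\gamma_2\neq\emptyset$, and check the $(5,0)$ constant directly by splitting into the two ``one-corner'' cases plus the cross case where the length bound $\ell(\alpha)\leq 22r$ against $d(u,v)\gtrsim 6r$ closes the argument. Your explicit bound $f\leq\tfrac{3}{2}e<\tfrac{3}{2}r$ confining the shortcuts to the corners is a slightly sharper version of the paper's bound $d(x_1,y_1)\leq 5r$, but the structure of the proof is the same.
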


\begin{proof}
For each path $\beta$ parametrized by arc length we will abuse notation to identify $\beta$ to its image. Moreover, for each pair of points $u$ and $v$ on $\beta$ we denote $d_\beta(u,v)$ to be the length of the subpath of $\beta$ connecting $u$ and $v$. Let $a$ and $b$ be the endpoints of $\gamma_1$, let $b$ and $c$ be the endpoints of $\gamma_2$, and let $c$ and $d$ be the endpoints of $\gamma_3$. We claim that there are points $x_1\in\gamma_1$ and $y_1\in\gamma_2$ such that $d(x_1,y_1)\leq 5r$ and the concatenation $\alpha_1=[a,x_1]_{\gamma_1}\cup [x_1,y_1] \cup [y_1,c]_{\gamma_2}$ is a $(5,0)$--quasigeodesic, where $[x_1,y_1]$ is a geodesic connecting $x_1$, $y_1$.

If the concatenation $\beta=\gamma_1\gamma_2$ is a $(5,0)$--quasigeodesic, then we can choose $x_1=y_1=b$ and $\alpha_1=\beta$. We now assume that $\beta$ is not a $(5,0)$--quasigeodesic. Since $d(a,c)\geq d(b,c)-d(a,b)\geq 19r$ and $d_\beta(a,c)\leq d(a,b)+d(b,c)\leq 21r$, $d(a,c)>d_\beta(a,c)/5$. Therefore, there is a maximal subsegment $[x_1,y_1]_\beta$ of $\beta$ such that $d(x_1,y_1)=d_{\beta}(x_1,y_1)/5$ by the continuity of $\beta$. It is obvious that $x_1$ and $y_1$ can not both lie in the same path $\gamma_1$ or $\gamma_2$. Therefore, we can assume $x_1\in \gamma_1$ and $y_1\in \gamma_2$. Obviously, $d(x_1,y_1)=d_{\beta}(x_1,y_1)/5\leq \ell(\beta)/5\leq 5r$. Moreover, $\alpha_1$ is a $(5,0)$--quasigeodesic by using the same argument as in Lemma \ref{l1}.

Similarly, there are points $x_2\in\gamma_3$ and $y_2\in\gamma_2$ such that $d(x_2,y_2)\leq 5r$ and the concatenation $\alpha_2=[b,y_2]_{\gamma_2}\cup [y_2,x_2] \cup [x_2,d]_{\gamma_3}$ is a $(5,0)$--quasigeodesic, where $[y_2,x_2]$ is a geodesic connecting $y_2$, $x_2$. 
Let \[\alpha=[a,x_1]_{\gamma_1}\cup [x_1,y_1] \cup [y_1,y_2]_{\gamma_2}\cup [y_2,x_2]\cup [x_2,d]_{\gamma_3}.\] Then $\alpha\subset \alpha_1\cup\alpha_2$ and $\alpha\cap \gamma_2\neq \emptyset$. We now prove that $\alpha$ is a $(5,0)$--quasigeodesic. Let $u$ and $v$ be arbitrary two points in $\alpha$. We consider several cases.

If two points $u$ and $v$ both lies in $\alpha_1$, then $d_{\alpha_1}(u,v)\leq 5d(u,v)$ because $\alpha_1$ is a $(5,0)$--quasigeodesic. Also, $d_\alpha(u,v)=d_{\alpha_1}(u,v)$ by the construction of $\alpha$ and $\alpha_1$. This implies that $d_{\alpha}(u,v)\leq 5d(u,v)$. We also obtain a similar inequality if two points $u$ and $v$ both lies in $\alpha_2$. In the remaining case, $u$ and $v$ does not lie in the same path $\alpha_1$ or $\alpha_2$. Therefore, one of two points $u$ and $v$ must lie in $[a,x_1]_{\gamma_1}\cup [x_1,y_1]$ (say $u$) and the remaining point must lie in $[y_2,x_2]\cup [y_2,d]_{\gamma_3}$ (say $v$). Since the length of $[a,x_1]_{\gamma_1}\cup [x_1,y_1]$ is less than $6r$, $d(a,u)\leq 6r$. Similarly, $d(d,v)\leq 6r$. Therefore,
\begin{align*}
d(u,v) &\geq d(a,d)-d(a,u)-d(d,v)\\&\geq d(b,c)-d(a,b)-d(c,d)-12r\\&\geq 20r-r-r-12r\geq 6r.
\end{align*}
Also, $d_\alpha(u,v)\leq \ell(\alpha)\leq \ell(\gamma)\leq 22r$. Therefore, $d_\alpha(u,v)\leq 5 d(u,v)$. Therefore, $\alpha$ is a $(5,0)$--quasigeodesic.

\end{proof}

\begin{prop}
\label{vqvqvq1}
Let $X$ be a geodesic space and $A$ a Morse subset of $X$ with the Morse gauge $\mu$. Then there is a constant $D>0$ such that the following hold. If $\gamma:[0,\infty)\to X$ is a geodesic ray that lies in some finite neighborhood of $A$, then there is $C>0$ such that $\gamma_{|[C,\infty)}$ lies in the $D$--neighborhood of $A$.
\end{prop}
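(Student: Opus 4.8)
The plan is to exhibit a single uniform constant $D$ depending only on the Morse gauge $\mu$, and to show that every geodesic ray $\gamma$ contained in some neighborhood $N_R(A)$ must eventually enter and then remain inside $N_D(A)$. I would split the argument into two parts: first, that $\gamma$ comes within a uniform distance $D_0$ of $A$ for arbitrarily large parameter values (``cofinal close approaches''); and second, that between two such close approaches the ray cannot stray far from $A$. Combining these facts yields the proposition, with $C$ taken to be any sufficiently large close-approach parameter and $D$ the uniform excursion bound from the second part.

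For the first part, I would fix a window $[t-s,t+s]$ with $s>10R$ and set $r=s/10>R$. Since $\gamma(t\pm s)\in N_R(A)$, I attach short geodesic caps of length less than $R<r$ joining $\gamma(t-s)$ and $\gamma(t+s)$ to nearby points $a_-,a_+\in A$, and I regard the resulting object as a concatenation of the two caps with the geodesic subsegment $\gamma|_{[t-s,t+s]}$, whose length is exactly $2s=20r$. This is precisely the hypothesis of Lemma \ref{cvcvcv1}, which then produces a $(5,0)$--quasigeodesic $\alpha$ from $a_-$ to $a_+$ that genuinely meets $\gamma|_{[t-s,t+s]}$. Because $A$ is Morse, $\alpha\subseteq N_{\mu(5,0)}(A)$, so any point of $\alpha\cap\gamma|_{[t-s,t+s]}$ is a point of $\gamma$ lying within $D_0:=\mu(5,0)$ of $A$ and having parameter at least $t-s$. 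Letting $t\to\infty$ with $s$ fixed shows such close approaches occur cofinally.

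The second part is a routine quasigeodesic-approximation estimate. Given close approaches $\gamma(t_1),\gamma(t_2)$, both within $D_0$ of $A$, with $t_1<t_2$, I let $p$ be the farthest point of $\gamma|_{[t_1,t_2]}$ from $A$ and $m=d(p,A)$. If $t_2-t_1<4D_0$ the segment is short, so $m<5D_0$ trivially. Otherwise, capping $\gamma(t_1)$ and $\gamma(t_2)$ by geodesics of length at most $D_0$ to points of $A$ produces a path $\sigma$ which one checks directly is a $(K_0,C_0)$--quasigeodesic with $K_0,C_0$ depending only on $D_0$, since a geodesic of length at least $4D_0$ with two caps of length at most $D_0$ cannot short-cut. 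Its endpoints lie on $A$, so the Morse property gives $\gamma|_{[t_1,t_2]}\subseteq\sigma\subseteq N_{\mu(K_0,C_0)}(A)$ and hence $m\le\mu(K_0,C_0)$. Setting $D=\max\{5D_0,\mu(K_0,C_0)\}$ and choosing $C$ to be any close-approach parameter, each later point of $\gamma$ lies on a segment bounded by two close approaches and therefore in $N_D(A)$.

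I expect the main obstacle to be the first part: the whole content lies in calibrating the window size $s$ against $R$ so that the caps are short relative to the central segment, which is exactly what Lemma \ref{cvcvcv1} requires, and in then using that the lemma's output quasigeodesic is forced to cross the central segment rather than remain in the caps. The second part is comparatively mechanical, amounting to confirming uniform quasigeodesic constants for a geodesic with two short caps and invoking the Morse gauge. Note that $D$ depends only on $\mu$ while $C$ may depend on $R$ and on $\gamma$, which is consistent with the order of quantifiers in the statement.
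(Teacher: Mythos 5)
Your proof is correct and follows essentially the same route as the paper: both use Lemma \ref{cvcvcv1} on windows of length $20r$ (with $r$ exceeding the ambient neighborhood radius) to produce a $(5,0)$--quasigeodesic with endpoints on $A$ that meets the ray, thereby locating cofinally many points of $\gamma$ within $\mu(5,0)$ of $A$, and then bound the excursions between consecutive such points via the Morse property. The only cosmetic difference is that you slide a window while the paper tiles $[0,\infty)$ by consecutive intervals $[20(i-1)r,20ir]$; your second part also spells out the uniform quasigeodesic constants for the capped segments, which the paper leaves implicit.
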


\begin{proof}
Assume that $\gamma$ lies in the $r$--neighborhood of $A$ for some $r>0$. For each $i\geq 1$ let $\gamma_i=\gamma_{|[20(i-1)r,20ir]}$. Then each $\gamma_i$ is a geodesic of length $20r$. Since $\gamma$ lies in the $r$--neighborhood of $A$, for each $i\geq 0$ there is a geodesic $\beta_i$ of length less than $r$ that connects $\gamma(20ir)$ and $A$. By Lemma \ref{cvcvcv1} for each $i\geq 1$ there is a $(5,0)$--quasigeodesic $\alpha_i$ with the same endpoints with the concatenation $\beta_{i-1}\gamma_i\beta_i$ such that $\alpha_i\cap\gamma_i\neq \emptyset$. 

Since each $\alpha_i$ is a $(5,0)$--quasigeodesic with endpoints in $A$, $\alpha_i$ lies entirely inside the $\mu(5,0)$--neighborhood of $A$. We recall that $\alpha_i\cap\gamma_i\neq \emptyset$. Therefore, there is $t_i\in [20(i-1)r,20ir]$ such that $\gamma(t_i)$ lies in the $\mu(5,0)$--neighborhood of $A$. Therefore, for each $i\geq 1$ each geodesic $\gamma_{|[t_i,t_{i+1}]}$ has endpoints in the $\mu(5,0)$--neighborhood of $A$. This implies that $\gamma_{|[t_i,t_{i+1}]}$ lies in some $D$--neighborhood of $A$, where $D$ only depends on $\mu$. Set $C=t_1$ then $\gamma_{|[C,\infty)}$ lies in the $D$--neighborhood of $A$ obviously.
\end{proof}

\section{Strong quasiconvexity, stability, and lower relative divergence}
\label{haha}

In this section, we propose the concept of strongly quasiconvex subgroups. We show the connection between strongly quasiconvex subgroups and stable subgroups and we characterize these subgroups using lower relative divergence. We also study some basic properties of strongly quasiconvex subgroups that are analogous to properties of quasiconvex subgroups in hyperbolic groups. 
 
\begin{defn}
Let $\Phi\!:\!A\to X$ be a quasi-isometric embedding between geodesic metric spaces. We say $A$ is \emph{strongly quasiconvex} in $X$ if the image $\Phi(A)$ is Morse in $X$. We say $A$ is \emph{stable} in $X$ if for any $K\geq 1$, $L \geq 0$ there is an $R=R(K,L)\geq 0$ so that if $\alpha$ and $\beta$ are two $(K,L)$--quasi-geodesics with the same endpoints in $\Phi(A)$, then the Hausdorff distance between $\alpha$ and $\beta$ is less than $R$. 
\end{defn}

Note that when we say $A$ is strongly quasiconvex (stable) in $X$ we mean that $A$ is strongly quasiconvex (stable) in $X$ with respect to a particular quasi-isometric embedding $\Phi\!:\!A\to X$. Such a quasi-isometric embedding will always be clear from context, for example an undistorted subgroup $H$ of a finitely generated group $G$. The following proposition provides some basic facts about strong quasiconvexity.

\begin{prop}
\label{pp1}
Suppose that $A$, $B$, $X$ are geodesic metric spaces and $B \overset{f}{\rightarrow} A \overset{g}{\rightarrow} X$ are quasi-isometric embeddings. 
\begin{enumerate}
\item If $B$ is strongly quasiconvex in $X$ via $g\circ f$, then $B$ is strongly quasiconvex in $A$ via $f$.
\item If $B$ is strongly quasiconvex in $A$ via $f$ and $A$ is strongly quasiconvex in $X$ via $g$, then $B$ is strongly quasiconvex in $X$ via $g\circ f$. 
\end{enumerate}
\end{prop}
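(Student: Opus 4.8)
The plan is to prove both statements by directly manipulating the definition of Morse subset, transporting quasi-geodesics back and forth across the quasi-isometric embeddings $f$ and $g$. The key elementary fact I will use repeatedly is that quasi-isometric embeddings send quasi-geodesics to quasi-geodesics: if $\Phi\colon A \to X$ is a $(K,L)$--quasi-isometric embedding and $\gamma$ is a $(K',L')$--quasi-geodesic in $A$, then $\Phi \circ \gamma$ is a $(K'',L'')$--quasi-geodesic in $X$ for constants $K'',L''$ depending only on $K,L,K',L'$. I will fix notation at the outset: say $f$ is $(K_1,L_1)$ and $g$ is $(K_2,L_2)$, so that $g\circ f$ is $(K_3,L_3)$ for suitable constants.

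For part (1), I would argue as follows. Let $\gamma$ be an arbitrary $(K,C)$--quasi-geodesic in $A$ with endpoints on $f(B)$. Pushing forward by $g$, the image $g\circ\gamma$ is a $(K',C')$--quasi-geodesic in $X$ with endpoints on $(g\circ f)(B)$, where $K',C'$ depend only on $K,C$ and the constants of $g$. Since $B$ is strongly quasiconvex in $X$ via $g\circ f$, its image is Morse, so $g\circ\gamma$ lies in the $M$--neighborhood of $(g\circ f)(B)$ for $M=M(K',C')$. The final step is to pull this containment back along $g$: because $g$ is a quasi-isometric embedding it roughly preserves distances, so a point within $M$ of $(g\circ f)(B)=g(f(B))$ in $X$ corresponds to a point within $K_2(M+L_2)$, say, of $f(B)$ in $A$. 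Hence $\gamma$ lies in a neighborhood of $f(B)$ whose radius depends only on $K,C$, which is exactly the Morse condition for $f(B)$ in $A$.

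For part (2), the roles reverse and one applies the two Morse hypotheses in succession. Let $\gamma$ be a $(K,C)$--quasi-geodesic in $X$ with endpoints on $(g\circ f)(B) = g(f(B))$. First I use that $A$ is strongly quasiconvex in $X$ via $g$: since the endpoints of $\gamma$ lie on $g(A)$, the Morse property for $g(A)$ places $\gamma$ in the $M_1$--neighborhood of $g(A)$. Now I project $\gamma$ back into $A$ using a quasi-inverse $\bar g$ of $g$ to obtain a path in $A$ that is a quasi-geodesic with endpoints on $f(B)$ and whose constants are controlled; applying the Morse property for $f(B)$ in $A$ places this projected path in an $M_2$--neighborhood of $f(B)$, and pushing forward by $g$ returns the desired containment of $\gamma$ near $g(f(B))$.

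The main obstacle I anticipate is the bookkeeping in part (2): pulling an arbitrary quasi-geodesic $\gamma$ in $X$ back to $A$ is only valid up to bounded error, and one must be careful that the pulled-back path still has endpoints genuinely on $f(B)$ (or within a controlled distance of it) and that its quasi-geodesic constants remain uniformly bounded independent of $\gamma$. A clean way to handle this is to invoke a standard lemma that a quasi-geodesic lying in a bounded neighborhood of $g(A)$ can be replaced, up to bounded Hausdorff distance, by a quasi-geodesic with image in $g(A)$; then the quasi-inverse of $g$ carries it to an honest quasi-geodesic in $A$. Once the constants are tracked correctly, both containments chain together and all radii depend only on $K$, $C$, and the fixed constants of $f$ and $g$, as required. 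The composability of quasi-isometric embeddings (Definition of quasi-isometric embedding in the preliminaries) is exactly what guarantees this tracking is finite.
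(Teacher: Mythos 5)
Your proposal is correct and follows essentially the same route as the paper: part (1) pushes the quasi-geodesic forward by $g$, applies the Morse property of $(g\circ f)(B)$ in $X$, and pulls the neighborhood back, while part (2) first uses the Morse property of $g(A)$ to trap the quasi-geodesic near $g(A)$, replaces it by a controlled quasi-geodesic in $A$ with endpoints on $f(B)$, and then applies the Morse property of $f(B)$ in $A$ before pushing forward again. The bookkeeping issue you flag in part (2) is exactly the step the paper handles by asserting the existence of a quasi-geodesic $\beta$ in $A$ with $g(\beta)$ at bounded Hausdorff distance from the original path, so your argument matches the paper's in both structure and substance.
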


\begin{proof}
We first prove Statement (1) of the above proposition. More precisely, we are going to prove that $f(B)$ is Morse in $A$. For each $K\geq 1$ and $L\geq 0$ let $\gamma$ be an arbitrary $(K,L)$--quasi-geodesic in $A$ that connects two points in $f(B)$. Then $g(\gamma)$ is $(K',L')$--quasigeodesic in $X$ that connects two points in $g\bigl(f(B)\bigr)$, where $K'$, $L'$ depend only on $K$, $L$, and the quasi-isometry embedding $g$. Since $B$ is strongly quasiconvex in $X$ via $g\circ f$, then $g(\gamma)$ lies in some $M$--neighborhood of $g\bigl(f(B)\bigr)$, where $M$ depend only on $K'$ and $L'$. Again, $g$ is a quasi-isometry embedding. Therefore, $\gamma$ lies in some $M'$--neighborhood of $f(B)$, where $M'$ depends only on $K$, $L$, $f$ and the quasi-isometry embedding $g$. This implies that $B$ is strongly quasiconvex in $A$ via $f$.

We now prove Statement (2) of the above proposition. More precisely, we are going to prove that $(g\circ f)(B)$ is Morse in $X$. For each $K\geq 1$ and $L\geq 0$ let $\alpha$ be an arbitrary $(K,L)$--quasi-geodesic in $X$ that connects two points in $g\bigl(f(B)\bigr)$. Since $g\bigl(f(B)\bigr)\subset g(A)$ and $g(A)$ is Morse subset in $X$, then $\alpha$ lies in some $M$--neighborhood of $g(A)$, where $M$ depends only on $K$ and $L$. Also, $g\!:A\to X$ is a quasi-isometry embedding. Then there is a $(K_1,L_1)$--quasigeodesic $\beta$ in $A$ that connects two points in $f(B)$ such that the Hausdorff distance between $g(\beta)$ and $\alpha$ is bounded above by $C$, where $K_1$, $L_1$, $C$ depend only on $K$, $L$, $M$, and the quasi-isometry embedding $g$. Also, $f(B)$ is a Morse subset of $A$. Then $\beta$ lies in some $M_1$--neighborhood $f(B)$, where $M_1$ depends only on $K_1$ and $L_1$. Again, $g\!:A\to X$ is a quasi-isometry embedding. Then $g(\beta)$ lies in some $M_2$--neighborhood of $g\bigl(f(B)\bigr)$, where $M_2$ depends only on $M_1$ and the map $g$. Therefore, $\alpha$ lies in the $(M_2+C)$--neighborhood of $g\bigl(f(B)\bigr)$. Therefore, $B$ is strongly quasiconvex in $X$ via $g\circ f$.
\end{proof}

The following proposition gives the exact relationship between strongly quasiconvex and stable subspaces.

\begin{prop}
\label{p3}
Let $\Phi\!:\!A\to X$ be a quasi-isometric embedding between geodesic metric spaces. Then $A$ is stable in $X$ if and only if $A$ is strongly quasiconvex and hyperbolic.
\end{prop}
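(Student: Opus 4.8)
The plan is to prove both implications by transporting quasi-geodesics back and forth between $X$ and the intrinsic geometry of $A$ through $\Phi$ and a fixed coarse inverse $\psi\colon\Phi(A)\to A$, using the Morse property to confine quasi-geodesics to a neighborhood of $\Phi(A)$ and using the standard equivalence between \emph{uniform stability of quasi-geodesics} and \emph{Gromov hyperbolicity} to pass between the two pictures. Throughout I would keep track of the fact that $\Phi$ is a quasi-isometry onto $\Phi(A)$ (with the subspace metric), so that Hausdorff bounds transfer between $X$ and $A$ up to the quasi-isometry constants.

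For the forward direction, assume $A$ is stable in $X$. To get strong quasiconvexity, take any $(K,C)$--quasi-geodesic $\gamma$ with endpoints $\Phi(a),\Phi(b)$ and compare it with the image under $\Phi$ of a geodesic of $A$ from $a$ to $b$; this image is a quasi-geodesic lying entirely in $\Phi(A)$ with the same endpoints, so stability places $\gamma$ within Hausdorff distance $R$ of it and hence inside a neighborhood of $\Phi(A)$, giving the Morse gauge. To get hyperbolicity of $A$, I would show that quasi-geodesics in $A$ are uniformly stable and then invoke the standard characterization that a geodesic space with this property is hyperbolic (to be cited in the precise form ``uniform stability of quasi-geodesics implies hyperbolicity'', e.g.\ from \cite{MR1744486} or the stability literature \cite{MR3426695}). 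Concretely, given two $(K,L)$--quasi-geodesics in $A$ with common endpoints, their $\Phi$--images are quasi-geodesics in $X$ with common endpoints on $\Phi(A)$; stability bounds their Hausdorff distance in $X$, and since $\Phi$ is a quasi-isometric embedding this bound pulls back to a uniform Hausdorff bound in $A$.

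For the converse, assume $\Phi(A)$ is Morse and $A$ is hyperbolic, and let $\alpha,\beta$ be $(K,L)$--quasi-geodesics in $X$ with common endpoints $\Phi(a),\Phi(b)$. The Morse property confines both to the $M$--neighborhood of $\Phi(A)$ for $M=M(K,L)$. Using a nearest-point assignment $\pi$ into $\Phi(A)$ together with $\psi$, I would form the pushed-back paths $\bar{\alpha}=\psi\circ\pi\circ\alpha$ and $\bar{\beta}=\psi\circ\pi\circ\beta$; a routine estimate shows these are $(K_0,L_0)$--quasi-geodesics in $A$ whose constants depend only on $K,L,M$ and the quasi-isometry data of $\Phi$, and whose endpoints can be arranged to equal $a,b$. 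Hyperbolicity of $A$ then bounds the Hausdorff distance between $\bar{\alpha}$ and $\bar{\beta}$, and applying $\Phi$ together with the fact that $\Phi\circ\psi$ is coarsely the identity on $\Phi(A)$ shows $\Phi(\bar{\alpha})$ is uniformly close to $\alpha$ and $\Phi(\bar{\beta})$ to $\beta$. Chaining the three bounds $\alpha\approx\Phi(\bar{\alpha})\approx\Phi(\bar{\beta})\approx\beta$ yields a uniform bound on the Hausdorff distance between $\alpha$ and $\beta$, which is exactly stability.

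The step I expect to require the most care is the converse direction: building the pulled-back quasi-geodesics $\bar{\alpha},\bar{\beta}$ in $A$ and verifying that their quasi-geodesic constants, as well as every comparison constant in the chain, depend only on $K$, $L$, the Morse gauge, and the quasi-isometry data of $\Phi$ — not on the individual quasi-geodesics — so that the final Hausdorff bound is uniform in $K,L$ alone. A secondary delicate point is pinning down the exact form of the hyperbolicity characterization used in the forward direction, since it is the \emph{converse} to the Morse lemma (uniform stability $\Rightarrow$ hyperbolic), rather than the Morse lemma itself, that the argument relies on.
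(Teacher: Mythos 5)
Your proof is correct and follows essentially the same route as the paper: the converse direction is exactly the paper's argument (Morse confines both quasi-geodesics to a uniform neighborhood of $\Phi(A)$, pull them back to quasi-geodesics in $A$ with the same endpoints, apply hyperbolicity of $A$, push forward and chain the Hausdorff bounds), with the pull-back step that the paper dismisses as ``an easy exercise'' spelled out via nearest-point projection and a coarse inverse. For the forward direction the paper simply cites Remark 3.1 and Lemma 3.3 of Durham--Taylor \cite{MR3426695}, and your sketch is the standard proof of those two facts, including the correct observation that what is needed there is the converse of the Morse lemma (uniform stability of quasi-geodesics implies hyperbolicity) rather than the Morse lemma itself.
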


\begin{proof}
One direction of the above proposition is deduced from Remark 3.1 and Lemma 3.3 in \cite{MR3426695}. We now prove that if $A$ is strongly quasiconvex and hyperbolic, then $A$ is stable in $X$. For any $K\geq 1$, $L \geq 0$ let $\alpha$ and $\beta$ are two $(K,L)$--quasi-geodesics with the same endpoints in $\Phi(A)$. Since $\Phi(A)$ is Morse in $X$, there is a constant $C\geq 0$ not depending on $\alpha$ and $\beta$ such that $\alpha$ and $\beta$ both lies in the $C$--neighborhood of $\Phi(A)$. This is an easy exercise that there are constants $K_1\geq 1$, $L_1\geq 0$, $D\geq 0$ not depending on $\alpha$ and $\beta$ and two $(K_1,L_1)$--quasi-geodesics $\alpha_1$ and $\beta_1$ with the same endpoints in $A$ such that the Hausdorff distance between $\Phi(\alpha_1)$, $\alpha$ and the Hausdorff distance between $\Phi(\beta_1)$, $\beta$ are both bounded above by $D$. Since $A$ is a hyperbolic space and $\Phi$ is a quasi-isometric embedding, the Hausdorff distance between $\Phi(\alpha_1)$ and $\Phi(\beta_1)$ is bounded above by some constant $D_1$ which does not depend on $\alpha$ and $\beta$. Therefore, the Hausdorff distance between $\alpha$ and $\beta$ is bounded above by $D_1+2D$. Therefore, $A$ is stable in $X$.
\end{proof}

We now define the concepts of strongly quasiconvex subgroups and stable subgroups.

\begin{defn}
Let $G$ be a finite generated group and $S$ an arbitrary finite generating set of $G$. Let $H$ be a finite generated subgroup of $G$ and $T$ an arbitrary finite generating set of $H$. The subgroup $H$ is \emph{undistorted} in $G$ if the natural inclusion $i\!:H\to G$ induces a quasi-isometric embedding from the Cayley graph $\Gamma(H,T)$ into the Cayley graph $\Gamma(G,S)$. We say $H$ is \emph{stable} in $G$ if $\Gamma(H,T)$ is stable in $\Gamma(G,S)$. 

We remark that stable subgroups were proved to be independent of the choice of finite generating sets (see Section 3 in \cite{MR3426695}).
\end{defn}

\begin{defn}
Let $G$ be a finite generated group and $H$ a subgroup of $G$. We say $H$ is \emph{quasiconvex} in $G$ with respect to some finite generating set $S$ of $G$ if there exists some $C>0$ such that every geodesic in the Cayley graph $\Gamma(G,S)$ that connects a pair of points in $H$ lies inside the $C$--neighborhood of $H$. We say $H$ is \emph{strongly quasiconvex} in $G$ if $H$ is a Morse subset in the Cayley graph $\Gamma(G,S)$ for some (any) finite generating set $S$.
\end{defn}

\begin{rem}
If $H$ is a quasiconvex subgroup of a group $G$ with respect to some finite generating set $S$, then $H$ is also finitely generated and undistorted in $G$ (see Lemma 3.5 of \cite{MR1744486} III.$\Gamma$). However, we emphasize that the concept of quasiconvex subgroups depend on the choice of finite generating set of the ambient group.

It is clear that if $H$ is a Morse subset in the Cayley graph $\Gamma(G,S)$ with some finite generating set $S$, then $H$ is also a quasiconvex subgroup of $G$ with respect to $S$. In particular, $H$ is finitely generated and undistorted in $G$. Therefore, the strong quasiconvexity of a subgroup does not depend on the choice of finite generating sets by Statement (1) in Proposition \ref{pp1}. Moreover, if a finitely generated group $G$ acts properly and cocompactly in some space, then $H$ is a strongly quasiconvex (stable) subgroup of $G$ if and only if $H$ is strongly quasiconvex (stable) in $X$ via some (any) orbit map restricted on $H$. 
\end{rem}


The following theorem is a direct consequence of Theorem \ref{th1}

\begin{thm}[Characterizing strongly quasiconvex subgroups]
\label{th2}
Let $G$ be a finitely generated group and $H$ infinite subgroup of $G$. Then $H$ is strongly quasiconvex in $G$ if and only if the lower relative divergence of $G$ with respect to $H$ is completely super linear.
\end{thm}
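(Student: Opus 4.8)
The plan is to deduce Theorem \ref{th2} directly from the characterization of Morse subsets in Theorem \ref{th1} by passing to a Cayley graph. First I would fix an arbitrary finite generating set $S$ of $G$ and set $X = \Gamma(G,S)$, so that $X$ is a geodesic space in which $H$ sits as a set of vertices. By the definition of strong quasiconvexity of a subgroup, $H$ is strongly quasiconvex in $G$ precisely when $H$ is a Morse subset of $X$; and by the definition of lower relative divergence in groups, $div(G,H)$ is literally $div(X,H)$. Thus the two notions appearing in the statement are exactly the two notions appearing in Theorem \ref{th1}, provided that the hypotheses of Theorem \ref{th1} are met.

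The only hypothesis of Theorem \ref{th1} that is not immediate is that $A = H$ must have infinite diameter in $X$. I would verify this using the fact that $H$ is a subgroup together with the local finiteness of the word metric. Since $H$ is a subgroup, for $h_1, h_2 \in H$ we have $d(h_1,h_2) = \abs{h_1^{-1}h_2}$, and as $h_1,h_2$ range over $H$ the product $h_1^{-1}h_2$ ranges over all of $H$, so $\diam(H) = \sup_{h \in H}\abs{h}$. If this supremum were a finite number $R$, then $H$ would be contained in the ball $\ball{e}{R+1}$ of $X$, which contains only finitely many vertices because $S$ is finite; this would contradict the hypothesis that $H$ is infinite. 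Hence $H$ has infinite diameter, and Theorem \ref{th1} applies.

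With the infinite-diameter hypothesis in hand, I would simply invoke Theorem \ref{th1} with $X = \Gamma(G,S)$ and $A = H$: it gives that $H$ is Morse in $X$ if and only if $div(X,H)$ is completely super linear. Unwinding the two definitions as above then yields that $H$ is strongly quasiconvex in $G$ if and only if $div(G,H)$ is completely super linear, as desired. Since strong quasiconvexity and lower relative divergence are both independent of the choice of finite generating set (the former by Statement (1) of Proposition \ref{pp1}, the latter because lower relative divergence is a pair quasi-isometry invariant), the conclusion does not depend on the choice of $S$. There is no real obstacle here beyond the elementary diameter check; the entire substance of the result is already contained in Theorem \ref{th1}.
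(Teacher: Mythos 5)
Your proposal is correct and matches the paper's argument: the paper simply declares Theorem \ref{th2} to be a direct consequence of Theorem \ref{th1}, and you have supplied exactly the routine unwinding (Cayley graph as the ambient geodesic space, plus the observation that an infinite subgroup has infinite diameter by local finiteness) that this deduction requires.
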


We also obtain several characterizations of stable subgroups by the following theorem and the proof can be deduced from Proposition \ref{p3} and Theorem \ref{th2}. 

\begin{thm}[Characterizing stable subgroups]
\label{th3}
Let $G$ be a finitely generated group and $H$ infinite subgroup of $G$. Then the following are equivalent.
\begin{enumerate}
\item $H$ is stable in $G$.
\item $H$ is hyperbolic and strongly quasiconvex in $G$
\item $H$ is hyperbolic and the lower relative divergence of $G$ with respect to $H$ is completely super linear.
\end{enumerate}
\end{thm}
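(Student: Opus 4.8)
The plan is to assemble the three equivalences by feeding the group-theoretic data into the two space-level results already established, namely Proposition \ref{p3} and Theorem \ref{th2}. The only genuine preliminary is to certify that, in each of the three statements, $H$ is finitely generated and undistorted, so that the natural inclusion induces a bona fide quasi-isometric embedding $i\colon \Gamma(H,T)\to\Gamma(G,S)$; once that is in place, the remainder is a translation between the space vocabulary and the group vocabulary.

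First I would establish (1) $\Leftrightarrow$ (2). The point is that both stability and strong quasiconvexity of $H$ force $H$ to be finitely generated and undistorted: for strong quasiconvexity this is the Remark following its definition, while for stability it is part of the Durham--Taylor setup and, equivalently, follows from the easy direction of Proposition \ref{p3}, since a stable subspace is Morse, hence quasiconvex, hence undistorted. Thus, with $A=\Gamma(H,T)$, $X=\Gamma(G,S)$, and $\Phi=i$, I may invoke Proposition \ref{p3}: $\Gamma(H,T)$ is stable in $\Gamma(G,S)$ if and only if it is simultaneously strongly quasiconvex (Morse) in $\Gamma(G,S)$ and hyperbolic. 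Translating each clause through the definitions — ``$\Gamma(H,T)$ stable in $\Gamma(G,S)$'' means ``$H$ stable in $G$'', ``$\Gamma(H,T)$ Morse in $\Gamma(G,S)$'' means ``$H$ strongly quasiconvex in $G$'', and ``$\Gamma(H,T)$ hyperbolic'' means ``$H$ hyperbolic'' — yields exactly (1) $\Leftrightarrow$ (2).

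Next I would establish (2) $\Leftrightarrow$ (3). Since $H$ is infinite, Theorem \ref{th2} applies verbatim and gives that $H$ is strongly quasiconvex in $G$ if and only if the lower relative divergence of $G$ with respect to $H$ is completely super linear. Conjoining the clause ``$H$ is hyperbolic'' to both sides of this equivalence turns (2) into (3). One should note that (3) does not a priori assume $H$ is finitely generated; however, completely super linear lower relative divergence already implies strong quasiconvexity by Theorem \ref{th2}, and hence finite generation and undistortion, so the hypotheses used in the previous paragraph are automatically met in this case as well.

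The step I expect to demand the most care is the bookkeeping in the first paragraph: Proposition \ref{p3} is stated for an abstract quasi-isometric embedding of geodesic spaces, so the substance of the argument lies in verifying that the inclusion really is such an embedding in every case under consideration, and that the space-level notions of stable, Morse (strongly quasiconvex), and hyperbolic correspond precisely to the group-level notions appearing in the three statements. Everything else is a direct appeal to the two cited results.
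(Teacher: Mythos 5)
Your proposal is correct and follows exactly the route the paper itself takes: the paper's proof of Theorem \ref{th3} consists precisely of the remark that it ``can be deduced from Proposition \ref{p3} and Theorem \ref{th2},'' and you have supplied the same deduction, with the additional (and welcome) bookkeeping that each of the three hypotheses forces $H$ to be finitely generated and undistorted so that Proposition \ref{p3} applies to the inclusion of Cayley graphs.
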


We now study some basic results about strongly quasiconvex subgroups related to subgroup inclusion and subgroup intersection.

\begin{lem} (Proposition 9.4, \cite{Hruska10}).
\label{ll1}
Let $G$ be a group with a finite generating set $S$. Suppose $xH$ and $yK$ are arbitrary left cosets of subgroups of $G$. For each constant $L$ there is a constant $L'=L'(G, S, xH, yK, L)$ so that in the metric space $(G, d_S)$ we have
\[N_L(xH) \cap N_L(yK) \subset N_{L'}(xHx^{-1} \cap yKy^{-1}).\]
\end{lem}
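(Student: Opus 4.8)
The plan is to exploit the left-invariance of $d_S$ together with a pigeonhole argument on ``short completions'' of the points in the coarse intersection. I would set $E = N_L(xH) \cap N_L(yK)$ and $J = xHx^{-1} \cap yKy^{-1}$, and assume $E \neq \emptyset$ (otherwise the inclusion is vacuous). For each $g \in E$ I would choose a point $xh_g \in xH$ with $d_S(g, xh_g) \le L$ and a point $yk_g \in yK$ with $d_S(g, yk_g) \le L$, and record the pair of ``short'' group elements
\[
s_g = (xh_g)^{-1}g, \qquad t_g = (yk_g)^{-1}g,
\]
so that $g = xh_g s_g = yk_g t_g$ with $|s_g|_S \le L$ and $|t_g|_S \le L$. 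Since $S$ is finite, the ball $B(e,L)$ in $(G,d_S)$ is finite, so there are only finitely many possible values of the pair $(s_g,t_g)$; recording this pair partitions $E$ into finitely many classes.

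The key algebraic observation I would use is that if $g$ and $g'$ lie in the same class, i.e.\ $s_g = s_{g'}$ and $t_g = t_{g'}$, then $g(g')^{-1} \in J$. Cancelling the common short element $s := s_g = s_{g'}$ gives
\[
g(g')^{-1} = (xh_g s)(xh_{g'} s)^{-1} = x\,h_g h_{g'}^{-1}\,x^{-1} \in xHx^{-1},
\]
and cancelling $t := t_g = t_{g'}$ gives symmetrically $g(g')^{-1} = y\,k_g k_{g'}^{-1}\,y^{-1} \in yKy^{-1}$; hence $g(g')^{-1}$ lies in the intersection $J$. Thus each class is contained in a single right coset of $J$.

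To finish, I would fix a representative $g_c \in E$ for each of the finitely many nonempty classes $c$. For any $g$ in class $c$ we then have $g g_c^{-1} \in J$, so by left-invariance of $d_S$,
\[
d_S(g, J) \le d_S\bigl(g,\, g g_c^{-1}\bigr) = |g_c^{-1}|_S = |g_c|_S .
\]
Setting $L' = 1 + \max_c |g_c|_S$, which is finite because there are finitely many classes, yields $d_S(g,J) < L'$ for every $g \in E$, i.e.\ $E \subseteq N_{L'}(J)$. The constant $L'$ manifestly depends only on $G$, $S$, the cosets $xH$ and $yK$, and $L$.

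The genuinely substantive step is the cancellation identity $(xh_g s)(xh_{g'} s)^{-1} = x h_g h_{g'}^{-1} x^{-1}$, which is exactly what forces the conjugated subgroups $xHx^{-1}$ and $yKy^{-1}$ (rather than $H$ and $K$) to appear in the conclusion; everything else is bookkeeping. The only point requiring care is that the short data $(s_g,t_g)$ is a \emph{choice} rather than a canonical invariant of $g$, so the pigeonhole must be run against a single fixed choice of completions, and the comparison $g(g')^{-1}\in J$ must use only the equality of those chosen short elements.
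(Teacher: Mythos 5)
Your proof is correct and complete: the cancellation identity $(xh_g s)(xh_{g'}s)^{-1}=xh_gh_{g'}^{-1}x^{-1}$ together with the finiteness of $B(e,L)\times B(e,L)$ and the choice of one representative per nonempty class gives exactly the required constant $L'$. The paper does not prove this lemma itself but cites Proposition 9.4 of Hruska, and your pigeonhole argument on pairs of short completions is essentially the standard proof given there, so there is nothing to add.
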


\begin{prop}
\label{pp2}
Let $G$ be a finitely generated group, $A$ a subgroup of $G$, and $B$ a subgroup of $A$. Then
\begin{enumerate}
\item If $A$ is finitely generated, undistorted in $G$ and $B$ is strongly quasiconvex in $G$, then $B$ is strongly quasiconvex in $A$.
\item If $B$ is strongly quasiconvex in $A$ and $A$ is strongly quasiconvex in $G$, then $B$ is strongly quasiconvex in $G$
\end{enumerate}
\end{prop}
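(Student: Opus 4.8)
The plan is to reduce both statements to the purely geometric Proposition \ref{pp1} by realizing the subgroup inclusions as quasi-isometric embeddings of Cayley graphs. Fix finite generating sets $S$ of $G$, $T_A$ of $A$, and $T_B$ of $B$, and let $f\!:\Gamma(B,T_B)\to\Gamma(A,T_A)$ and $g\!:\Gamma(A,T_A)\to\Gamma(G,S)$ be the maps induced by the inclusions $B\le A\le G$. With $X=\Gamma(G,S)$, the orientation $B\xrightarrow{f}A\xrightarrow{g}X$ matches Proposition \ref{pp1}, and ``$B$ is strongly quasiconvex in $G$'' (resp.\ in $A$) is by definition the assertion that the image of $g\circ f$ (resp.\ of $f$) is Morse, i.e.\ that $B$ is strongly quasiconvex via that map. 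Thus, once $f$ and $g$ are known to be quasi-isometric embeddings, parts (1) and (2) are exactly parts (1) and (2) of Proposition \ref{pp1}. All the real work lies in verifying that the relevant inclusions are quasi-isometric embeddings, i.e.\ that the relevant subgroups are finitely generated and undistorted.

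For part (2) this is immediate from the Remark following the definition of strong quasiconvexity: a Morse subset of a Cayley graph is automatically a finitely generated undistorted subgroup. Hence $A$ strongly quasiconvex in $G$ makes $g$ a quasi-isometric embedding, and, applying the same Remark with $A$ as ambient group, $B$ strongly quasiconvex in $A$ makes $f$ a quasi-isometric embedding. Since $B$ is strongly quasiconvex in $A$ via $f$ and $A$ is strongly quasiconvex in $X$ via $g$, Proposition \ref{pp1}(2) yields that $B$ is strongly quasiconvex in $X$ via $g\circ f$, which is precisely strong quasiconvexity of $B$ in $G$.

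For part (1) we are handed $A$ finitely generated and undistorted in $G$, so $g$ is a quasi-isometric embedding directly; and $B$ strongly quasiconvex in $G$ means $B$ is Morse in $\Gamma(G,S)$, hence finitely generated and undistorted in $G$ by the Remark. The one genuinely substantive preliminary is to upgrade these to ``$f$ is a quasi-isometric embedding'', i.e.\ ``$B$ is undistorted in $A$''. This is a transitivity-of-undistortion argument along $B\le A\le G$: since $d_{T_A}$ is comparable to $d_S$ on $A$ and $d_{T_B}$ is comparable to $d_S$ on $B$, restricting the first comparison to the subset $B\subseteq A$ gives that $d_{T_B}$, $d_S$, and $d_{T_A}$ are all pairwise comparable on $B$, so $f$ is a quasi-isometric embedding. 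Then $B$ is strongly quasiconvex via $g\circ f$ (its image $B$ is Morse in $X$), and Proposition \ref{pp1}(1) gives that $B$ is strongly quasiconvex in $A$.

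The main, if mild, obstacle is precisely this transitivity step in part (1): Proposition \ref{pp1} requires $f$ to be a quasi-isometric embedding, which is not literally among the hypotheses and must be extracted by comparing the three word metrics on the nested subgroups. A cleaner route that sidesteps it is to prove part (1) directly: push a $(K,C)$--quasi-geodesic $\gamma$ of $\Gamma(A,T_A)$ with endpoints in $B$ forward by $g$ to a quasi-geodesic of $\Gamma(G,S)$ with endpoints in $B$, apply Morse-ness of $B$ in $G$ to confine it to a $d_S$--neighborhood of $B$, and then use only undistortion of $A$ in $G$ (to compare $d_S$ and $d_{T_A}$ for pairs of points that both lie in $A$) to convert this into a $d_{T_A}$--neighborhood of $B$; this invokes $A$ undistorted in $G$ but never needs $B$ undistorted in $A$. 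Finally, I note that Lemma \ref{ll1} plays no role here; it is reserved for the later intersection results.
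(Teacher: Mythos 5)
Your proposal is correct and follows essentially the same route as the paper: the paper's own proof consists of the observation that, since $A$ is undistorted in $G$, the subgroup $B$ is undistorted in $A$ if and only if it is undistorted in $G$, after which both parts are read off from Proposition \ref{pp1}. The only difference is that you spell out the verification that the relevant inclusions are quasi-isometric embeddings (and offer an optional direct argument for part (1)), whereas the paper leaves this implicit.
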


\begin{proof}
We note that if subgroups $A$ is undistorted in $G$, then $B$ is undistorted in $A$ if and only if $B$ is undistorted in $G$. Therefore, the above proposition is a direct result of Proposition \ref{pp1}.
\end{proof}

We remark that Statement (1) in Proposition \ref{pp2} can be strengthened by the following proposition.

\begin{prop}
\label{pp3}
Let $G$ be a finitely generated group and $A$ undistorted subgroup of $G$. If $H$ is a strongly quasiconvex subgroup of $G$, then $H_1=H\cap A$ is a strongly quasiconvex subgroup of $A$. In particular, $H_1$ is finitely generated and undistorted in $A$.
\end{prop}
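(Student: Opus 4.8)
The plan is to verify directly that $H_1 = H \cap A$ is a Morse subset of the Cayley graph $\Gamma(A,T)$, where $T$ is a finite generating set of $A$ (note $A$ is finitely generated, since being undistorted in $G$ presupposes this). Once Morse-ness of the subgroup $H_1$ is established, the remark following the definition of strongly quasiconvex subgroups — applied with $A$ in place of the ambient group — immediately yields the ``in particular'' clause that $H_1$ is finitely generated and undistorted in $A$. So the entire content is to produce, for each $K \geq 1$ and $C \geq 0$, a uniform neighborhood of $H_1$ containing every $(K,C)$--quasi-geodesic of $\Gamma(A,T)$ with endpoints in $H_1$.

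First I would fix such constants $K$, $C$ and an arbitrary $(K,C)$--quasi-geodesic $\gamma$ in $\Gamma(A,T)$ with endpoints in $H_1$. Since $A$ is undistorted in $G$, the inclusion $\Gamma(A,T) \inclusion \Gamma(G,S)$ is a quasi-isometric embedding, so the image of $\gamma$ is a $(K',C')$--quasi-geodesic in $\Gamma(G,S)$ with endpoints in $H_1 \subseteq H$, where $K'$ and $C'$ depend only on $K$, $C$ and the undistortion constants. Because $H$ is strongly quasiconvex in $G$, i.e.\ Morse in $\Gamma(G,S)$, this image lies in the $M$--neighborhood of $H$ for some $M = M(K',C')$ depending only on $K$ and $C$. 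On the other hand, the vertices of the image lie in $A$, so the image is trivially contained in $N_M(A)$.

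The key step is to convert ``close to $H$ and close to $A$'' into ``close to $H \cap A$''. Applying Lemma \ref{ll1} with $x = y = 1$, $xH = H$ and $yK = A$ gives
\[ N_M(H) \cap N_M(A) \subseteq N_{M'}(H \cap A) = N_{M'}(H_1), \]
where $M' = L'(G,S,H,A,M)$ depends only on $M$, hence only on $K$ and $C$. Thus the image of $\gamma$ lies in $N_{M'}(H_1)$ in the $G$--metric. Invoking the undistortion of $A$ a second time converts this into a bound in the $A$--metric: for undistortion constants $K_0, C_0$ each point of $\gamma$ lies within $A$--distance $K_0 M' + K_0 C_0$ of $H_1$. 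Since this final bound depends only on $K$ and $C$, we conclude that $H_1$ is Morse in $\Gamma(A,T)$, i.e.\ strongly quasiconvex in $A$.

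The main obstacle is the intersection step: knowing $\gamma$ is near $H$ and contained in $A$ does not by itself force $\gamma$ near $H \cap A$, and Lemma \ref{ll1} is precisely the tool that supplies this. The remaining work is bookkeeping — checking that $M'$, and hence the final $A$--neighborhood bound, depends only on $(K,C)$ and not on the particular quasi-geodesic $\gamma$, which is exactly what the Morse property of $H$ and the quasi-isometry invariance of the estimates guarantee. A minor technical point is that one should take the image quasi-geodesic to pass through vertices of $A$ so that containment in $N_M(A)$ is immediate; interior edge points lie within distance comparable to $1$ of $A$ and are absorbed into the constants.
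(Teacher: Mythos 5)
Your proposal is correct and follows essentially the same route as the paper's proof: push the quasi-geodesic into the Cayley graph of $G$ via the quasi-isometric embedding coming from undistortion, use the Morse property of $H$ to place it in a uniform neighborhood of $H$, observe it also lies near $A$, apply Lemma \ref{ll1} to land in a uniform neighborhood of $H\cap A$, and pull back to the Cayley graph of $A$. The constant-tracking and the handling of the ``in particular'' clause via the remark on Morse subgroups are both consistent with what the paper does.
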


\begin{proof}
We fix finite generating sets $S$ and $T$ for $G$ and $A$ respectively. Let $f\!:\Gamma(A,T)\to\Gamma(G,S)$ be a quasi-isometry embedding which is an extension of the inclusion $A\inclusion G$. We will prove that $H_1$ is a Morse subset of $\Gamma(A,T)$. For each $K\geq 1$ and $L\geq 0$ let $\alpha$ be an arbitrary $(K,L)$--quasi-geodesic in $\Gamma(A,T)$ that connects two points in $H_1$. Since $f\!:\Gamma(A,T)\to\Gamma(G,S)$ is a quasi-isometry embedding which is an extension of the inclusion $A\inclusion G$, then $f(\alpha)$ is a $(K',L')$--quasigeodesic in $\Gamma(G,S)$ that connects two points in $H_1$ and $f(\alpha)$ lies in some $D$--neighborhood of $A$ in $\Gamma(G,S)$, where $K'$, $L'$, $D$ depend only on $K$, $L$ and the map $f$. Since $H$ is a Morse subset in $\Gamma(G,S)$, $f(\alpha)$ also lies in some $D_1$--neighborhood of $H$ in $\Gamma(G,S)$, where $D_1$ depends only on $K'$ and $L'$. By Lemma \ref{ll1}, there is $M=M(G,S,A,H,D,D_1)$ such that $f(\alpha)$ lies in the $M$--neighborhood of $H_1=H\cap A$ in $\Gamma(G,S)$. Again, $f\!:\Gamma(A,T)\to\Gamma(G,S)$ is a quasi-isometry embedding which is an extension of the inclusion $A\inclusion G$. Then, $\alpha$ lies in some $M'$--neighborhood of $H_1$ in $\Gamma(A,T)$, where $M'$ depend only on $M$ and the map $f$.
\end{proof}

The following proposition is a direct result of Propositions \ref{pp2} and \ref{pp3}.

\begin{prop}
\label{prp1}
Let $G$ be a finitely generated group and $H_1$, $H_2$ strongly quasiconvex subgroups of $G$. Then $H_1\cap H_2$ is strongly quasiconvex in $H_1$, $H_2$, and $G$.
\end{prop}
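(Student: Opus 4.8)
The plan is to deduce everything by combining Propositions \ref{pp2} and \ref{pp3}, so the proof is essentially bookkeeping once we record the right undistortedness facts. The key observation that makes the machinery applicable is that a strongly quasiconvex subgroup is automatically finitely generated and undistorted: a Morse subset of a Cayley graph is in particular a quasiconvex subgroup, hence undistorted (this is the content of Theorem \ref{vuiqua}(1) and the remark following the definition of quasiconvex subgroups). Thus both $H_1$ and $H_2$ are undistorted subgroups of $G$, which is precisely the hypothesis needed to feed them into Proposition \ref{pp3} in the role of the undistorted subgroup $A$.

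First I would show that $H_1 \cap H_2$ is strongly quasiconvex in $H_1$. I apply Proposition \ref{pp3} with $A = H_1$ and $H = H_2$: since $H_1$ is undistorted in $G$ and $H_2$ is strongly quasiconvex in $G$, the proposition gives that $H_2 \cap H_1 = H_1 \cap H_2$ is strongly quasiconvex in $H_1$. Interchanging the roles of $H_1$ and $H_2$ and applying the same proposition (now with $A = H_2$, $H = H_1$) shows that $H_1 \cap H_2$ is strongly quasiconvex in $H_2$ as well.

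Finally, for strong quasiconvexity in $G$, I would invoke the transitivity of Proposition \ref{pp2}(2) along the chain $H_1 \cap H_2 \leq H_1 \leq G$. We have just established that $B = H_1 \cap H_2$ is strongly quasiconvex in $A = H_1$, and $A = H_1$ is strongly quasiconvex in $G$ by hypothesis; Statement (2) of Proposition \ref{pp2} then yields directly that $H_1 \cap H_2$ is strongly quasiconvex in $G$. (One could equally well run this through $H_2$.)

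Since each step is a direct citation of an already-proved proposition, there is no genuine obstacle. The only point requiring any care is checking the undistortedness hypothesis of Proposition \ref{pp3}, and this is immediate from the fact that strong quasiconvexity implies undistortedness; everything else is a mechanical concatenation of the inclusion-type results.
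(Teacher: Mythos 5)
Your proof is correct and follows exactly the route the paper intends: the paper states Proposition \ref{prp1} as ``a direct result of Propositions \ref{pp2} and \ref{pp3},'' and your argument simply spells out that deduction, including the (correct) observation that strong quasiconvexity supplies the undistortedness hypothesis needed for Proposition \ref{pp3}.
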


Quasiconvex subgroups in hyperbolic groups have the well-known property of finite height and therefore, they have finite index in their commensurators. We now show that strongly quasiconvex subgroups in general also have these properties.

\begin{defn}
\label{hw}
Let $G$ be a group and $H$ a subgroup.
\begin{enumerate}
\item Conjugates $g_1Hg_1^{-1}, \cdots g_kHg_k^{-1} $ are \emph{essentially distinct} if the cosets $g_1H,\cdots,g_kH$ are distinct.
\item $H$ has height at most $n$ in $G$ if the intersection of any $(n+1)$ essentially distinct conjugates is finite. The least $n$ for which this is satisfied is called the height of $H$ in $G$.
\item The \emph{width} of $H$ is the maximal cardinality of the set $\{g_iH:\abs{g_iH{g_i}^{-1} \cap g_jH{g_j}^{-1}}=\infty\}$, where $\{g_iH\}$
ranges over all collections of distinct cosets.
\end{enumerate}
Similarly, given a finite collection $\mathcal{H} =\{H_1,\cdots,H_l\}$.
\begin{enumerate}
\item Conjugates $g_1H_{\sigma(1)}g_1^{-1}\cdots g_kH_{\sigma(k)}g_k^{-1}$ are \emph{essentially distinct} if the cosets $g_1H_{\sigma(1)},\cdots,g_kH_{\sigma(k)}$ are distinct. 
\item The finite collection $\mathcal{H}$ of subgroups of $G$ has height at most $n$ if the intersection of any $(n+1)$ essentially distinct conjugates is finite. The least $n$ for which this is satisfied is called the height of $\mathcal{H}$ in $G$.
\item The \emph{width} of $\mathcal{H}$ is the maximal cardinality of the set $$\{g_{\sigma(i)}H_{\sigma(i)}:\abs{g_{\sigma(i)}H_{\sigma(i)}g^{-1}_{\sigma(i)}\cap g_{\sigma(j)}H_{\sigma(j)}g^{-1}_{\sigma(j)}}=\infty\},$$ where $\{g_{\sigma(i)}H_{\sigma(i)}\}$ 
ranges over all collections of distinct cosets.
\end{enumerate}
\end{defn}

\begin{defn}
Let $G$ be a group and $H$ a subgroup. The \emph{commensurator} of $H$ in $G$, denoted $Comm_G(H)$ is defined as
\[Comm_G(H) = \set{g \in G}{[H: H \cap gHg^{-1}] <\infty, [gHg^{-1}: H \cap gHg^{-1}]<\infty}.\]
\end{defn}

\begin{thm}
\label{lacloi1}
Let $\mathcal{H} = \{H_1,\cdots,H_\ell\}$ be a finite collection of strongly quasiconvex subgroups of a finitely generated group $G$. Then $\mathcal{H}$ has finite height.
\end{thm}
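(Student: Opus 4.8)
The plan is to bound the height of $\mathcal{H}$ by a constant depending only on $\ell$ and on a common Morse gauge for the cosets of $H_1,\dots,H_\ell$, by forcing all the relevant conjugates to meet one small ball and then counting. Fix a finite generating set $S$, and let $\mu$ be a common Morse gauge for $H_1,\dots,H_\ell$ (the pointwise maximum of their gauges). Since left translation is an isometry of $\Gamma(G,S)$, every left coset $gH_j$ is Morse with gauge $\mu$. Now suppose $C_i=g_iH_{\sigma(i)}g_i^{-1}$, for $i=0,\dots,n$, are essentially distinct conjugates whose common intersection $I=\bigcap_i C_i$ is infinite; the goal is to bound $n$ independently of the tuple. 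The elementary but crucial observation is that $C_i\subseteq N_{\abs{g_i}}(K_i)$, where $K_i:=g_iH_{\sigma(i)}$ and $\abs{g_i}=d_S(1,g_i)$: if $c=g_ihg_i^{-1}\in C_i$ then $cg_i=g_ih\in K_i$, so $d(c,K_i)\le\abs{g_i}$. In particular $I\subseteq N_{\abs{g_i}}(K_i)$ for every $i$.

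Next I would manufacture a single geodesic ray lying in a finite neighborhood of every $K_i$ at once. Each $C_i$ is itself strongly quasiconvex (conjugation is an inner automorphism, hence a quasi-isometry of $G$, and the Morse property is a quasi-isometry invariant), with some gauge $\mu_i$. Because $I$ is an infinite subgroup it has infinite diameter, so I may pick $w_m\in I$ with $\abs{w_m}\to\infty$. As $1,w_m\in I\subseteq C_i$, the geodesic $[1,w_m]$ lies in $N_{\mu_i(1,0)}(C_i)\subseteq N_{\mu_i(1,0)+\abs{g_i}}(K_i)$ for each fixed $i$. Since $\Gamma(G,S)$ is proper, an Arzel\`a--Ascoli argument extracts a geodesic ray $\rho$ based at $1$ that is a uniform-on-compacta limit of the $[1,w_m]$; passing to the limit in each of the (closed) neighborhood conditions gives $\rho\subseteq N_{R_i}(K_i)$ with $R_i=\mu_i(1,0)+\abs{g_i}+1$ finite, for every $i$ simultaneously. (Alternatively one may first note, via iterating Proposition \ref{prp1}, that $I$ is Morse and keep $\rho$ inside a neighborhood of $I$; either route works.)

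Now comes the decisive input, Proposition \ref{vqvqvq1}. Applied to the ray $\rho$ and the Morse set $K_i$ — and using that all the $K_i$ share the single gauge $\mu$, so that the constant produced by that proposition depends only on $\mu$ — it yields one constant $D=D(\mu)$, the same for all $i$, together with parameters $C_i'$ such that $\rho|_{[C_i',\infty)}\subseteq N_D(K_i)$. This is exactly where the otherwise fatal dependence on $\abs{g_i}$ is absorbed: that dependence survives only in \emph{where} the uniform tail begins ($C_i'$ and $R_i$), not in the uniform radius $D$. Setting $t^\ast=\max_i C_i'$ and $p=\rho(t^\ast)$, I obtain $d(p,K_i)\le D$ for every $i$, so each of the $n+1$ cosets $K_0,\dots,K_n$ meets the ball $B(p,D)$.

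Finally I would run the pigeonhole count. Let $N$ be the number of group elements in a ball of radius $D$; by vertex-transitivity $N=\#\bigl(B(1,D)\cap G\bigr)$ is finite and independent of $p$. Choose $k_i\in K_i\cap B(p,D)$ for each $i$. Since distinct left cosets of a fixed subgroup $H_m$ are disjoint, any single element of $G$ lies in at most $\ell$ of the distinct cosets $K_0,\dots,K_n$ (at most one per subgroup $H_1,\dots,H_\ell$); as essential distinctness makes these $n+1$ cosets pairwise distinct, the assignment $i\mapsto k_i$ has fibers of size at most $\ell$, whence $n+1\le \ell N$. Therefore the height of $\mathcal{H}$ is at most $\ell N-1$, a bound depending only on $\ell$ and $\mu$. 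The heart of the matter — and the step I expect to be the main obstacle — is precisely the non-uniformity of the Morse gauge of the conjugates $C_i$, which degrades as $\abs{g_i}$ grows; the whole strategy is designed to sidestep it by transferring every estimate onto the cosets $K_i$, which carry the common gauge $\mu$, and then using Proposition \ref{vqvqvq1} to upgrade ``lies in some finite neighborhood'' into ``eventually lies in a \emph{uniform} neighborhood.''
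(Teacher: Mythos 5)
Your proposal is correct and follows essentially the same route as the paper's proof: both produce a geodesic ray lying in a finite neighborhood of the infinite intersection (hence of each coset $g_iH_{\sigma(i)}$), invoke Proposition \ref{vqvqvq1} to upgrade the tail of the ray into a \emph{uniform} $D$--neighborhood of each coset with $D$ depending only on the common Morse gauge, and then bound the number of distinct cosets meeting a ball of radius $D$. The only differences are cosmetic: you spell out the Arzel\`a--Ascoli extraction and apply Proposition \ref{vqvqvq1} directly to the cosets $K_i$, whereas the paper translates by $g_i^{-1}$ and applies it to $H_{\sigma(i)}$.
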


\begin{proof}
By Proposition \ref{vqvqvq1} there is a constant $D>0$ such that the following hold. If $\gamma:[0,\infty)\to \Gamma(G,S)$ is a geodesic ray in the Cayley graph $\Gamma(G,S)$ that lies in some finite neighborhood of $H_i$, then there is $C>0$ such that $\gamma_{|[C,\infty)}$ lies in the $D$--neighborhood of $H_i$. Let $g_1H_{\sigma(1)}g_1^{-1}\cdots g_kH_{\sigma(k)}g_k^{-1}$ be essential distinct conjugates with infinite intersection $\bigcap g_iH_{\sigma(i)}g_i^{-1}$. 

Since the Cayley graph $\Gamma(G,S)$ is proper and $\bigcap g_iH_{\sigma(i)}g_i^{-1}$ is an infinite strongly quasiconvex subgroups, there is a geodesic ray $\gamma:[0,\infty)\to \Gamma(G,S)$ in the Cayley graph $\Gamma(G,S)$ that lies in some finite neighborhood of $\bigcap g_iH_{\sigma(i)}g_i^{-1}$. Therefore, for each $i$ the ray $\gamma$ lies in some finite neighborhood of $g_iH_{\sigma(i)}$. This implies that the ray $g_i^{-1}\gamma$ lies in some finite neighborhood of $H_{\sigma(i)}$. Thus, there is some $C_i>0$ such that $g_i^{-1}\gamma_{|[C_i,\infty)}$ lies in the $D$--neighborhood of $H_{\sigma(i)}$. In other word, $\gamma_{|[C_i,\infty)}$ lies in the $D$--neighborhood of $g_iH_{\sigma(i)}$. Therefore, all left cosets $g_iH_{\sigma(i)}$ intersect some ball $B(a,D)$. But there is a uniform bound $N$ on the number of cosets of subgroups in $\mathcal{H}$ intersecting any metric ball of radius $D$. Thus, the collection $\mathcal{H}$ has finite height. 
\end{proof}

\begin{cor}
If $H$ is an infinite strongly quasiconvex subgroup of a finitely generated group $G$, then $H$ has finite index in its commensurator $Comm_G(H)$.
\end{cor}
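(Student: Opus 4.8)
The plan is to deduce this directly from the finite height established in Theorem \ref{lacloi1}, following the classical argument for quasiconvex subgroups of hyperbolic groups. Since $\{H\}$ is a singleton finite collection of strongly quasiconvex subgroups, Theorem \ref{lacloi1} provides a bound $n$ on the height of $H$ in $G$; that is, the intersection of any $n+1$ essentially distinct conjugates of $H$ is finite. I will show that this same integer $n$ already bounds the index $[Comm_G(H):H]$, so the geometric input is entirely absorbed into Theorem \ref{lacloi1} and what remains is a short group-theoretic counting argument.

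First I would record that $H \leq Comm_G(H)$, so the index is well defined, and that it suffices to bound the number of distinct cosets $gH$ with $g \in Comm_G(H)$. Suppose, then, that $g_0H, g_1H, \dots, g_NH$ are distinct cosets with each $g_i \in Comm_G(H)$ and $g_0 = e$. The corresponding conjugates $g_0Hg_0^{-1}, \dots, g_NHg_N^{-1}$ are essentially distinct in the sense of Definition \ref{hw}, precisely because the cosets $g_iH$ are distinct.

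The key step is to show that the intersection $\bigcap_{i=0}^{N} g_iHg_i^{-1}$ is infinite. By the definition of the commensurator, for each $i$ the subgroup $H \cap g_iHg_i^{-1}$ has finite index in $H$. Since a finite intersection of finite-index subgroups of $H$ again has finite index in $H$, the subgroup $\bigcap_{i=0}^{N}\bigl(H \cap g_iHg_i^{-1}\bigr)$ has finite index in $H$, and hence is infinite because $H$ is infinite; as it is contained in $\bigcap_{i=0}^{N} g_iHg_i^{-1}$, that larger intersection is infinite as well.

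Combining these two observations finishes the argument: if $N \geq n$, then $g_0Hg_0^{-1},\dots,g_NHg_N^{-1}$ would be $n+1$ (or more) essentially distinct conjugates of $H$ with infinite intersection, contradicting height at most $n$. Hence $N + 1 \leq n$, so there are at most $n$ distinct cosets of $H$ in $Comm_G(H)$, i.e. $[Comm_G(H):H] \leq n < \infty$. I expect no serious obstacle, since the hard geometric content (finite height, obtained through the eventual-neighborhood behavior of geodesic rays in Proposition \ref{vqvqvq1}) is already packaged in Theorem \ref{lacloi1}; the only point requiring care is the purely algebraic claim that being commensurable with $H$ is preserved under finite intersections, which reduces to the stability of the property ``finite index in $H$'' under intersection.
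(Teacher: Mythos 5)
Your argument is correct and is exactly the (standard) deduction the paper intends: the corollary is stated as an immediate consequence of the finite height result in Theorem \ref{lacloi1}, and the counting argument you supply—distinct commensurator cosets give essentially distinct conjugates whose intersection contains a finite-index, hence infinite, subgroup of $H$—is the classical way to extract finite index in $Comm_G(H)$ from finite height. The paper omits these details, but your write-up fills them in faithfully with no gaps.
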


We now prove that a finite collection of strongly quasiconvex subgroups has bounded packing and finite width. We first recall the concept of bounded packing subgroups in \cite{MR2497315}.

\begin{defn}
\label{bp}
Let $G$ be a finitely generated group and $\Gamma$ a Cayley graph with respect to a finite generating set. A subgroup $H$ has \emph{bounded packing} in $G$ if, for all $D \geq 0$, there exist $N \in \NN$ such that for any collection of $N$ distinct cosets $gH$ in $G$, at least two are separated by a distance of at least $D$.

Similarly, a finite collection $\mathcal{H}= \{H_1,\cdots,H_{\ell}\}$ of subgroups of $G$ has \emph{bounded packing} if, for all $D \geq 0$, there exist $N \in \NN$ such that for any collection of $N$ distinct cosets $gH_i$ with $H_i \in \mathcal{H}$, at least two are separated by a distance of at least $D$.
\end{defn}

\begin{lem}
\label{dacbiet}
Let $(X,d)$ be a geodesic space. Let $A, B$, and $C$ be Morse subsets of $X$ with a Morse gauge $\mu$. For each $D>0$ there is a number $D'$ depending on $D$ and $\mu$ such that if $A, B$, and $C$ are pairwise $D$-close, then the intersection $N_{D'}(A)\cap N_{D'}(B) \cap N_{D'}(C)$ is not empty.
\end{lem}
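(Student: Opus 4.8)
The plan is to realize a common near--point as a coarse \emph{center} of a geodesic triangle built from the three pairwise meetings, in exact analogy with the incenter of a thin triangle in a hyperbolic space. Since $A,B,C$ are pairwise $D$--close, first choose $u\in N_D(A)\cap N_D(B)$, $v\in N_D(B)\cap N_D(C)$, and $w\in N_D(C)\cap N_D(A)$, and form the geodesic triangle with sides $[u,v]$, $[v,w]$, $[w,u]$. The first step is the following control of each side, which I will call Claim~A: a geodesic whose endpoints both lie in $N_D(B)$, for a $\mu$--Morse set $B$, is contained in $N_{M'}(B)$ with $M'=\mu(1,4D)$. To prove it, choose $u',v'\in B$ within $D$ of the endpoints and prepend $[u',u]$ and append $[v,v']$; attaching segments of length at most $D$ to a geodesic produces a $(1,4D)$--quasi-geodesic whose endpoints $u',v'$ now lie on $B$, so the Morse property of $B$ forces this path --- and hence the original side --- into $N_{\mu(1,4D)}(B)$. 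Applying Claim~A to each side yields $[u,v]\subset N_{M'}(B)$, $[v,w]\subset N_{M'}(C)$, and $[w,u]\subset N_{M'}(A)$.

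The decisive step is to produce a single point $o$ lying within a uniform distance $\delta=\delta(\mu)$ of all three sides at once; granting this, take $o\in[w,u]$ with $d(o,[u,v])\le\delta$ and $d(o,[v,w])\le\delta$. Then $o\in[w,u]\subset N_{M'}(A)$ gives $d(o,A)\le M'$, while $d(o,B)\le\delta+M'$ and $d(o,C)\le\delta+M'$ follow from combining the two $\delta$--estimates with the neighborhood statements of Claim~A. Hence $o\in N_{D'}(A)\cap N_{D'}(B)\cap N_{D'}(C)$ for $D'=\delta+M'$, a constant depending only on $\mu$ and $D$ (crucially, independent of the diameters of $A,B,C$ and of the size of the triangle). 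So everything comes down to the existence of the center $o$, i.e. to a \emph{uniform thinness} statement for this particular triangle.

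The hard part is precisely this uniform thinness: that $\delta$ can be taken independent of the triangle's size. This is exactly what fails for an ordinary (say Euclidean) triangle, and is the sole reason the Morse hypothesis is present. I would deduce it from the two--sided strengthening of the Morse condition, namely the \emph{stability} of geodesics that lie near a Morse set: by Claim~A any quasi-geodesic with endpoints on $[w,u]$ stays near $A$, and the stability supplied by the quasi-geodesic approximations of Lemmas~\ref{l1}, \ref{l2}, and \ref{cvcvcv1} (together with the characterization in Theorem~\ref{th1} and the stable/strongly-quasiconvex equivalence of Proposition~\ref{p3}) then forces such quasi-geodesics back onto a uniform neighborhood of $[w,u]$, so that $[w,u]$ is itself $\mu'$--Morse for a gauge $\mu'=\mu'(\mu,D)$; the same holds for the other two sides. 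A geodesic triangle all of whose sides are $\mu'$--Morse is $\delta(\mu')$--thin --- the center $o$ being located, as in the proof of Proposition~\ref{vqvqvq1}, by chopping the detour path $[u,v]\cup[v,w]$ into bounded--length short--long--short pieces, approximating each by a $(5,0)$--quasi-geodesic with endpoints on the appropriate Morse set, and comparing with the geodesic $[w,u]$. Once the uniform thinness is secured, the bookkeeping of the previous paragraph completes the proof.
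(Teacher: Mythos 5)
Your reduction to Claim~A and the final bookkeeping are fine, but the step you yourself flag as ``the hard part'' --- producing a center $o$ within a uniform distance $\delta(\mu,D)$ of all three sides --- is not merely hard, it is false under the hypotheses of the lemma. The breakdown is in the assertion that a side such as $[w,u]$ is itself $\mu'$--Morse because it lies in a bounded neighborhood of the Morse set $A$: a quasi-geodesic with endpoints on $[w,u]$ is indeed forced into a uniform neighborhood of $A$, but nothing forces it back into a uniform neighborhood of $[w,u]$ unless $A$ is hyperbolic --- which, by Proposition~\ref{p3}, is precisely the extra content of stability over strong quasiconvexity, and precisely what is not assumed here. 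Concretely, take $X=\RR^2$ and $A=B=C=\Z^2$: these are Morse with the constant gauge $\mu\equiv 1$ (every point of $\RR^2$ lies within $1$ of $\Z^2$) and are pairwise $1$--close, yet the points $u,v,w$ may be chosen to be the vertices of an arbitrarily large Euclidean triangle, whose sides are not uniformly Morse and which admits no point within a uniform $\delta$ of all three sides. (The lemma's conclusion is trivially true in that example, but your argument does not produce it.) So the triangle cannot be assumed thin, and the incenter strategy collapses at its key step.

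The paper's proof avoids any thinness statement by choosing the common point more cleverly. With $x\in N_D(A)\cap N_D(B)$, $y\in N_D(B)\cap N_D(C)$, $z\in N_D(C)\cap N_D(A)$, let $u$ be a point of the geodesic $[y,z]$ nearest to the opposite vertex $x$. The nearest-point property makes the two bent paths $\gamma_1=[x,u]\cup[u,y]$ and $\gamma_2=[x,u]\cup[u,z]$ into $(3,0)$--quasi-geodesics (an elementary estimate, cf.\ Lemma 2.2 of \cite{MC1}), and the three paths $\gamma_1$, $\gamma_2$, $[y,z]$ have their endpoint pairs in $N_D(B)$, $N_D(A)$, $N_D(C)$ respectively. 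Applying the Morse property of each set to the corresponding path (after extending the endpoints onto the sets, exactly as in your Claim~A) places the single point $u$, which lies on all three paths, in a uniform neighborhood of $A$, $B$, and $C$ simultaneously. No comparison between different sides of the triangle is ever required, which is why the argument survives in the non-hyperbolic setting.
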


\begin{proof}
Let \begin{align*}
     x\in N_D(A)\cap N_D(B)\\y\in N_D(B)\cap N_D(C)\\z\in N_D(C)\cap N_D(A). 
    \end{align*}
Let $u$ be a point in a geodesic $[y,z]$ connecting $y$ and $z$ such that $d(x,u)=d\bigl(x,[y,z]\bigr)$. Let $\gamma_1$ be the concatenation of two geodesics $[x,u]$ and $[u,y]$. Similarly, let $\gamma_2$ be the concatenation of two geodesics $[x,u]$ and $[u,z]$. Then $\gamma_1$ and $\gamma_2$ are both $(3,0)$--quasigeodesic. The proof of this claim is quite elementary and the reader can also see the proof of this claim in the proof of Lemma 2.2 in \cite{MC1}. Since $\gamma_1$, $\gamma_2$, and $[y,z]$ are all $(3,0)$--quasigeodesic and $A, B$, and $C$ are all $\mu$--Morse subsets of $X$, then there is a number $D'$ depending on $D$ and $\mu$ such that 
\begin{align*}
     \gamma_1 \subset N_{D'}(B)\\ \gamma_2\subset N_{D'}(A)\\ [y,z]\subset N_{D'}(C) 
    \end{align*}		
In particular, $u$ belongs to the intersection $N_{D'}(A)\cap N_{D'}(B) \cap N_{D'}(C)$. Therefore, this intersection is not empty.		
\end{proof}

\begin{lem}[Lemma 4.2 in \cite{MR2497315}]
\label{4.2HW}
Suppose $H\leq G$ has height $0 <n<\infty$ in $G$. Choose $g \in G$ so that $gH\neq H$, and let $K=H\cap gHg^{-1}$. Then $K$ has the height less than $n$ in $H$.
\end{lem}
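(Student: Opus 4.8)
The plan is to translate intersections of conjugates of $K$ inside $H$ into intersections of conjugates of $H$ inside $G$, and then invoke the hypothesis that $H$ has height exactly $n$ in $G$. To bound the height of $K$ in $H$, it suffices to take $n$ essentially distinct conjugates $h_1 K h_1^{-1}, \ldots, h_n K h_n^{-1}$ of $K$ in $H$ (so $h_i \in H$ and the cosets $h_1 K, \ldots, h_n K$ are distinct in $H$) and to show that their intersection is finite.

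The key algebraic identity driving the argument is that, since $h_i \in H$ and $K = H \cap gHg^{-1}$, we have
\[
h_i K h_i^{-1} = h_i H h_i^{-1} \cap (h_i g) H (h_i g)^{-1} = H \cap (h_i g) H (h_i g)^{-1}.
\]
Intersecting over all $i$ gives
\[
\bigcap_{i=1}^n h_i K h_i^{-1} = H \cap \bigcap_{i=1}^n (h_i g) H (h_i g)^{-1},
\]
which is precisely the intersection of the $n+1$ conjugates $H, (h_1 g) H (h_1 g)^{-1}, \ldots, (h_n g) H (h_n g)^{-1}$ of $H$ in $G$. So if these $n+1$ conjugates turn out to be essentially distinct, then the height hypothesis in $G$ forces their intersection to be finite, and we conclude that every intersection of $n$ essentially distinct conjugates of $K$ in $H$ is finite, i.e.\ $K$ has height at most $n-1 < n$ in $H$.

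The real content is therefore the bookkeeping on essential distinctness, which I expect to be the main (though elementary) step. First I would check that $(h_i g) H \neq H$ for each $i$: if $h_i g \in H$ then $g \in h_i^{-1} H = H$, so $gH = H$, contradicting the choice of $g$. This is exactly where the hypothesis $gH \neq H$ enters, and it is what produces the extra $(n+1)$-st conjugate (namely $H$ itself) that makes the height drop by one. Next I would verify that for $i \neq j$ the cosets $(h_i g) H$ and $(h_j g) H$ are distinct: setting $h = h_j^{-1} h_i \in H$, one has the chain of equivalences
\[
(h_i g) H = (h_j g) H \iff g^{-1} h g \in H \iff h \in gHg^{-1} \iff h \in K \iff h_i K = h_j K,
\]
where the third equivalence uses that $h \in H$ already, so $h \in gHg^{-1}$ is the same as $h \in H \cap gHg^{-1} = K$. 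Thus distinctness of the cosets $h_i K$ in $H$ is \emph{equivalent} to distinctness of the cosets $(h_i g) H$ in $G$. Combining these two observations shows the $n+1$ conjugates of $H$ are essentially distinct, which completes the argument via the displayed identity above.
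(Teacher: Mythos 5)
Your argument is correct and is exactly the standard proof of this fact: the paper itself does not reprove the lemma but cites it from Hruska--Wise, and their proof is the same conjugate-bookkeeping you carry out, namely rewriting $h_iKh_i^{-1}=H\cap (h_ig)H(h_ig)^{-1}$ and checking that distinctness of the cosets $h_iK$ in $H$ is equivalent to distinctness of the cosets $(h_ig)H$ in $G$, with $gH\neq H$ supplying the extra $(n+1)$-st conjugate. No gaps.
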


\begin{thm}
\label{buocdem}
Let $H$ be a strongly quasiconvex subgroup of a finitely generated group $G$. Then $H$ has bounded packing in $G$.
\end{thm}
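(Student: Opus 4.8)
The plan is to induct on the height $n$ of $H$ in $G$, which is finite by Theorem~\ref{lacloi1}. I fix a Cayley graph $\Gamma(G,S)$ and use two standing facts: since left translation is an isometry and the Morse condition is translation invariant, every coset $gH$ is Morse with the same gauge $\mu$ as $H$; and $H$ is undistorted in $G$. The goal is, for each fixed $D\ge 0$, to bound the number of distinct cosets of $H$ that are pairwise within distance $D$ by a constant $N(D)$. For the base case $n=0$ the subgroup $H$ is finite; after translating one such coset onto $H$, every other pairwise-$D$-close coset meets the ball $B(e,D+\diam H)$, so the local finiteness of $\Gamma(G,S)$ bounds the number of them.

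For the inductive step I would take distinct pairwise-$D$-close cosets $g_0H,\dots,g_mH$ and translate so that $g_0=e$. For each $i\ge 1$ closeness to $H$ gives $x_i\in H$ and $y_i\in g_iH$ with $d(x_i,y_i)\le D$; then $f_i:=x_i^{-1}y_i$ lies in the finite ball $F_0:=B(e,D)$ and $g_iH=x_if_iH$ with $x_i\in H$. Grouping the cosets by the value $f\in F_0$ partitions them into $\lvert F_0\rvert$ families. Within the family attached to a fixed $f$ one necessarily has $f\notin H$ (else $g_iH=H$), so $f H\neq H$ and the $H$--translates $x_i fH$ are in bijection with the left cosets $x_iJ$ of $J:=H\cap fHf^{-1}$ in $H$, via $xfH\leftrightarrow xJ$. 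Since $J$ is the intersection of the two essentially distinct conjugates $H$ and $fHf^{-1}$, Lemma~\ref{4.2HW} gives that $J$ has height strictly less than $n$ in $H$, and Proposition~\ref{prp1} gives that $J$ is strongly quasiconvex in $H$; hence the inductive hypothesis applies to $J\le H$ and furnishes a bounded-packing constant for $J$ in $H$.

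The heart of the argument is the geometric transfer: I must show that pairwise $D$--closeness of the cosets $g_iH=x_ifH$ in $G$ forces the corresponding cosets $x_iJ$ to be pairwise $D^*$--close in $H$, with $D^*$ independent of the family. For two such cosets I would translate by $x_i^{-1}\in H$ (which fixes the coset $H$) to reduce to the configuration $H,\,fH,\,wfH$ with $w=x_i^{-1}x_j\in H$, all pairwise $D$--close. Applying the Morse Helly property (Lemma~\ref{dacbiet}) to this triple produces a point $z$ lying in the $D'$--neighborhoods of all three; then Lemma~\ref{ll1}, applied to the fixed pair $(H,fH)$, places $z$ within a constant $L=L(f,D')$ of $J$, while the same lemma applied to $w^{-1}z$ (again using that $w\in H$ fixes $H$) places $z$ within $L$ of $wJ$. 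This yields points of $J$ and of $wJ$ at distance at most $2L$ in $G$; since both points lie in $H$ and $H$ is undistorted, they are at bounded distance $D^*$ in the intrinsic metric of $H$, giving $d_H(x_iJ,x_jJ)\le D^*$.

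Bounded packing of $J$ in $H$ then caps the number of cosets in each $f$--family by $N_J(D^*)$, and summing over the finitely many $f\in F_0$ yields $N(D)$. The main obstacle is exactly this transfer step: keeping every constant independent of the particular coset family. The point that rescues uniformity is that the conjugating elements $f$ are confined to the finite set $F_0=B(e,D)$, so the finitely many constants $L(f,D')$ (and hence $D^*$) may be taken uniform; the variable elements $x_i,w$ enter only through left translations, which are isometries that fix the reference coset $H$ and therefore introduce no new constants.
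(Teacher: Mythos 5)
Your proof is correct and follows essentially the same route as the paper's: induction on the (finite, by Theorem~\ref{lacloi1}) height, reduction to finitely many $H$--orbits of cosets indexed by the ball $B(e,D)$, the bijection with cosets of $J=H\cap fHf^{-1}$ combined with Lemma~\ref{4.2HW} to drop the height, and the same transfer step via the Morse Helly property (Lemma~\ref{dacbiet}) followed by Lemma~\ref{ll1} and undistortedness of $H$. Your treatment of the uniformity of the constants over the finite set $F_0$ is slightly more explicit than the paper's, but the argument is the same.
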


The proof of the above theorem follows the same line of argument as Theorem 4.8 in \cite{MR2497315}. At some point in the proof of Theorem 4.8 in \cite{MR2497315}, Hruska-Wise need to use the thinness of a triangle in a hyperbolic space. However, due to the lack of hyperbolicity in group $G$ and subgroup $H$ in the above theorem, we need to use Lemma \ref{dacbiet} instead. 

\begin{proof}
Fix finite generating sets $S$ and $T$ for $G$ and $H$ respectively. By Theorem \ref{lacloi1}, we know that the height of $H$ in $G$ is finite. We will prove the theorem by induction on height. If the height of $H$ in $G$ is zero, then $H$ is a finite group. Therefore, $H$ has bounded packing by Corollary 2.6 in \cite{MR2497315}. We now assume by induction that the theorem holds for every finitely generated group $G_0$ and strongly quasiconvex subgroup $H_0$ with the height of $H_0$ in $G_0$ less than the height of $H$ in $G$. 

Let $\mathcal{H}$ be a set of left cosets $gH$ which are pairwise $D$-close. We are going to prove that the cardinality of $\mathcal{H}$ is bounded by a number depending on $D$. We can translating $\mathcal{H}$ so we may assume that $H \in \mathcal{H}$. Observe that if $d(gH,H)<D$ then $gH=hxH$ for some $h\in H$ and $x\in B(1,D)$. It follows that the left cosets $gH$ intersecting $N_D(H)$ lie in at most $\abs{B(1,D)}$ distinct $H$--orbits. Thus it suffices to bound the number of elements of $\mathcal{H}$ in the orbit $H(gH)$ for each fixed $g\notin H$.

If we let $K=H\cap gHg^{-1}$, then Lemma \ref{4.2HW} shows that the height of $K$ in $H$ is less than the height of $H$ in $G$. Since $K$ is a strongly quasiconvex subgroup $H$, the inductive hypothesis applied to $K \leq H$ gives for each $D'$ a number $M'= M'(D')$ so that any
collection of $M'$ distinct cosets $hK$ in $H$ contains a pair separated by a $d_T$--distance at least $D'$. Furthermore, the proof of Lemma 4.2 in \cite{MR2497315} shows that there is a well-defined map $hgH \to hK$ taking left cosets of $H$ in the orbit of $gH$ to left cosets of $K$. A similar argument shows that this map is bijective.

In order to complete the proof, we will show that $D$--closeness of distinct cosets $h_1gH$ and $h_2gH$ in $(G,d_S)$ implies $D'$--closeness of the corresponding cosets $h_1K$ and $h_2K$ in $(H,d_T)$ for some $D'$ depending on $D$. The proof of this step in Theorem 4.8 in \cite{MR2497315} requires the thinness of a triangle. However, we need to use Lemma \ref{dacbiet} for our situation. In fact, there is $D_1$ depending on $D$ such that $N_{D_1}(H)\cap N_{D_1}(h_1gH) \cap N_{D_1}(h_2gH)$ contains an element $u$. By Lemma \ref{ll1}, there is number $D_2$ depending on $D_1$ but is independent of the choice of $h_i\in H$ such that $$N_{D_1}(H)\cap N_{D_1}(h_igH)\subset N_{D_2}(h_iK).$$ Therefore, $u\in N_{D_2}(h_1K)\cap N_{D_2}(h_2K)$. In other word, $d_S(h_1K,h_2K)<2D_2$. Since $H$ is an undistorted subgroup of $G$, then $d_T(h_1K,h_2K)<D'$ for some $D'$ depending on $D$, as desired.
\end{proof}

\begin{thm}
Let $\mathcal{H}=\{H_1,\cdots,H_{\ell}\}$ be a finite collection of strongly quasiconvex subgroups of a finitely generated group $G$. Then $\mathcal{H}$ has bounded packing in $G$. Further, $\mathcal{H}$ has finite width.
\end{thm}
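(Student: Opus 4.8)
The plan is to prove the two assertions separately, reducing each to results already established in this section. The bounded packing statement I would deduce directly from the single–subgroup case (Theorem \ref{buocdem}) by a pigeonhole argument. For each $i$ apply Theorem \ref{buocdem} to obtain a constant $N_i(D)$ so that any family of $N_i(D)$ distinct cosets of $H_i$ contains two separated by distance at least $D$; equivalently, any pairwise $D$--close family of distinct cosets of $H_i$ has at most $N_i(D)-1$ members. Put $N_{\max}(D)=\max_i N_i(D)$ and $N=\ell\,(N_{\max}(D)-1)+1$. Given any $N$ distinct cosets $g_kH_{\sigma(k)}$ that are pairwise within distance $D$, the pigeonhole principle forces at least $N_{\max}(D)\geq N_i(D)$ of them to be cosets of a single $H_i$; since these are distinct and pairwise $D$--close, this contradicts the packing bound for $H_i$. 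Hence $\mathcal{H}$ has bounded packing, with packing constant $N$ for each $D$. This step is routine and is the cheapest part.

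For finite width I would first set up the geometric dictionary relating infinite conjugate intersections to fellow--traveling of cosets. Writing $L_i=g_iH_{\sigma(i)}$ and $J_{ij}=g_iH_{\sigma(i)}g_i^{-1}\cap g_jH_{\sigma(j)}g_j^{-1}$, I claim $J_{ij}$ is infinite if and only if $N_r(L_i)\cap N_r(L_j)$ is unbounded for some $r$. One direction is immediate from Lemma \ref{ll1}: if $N_r(L_i)\cap N_r(L_j)$ is infinite then $N_{r'}(J_{ij})$ is infinite, forcing $J_{ij}$ infinite. For the converse, observe that $J_{ij}$ stabilizes both $L_i$ and $L_j$ under left multiplication, so the orbit $J_{ij}g_i$ lies in $L_i$ and, since $d(\gamma g_i,L_j)\leq d(\gamma g_i,\gamma g_j)=d(g_i,g_j)$ for every $\gamma\in J_{ij}$, it is an unbounded subset of $N_{d(g_i,g_j)}(L_i)\cap N_{d(g_i,g_j)}(L_j)$. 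By Proposition \ref{prp1} each such $J_{ij}$ is itself strongly quasiconvex, so these fellow--traveling sets are genuinely Morse with a common Morse gauge (left translation being an isometry).

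With this dictionary in hand I would bound the size of a clique of distinct cosets whose pairwise conjugate intersections are infinite, arguing by induction on the height of $\mathcal{H}$, which is finite by Theorem \ref{lacloi1}. This mirrors the proof of Theorem \ref{buocdem}. After translating so that one coset is $H_{\sigma(1)}$ itself, each remaining coset $L_j$ produces an infinite, strongly quasiconvex intersection $K_j=H_{\sigma(1)}\cap g_jH_{\sigma(j)}g_j^{-1}$ of strictly smaller height inside $H_{\sigma(1)}$ (a collection analogue of Lemma \ref{4.2HW}). The assignment $L_j\mapsto K_j$ pushes the clique into $H_{\sigma(1)}$, where the inductive hypothesis bounds the number of distinct $K_j$, while the bounded packing just proved bounds the number of cosets mapping to any fixed $K_j$. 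Here Lemma \ref{dacbiet}, turning pairwise closeness of Morse sets into a common neighborhood point, and Proposition \ref{vqvqvq1}, controlling geodesic rays near a Morse set, play exactly the role that thinness of triangles plays in the hyperbolic arguments of Gitik--Mitra--Rips--Sageev and Hruska--Wise.

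The hard part will be the \emph{localization}. The fellow--traveling distance $d(g_i,g_j)$ and the position of the fellow--traveled region can grow with the clique, so one cannot apply bounded packing at a single ball and finish in one stroke; it is precisely the finite--height induction that tames this growth, reducing the height at each stage and letting packing bound the branching. The delicate bookkeeping is to formulate and verify the collection version of the height--reduction lemma for the varying subgroups $K_j$ and to confirm that these $K_j$ constitute a family to which the inductive hypothesis legitimately applies. This is the step where the Morse machinery of this section (Lemmas \ref{dacbiet}, \ref{ll1} and Proposition \ref{vqvqvq1}) must substitute, uniformly, for hyperbolicity.
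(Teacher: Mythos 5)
Your bounded packing argument is correct and is essentially the paper's: the paper simply notes that a pairwise $D$--close collection $\mathcal{M}$ contains at most $N_i$ cosets of each $H_i$ by Theorem \ref{buocdem} and hence at most $\Sigma N_i$ elements; your pigeonhole phrasing is the same computation.

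The finite width part has a genuine gap, and moreover misses the short route the paper takes. Your proposed induction on height is never actually carried out, and its key step is suspect: after replacing each coset $L_j$ by $K_j=H_{\sigma(1)}\cap g_jH_{\sigma(j)}g_j^{-1}$, you want to apply an inductive hypothesis to the family $\{K_j\}$ inside $H_{\sigma(1)}$, but the clique condition does not descend --- pairwise infiniteness of $g_jH_{\sigma(j)}g_j^{-1}\cap g_{j'}H_{\sigma(j')}g_{j'}^{-1}$ says nothing about the \emph{triple} intersections $K_j\cap K_{j'}$, so the $K_j$ need not form a configuration to which any width-type inductive hypothesis applies; the ``collection analogue of Lemma \ref{4.2HW}'' you invoke is left unformulated and is exactly where this breaks. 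The paper instead finishes in one stroke, and the ``localization'' problem you flag is illusory. Arguing as in Theorem \ref{lacloi1}: if two conjugates $g_iH_{\sigma(i)}g_i^{-1}$ and $g_jH_{\sigma(j)}g_j^{-1}$ have infinite intersection, that intersection is an infinite strongly quasiconvex subgroup, so there is a geodesic ray in a finite neighborhood of it, hence in finite neighborhoods of both $g_iH_{\sigma(i)}$ and $g_jH_{\sigma(j)}$; Proposition \ref{vqvqvq1} then forces the tail of that ray into the $D$--neighborhood of \emph{both} cosets for a constant $D$ depending only on the Morse gauges of the $H_i$, not on the pair. So any two cosets in a width configuration are within distance $2D$ of each other for this uniform $D$, and the bounded packing statement just proved (which is phrased in terms of pairwise distances between cosets, not a common ball) immediately bounds the cardinality of the configuration by $N(2D+1)$. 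Your own dictionary only produced the pair-dependent closeness constant $d(g_i,g_j)$, which is why you were driven to the unnecessary and unverified induction; the missing ingredient is the uniformity supplied by Proposition \ref{vqvqvq1}.
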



\begin{proof}
First, let $\mathcal{M}$ be a collection of coset of subgroups in $\mathcal{H}$ which are pairwise $D$--close. By Theorem \ref{buocdem}, for each $i=1,2,\cdots,\ell$ there is a $N_i\geq 0$ such that the number of coset of $H_i$ in $\mathcal{M}$ is at most $N_i$. Therefore, the number of elements of $\mathcal{M}$ is at most $\Sigma N_i$. Thus, $\mathcal{H}$ has bounded packing.

Arguing as in the proof of Theorem \ref{lacloi1}, we see that any two conjugates of subgroups in $\mathcal{H}$ with infinite intersection have cosets uniformly close together. Therefore, $\mathcal{H}$ has finite width.
\end{proof}

\section{Morse boundaries and strong quasiconvexity}
\label{bvsq}

We recall basic properties of a quasiconvex subgroup in a hyperbolic group: the subgroup is also a hyperbolic group and the subgroup inclusion induces a topological embedding of the Gromov boundary of the subgroup into the Gromov boundary of the ambient group. Moreover, the image of this topological embedding is identical to the limit set of the subgroup inside the ambient group. Also, the intersection of two quasiconvex subgroups is again a quasiconvex subgroup and the limit set of the intersection of two quasiconvex subgroups is equal to the intersection of limit sets of the two subgroups. In this section, we prove some analogous properties for strongly quasiconvex subgroups in general. 

We first review the concept of Morse boundary in \cite{MC1}. 

\begin{defn}
Let $\mathcal{M}$ be the set of all Morse gauges. We put a partial ordering on $\mathcal{M}$ so that for two Morse gauges $N, N' \in \mathcal{M}$, we say $N \leq N'$ if and only if $N(K,L) \leq N'(K, L)$ for all $K, L$.
\end{defn}

\begin{defn}
Let $X$ be a proper geodesic space. The \emph{Morse boundary} of $X$ with basepoint $p$, denoted $\partial_M {X_p}$, is defined to be the set of all equivalence classes of Morse geodesic rays in $X$ with initial point $p$, where two rays $\alpha,\alpha'\!:[0,\infty)\rightarrow X$ are equivalent if there exists a constant $K$ such that $d_X\bigl(\alpha(t), \alpha'(t)\bigr) < K$ for all $t > 0$. We denote the equivalence class of a ray $\alpha$ in $\partial_M {X_p}$ by $[\alpha]$.

On $\partial_M {X_p}$, we could build a topology as follows:

Consider the subset of the Morse boundary 
\[\partial_M^N {X_p}=\set{x}{\text{The class $x$ contains at least one $N$--Morse geodesic rays $\alpha$ with $\alpha(0)=p$}}.\]

We define convergence in $\partial_M^N {X_p}$ by: $x_n \rightarrow x$ as $n \rightarrow \infty$ if and only if there exists $N$--Morse geodesic rays $\alpha_n$ with $\alpha_n(0) = p$ and $[\alpha_n] = x_n$ such that every subsequence of ${\alpha_n}$ contains a subsequence that converges uniformly on compact sets to a geodesic ray $\alpha$ with $[\alpha]=x$. The closed subsets $F$ in $\partial_M^N {X_p}$ are those satisfying the condition
\[\bigl[\{x_n\} \subset F \text{ and } x_n \rightarrow x\bigr] \implies x\in F.\]
We equip the the Morse boundary $\partial_M {X_p}$ with the direct limit topology $$\partial_M {X_p}=\lim_{\overrightarrow{\mathcal{M}}} \partial_M^N {X_p}.$$

Let $A$ be a subset of $X$ with basepoint $p$. The \emph{limit set} $\Lambda A$ of $A$ in $\partial_M {X_p}$ is the set of all points $c$ in $\partial_M {X_p}$ represented by rays based at $p$ that lie in some finite neighborhood of $A$. 
\end{defn}

\begin{rem}
The direct limit topology on $\partial_M {X_p}$ is independent of the basepoint $p$ (see Proposition 3.5 in \cite{MC1}). Therefore, we can assume the
basepoint is fixed, suppress it from the notation and write $\partial_M {X}$. Moreover, the Morse boundary is a quasi-isometry invariant (see Proposition 3.7 \cite{MC1}). Therefore, we define the \emph{Morse boundary} of a finitely generated group $G$, denoted $\partial_M G$, as the Morse boundary of its Cayley graph. We also define the \emph{limit set} of a subgroup $H$ of $G$ in $\partial_M G$ accordingly.

We define \emph{an action} of $G$ on $\partial_M G$ as follows. For each element $g$ in $G$ and $[\alpha]$ in $\partial_M G$, $g[\alpha]=[\beta]$, where $\alpha$ and $\beta$ are two rays at the basepoint in some Cayley graph of $G$ such that the Hausdorff distance between $g\alpha$ and $\beta$ is finite.
\end{rem}

\begin{defn}
Let $X$ and $Y$ be proper geodesic metric spaces and $p \in X$, $p' \in Y$.
We say that $f\!:\partial_M {X_p} \rightarrow \partial_M {Y_{p'}}$ is \emph{Morse preserving} if given $N$ in $\mathcal{M}$ there exists
an $N'$ in $\mathcal{M}$ such that $f$ injectively maps $\partial_M^N {X_p}$ to $\partial_M^{N'} {Y_{p'}}$.
\end{defn}

The following proposition shows that if a quasi-isometric embedding defines a strongly quasiconvex subspace then it induces a Morse preserving map. This proposition is a key lemma for the proof of the fact that inclusion map of a strongly quasiconvex subgroup into a finitely generated group induces a topological embedding on Morse boundaries.
 
\begin{prop}
\label{ppp1}
Let $X$, $A$ be two proper geodesic spaces and $A$ is strongly quasiconvex in $X$ via the map $f$. Then for each Morse gauge $N$ there is another Morse gauge $N'$ such that for every $N$--Morse geodesic ray $\gamma\!:[0,\infty)\to A$ there is an $N'$--Morse geodesic ray with basepoint $f\bigl(\gamma(0)\bigr)$ in $X$ bounded Hausdorff distance from $f(\gamma)$ (.i.e. $f$ induces a Morse preserving map). 
\end{prop}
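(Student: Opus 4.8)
The plan is to split the statement into two parts: first, that the image $f(\gamma)$ is itself a Morse subset of $X$ with a Morse gauge controlled only by $N$; and second, the standard passage from a Morse quasigeodesic ray to a nearby Morse geodesic ray. Write $f$ for the $(K,L)$--quasi-isometric embedding realizing the strong quasiconvexity of $A$, so that $f(A)$ is $\mu$--Morse in $X$ for a Morse gauge $\mu$ depending only on $X$, $A$, and $f$. The key observation is that an $N$--Morse geodesic ray $\gamma\colon[0,\infty)\to A$ is precisely the data of a quasi-isometric (indeed isometric) embedding of the geodesic space $[0,\infty)$ into $A$ whose image is $N$--Morse; that is, $[0,\infty)$ is strongly quasiconvex in $A$ via $\gamma$. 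Applying Proposition~\ref{pp1}(2) to the composition $[0,\infty)\overset{\gamma}{\to}A\overset{f}{\to}X$ immediately gives that $[0,\infty)$ is strongly quasiconvex in $X$ via $f\circ\gamma$, i.e. that $f(\gamma)$ is a Morse subset of $X$. Crucially, inspecting the proof of Proposition~\ref{pp1}(2) shows that the resulting Morse gauge $N_1$ of $f(\gamma)$ is assembled only from $N$, the gauge $\mu$, and the constants $K,L$; hence $N_1$ depends only on $N$ and on the fixed data $X$, $A$, $f$, uniformly over all $N$--Morse rays $\gamma$.

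It then remains to replace the Morse $(K,L)$--quasigeodesic ray $f(\gamma)$ by a genuine geodesic ray. Since $X$ is proper, I would take the geodesic segments $\sigma_n=[f(\gamma(0)),f(\gamma(n))]$ and extract, by Arzel\`a--Ascoli, a subsequence converging uniformly on compacta to a geodesic ray $\alpha$ with $\alpha(0)=f(\gamma(0))$. Each $\sigma_n$ is a geodesic with both endpoints on the $N_1$--Morse set $f(\gamma)$, so $\sigma_n$ lies in the $N_1(1,0)$--neighborhood of $f(\gamma)$; passing to the limit places $\alpha$ in that same neighborhood of $f(\gamma)$, which gives one side of the desired Hausdorff bound.

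The main obstacle is the remaining side together with the control on the Morse gauge of $\alpha$: one must show that $f(\gamma)$ stays uniformly close to $\alpha$ and that $\alpha$ is $N'$--Morse for an $N'$ depending only on $N_1$ (hence only on $N$ and the fixed data). Both follow from the standard fact that a Morse quasigeodesic is stable, so that $f(\gamma)|_{[0,n]}$ and $\sigma_n$ are Hausdorff close with a bound $H=H(N_1,K,L)$ independent of $n$ (see \cite{MC1}, cf.\ \cite{MR3339446}); letting $n\to\infty$ then bounds the Hausdorff distance between $f(\gamma)$ and $\alpha$ by some $H'=H'(N_1,K,L)$. Once this two-sided bound is in hand, the Morse gauge of $\alpha$ is routine: any $(K',C')$--quasigeodesic with endpoints on $\alpha$ can be extended by two segments of length at most $H'$ to a quasigeodesic with endpoints on $f(\gamma)$, whose constants depend only on $K',C',H'$, and applying the $N_1$--Morse property of $f(\gamma)$ and translating back within $H'$ of $\alpha$ yields an $N'$ depending only on $N_1$, $K$, and $L$. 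The delicate point throughout is uniformity---ensuring that every constant is controlled by $N$ and the fixed data alone, never by the individual ray $\gamma$---since it is exactly this uniformity that makes the induced map on Morse boundaries Morse preserving.
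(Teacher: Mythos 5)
Your proof is correct and takes essentially the same route as the paper's: both obtain the uniform Morse gauge $N_1$ of $f(\gamma)$ from the proof of Proposition \ref{pp1}(2), extract a limiting geodesic ray $\alpha$ based at $f\bigl(\gamma(0)\bigr)$ via properness and Arzel\`a--Ascoli applied to geodesic segments with endpoints on $f(\gamma)$, and then bound the Hausdorff distance between $\alpha$ and $f(\gamma)$ to conclude that $\alpha$ is $N'$--Morse with $N'$ depending only on $N$ and the fixed data. Your write-up is merely more explicit than the paper's about the two-sided Hausdorff estimate and the final transfer of the Morse gauge to $\alpha$, steps the paper asserts tersely by appeal to standard stability of Morse quasi-geodesics.
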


\begin{proof}
Since $f$ is a quasi-isometry embedding, $f(\gamma)$ is a $(K,L)$--quasi-geodesic, where $K$, $L$ only depends on the quasi-isometry embedding constants of $f$. Also $f(\gamma)$ is Morse with the Morse gauge $N_1$ depending only on the Morse gauge $N$ and the map $f$ (see the proof of Statement (2) of Proposition \ref{pp1}). Let $(y_n)$ be a sequence of points in $f(\gamma)$ such that $d(f\bigl(\gamma(0),y_n\bigr)$ approaches infinity as $n$ approaches infinity. For each $n$ let $\alpha_n$ be a geodesic segment connecting the basepoint $f\bigl(\gamma(0)\bigr)$ and $y_n$. Since $f(\gamma)$ is a $(K,L)$--Morse quasi-geodesic with Morse gauge $N_1$, each geodesic segment $\alpha_n$ lies in some $D_1$--neighborhood of $f(\gamma)$, where $D_1$ only depends on the Morse gauge $N_1$. Since $X$ is a proper geodesic space, some subsequence of $(\alpha_n)$ converges to a ray $\alpha$ based at $f\bigl(\gamma(0)\bigr)$. Moreover, the Hausdorff distance between $\alpha$ and $f(\gamma)$ is bounded above by some constant $D$ depending only on $K$, $L$, and the Morse gauge $N_1$. This implies that $D$ only depends on the Morse gauge $N$ and the map $f$. Since the Hausdorff distance between $f(\gamma)$ and $\alpha$ is $D$ and $f(\gamma)$ is $N_1$--Morse, the ray $\alpha$ is $N'$--Morse where $N'$ only depends on $D$ and the Morse gauge $N_1$. Therefore, $N'$ only depends on the Morse gauge $N$ and the map $f$.
\end{proof}

\begin{thm}
\label{thth1}
Let $X$, $A$ be two proper geodesic spaces and $A$ is strongly quasiconvex in $X$ via the map $f$. Then $f$ induces a topological embedding $\partial_M f\!:\partial_M A\to \partial_M X$ such that $\partial_M f(\partial_M A)=\Lambda f(A)$. 
\end{thm}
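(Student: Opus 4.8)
The plan is to construct the map directly from Proposition~\ref{ppp1}, verify that it is a well-defined continuous injection, identify its image with $\Lambda f(A)$, and finally promote it to a homeomorphism onto its image by exploiting compactness of the Morse strata.

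\emph{Definition and basic properties.} First I would fix basepoints $p\in A$ and $f(p)\in X$, and define $\partial_M f$ on a class $[\gamma]\in\partial_M A$ represented by an $N$--Morse geodesic ray $\gamma$ based at $p$ by $\partial_M f([\gamma])=[\alpha]$, where $\alpha$ is the $N'$--Morse geodesic ray based at $f(p)$ at bounded Hausdorff distance from $f(\gamma)$ furnished by Proposition~\ref{ppp1}. This is well defined: any two geodesic rays at finite Hausdorff distance whose basepoints are at bounded distance are asymptotic, so $[\alpha]$ is independent of the choice of $\alpha$; and if $\gamma\sim\gamma'$ then $f(\gamma)$ and $f(\gamma')$ are at finite Hausdorff distance since $f$ is coarsely Lipschitz, forcing $[\alpha]=[\alpha']$. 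Proposition~\ref{ppp1} simultaneously exhibits the assignment $N\mapsto N'$ showing $\partial_M f$ is \emph{Morse preserving}, i.e. it carries $\partial_M^N A$ into $\partial_M^{N'}X$. Injectivity is the same argument run backwards through the quasi-isometric embedding $f$: if $[\alpha]=[\alpha']$ then $f(\gamma),f(\gamma')$ are at finite Hausdorff distance, and the lower bound $d_A(x,y)\le K\,d_X(f(x),f(y))+KL$ pushes this back to $\gamma,\gamma'$, which are then asymptotic.

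\emph{Continuity.} Since $\partial_M A$ carries the direct limit topology over its strata, it suffices to prove continuity of $\partial_M f$ on each $\partial_M^N A$. So I would take $x_n\to x$ in $\partial_M^N A$, represented by $N$--Morse rays $\gamma_n$ based at $p$ having a subsequential uniform--on--compacts limit $\gamma$ representing $x$, and set $\alpha_n=\partial_M f(x_n)$ and $\alpha=\partial_M f(x)$, all $N'$--Morse rays based at $f(p)$. By properness of $X$ and Arzel\`a--Ascoli every subsequence of $(\alpha_n)$ has a subsubsequence converging uniformly on compacts to a geodesic ray $\alpha_\infty$, and the point is to check $[\alpha_\infty]=[\alpha]$. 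Here I would use the \emph{uniform} Hausdorff bound $D=D(N,f)$ of Proposition~\ref{ppp1}: each $\alpha_n$ is $D$--close to $f(\gamma_n)$, and for a fixed parameter $t$ the matching point $f(\gamma_n(s))$ satisfies $\tfrac1K s-L\le d_X(f(p),f(\gamma_n(s)))\le t+D$, so $s$ is bounded uniformly in $n$; passing to the limit and using that $f$ is coarsely Lipschitz then shows $\alpha_\infty$ is at finite Hausdorff distance from $f(\gamma)$, whence $[\alpha_\infty]=[\alpha]$. This is exactly the convergence criterion, so $\alpha_n\to\alpha$ in $\partial_M^{N'}X$.

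\emph{Embedding and identification of the image.} To upgrade $\partial_M f$ to a topological embedding I would invoke that, since $A$ is proper, each stratum $\partial_M^N A$ is compact (\cite{MC1}), while $\partial_M^{N'}X$ is Hausdorff; a continuous injection from a compact space into a Hausdorff space is a homeomorphism onto its image, so $\partial_M f$ restricts to an embedding on every stratum, and being continuous and Morse preserving this assembles to an embedding on the direct limit. For the image, $\partial_M f(\partial_M A)\subseteq\Lambda f(A)$ is immediate because $\alpha$ lies in a finite neighborhood of $f(\gamma)\subseteq f(A)$. For the reverse inclusion I would take $c\in\Lambda f(A)$ represented by a Morse ray $\alpha$ lying in a finite neighborhood of $f(A)$; Proposition~\ref{vqvqvq1} applied to the Morse subset $f(A)$ upgrades this to a \emph{uniform} neighborhood, so choosing for each point of $\alpha$ a nearby point of $A$ yields a quasi-geodesic ray $Q$ in $A$ with $f(Q)$ at finite Hausdorff distance from $\alpha$. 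As $\alpha$ is Morse, $f(Q)$ is Morse in $X$, and pulling quasi-geodesics back through $f$ (as in Proposition~\ref{pp1}) shows $Q$ is Morse in $A$; a limiting geodesic ray $\gamma$ asymptotic to $Q$ is then Morse in $A$ with $\partial_M f([\gamma])=[\alpha]=c$.

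\emph{Main obstacle.} The bookkeeping of well-definedness and injectivity is routine manipulation of the quasi-isometry constants. The genuine difficulties are twofold: the continuity argument, where the possible discontinuity of $f$ forces one to track matching points only coarsely and to bound their arc-length parameters uniformly in $n$; and the surjectivity onto $\Lambda f(A)$, whose essential inputs are Proposition~\ref{vqvqvq1} (to replace a finite by a uniform neighborhood of $f(A)$) together with the fact that $f$ both preserves and reflects Morseness of quasi-geodesics. The structurally cleanest step, namely that $\partial_M f$ is a homeomorphism onto its image, relies entirely on compactness of the Morse strata, so the argument hinges on having that property from \cite{MC1}.
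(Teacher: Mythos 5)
Your proposal is correct and follows essentially the same route as the paper: the map is defined exactly as in the paper via Proposition~\ref{ppp1}, and the identification $\partial_M f(\partial_M A)=\Lambda f(A)$ is proved the same way (pull a Morse ray in a finite neighborhood of $f(A)$ back to a Morse quasi-geodesic in $A$ using that $f$ both preserves and reflects Morseness, then replace it by an asymptotic geodesic ray). The only difference is that you unpack the topological-embedding step --- continuity on strata plus compactness of $\partial_M^N A$ --- which the paper simply delegates to Proposition 4.2 of \cite{MC1}, and you invoke Proposition~\ref{vqvqvq1} in the surjectivity argument where a finite (not necessarily uniform) neighborhood already suffices.
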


\begin{proof}
The fact that $f$ induces a topological embedding $\partial_M f\!:\partial_M A\to \partial_M X$ is a direct result of Proposition \ref{ppp1} via Proposition 4.2 in \cite{MC1}. However, we need to check that $\partial_M f(\partial_M A)=\Lambda f(A)$.

We first recall the construction of the map $\partial_M f$ from Proposition 4.2 in \cite{MC1}. Fix a point $p$ in $A$ and let $q=f(p)$. By Proposition \ref{ppp1}, for each Morse gauge $N$ there is another Morse gauge $N'$ such that for every $N$--Morse geodesic ray $\alpha$ based at $p$ there is an $N'$--Morse geodesic ray $\beta$ with basepoint $q$ in $X$ bounded Hausdorff distance from $f(\alpha)$. We define $\partial_M f\bigl([\alpha]\bigr)=[\beta]$, then $\partial_M f\!:\partial_M A\to \partial_M X$ is a topological embedding (see the proof of Proposition 4.2 in \cite{MC1}). Obviously, $\partial_M f(\partial_M A)\subset\Lambda f(A)$ by the construction. We now prove the opposite inclusion.

Let $c$ be an arbitrary element in $\Lambda f(A)$. There is a Morse geodesic ray $\beta_1$ based at $q$ in $X$ such that $c=[\beta_1]$ and $\beta_1$ lies in some finite neighborhood of $f(A)$. Since $f$ is a quasi-isometric embedding, there is a quasi-geodesic $\alpha_1$ based at $p$ in $A$ such that the Hausdorff distance between $f(\alpha_1)$ and $\beta_1$ is finite. Therefore, $f(\alpha_1)$ is also a Morse quasi-geodesic. This implies that $\alpha_1$ is also a Morse quasi-geodesic in $A$ by Statement (1) of Proposition \ref{pp1}. Also, $A$ is a proper geodesic space. Then by a similar argument as in the proof of Proposition \ref{ppp1} there is a Morse geodesic ray $\alpha$ based at $p$ in $X$ such that the Hausdorff distance between $\alpha$ and $\alpha_1$ is finite. Thus, the Hausdorff distance between $f(\alpha)$ and $f(\alpha_1)$ is also finite. This implies that the Hausdorff distance between $f(\alpha)$ and $\beta_1$ is finite. By the construction of $\partial_M f$ we have $\partial_M f\bigl([\alpha]\bigr)=[\beta]$ for some geodesic ray $\beta$ based at $q$ such that the Hausdorff distance between $f(\alpha)$ and $\beta$ is finite. Therefore, Hausdorff distance between $\beta$ and $\beta_1$ is finite. This implies that $c=[\beta_1]=[\beta]= \partial_M f\bigl([\alpha]\bigr)\in \partial_M f(\partial_M A)$. Therefore, $\partial_M f(\partial_M A)=\Lambda f(A)$.

\end{proof}

We now state the main theorem of this section.

\begin{thm}
Let $G$ be a finitely generated group. Then
\begin{enumerate}
\item If $H$ is a finitely generated strongly quasiconvex subgroup of $G$, then the inclusion $i:H\inclusion G$ induces a topological embedding $\hat{i}\!:\partial_M H\to \partial_M G$ such that $\hat{i}(\partial_M H)=\Lambda H$. 
\item If $H_1$ and $H_2$ are finitely generated strongly quasiconvex subgroups of $G$, then $H_1\cap H_2$ is strongly quasiconvex in $G$ and $\Lambda (H_1\cap H_2)=\Lambda H_1\cap \Lambda H_2$.
\end{enumerate}
\end{thm}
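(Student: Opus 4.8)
The plan is to obtain statement (1) as an essentially immediate application of Theorem \ref{thth1}, and to prove statement (2) by combining Proposition \ref{prp1} with the coset-intersection estimate of Lemma \ref{ll1}.

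For (1), I would fix finite generating sets $T$ and $S$ for $H$ and $G$. Since $H$ is strongly quasiconvex it is finitely generated and undistorted, so the inclusion $i\!:H\inclusion G$ extends to a quasi-isometric embedding $f\!:\Gamma(H,T)\to\Gamma(G,S)$ whose image lies at finite Hausdorff distance from $H$ in $\Gamma(G,S)$. Because $H$ is Morse in $\Gamma(G,S)$, the image $f\bigl(\Gamma(H,T)\bigr)$ is Morse as well, i.e. $\Gamma(H,T)$ is strongly quasiconvex in $\Gamma(G,S)$ via $f$. Both Cayley graphs are proper geodesic spaces, so Theorem \ref{thth1} supplies a topological embedding $\hat{i}=\partial_M f\!:\partial_M H\to\partial_M G$ with $\hat{i}(\partial_M H)=\Lambda f\bigl(\Gamma(H,T)\bigr)$. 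It then remains only to identify this limit set with $\Lambda H$: since $f\bigl(\Gamma(H,T)\bigr)$ and $H$ are at finite Hausdorff distance, a geodesic ray lies in a finite neighborhood of one exactly when it lies in a finite neighborhood of the other, whence $\Lambda f\bigl(\Gamma(H,T)\bigr)=\Lambda H$ and $\hat{i}(\partial_M H)=\Lambda H$.

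For (2), the strong quasiconvexity of $H_1\cap H_2$ in $G$ is immediate from Proposition \ref{prp1}, so the content is the equality of limit sets. The inclusion $\Lambda(H_1\cap H_2)\subseteq\Lambda H_1\cap\Lambda H_2$ is formal, as any ray contained in a finite neighborhood of $H_1\cap H_2$ is contained in a finite neighborhood of each $H_i$. For the reverse inclusion I would take $c\in\Lambda H_1\cap\Lambda H_2$ and choose a single Morse geodesic ray $\gamma$ based at the identity representing $c$; since all representatives of $c$ based at the identity lie at uniformly bounded Hausdorff distance from one another, this one ray $\gamma$ lies simultaneously in $N_L(H_1)$ and $N_L(H_2)$ for a common finite $L$. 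Applying Lemma \ref{ll1} with $x=y=1$, $xH=H_1$, $yK=H_2$ produces a constant $L'$ with
\[N_L(H_1)\cap N_L(H_2)\subseteq N_{L'}(H_1\cap H_2),\]
so $\gamma\subseteq N_{L'}(H_1\cap H_2)$ and therefore $c=[\gamma]\in\Lambda(H_1\cap H_2)$.

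The only genuinely nontrivial point is this reverse inclusion: a priori, a ray asymptotic to both $H_1$ and $H_2$ need not run uniformly close to their intersection, and there is no hyperbolicity available to force this. Lemma \ref{ll1} is exactly the quantitative handle on how the $L$--neighborhoods of the two cosets overlap that resolves the difficulty, and once the representative of $c$ is pinned to a single ray using bounded Hausdorff distance between equivalent rays, the argument closes. Everything else follows formally from Theorem \ref{thth1} and Proposition \ref{prp1}.
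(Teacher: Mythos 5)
Your proposal is correct and follows essentially the same route as the paper: statement (1) via Theorem \ref{thth1}, the strong quasiconvexity of $H_1\cap H_2$ via Proposition \ref{prp1}, and the reverse inclusion of limit sets by pinning a single ray representing $c$ into a common neighborhood of both subgroups and invoking Lemma \ref{ll1}. The only difference is that you spell out two points the paper leaves implicit (the identification $\Lambda f\bigl(\Gamma(H,T)\bigr)=\Lambda H$ and the fact that equivalent rays based at the same point are uniformly close, so one ray works for both $H_1$ and $H_2$), which is a harmless elaboration.
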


\begin{proof}
Statement (1) is a direct result of Theorem \ref{thth1}. Therefore, we only need to prove Statement (2). The fact that $H_1\cap H_2$ is strongly quasiconvex in $G$ is a result of Proposition \ref{prp1}. Since $H_1\cap H_2\subset H_1$, $\Lambda (H_1\cap H_2)\subset \Lambda H_1$. Similarly, $\Lambda (H_1\cap H_2)\subset \Lambda H_2$. Therefore, $\Lambda (H_1\cap H_2)\subset\Lambda H_1\cap \Lambda H_2$. We now let $c$ be an arbitrary element in $\Lambda H_1\cap \Lambda H_2$. Then there is a Morse geodesic ray $\gamma$ based at $e$ in some Cayley graph $\Gamma(G,S)$ such that $[\gamma]=c$ and $\gamma$ lies in some finite neighborhoods of $H_1$ and $H_2$. Therefore, $\gamma$ also lies in a finite neighborhood of $H_1\cap H_2$ by Lemma \ref{ll1}. This implies that $c$ is also an element in $\Lambda (H_1\cap H_2)$.
\end{proof}

\section{Strong quasiconvexity and stability in relatively hyperbolic groups}
\label{strhg}

In this section, we investigate strongly quasiconvex subgroups and stable subgroups in relatively hyperbolic groups. We first recall the concepts of coned off Cayley graphs and relative hyperbolic groups.

\begin{defn}
Given a finitely generated group $G$ with Cayley graph $\Gamma(G,S)$ equipped with the path metric and a finite collection $\PP$ of subgroups of G, one can construct the \emph{coned off Cayley graph} $\hat{\Gamma}(G,S,\PP)$ as follows: For each left coset $gP$ where $P\in \PP$, add a vertex $v_{gP}$, called a \emph{peripheral vertex}, to the Cayley graph $\Gamma(G,S)$ and for each element $x$ of $gP$, add an edge $e(x,gP)$ of length 1/2 from $x$ to the vertex $v_{gP}$. This results in a metric space that may not be proper (i.e. closed balls need not be compact).
\end{defn}

\begin{defn} [Relatively hyperbolic group]
\label{rel}
A finitely generated group $G$ is \emph{hyperbolic relative to a finite collection $\PP$ of subgroups of $G$} if the coned off Cayley graph is $\delta$--hyperbolic and \emph{fine} (i.e. for each positive number $n$, each edge of the coned off Cayley graph is contained in only finitely many circuits of length $n$).

Each group $P\in \PP$ is a \emph{peripheral subgroup} and its left cosets are \emph{peripheral left cosets} and we denote the collection of all peripheral left cosets by $\Pi$.

\end{defn}

We now review some known results on relatively hyperbolic subgroups including undistorted subgroups and peripheral subgroups in relatively hyperbolic groups, relations between quasigeodesics in Cayley graphs and coned off Cayley graphs.

\begin{thm}\cite[Theorem 1.5 and Theorem 9.1]{Hruska10}
\label{thhk}
Let $(G,\PP)$ be a finitely generated relatively hyperbolic group and let $H$ be a finitely generated undistorted subgroup of $G$. Let $S$ be some (any) finite generating set for $G$. Then
\begin{enumerate}
 \item There is a constant $A=A(S)$ such that for each geodesic $c$ in the coned off Cayley graph $\hat{\Gamma}(G,S,\PP)$ connecting two points of $H$, every $G$--vertex of $c$ lies within the $A$--neighborhood of $H$ with respect to the metric $d_S$.
\item All subgroups of $H$ of the form $H\cap P^{g}$ where $g\in G$, $P\in\PP$ and $\abs{H\cap P^{g}}=\infty$ lie in only finitely many conjugacy classes in $H$. Furthermore, if $\OO$ is a set of representatives of these conjugacy classes then $(H,\OO)$ is relatively
hyperbolic.
\end{enumerate}
\end{thm}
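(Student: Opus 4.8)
The plan is to treat the two statements in turn, reading Statement (1) as the assertion that a finitely generated undistorted subgroup is \emph{relatively quasiconvex} as measured in the coned off Cayley graph, and Statement (2) as the assertion that such a subgroup inherits a relatively hyperbolic structure from $(G,\PP)$. This is the content of \cite{Hruska10}, so the goal here is only to lay out the line of argument and isolate the difficulty.

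For Statement (1), fix a finite generating set $T$ of $H$, so that undistortedness makes the inclusion $\Gamma(H,T)\to\Gamma(G,S)$ a $(\lambda,\epsilon)$--quasi-isometric embedding. Given $h_1,h_2\in H$, I would compare two paths joining them: a geodesic $c$ in the $\delta$--hyperbolic, fine coned off graph $\hat\Gamma=\hat\Gamma(G,S,\PP)$, and the image in $\hat\Gamma$ of a $d_T$--geodesic $\eta$ of $H$. First I would replace each peripheral edge of $c$, which passes through a cone vertex $v_{gP}$ and enters and exits a coset $gP$ at points $x,y$, by a $d_S$--geodesic of $gP$ from $x$ to $y$; this produces a genuine path in $\Gamma(G,S)$ whose $G$--vertices include all $G$--vertices of $c$. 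Next I would invoke the Bounded Coset Penetration property together with the hyperbolicity of $\hat\Gamma$ to show that $c$ and $\eta$ track one another in $\hat\Gamma$ and penetrate the same peripheral cosets to comparable depth. Finally, undistortedness of $H$ is precisely what converts $\hat\Gamma$--closeness of a $G$--vertex $u$ of $c$ to $\eta$ into $d_S$--closeness of $u$ to $H$, yielding the uniform constant $A=A(S)$.

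The main obstacle in (1) is exactly this last conversion: the metric of $\hat\Gamma$ collapses the potentially unbounded $d_S$--diameter of each peripheral coset, so $\hat\Gamma$--proximity is far weaker than $d_S$--proximity. The delicate case is when a $G$--vertex $u$ of $c$ is $\hat\Gamma$--close to $\eta$ only through a shared cone vertex $v_{gP}$; one must rule out $u$ being $d_S$--far from $H$ while $\eta$ merely passes nearby the coset $gP$. Here fineness of $\hat\Gamma$, which bounds the combinatorial complexity of loops through any edge, and the Bounded Coset Penetration property, which forces two quasi-geodesics with common endpoints to enter and leave a penetrated coset at comparable points, must be combined to close the gap.

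For Statement (2), I would use the relative quasiconvexity just established. An infinite intersection $H\cap P^g$ is the stabilizer in $H$ of the coset $gP$, and it produces $H$--translates bringing $gP$ within bounded distance of $H$; relative quasiconvexity then places the corresponding cone vertices inside the coned off convex hull of $H$ in $\hat\Gamma$. Since $H$ acts on this hull and fineness of $\hat\Gamma$ bounds the number of $H$--orbits of cone vertices at bounded distance from $H$, one obtains finitely many conjugacy classes of infinite intersections $H\cap P^g$, giving the representative set $\OO$. To conclude that $(H,\OO)$ is relatively hyperbolic, I would form $\hat\Gamma(H,T,\OO)$ and show, using (1), that the natural map $\hat\Gamma(H,T,\OO)\to\hat\Gamma(G,S,\PP)$ is a quasi-isometric embedding with quasiconvex image; a quasiconvex subspace of a $\delta$--hyperbolic fine graph is again hyperbolic and fine, so $\hat\Gamma(H,T,\OO)$ satisfies Definition \ref{rel}. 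The hard part here is the transfer of fineness, namely verifying that the induced coned off graph of $H$ has only finitely many circuits of each length through each edge, which one must extract from fineness of the ambient graph together with the uniform penetration control supplied by (1).
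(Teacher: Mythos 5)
This statement is not proved in the paper at all: it is imported verbatim as \cite[Theorem 1.5 and Theorem 9.1]{Hruska10} and used as a black box (together with Lemma \ref{lh}, which is Hruska's Lemma 8.8). So there is no internal proof to compare against; what you have written is a sketch of Hruska's argument, and it should be judged as such. As a sketch it identifies the right shape, but it is not a proof: both of the steps you yourself flag as ``the main obstacle'' and ``the hard part'' are precisely the mathematical content of the cited theorems, and they are left unresolved.

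Concretely, in Statement (1) the step ``invoke the Bounded Coset Penetration property together with the hyperbolicity of $\hat\Gamma$ to show that $c$ and $\eta$ track one another'' has a gap: BCP is a statement about quasi-geodesics without backtracking in the \emph{relative} metric, whereas the image of a $d_T$--geodesic of $H$ is, via undistortedness, only a $(\lambda,\epsilon)$--quasi-geodesic in the \emph{word} metric $d_S$; such a path can a priori wander arbitrarily deep inside a single peripheral coset and fail to be a relative quasi-geodesic, so BCP does not directly apply to it. The missing ingredient is exactly Lemma \ref{lh} of the paper (Hruska's Lemma 8.8): every $G$--vertex of a relative geodesic lies $d_S$--uniformly close to a word-metric quasi-geodesic with the same endpoints. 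Granting that lemma, Statement (1) is a two-line deduction (the vertices of $\eta$ lie within $\max_{t\in T}|t|_S$ of $H$); without it, your argument does not close. For Statement (2), the assertion that ``a quasiconvex subspace of a $\delta$--hyperbolic fine graph is again hyperbolic and fine'' is not usable as stated: $\hat\Gamma(H,T,\OO)$ is not a subspace of $\hat\Gamma(G,S,\PP)$ (its cone vertices sit over cosets of $H\cap P^g$ in $H$, not of $P$ in $G$), fineness is a combinatorial condition on circuits through edges that does not pass to quasi-isometric images in any obvious way, and verifying it for the induced peripheral structure is the substance of Hruska's Theorem 9.1. If the intent is to use this theorem the way the paper does, the honest move is to cite it; if the intent is to prove it, both gaps need genuine arguments.
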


\begin{thm}\cite[Corollary 1.14]{MR2153979}
\label{tmkr}
A relatively hyperbolic group $(G,\PP)$ is hyperbolic if each peripheral subgroup in $\PP$ is hyperbolic.
\end{thm}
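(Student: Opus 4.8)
The plan is to argue at the level of asymptotic cones, which is the natural setting of the cited source. I would rely on two facts. First, by Gromov's criterion a finitely generated group $\Gamma$ is hyperbolic if and only if every asymptotic cone of $\Gamma$ is a real tree. Second, by the Dru\c{t}u--Sapir characterization of relative hyperbolicity, $(G,\PP)$ is relatively hyperbolic if and only if every asymptotic cone $\Cone_\omega(G)$ is \emph{tree-graded} with respect to the collection of pieces obtained as $\omega$--limits $\ulim(g_n P)$ of sequences of peripheral left cosets $g_nP$ with $P\in\PP$. Combining these, it suffices to show that if each $P\in\PP$ is hyperbolic, then every asymptotic cone of $G$ is a real tree.

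First I would check that every piece of the tree-graded structure on $\Cone_\omega(G)$ is a real tree. Fix $P\in\PP$; a piece arising from $P$ is the $\omega$--limit of a sequence of cosets $g_nP$, each carrying the metric $d_S$ induced from $\Gamma(G,S)$. Because peripheral subgroups of a relatively hyperbolic group are undistorted, the inclusion of $P$ into $\Gamma(G,S)$ is a quasi-isometric embedding, and left multiplication carries $P$ isometrically onto $g_nP$, so each coset $(g_nP,d_S)$ is uniformly quasi-isometric to $P$ with its own word metric. Since hyperbolicity is a quasi-isometry invariant of geodesic spaces, $P$ with the induced metric is a hyperbolic space, and the asymptotic cone of a hyperbolic space is a real tree. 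As the cosets $g_nP$ are mutually isometric, their $\omega$--limit is exactly such an asymptotic cone; hence every piece is a real tree.

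Finally I would invoke the transitivity of the tree-graded structure. A real tree is itself tree-graded with respect to the collection of its one-point subsets, and tree-graded decompositions compose: a space that is tree-graded with respect to pieces which are themselves tree-graded is tree-graded with respect to the union of the finer pieces. Refining each piece of $\Cone_\omega(G)$ to singletons therefore exhibits $\Cone_\omega(G)$ as tree-graded with respect to singletons, which means it is a real tree. Since the ultrafilter and scaling sequence were arbitrary, every asymptotic cone of $G$ is a real tree, and so $G$ is hyperbolic by Gromov's criterion.

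The step I expect to be the main obstacle is the identification of the pieces as genuine real trees: one must pass correctly between the metric induced on a coset from $\Gamma(G,S)$ and the intrinsic word metric on $P$, which relies on undistortedness of the peripheral subgroups and on the fact that an asymptotic cone turns a uniform quasi-isometry into a bi-Lipschitz equivalence that preserves being a (complete geodesic) real tree. A fully self-contained alternative, staying inside the coned-off framework of Definition~\ref{rel}, would instead analyze a geodesic triangle of $\Gamma(G,S)$ by projecting it to the $\delta$--hyperbolic graph $\hat{\Gamma}(G,S,\PP)$ and then controlling the portions that penetrate a common peripheral coset via the hyperbolicity of that coset; there the principal difficulty is promoting closeness in the coned-off metric to closeness in $d_S$, for which one would use bounded coset penetration together with Theorem~\ref{thhk}.
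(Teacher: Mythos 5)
The paper gives no proof of this statement---it is imported verbatim as Corollary~1.14 of the cited reference \cite{MR2153979}---and your argument is essentially the proof from that source: every asymptotic cone of $G$ is tree-graded with pieces that are asymptotic cones of the (undistorted, hyperbolic) peripheral subgroups, hence real trees, so the tree-graded structure refines to singletons and every cone of $G$ is a real tree, which gives hyperbolicity by Gromov's criterion. Your proposal is correct and takes the same route as the cited source, including correctly flagging the only delicate point (passing from the induced metric on a coset to the intrinsic word metric on $P$, and noting that a geodesic piece bi-Lipschitz to a real tree is itself a real tree).
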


\begin{lem}\cite[Lemma 8.8]{Hruska10}
\label{lh}
Let $(G,\PP)$ be a finitely generated relatively hyperbolic group with a finite generating set $S$. For each $K\geq 1$ and $L\geq 0$ there is $A = A(K,L)>0$ such that the following holds. Let $\alpha$ be a $(K,L)$--quasigeodesic in $\Gamma(G,S)$ and $c$ a geodesic in $\hat{\Gamma}(G,S,\PP)$ with the same endpoints in $G$. Then each $G$--vertex of $c$ lies in the $A$--neighborhood of some vertex of $\alpha$ with respect to the metric $d_S$.
\end{lem}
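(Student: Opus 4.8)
The plan is to pass from the genuine Cayley graph to the coned-off Cayley graph, where hyperbolicity is available, and then to transfer the resulting estimates in the metric $\hat d$ of $\hat{\Gamma}(G,S,\PP)$ back to the metric $d_S$ using the Bounded Coset Penetration property. First I would \emph{electrify} $\alpha$: composing with the natural inclusion $\Gamma(G,S)\inclusion \hat{\Gamma}(G,S,\PP)$ sends the $(K,L)$--quasigeodesic $\alpha$ to a path $\hat\alpha$ which, by the standard theory of relatively hyperbolic groups (see \cite{Hruska10}), is a $(K',L')$--quasigeodesic in $\hat{\Gamma}$ with $K',L'$ depending only on $K$, $L$, and $S$. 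After a bounded modification that displaces points only a bounded $d_S$--distance, I may assume $\hat\alpha$ has no backtracking, i.e. it never penetrates the same peripheral coset twice; the geodesic $c$ likewise has no backtracking.

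Since $\hat{\Gamma}$ is $\delta$--hyperbolic and fine (Definition \ref{rel}) and $c$, $\hat\alpha$ are quasigeodesics with the same endpoints, stability of quasigeodesics in hyperbolic spaces yields a constant $\kappa=\kappa(\delta,K',L')$ so that $c$ and $\hat\alpha$ lie within $\hat d$--Hausdorff distance $\kappa$ of one another. The heart of the argument is upgrading this $\hat d$--proximity to the required $d_S$--proximity, and this is where the \textbf{main obstacle} lies: because peripheral cosets may be arbitrarily distorted in $\Gamma(G,S)$, two vertices that are $\hat d$--close need not be $d_S$--close at all, since they may sit at opposite ends of a long excursion into a common coset. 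The tool that resolves precisely this difficulty is the Bounded Coset Penetration property of Farb, which is available for $(G,\PP)$ because $\hat{\Gamma}$ is fine and hyperbolic: for the pair $c$, $\hat\alpha$ of quasigeodesics without backtracking and with common endpoints there is a constant $\epsilon=\epsilon(K',L')$ such that (i) if both $c$ and $\hat\alpha$ penetrate a coset $gP$, then their entry points are $d_S$--within $\epsilon$ and their exit points are $d_S$--within $\epsilon$, and (ii) if only one of them penetrates $gP$, then the entry and exit $G$--vertices of that one inside $gP$ are $d_S$--within $\epsilon$.

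I would then use BCP to match the coset penetrations of $c$ and $\hat\alpha$ and argue vertex by vertex. Every $G$--vertex of $c$ lies on a maximal $\Gamma$--edge run of $c$, and the endpoints of such a run are exactly the entry and exit points of cosets penetrated by $c$. For a run bounded by cosets that $\hat\alpha$ also penetrates, conclusion (i) places the run's endpoints $d_S$--near $\hat\alpha$, hence $d_S$--near $\alpha$; along the run itself both $c$ and $\hat\alpha$ avoid deep coset excursions, since any coset met by only one of them is $d_S$--shallow by (ii). Consequently, over this stretch $\hat d$ and $d_S$ differ by a bounded amount, so the $\hat d$--fellow-traveling of $c$ and $\hat\alpha$ promotes to genuine $d_S$--fellow-traveling, giving $d_S(u,\alpha)\le A$ for every vertex $u$ of the run. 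Cosets penetrated by $c$ but not by $\hat\alpha$ contribute only $d_S$--bounded detours by (ii) and are absorbed into the constant.

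Finally I would track all the dependencies: $K'$, $L'$ depend only on $K$, $L$, $S$; $\kappa$ depends on $\delta$, $K'$, $L'$; and $\epsilon$ depends on $K'$, $L'$. Hence the resulting neighborhood constant $A$ depends only on $K$ and $L$ (for the fixed generating set $S$), which is exactly the uniformity asserted. I expect the delicate point to be the matching of coset penetrations in the inhomogeneous case of (ii), where a coset seen by $c$ alone must be shown not to drag a $G$--vertex of $c$ away from $\alpha$; the no-backtracking hypothesis and the $d_S$--shallowness supplied by BCP are what make this step go through.
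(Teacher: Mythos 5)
The paper offers no proof of this lemma --- it is quoted from \cite{Hruska10} --- and your proposal reconstructs essentially the argument given there: electrify $\alpha$, compare with $c$ in the hyperbolic space $\hat{\Gamma}(G,S,\PP)$, and convert $\hat{d}$--proximity to $d_S$--proximity with the Bounded Coset Penetration property. Two steps need to be stated more carefully. First, the literal image of $\alpha$ under the inclusion $\Gamma(G,S)\inclusion\hat{\Gamma}(G,S,\PP)$ is \emph{not} a quasigeodesic of $\hat{\Gamma}$: a quasigeodesic of $\Gamma(G,S)$ can spend arbitrarily long parameter time inside one peripheral coset while moving $\hat{d}$--distance at most $1$. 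The correct statement (due to Osin \cite{Osin06}, and used in \cite{Hruska10} as the input to Lemma 8.8) is that the path obtained from $\alpha$ by replacing each maximal peripheral excursion with a two-edge detour through the corresponding cone point is a quasigeodesic \emph{without backtracking} in $\hat{\Gamma}$ whose $G$--vertices are vertices of $\alpha$; your ``bounded modification'' should be this coning-off, and it deletes vertices rather than displacing them boundedly in $d_S$, which is harmless here since only the surviving vertices are needed. Second, the promotion of $\hat{d}$--fellow-traveling to $d_S$--fellow-traveling is precisely Osin's result that the $G$--vertex sets of two non-backtracking quasigeodesics in $\hat{\Gamma}$ with common endpoints lie in mutual $d_S$--$\epsilon$--neighborhoods; your vertex-by-vertex matching of coset penetrations is a sketch of its proof, but the assertion that ``$\hat{d}$ and $d_S$ differ by a bounded amount'' along a $\Gamma$--edge run of $c$ does not follow from hyperbolic stability alone and is exactly where the fineness/BCP bookkeeping has to be carried out. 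With these two statements invoked as black boxes (as the paper in effect does by citing \cite{Hruska10}), your argument is correct and is the intended one.
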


\begin{lem} \cite[Lemma 4.15]{MR2153979}
\label{lemma2}
Let $(G,\PP)$ be a finitely generated relatively hyperbolic group. Then each conjugate $P^g$ of peripheral subgroup in $\PP$ is strongly quasiconvex in $G$. Moreover, each $D$--neighborhood of peripheral left coset is $M$--Morse in Cayley graph of $G$ where the Morse gauge $M$ only depends on $D$ and the choice of finite generating set of $G$.
\end{lem}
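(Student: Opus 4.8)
The plan is to first reduce the full statement to the single assertion that each peripheral subgroup $P \in \PP$, regarded as the coset $eP$, is Morse in $\Gamma(G,S)$ with a Morse gauge depending only on $S$ and the finite collection $\PP$. For an arbitrary peripheral left coset $gP$, left multiplication by $g$ is an isometry of $\Gamma(G,S)$ carrying $P$ onto $gP$; since isometries send $(K,C)$--quasi-geodesics to $(K,C)$--quasi-geodesics and preserve metric neighborhoods, $gP$ is then Morse with the \emph{same} gauge as $P$, giving uniformity over all peripheral left cosets. For a conjugate $P^g = gPg^{-1}$ I would use that $gP = P^g g$, that right multiplication by $g$ is a $(1, 2\abs{g}_S)$--quasi-isometry of $\Gamma(G,S)$, and that the Morse property is a quasi-isometry invariant; this yields that $P^g$ is Morse, with a gauge now permitted to depend on $g$, which suffices for the first assertion. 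Finally, to pass from a coset to its $D$--neighborhood, I would take a $(K,C)$--quasi-geodesic $\beta$ with endpoints in $N_D(gP)$, prepend and append geodesic segments of length at most $D$ joining its endpoints to $gP$, observe that the result is a $(K, C + 2D)$--quasi-geodesic with endpoints on $gP$, and conclude from the coset case that $\beta$ lies in a neighborhood of $gP$ whose radius depends only on $D$ and $S$.

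For the core assertion that $P$ is Morse, I would work in the coned off Cayley graph $\hat{\Gamma}(G,S,\PP)$, where the whole coset $P$ collapses to the single peripheral vertex $v_P$. Given a $(K,C)$--quasi-geodesic $\alpha$ with endpoints $x,y \in P$, I would first replace it by a continuous, tame quasi-geodesic at bounded Hausdorff distance (as in the proof of Proposition \ref{p1}), decompose it into the maximal subsegments travelling inside individual peripheral cosets together with the complementary ``transient'' pieces, and use the hyperbolicity and fineness of $\hat{\Gamma}(G,S,\PP)$ together with the fact that $x$ and $y$ both lie within $\hat{\Gamma}$--distance $1$ of $v_P$ to show that the image of $\alpha$ stays within bounded $\hat{\Gamma}$--distance of $v_P$. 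This confines the transient pieces and the coset excursions of $\alpha$ to a bounded combinatorial region around $v_P$.

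The final and main step is to convert this coned off control back into control in the word metric $d_S$. Here I would invoke Lemma \ref{lh} and Theorem \ref{thhk}: the $G$--vertices of a $\hat{\Gamma}$--geodesic between points of $\alpha$ track $\alpha$ in $d_S$, while geodesics in $\hat{\Gamma}$ between points of a coset stay $d_S$--close to that coset. Combined with the \emph{bounded coset penetration} furnished by fineness, this rules out any deep $d_S$--excursion of $\alpha$ into a peripheral coset $Q \neq P$: such an excursion would have to enter and leave $Q$ while $\alpha$ remains $\hat{\Gamma}$--near $v_P$, which bounded coset penetration forbids beyond a uniformly bounded depth. Hence every point of $\alpha$ lies within $M = M(K,C,S)$ of $P$, as required.

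I expect the transfer in the last paragraph to be the main obstacle: the one--directional estimates of Lemma \ref{lh} and Theorem \ref{thhk} do not by themselves bound $\alpha$ by $P$ (when both endpoints lie in the \emph{same} coset these statements are nearly vacuous, since such points are $\hat{\Gamma}$--distance at most $1$ apart), so the argument genuinely needs the two--sided fellow--traveling of quasi-geodesics and the bounded coset penetration property of relatively hyperbolic groups. An alternative route, fitting the theme of this paper, would be to apply Theorem \ref{th1} and instead verify directly that the lower relative divergence $div(G,P)$ of $G$ with respect to a peripheral subgroup is completely super linear; this reduces the problem to a divergence estimate but concentrates the same difficulty into controlling detours around $\partial N_r(P)$.
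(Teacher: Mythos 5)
You should know at the outset that the paper does not prove this lemma: it is quoted directly from \cite{MR2153979} (Lemma 4.15), where the argument runs through asymptotic cones --- a sequence of quasi-geodesics escaping every bounded neighborhood of a peripheral coset would, after rescaling, limit to a bi-Lipschitz path in the asymptotic cone with endpoints in a piece of the tree-graded structure but leaving that piece, contradicting the strong convexity of pieces. Your combinatorial route through the coned-off graph is therefore genuinely different from the source the paper relies on. Your reduction steps are all correct and cleanly organized: left translation gives uniformity of the gauge over peripheral left cosets, the bounded Hausdorff distance between $P^g$ and $gP$ (equivalently your right-multiplication quasi-isometry) handles conjugates, and extending a quasi-geodesic by geodesic segments of length at most $D$ handles the $D$-neighborhood statement (the new additive constant is on the order of $C+2D(K+1)$ rather than $C+2D$, which is immaterial).

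The gap is in the core step, and you have correctly located it yourself. Two separate pieces of machinery are missing. First, the assertion that the image of $\alpha$ stays within bounded $\hat{\Gamma}$-distance of $v_P$ does not follow from hyperbolicity and fineness alone; it requires the theorem that quasi-geodesics of $\Gamma(G,S)$ project to unparametrized quasi-geodesics without backtracking in $\hat{\Gamma}(G,S,\PP)$, which is a result of comparable depth to the lemma itself and is not among the facts quoted in the paper. Second, and more seriously, bounded $\hat{\Gamma}$-distance to $v_P$ together with ``no deep excursion into a coset $Q\neq P$'' still does not directly bound $d_S\bigl(\alpha(t),P\bigr)$: the BCP argument you gesture at needs a comparison quasi-geodesic lying inside $P$ itself (hence finite generation and undistortion of peripheral subgroups), the uniformly bounded coarse intersection of distinct peripheral cosets, and the two-sided relative fellow-traveling of quasi-geodesics --- the last of which is essentially equivalent to the statement being proved. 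As you note, Lemma \ref{lh} and Theorem \ref{thhk} are one-directional and nearly vacuous when both endpoints lie in a single coset, so they cannot supply this. As written, the proposal reduces the lemma to unproved statements of comparable strength rather than establishing it; your closing observation that the alternative route via Theorem \ref{th1} merely relocates the same difficulty is accurate.
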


The following theorem provides characterizations of strongly quasiconvex subgroups in relatively hyperbolic groups. This is the main theorem of this section.

\begin{thm}
\label{thfstqg}
Let $(G,\PP)$ be a finitely generated relatively hyperbolic group and $H$ finitely generated undistorted subgroup of $G$. Then the following are equivalent:
\begin{enumerate}
\item The subgroup $H$ is strongly quasiconvex in $G$.
\item The subgroup $H\cap P^g$ is strongly quasiconvex in $P^g$ for each conjugate $P^g$ of peripheral subgroup in $\PP$.
\item The subgroup $H\cap P^g$ is strongly quasiconvex in $G$ for each conjugate $P^g$ of peripheral subgroup in $\PP$. 
\end{enumerate}
\end{thm}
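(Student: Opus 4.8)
The plan is to prove the cycle $(1)\Rightarrow(2)\Rightarrow(3)\Rightarrow(1)$, where the first two implications are soft consequences of the combination results of Section~\ref{haha} together with Lemma~\ref{lemma2}, and the last implication carries all the geometric content. For $(1)\Rightarrow(2)$, note first that by Lemma~\ref{lemma2} each conjugate $P^g$ is strongly quasiconvex, hence by Theorem~\ref{vuiqua} finitely generated and undistorted in $G$. Since $H$ is strongly quasiconvex in $G$, Proposition~\ref{pp3} applied with the undistorted subgroup $P^g$ gives that $H\cap P^g$ is strongly quasiconvex in $P^g$. The equivalence $(2)\Leftrightarrow(3)$ is then immediate from Proposition~\ref{pp2} and Lemma~\ref{lemma2}: transitivity (Proposition~\ref{pp2}(2)) upgrades strong quasiconvexity of $H\cap P^g$ in $P^g$, together with strong quasiconvexity of $P^g$ in $G$, to strong quasiconvexity of $H\cap P^g$ in $G$, while Proposition~\ref{pp2}(1) (using that $P^g$ is undistorted) runs the reverse direction.

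The substance is $(3)\Rightarrow(1)$. Fix a finite generating set $S$ and let $\gamma$ be an arbitrary $(K,C)$-quasi-geodesic in $\Gamma(G,S)$ with endpoints on $H$; the goal is to bound $d_S(\gamma(t),H)$ by a constant depending only on $K,C$. Choose a geodesic $c$ in the coned-off Cayley graph $\hat{\Gamma}(G,S,\PP)$ with the same endpoints. By Theorem~\ref{thhk}(1) every $G$-vertex of $c$ lies in the $A$-neighborhood of $H$, and by Lemma~\ref{lh} every $G$-vertex of $c$ lies within a constant $A(K,C)$ of $\gamma$; these two facts pin the coarse ``backbone'' of $\gamma$ to $H$. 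The strategy is to decompose $\gamma$ into two kinds of pieces: pieces lying uniformly close to the $G$-vertices of $c$, hence uniformly close to $H$ by Theorem~\ref{thhk}(1); and \emph{peripheral excursions}, maximal subsegments of $\gamma$ that plunge deeply into a single peripheral coset $gP$ through which $c$ passes at a peripheral vertex $v_{gP}$. Here Lemma~\ref{lemma2} is used twice: the neighborhoods of peripheral cosets are uniformly Morse, so each excursion is confined to some $N_D(gP)$ with $D=D(K,C)$, and its two endpoints lie simultaneously within a bounded distance of $H$ (being near flanking $G$-vertices of $c$) and of the coset $gP$; by Lemma~\ref{ll1} they therefore lie within a bounded distance of the subgroup $H\cap gPg^{-1}$.

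It then remains to trap each peripheral excursion uniformly close to $H$. Since $H\cap gPg^{-1}$ is one of the intersections $H\cap P^{g}$, hypothesis~(3) makes it Morse in $G$, so this excursion, being a $(K,C)$-quasi-geodesic whose endpoints are near a Morse subgroup, is confined to a bounded neighborhood of $H\cap gPg^{-1}\subseteq H$. The one genuine difficulty is \emph{uniformity}: a priori the Morse gauges of the infinitely many subgroups $H\cap gPg^{-1}$ need not be comparable. This is exactly where Theorem~\ref{thhk}(2) enters. The infinite intersections $H\cap P^{g}$ fall into finitely many $H$-conjugacy classes, with representatives $H\cap g_jP_jg_j^{-1}$, and correspondingly any excursion coset has the form $gP=h\,(g_jP_j)$ for some $h\in H$. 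Left-translating the excursion by $h^{-1}\in H$ — which \emph{preserves} $H$ — reduces it to an excursion into one of the finitely many fixed cosets $g_jP_j$ with endpoints still near $H$, and now the Morse property of the finitely many representative subgroups $H\cap g_jP_jg_j^{-1}$ furnishes a single Morse gauge governing all excursions.

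Assembling the bounded pieces with the uniformly controlled excursions yields the desired bound on $d_S(\gamma(t),H)$ depending only on $K$ and $C$, so $H$ is Morse, i.e.\ strongly quasiconvex in $G$. (Excursions into cosets with \emph{finite} intersection $H\cap gPg^{-1}$ are a harmless separate case: their endpoints are then confined near a finite subgroup and the excursion, already trapped near $gP$, has uniformly bounded diameter, the uniformity again coming from the finiteness in Theorem~\ref{thhk}(2).) I expect the main obstacle to be precisely this bookkeeping around the peripheral excursions: rigorously isolating them from the backbone, confining them through the uniform Morse neighborhoods of Lemma~\ref{lemma2}, and taming their Morse gauges uniformly via the conjugacy-class finiteness of Theorem~\ref{thhk}(2).
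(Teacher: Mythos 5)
Your overall architecture coincides with the paper's: the implications $(1)\Rightarrow(2)$ and $(2)\Leftrightarrow(3)$ are dispatched exactly as you do, via Proposition \ref{pp3} (the paper cites Propositions \ref{pp2}, \ref{prp1}) together with Lemma \ref{lemma2}, and for $(3)\Rightarrow(1)$ the paper uses the same skeleton you describe — a coned-off geodesic $c$ with the same endpoints, Theorem \ref{thhk}(1) and Lemma \ref{lh} to pin its $G$-vertices to both $H$ and the quasi-geodesic, a case split between short jumps and peripheral excursions, Lemma \ref{ll1} to place excursion endpoints near $H\cap P^g$, and a left-translation by $h\in H$ to reduce to finitely many model subgroups whose Morse gauges can be maximized. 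The one genuine divergence is the source of the finiteness, and here your version has two soft spots. First, Theorem \ref{thhk}(2) gives finitely many $H$-conjugacy classes of the \emph{intersections} $H\cap P^g$, but your sentence ``correspondingly any excursion coset has the form $gP=h(g_jP_j)$'' does not follow from that alone: conjugacy of the intersections only forces the cosets to be $H$-translates of one another after invoking almost malnormality of the peripheral structure (two distinct peripheral cosets have finite intersection), which you neither state nor cite, and which says nothing in the finite-intersection case. Second, the constant in Lemma \ref{ll1} depends on the particular pair of cosets, so applying it to $N(gP)\cap N(H)$ for the original (arbitrary) coset $gP$, as you do before translating, does not yield a uniform bound; the translation by $h^{-1}$ must come first. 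The paper sidesteps both issues at once with a cheaper observation: an excursion coset $gP$ meets the $A_0$-neighborhood of $H$, hence $gP=htP$ with $h\in H$ and $\abs{t}_S<A_0$, so after translating by $h^{-1}$ only the finitely many cosets $tP$ ($\abs{t}_S<A_0$, $P\in\PP$) occur, and Lemma \ref{ll1} and the Morse gauges need only be quantified over that finite list. Your proof is repairable along these lines, but as written the uniformity step leans on Theorem \ref{thhk}(2) for more than it delivers.
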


\begin{proof}
The implications ``$(1)\implies(2)\implies (3)$'' are direct results from Propositions \ref{pp2}, \ref{prp1}, and Lemma \ref{lemma2}. We now prove the implication ``$(3) \implies (1)$''. Let $S$ be a finite generating set of $G$ and let $A_0=A_0(S)$ be the constant in Statement (1) of Theorem \ref{thhk}.

Let $K\geq 1$ and $L\geq 0$ arbitrary. Let $A=A(K,L)$ be constant in Lemma \ref{lh}. By Lemma \ref{ll1} there is a constant $A_1>0$ such that for each $t\in B(e,A_0)\cap G$ and $P\in\PP$ \[N_A(tP)\cap N_{A+A_0} (H)\subset N_{A_1}(tPt^{-1}\cap H).\] Let $\mu$ be the maximum of all Morse gauge functions of all set of the form $N_{A_1}(tPt^{-1}\cap H)$ where $\abs{t}_S<A_0$ and $P\in\PP$.

Let \[D=K(2A+1)+L+\mu(K,L)+A_0+A+A_1+1.\]

Let $\alpha\!:[a,b]\to\Gamma(G,S)$ be an arbitrary $(K,L)$--quasigeodesic in $\Gamma(G,S)$ that connects two points $h_1$ and $h_2$ in $H$. By Lemma 1.11 of \cite{MR1744486} III.H, we can assume that $\alpha$ is continuous and \[\ell(\alpha_{|[t,t']})\leq Kd_S\bigl(\alpha(t),\alpha(t')\bigr)+L.\] 
We will show that $\alpha$ lies in the $D$--neighborhood of $H$. Let $c$ be a geodesic in $\hat{\Gamma}(G,S,\PP)$ that connects two points $h_1$ and $h_2$ and let $h_1=s_0,s_1,\cdots,s_n=h_2$ be all $G$--vertices of $c$. By the choice of $A_0$, all $G$--vertices $s_i$ lie in the $A_0$--neighborhood of $H$. Also by the choice of $A$, there is $t_i\in[a,b]$ such that $d_S\bigl(s_i,\alpha(t_i)\bigr)<A$. We consider $t_0=s_0$ and $t_n=s_n$. Therefore, each point $\alpha(t_i)$ lies in the $(A_0+A)$--neighborhood of $H$. It is sufficient to show that each $\alpha(I_i)$ lies in the $D$--neighborhood of $H$ where $I_i$ is a closed subinterval of $[a,b]$ with endpoints $t_{i-1}$ and $t_i$. We consider two cases.

\textbf{Case 1:} Two $G$-vertices $s_{i-1}$ and $s_i$ are adjacent in $\Gamma(G,S)$. Then the distance between two points $\alpha(t_{i-1})$ and $\alpha(t_i)$ with respect to the metric $d_S$ is bounded above by $2A+1$. Therefore, \[\ell(\alpha_{|I_i})\leq Kd_S\bigl(\alpha(t_{i-1}),\alpha(t_i)\bigr)+L\leq K(2A+1)+L.\]
Also, $\alpha(t_i)$ lies in the $(A+A_0)$--neighborhood of $H$. Therefore, $\alpha(I_i)$ lies in the $D$--neighborhood of $H$ by the choice of $D$.

\textbf{Case 2:} Two $G$-vertices $s_{i-1}$ and $s_i$ are not adjacent in $\Gamma(G,S)$. Therefore, $s_{i-1}$ and $s_i$ both lie in the same peripheral left coset $gP$. Since $d_S(gP,H)\leq d_S(s_i,H)<A_0$, $gP=htP$ for some $h\in H$ and $\abs{t}_S<A_0$. Observe that the endpoints of $\alpha(I_i)$ both lies in $N_A(htP)\cap N_{A+A_0}(H)$. Therefore, the endpoints of $h^{-1}\alpha(I_i)$ both lies in $N_A(tP)\cap N_{A+A_0}(H)\subset N_{A_1}(tPt^{-1}\cap H)$. Since $h^{-1}\alpha$ is also a $(K,L)$--quasigeodesic, $h^{-1}\alpha(I_i)$ lies entirely in the $\mu(K,L)$--neighborhood of the set $N_{A_1}(tPt^{-1}\cap H)$. Therefore, $h^{-1}\alpha(I_i)$ lies in $(\mu(K,L)+A_1)$--neighborhood of the set $H$. Translating by $h$, we see that $\alpha(I_i)$ also lies in the $\bigl(\mu(K,L)+A_1\bigr)$--neighborhood of the set $H$. By the choice of $D$, $\alpha(I_i)$ lies in $D$--neighborhood of the set $H$. 

Therefore, subgroup $H$ is strongly quasiconvex in $G$. 
\end{proof}

Two characterizations of stable subgroups in relatively hyperbolic groups follow immediately.

\begin{cor}
Let $(G,\PP)$ be a finitely generated relatively hyperbolic group and $H$ finitely generated undistorted subgroup of $G$. Then the following are equivalent:
\begin{enumerate}
\item The subgroup $H$ is stable in $G$.
\item The subgroup $H\cap P^g$ is stable in $P^g$ for each conjugate $P^g$ of peripheral subgroup in $\PP$.
\item The subgroup $H\cap P^g$ is stable in $G$ for each conjugate $P^g$ of peripheral subgroup in $\PP$. 
\end{enumerate}
\end{cor}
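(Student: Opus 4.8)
The plan is to leverage the characterization of stability from Theorem \ref{th3}, which says that a subgroup is stable precisely when it is both hyperbolic and strongly quasiconvex, together with the characterization of strong quasiconvexity in relatively hyperbolic groups already established in Theorem \ref{thfstqg}. Since Theorem \ref{thfstqg} tells us that the three strong quasiconvexity conditions---$H$ strongly quasiconvex in $G$, each $H\cap P^g$ strongly quasiconvex in $P^g$, and each $H\cap P^g$ strongly quasiconvex in $G$---are all equivalent, the content of the corollary reduces to comparing the corresponding hyperbolicity conditions once these strong quasiconvexity hypotheses are in force.

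First I would dispose of the equivalence $(2)\Leftrightarrow(3)$. By Theorem \ref{th3}, ``$H\cap P^g$ stable in $P^g$'' means ``$H\cap P^g$ hyperbolic and strongly quasiconvex in $P^g$'', while ``$H\cap P^g$ stable in $G$'' means ``$H\cap P^g$ hyperbolic and strongly quasiconvex in $G$''. The hyperbolicity of $H\cap P^g$ is intrinsic to the group and is thus the same in both statements, and the two strong quasiconvexity conditions are equivalent by Theorem \ref{thfstqg}; hence $(2)$ and $(3)$ coincide. When $H\cap P^g$ is finite it is trivially hyperbolic and stable in either ambient, so only the infinite intersections matter. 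For $(1)\Rightarrow(3)$, assume $H$ is stable, hence hyperbolic and strongly quasiconvex in $G$. By Theorem \ref{thfstqg} each $H\cap P^g$ is strongly quasiconvex in $G$, and by Statement (1) of Proposition \ref{pp2} it is strongly quasiconvex (that is, Morse) in $H$. Since $H$ is hyperbolic, a Morse subset of $H$ is quasiconvex, so $H\cap P^g$ is a quasiconvex subgroup of a hyperbolic group and is therefore itself hyperbolic; combined with its strong quasiconvexity in $G$ and Theorem \ref{th3}, this yields $(3)$.

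The heart of the matter---and the step I expect to be the main obstacle---is the reverse implication $(3)\Rightarrow(1)$, where one must upgrade hyperbolicity of the pieces $H\cap P^g$ to hyperbolicity of the whole subgroup $H$. Here I would invoke Statement (2) of Theorem \ref{thhk}: since $H$ is finitely generated and undistorted, the pair $(H,\OO)$ is relatively hyperbolic, where $\OO$ is a set of representatives for the conjugacy classes in $H$ of the infinite subgroups $H\cap P^g$. Condition $(3)$ guarantees, via Theorem \ref{th3}, that every such peripheral subgroup is hyperbolic, while Theorem \ref{thfstqg} gives that $H$ is strongly quasiconvex in $G$. Applying Theorem \ref{tmkr} to $(H,\OO)$ then promotes the hyperbolicity of the peripheral subgroups to hyperbolicity of $H$ itself. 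With $H$ now hyperbolic and strongly quasiconvex in $G$, Theorem \ref{th3} shows that $H$ is stable, closing the cycle.

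The one point requiring care is matching the peripheral subgroups of the relatively hyperbolic structure $(H,\OO)$ with the infinite intersections $H\cap P^g$ appearing in the statement; this identification is exactly what Statement (2) of Theorem \ref{thhk} provides. Apart from that bookkeeping, the argument is a clean synthesis of Theorems \ref{th3}, \ref{thfstqg}, \ref{thhk}, and \ref{tmkr}, with no new quasigeodesic estimates needed, since all the geometric work has been absorbed into the strong quasiconvexity characterization of Theorem \ref{thfstqg}.
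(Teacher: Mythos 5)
Your proposal is correct and follows essentially the same route as the paper: Theorem \ref{th3} reduces stability to hyperbolicity plus strong quasiconvexity, Theorem \ref{thfstqg} handles all the strong quasiconvexity equivalences, and the key implication $(3)\Rightarrow(1)$ upgrades hyperbolicity of the intersections $H\cap P^g$ to hyperbolicity of $H$ via Statement (2) of Theorem \ref{thhk} together with Theorem \ref{tmkr}, exactly as in the paper. The only cosmetic difference is that for $(1)\Rightarrow(3)$ you cite Proposition \ref{pp2} and Theorem \ref{thfstqg} where the paper cites Proposition \ref{prp1} and Lemma \ref{lemma2}, but both yield the same intermediate fact that $H\cap P^g$ is strongly quasiconvex in $H$, $P^g$, and $G$.
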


\begin{proof}
We can see that (1) implies (2) and (1) implies (3) easily. In fact, if the subgroup $H$ is stable in $G$, then $H$ is strongly quasiconvex in $G$ and $H$ is hyperbolic (see Theorem \ref{th3}). Therefore, $H\cap P^g$ is strongly quasiconvex in $H$, $P^g$, and $G$ by Proposition \ref{prp1} and Lemma \ref{lemma2}. In particular, $H\cap P^g$ is hyperbolic because $H$ is a hyperbolic group. Therefore, $H\cap P^g$ is stable in $P^g$ and $G$ by Theorem \ref{th3}. The equivalence between (2) and (3) can also be seen easily from Theorems \ref{th3} and \ref{thfstqg}. 

We now prove that (3) implies (1). By the hypothesis, $H\cap P^g$ is strongly quasiconvex in $G$ for each conjugate $P^g$ of peripheral subgroup in $\PP$. Therefore, $H$ is strongly quasiconvex in $G$ by Theorem \ref{thfstqg}. Also $H\cap P^g$ is hyperbolic for each conjugate $P^g$ of peripheral subgroup in $\PP$. Therefore, the subgroup $H$ is also hyperbolic by Statement (2) of Theorem \ref{thhk} and Theorem \ref{tmkr}. Therefore, the subgroup $H$ is stable in $G$ by Theorem \ref{th3}. 

\end{proof}

\section{Strong quasiconvexity, stability in right-angled Coxeter groups and related lower relative divergence}
\label{storacgs}

In this section, we investigate strongly quasiconvex subgroups and stable subgroups in right-angled Coxeter groups. More precisely, we characterize strongly quasiconvex (stable) special subgroups in two dimensional right-angled Coxeter groups. Also for each integer $d\geq 2$ we construct a right-angled Coxeter group together with a non-stable strongly quasiconvex subgroup whose lower relative divergence is exactly polynomial of degree $d$. We remark that these are also the first examples of groups together with non hyperbolic subgroups whose lower relative divergence are polynomial of degree $d$ ($d\geq 2$).

\subsection{Some backgrounds in right-angled Coxeter groups}

This subsection provides background on right-angled Coxeter groups.

\begin{defn}
Given a finite, simplicial graph $\Gamma$, the associated \emph{right-angled Coxeter group} $G_\Gamma$ has generating set $S$ the vertices of $\Gamma$, and relations $s^2 = 1$ for all $s$ in $S$ and $st = ts$ whenever $s$ and $t$ are adjacent vertices.

Let $S_1$ be a subset of $S$. The subgroup of $G_\Gamma$ generated by $S_1$ is a right-angled Coxeter group $G_{\Gamma_1}$, where $\Gamma_1$ is the induced subgraph of $\Gamma$ with vertex set $S_1$ (i.e. $\Gamma_1$ is the union of all edges of $\Gamma$ with both endpoints in $S_1$). The subgroup $G_{\Gamma_1}$ is called a \emph{special subgroup} of $G_\Gamma$. Any of its conjugates is called a \emph{parabolic subgroup} of $G_\Gamma$.

\end{defn}

\begin{defn}
Given a finite, simplicial graph $\Gamma$, the associated \emph{Davis complex} $\Sigma_\Gamma$ is a cube complex constructed as follows. For every $k$--clique, $T \subset \Gamma$, the special subgroup $G_T$ is isomorphic to the direct product of $k$ copies of $Z_2$. Hence, the Cayley graph of $G_T$ is isomorphic to the 1--skeleton of a $k$--cube. The Davis complex $\Sigma_\Gamma$ has 1--skeleton the Cayley graph of $G_\Gamma$, where edges are given unit length. Additionally, for each $k$--clique, $T \subset \Gamma$, and coset $gG_T$, we glue a unit $k$--cube to $gG_T \subset\Sigma_\Gamma$. The Davis complex $\Sigma_\Gamma$ is a $\CAT(0)$ cube complex and the group $G_\Gamma$ acts properly and cocompactly on the Davis complex $\Sigma_\Gamma$ (see \cite{MR2360474}).
\end{defn}

\begin{rem}[see \cite{MR2360474}]
\label{rr}
Let $\Gamma$ be a finite simplicial graph with vertex set $S$ and $\Gamma_1$ the induced subgraph of $\Gamma$ generated by some subset $T$ of $S$. Then the Davis complex $\Sigma_{\Gamma_1}$ embeds isometrically in the Davis complex $\Sigma_{\Gamma}$. Moreover, the Cayley graph $\Sigma^{(1)}_{\Gamma_1}$ of the group $G_{\Gamma_1}$ embeds isometrically in the Cayley graph $\Sigma^{(1)}_{\Gamma}$ of the group $G_{\Gamma}$. 

We also remark that a Davis complex $\Sigma_{\Gamma}$ is a two dimensional cube complex if and only if the defining graph $\Gamma$ is triangle free and contains at least one edge.
\end{rem}


\begin{defn}
Let $\Sigma$ be a $\CAT(0)$ cube complex. In $\Sigma$, we consider the equivalence relation on the set of midcubes of cubes generated by the rule that two midcubes are related if they share a face. A \emph{hyperplane}, $H$, is the union of the midcubes in a single equivalence class. We define the \emph{support} of a hyperplane $H$, denoted $N(H)$, to be the union of cubes which contain midcubes of $H$.
\end{defn}

\begin{rem}
\label{rm11}
Let $\Gamma$ be a finite simplicial graph with vertex set $S$ and $\Sigma_\Gamma$ the associated Davis complex. Each hyperplane in $\Sigma_\Gamma$ separates $\Sigma_\Gamma$ into two convex sets. It follows that the distance between a pair of vertices with respect to the metric $d_S$ equals the number of hyperplanes in $\Sigma_\Gamma$ separating those vertices.

For a generator $v$, let $e_v$ denote the edge from the basepoint 1 to the vertex $v$. Any edge in $\Sigma_\Gamma$ determines a unique hyperplane, namely the hyperplane containing the midpoint of that edge. Denote by $H_v$ the hyperplane containing the midpoint of $e_v$.

For a cube in $\Sigma_\Gamma$, all of the parallel edges are labeled by the same generator $v$. It follows that all of the edges crossing a hyperplane $H$ have the same label $v$, and we call this a hyperplane of \emph{type $v$}. Obviously, if two hyperplanes with the types $v_1$ and $v_2$ cross, then $v_1$ and $v_2$ commute. Since $G_\Gamma$ acts transitively on edges labeled $v$, a hyperplane is of type $v$ if and only if it is a translate of the standard hyperplane $H_v$. Obviously, the star subgroup $G_{St(v)}$ is the stabilizer of the hyperplane $H_v$ and $G_{St(v)}$ can also be considered as vertices of the support $N(H_v)$ of $H_v$. Therefore, the subgroup $gG_{St(v)}g^{-1}$ is the stabilizer of the hyperplane $gH_v$ and $gG_{St(v)}$ can also be considered as vertices of the support $N(gH_v)$ of $gH_v$.
\end{rem}

\subsection{Strongly quasiconvex special subgroup and stable special subgroup characterizations in right-angled Coxeter groups}

In this subsection, we establish characterizations of strongly quasiconvex special subgroups and stable special subgroups in two dimensional right-angled Coxeter groups in terms of defining graphs. As a consequence, we give a criterion on simplical, triangle free graphs that guarantees that the associated right-angled Coxeter groups possess Morse boundaries which are not totally disconnected. 

We first develop some results in right-angled Coxeter groups that help establish the characterizations of strongly quasiconvex (stable) special subgroups in two dimensional right-angled Coxeter groups.

\begin{lem}
\label{lleemmaa1}
Let $\Gamma$ be a simplicial graph with vertex set $S$. Let $K$ be a special subgroup of $G_\Gamma$ generated by some subset $S_1$ of $S$ and $\Gamma_1$ the subgraph of $\Gamma$ induced by $S_1$. For each $g\in G_\Gamma$ $\gamma$ is a shortest path in $\Sigma^{(1)}_\Gamma$ that connects $g$ to some point in $K$ if and only if $\gamma$ is a geodesic in $\Sigma^{(1)}_\Gamma$ and each hyperplane crossed by some edge of $\gamma$ does not intersect the Cayley graph $\Sigma^{(1)}_{\Gamma_1}$ of $K$.
\end{lem}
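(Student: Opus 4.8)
The plan is to work entirely in the Davis complex $\Sigma_\Gamma$ viewed as a $\CAT(0)$ cube complex and to translate every statement into the language of hyperplanes via Remark \ref{rm11}: each hyperplane separates $\Sigma_\Gamma$ into two convex halfspaces, the distance $d_S(u,v)$ equals the number of hyperplanes separating $u$ and $v$, and consequently a path joining $u$ to $v$ is a geodesic precisely when it crosses each separating hyperplane exactly once and crosses no other hyperplane. Write $L=\Sigma^{(1)}_{\Gamma_1}$ for the Cayley graph of $K$; by Remark \ref{rr} it embeds isometrically, and it is moreover a convex subcomplex of $\Sigma^{(1)}_\Gamma$, a standard property of the parabolic subcomplexes of a Davis complex. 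Since $L$ is connected, any hyperplane $H$ that does not meet $L$ has all of $L$ in one of its two halfspaces; I say $H$ \emph{separates $g$ from $L$} if in addition $g$ lies in the other halfspace, and I write $\mathcal S(g)$ for the set of such hyperplanes. Throughout, $\gamma$ denotes a path joining $g$ to a vertex of $K$.

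The crux of the argument, and the step I expect to be the main obstacle, is the following claim: if $k\in K$ is a vertex of $K$ nearest to $g$, then every hyperplane separating $g$ from $k$ misses $L$. I would prove this by a median exchange argument. Suppose some hyperplane $H$ separating $g$ from $k$ meets $L$; then $H$ crosses an edge of $L$, so some vertex $k_1\in K$ lies in the same halfspace of $H$ as $g$. Consider the median $m=m(g,k,k_1)$. Since $k$ and $k_1$ lie in the convex subcomplex $L$, the median $m$, lying on a geodesic between them, is again a vertex of $K$. For the hyperplane $H$ the points $g$ and $k_1$ lie together on one side while $k$ lies alone on the other, so $H$ separates $m$ from $k$; hence $d_S(m,k)\ge 1$ and, by the median identity $d_S(g,k)=d_S(g,m)+d_S(m,k)$, the vertex $m\in K$ is strictly closer to $g$ than $k$, contradicting the choice of $k$. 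This establishes the claim, and as a byproduct yields the distance formula $d_S(g,K)=|\mathcal S(g)|$: the inequality $d_S(g,K)\ge|\mathcal S(g)|$ holds because any geodesic from $g$ to a point of $K$ must cross every member of $\mathcal S(g)$, while the reverse inequality follows from the claim applied to a nearest vertex.

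For the forward implication I would first note that if $\gamma$ is a shortest path joining $g$ to $k\in K$, then $\ell(\gamma)=d_S(g,K)\le d_S(g,k)\le\ell(\gamma)$, forcing $\ell(\gamma)=d_S(g,k)$ and $d_S(g,k)=d_S(g,K)$; thus $\gamma$ is a geodesic and $k$ is a nearest vertex of $K$ to $g$. The hyperplanes crossed by $\gamma$ are then exactly those separating $g$ from $k$, and by the crux claim each of these misses $L$, as desired. For the backward implication, suppose $\gamma$ is a geodesic joining $g$ to $k\in K$ all of whose crossed hyperplanes miss $L$. Each crossed hyperplane $H$ separates $g$ from the endpoint $k\in L$, and since $H$ misses the connected set $L$ it in fact separates $g$ from all of $L$, so $H\in\mathcal S(g)$. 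Hence the set of hyperplanes crossed by $\gamma$ is contained in $\mathcal S(g)$, giving $\ell(\gamma)\le|\mathcal S(g)|=d_S(g,K)$; as trivially $\ell(\gamma)\ge d_S(g,K)$, equality holds and $\gamma$ is a shortest path to $K$.

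The main difficulty, as indicated, is the median exchange step together with the justification that $L=\Sigma^{(1)}_{\Gamma_1}$ is genuinely convex rather than merely isometrically embedded, since it is convexity that keeps the median $m$ inside $K$; everything else is bookkeeping with the hyperplane distance formula of Remark \ref{rm11}. An alternative to invoking medians would be a direct reflection argument using the reflection $k_1 v k_1^{-1}\in K$ in the wall $H$, but the median formulation keeps the distance comparison transparent and avoids tracking how a reflection permutes the remaining hyperplanes.
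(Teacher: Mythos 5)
Your proof is correct, but it takes a genuinely different route from the paper's. The paper argues combinatorially with the word $s_1s_2\cdots s_n$ read off the path: if some crossed hyperplane meets $\Sigma^{(1)}_{\Gamma_1}$, take the first such $H_i$; every earlier $H_j$ separates $h$ from the midpoint of $e_i$ but misses $\Sigma^{(1)}_{\Gamma_1}$, hence must cross $H_i$ (it must meet the concatenation of a path $\alpha\subset\Sigma^{(1)}_{\Gamma_1}$ with a path $\beta\subset H_i$, and it cannot meet $\alpha$), so $s_i$ commutes past $s_1,\dots,s_{i-1}$ and can be absorbed into the endpoint, producing a strictly shorter path to $hs_i\in K$ --- a deletion/exchange argument. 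You instead reduce everything to the single claim that the separating hyperplanes of $g$ and a nearest vertex $k\in K$ all miss $L=\Sigma^{(1)}_{\Gamma_1}$, proved by a median exchange, and then both implications become hyperplane counting against the formula $d_S(g,K)=\abs{\mathcal S(g)}$. (Your backward direction is essentially the paper's, repackaged: the paper shows every hyperplane crossed by $\gamma$ must also be crossed by any competitor $\gamma'$.) The trade-off: the paper's proof uses only the facts recorded in Remark \ref{rm11} plus connectedness of $\Sigma^{(1)}_{\Gamma_1}$, whereas yours additionally needs convexity of $\Sigma^{(1)}_{\Gamma_1}$ (to keep the median inside $K$) and the median/majority-rule machinery. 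You are right to flag convexity as the point requiring justification; it is a standard fact about special subgroups of right-angled Coxeter groups, and the paper itself invokes it without comment in the proof of Proposition \ref{pip2}, so this is not a gap --- but it should be cited rather than asserted. In return your argument yields the distance formula $d_S(g,K)=\abs{\mathcal S(g)}$ as a byproduct and applies verbatim to any convex subcomplex of a $\CAT(0)$ cube complex, not just to parabolic subgroups.
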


\begin{proof}
We first assume that $\gamma$ is a shortest path in $\Sigma^{(1)}_\Gamma$ that connects $g$ to some point $h$ in $K$. Obviously, $\gamma$ must be a geodesic in $\Sigma^{(1)}_\Gamma$ connecting $h$ and $g$. Let $n$ be the length of $\gamma$. Then, the geodesic $\gamma$ along the direction from $h$ to $g$ is the concatenation of $n$ edges $e_1e_2\cdots e_n$ where each edge $e_i$ is labelled by some vertex $s_i$ in $S$. Therefore, $g=h(s_1s_2\cdots s_n)$. 

Assume for the contradiction that there is a hyperplane $H_i$ crossed by the edge $e_i$ intersects $\Sigma^{(1)}_{\Gamma_1}$. We choose the smallest number $i$ with that property. In particular, $s_i\in S_1 \subset K$. We claim that for each $j<i$ each hyperplane $H_j$ crossed by edge $e_j$ of $\gamma$ intersects $H_i$. Indeed, each hyperplane $H_j$ ($j<i$) does not intersect $\Sigma^{(1)}_{\Gamma_1}$ by the choice of $i$. Moreover, $\Sigma^{(1)}_{\Gamma_1}$ is connected then there is a path $\alpha$ in $\Sigma^{(1)}_{\Gamma_1}$ that connects $h$ and a point $u$ in $H_i\cap \Sigma^{(1)}_{\Gamma_1}$. Let $\beta$ is the path in $H_i$ that connects $u$ to the midpoint of $e_i$. Therefore, each hyperplane $H_j$ ($j<i$) must intersect the concatenation $\alpha\beta$. However, each hyperplane $H_j$ ($j<i$) does not intersect $\alpha$ because it does not intersect $\Sigma^{(1)}_{\Gamma_1}$. This implies that each hyperplane $H_j$ ($j<i$) intersects $H_i$. Therefore, $s_j$ commutes to $s_i$ for each $j<i$. Thus, \[g=h(s_1s_2\cdots s_{i-1}s_is_{i+1}\cdots s_n)=(hs_i)(s_1s_2\cdots s_{i-1}s_{i+1}\cdots s_n).\] 
The above equality shows that there is path of length $n-1$ that connects $g$ to the point $hs_i\in K$. This is a contradiction. Therefore, each hyperplane crossed by some edge of $\gamma$ does not intersect the Cayley graph $\Sigma^{(1)}_{\Gamma_1}$.

We now assume that $\gamma$ is a geodesic in $\Sigma^{(1)}_\Gamma$ and each hyperplane crossed by some edge of $\gamma$ does not intersect the Cayley graph $\Sigma^{(1)}_{\Gamma_1}$. Let $\gamma'$ be an arbitrary path in $\Sigma^{(1)}_{\Gamma}$ that connects $g$ to some point $h_1$ in $K$. Since $\Sigma^{(1)}_{\Gamma_1}$ is connected, there is a path $\alpha'$ in $\Sigma^{(1)}_{\Gamma_1}$ that connects $h$ and $h_1$. Therefore, hyperplanes crossed by edges of $\gamma$ intersect the concatenation $\alpha'\gamma'$. Also, these hyperplanes do not intersect $\alpha'$ since they do not intersect $\Sigma^{(1)}_{\Gamma_1}$. Therefore, all these hyperplanes must intersect $\gamma'$. This implies that $\ell(\gamma)\leq \ell(\gamma')$. Therefore, $\gamma$ is a shortest path in $\Sigma^{(1)}_\Gamma$ that connects $g$ to some point in $K$.
\end{proof}

\begin{lem}
\label{ld}
Let $\Gamma$ be a simplicial graph with vertex set $S$. Let $K$ be a special subgroup of $G_\Gamma$ generated by some subset $S_1$ of $S$. Let $\sigma$ be an induced 4--cycle with two pairs of non-adjacent vertices $(a_1,a_2)$ and $(b_1,b_2)$ such that $a_1$, $a_2$ both lie in $S_1$ but $b_1$ does not. Then $d_S\bigl((b_1b_2)^n, K\bigr)=2n$ and $d_S\bigl((b_1b_2)^n b_1, K\bigr)=2n+1$.
\end{lem}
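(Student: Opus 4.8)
The plan is to realize $K$ as the convex subcomplex $\Sigma^{(1)}_{\Gamma_1}\subset\Sigma^{(1)}_\Gamma$ and to compute the two distances by exhibiting explicit shortest paths to $K$ via Lemma~\ref{lleemmaa1}. Since $b_1$ and $b_2$ are non-adjacent in $\sigma$, they do not commute, so the words $(b_1b_2)^n$ and $(b_1b_2)^nb_1$ admit no cancellation and are geodesic words, tracing geodesics from $1\in K$ to $(b_1b_2)^n$ and to $(b_1b_2)^nb_1$ of lengths $2n$ and $2n+1$ respectively. By Lemma~\ref{lleemmaa1} it suffices to check that every hyperplane these geodesics cross is disjoint from $\Sigma^{(1)}_{\Gamma_1}$; then each geodesic is a shortest path to $K$ and realizes the asserted distance. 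The hyperplanes crossed have type $b_1$ (the translates $(b_1b_2)^kH_{b_1}$) or type $b_2$ (the translates $(b_1b_2)^kb_1H_{b_2}$). A hyperplane can meet the graph $\Sigma^{(1)}_{\Gamma_1}$ only at the midpoint of an edge labelled by $S_1$, so every type-$b_1$ hyperplane is immediately disjoint from $\Sigma^{(1)}_{\Gamma_1}$ because $b_1\notin S_1$.

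For the type-$b_2$ hyperplanes I would use that, by Remark~\ref{rm11}, $G_{St(b_2)}$ is the stabilizer of $H_{b_2}$; hence $(b_1b_2)^kb_1H_{b_2}$ meets $\Sigma^{(1)}_{\Gamma_1}$ precisely when $(b_1b_2)^kb_1H_{b_2}=wH_{b_2}$ for some $w\in K$, that is, when $(b_1b_2)^kb_1\in K\cdot G_{St(b_2)}$. Thus the whole lemma reduces to proving $(b_1b_2)^kb_1\notin K\cdot G_{St(b_2)}$ for all $k\ge 0$. To see this I would build the retraction $\rho\colon G_\Gamma\to\langle b_1,b_2\rangle$ sending $b_1,b_2$ to themselves and every other generator to $1$; this respects the relations precisely because $b_1$ and $b_2$ are non-adjacent (otherwise the relation $b_1b_2=b_2b_1$ would obstruct it), and its image is the infinite dihedral group $\langle b_1,b_2\rangle\cong\Z_2*\Z_2$. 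Since $b_1\notin S_1$ and $b_1\notin St(b_2)$, both $\rho(K)$ and $\rho(G_{St(b_2)})$ lie in $\langle b_2\rangle=\{1,b_2\}$, so $\rho(K\cdot G_{St(b_2)})\subseteq\{1,b_2\}$; whereas $\rho\bigl((b_1b_2)^kb_1\bigr)=b_1(b_2b_1)^k$ is a reduced word of length $2k+1$ in $\Z_2*\Z_2$, hence neither $1$ nor $b_2$. This gives the required non-membership, and Lemma~\ref{lleemmaa1} then yields $d_S\bigl((b_1b_2)^n,K\bigr)=2n$ and $d_S\bigl((b_1b_2)^nb_1,K\bigr)=2n+1$.

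The reductions in the first paragraph are routine bookkeeping with hyperplanes, so I expect the only real obstacle to be the non-membership $(b_1b_2)^kb_1\notin K\cdot G_{St(b_2)}$; the retraction onto $\Z_2*\Z_2$ is the device that overcomes it, and it is exactly the non-adjacency of $b_1$ and $b_2$, together with $b_1\notin S_1$, that makes this retraction available and effective. I note that the argument never actually invokes the hypothesis $a_1,a_2\in S_1$, which enters only through the ambient context in which the lemma will be applied.
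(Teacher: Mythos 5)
Your proof is correct, and its skeleton is the same as the paper's: write down the alternating word as an explicit geodesic from $e\in K$ to the target element, and invoke Lemma~\ref{lleemmaa1} by checking that every hyperplane dual to an edge of that geodesic misses $\Sigma^{(1)}_{\Gamma_1}$. The two arguments diverge only in how that disjointness is verified for the hyperplanes that are not automatically ruled out by their label. The paper observes that, because the word alternates two non-commuting involutions, the hyperplanes $H_1,\dots,H_{2n}$ are pairwise non-crossing; since $H_1$ has type $b_1\notin S_1$ it misses $\Sigma^{(1)}_{\Gamma_1}$, and a connectivity/separation argument (the same one used inside the proof of Lemma~\ref{lleemmaa1}) then forces every later $H_i$ to miss $\Sigma^{(1)}_{\Gamma_1}$ as well, since otherwise $H_i$ would have to cross $H_1$. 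You instead treat the type-$b_2$ hyperplanes head-on: using the stabilizer description $\Stab(H_{b_2})=G_{St(b_2)}$ from Remark~\ref{rm11}, you reduce disjointness to the coset non-membership $(b_1b_2)^kb_1\notin K\cdot G_{St(b_2)}$, and kill that with the retraction $\rho\colon G_\Gamma\to\langle b_1,b_2\rangle\cong\Z_2*\Z_2$. Your route is more algebraic and arguably more self-contained (it does not re-run the separation argument), at the cost of introducing the retraction; the paper's route is shorter given that the separation mechanism is already set up in Lemma~\ref{lleemmaa1}. Your closing remark is also accurate: neither proof uses $a_1,a_2\in S_1$ or the $4$--cycle; only the non-adjacency of $b_1,b_2$ and $b_1\notin S_1$ are needed, the rest of the hypotheses entering only when the lemma is applied in Proposition~\ref{pip1}.
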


\begin{proof}
We will only prove that $d_S\bigl((b_1b_2)^n, K\bigr)=2n$ and the argument for the remaining equality is identical. Let $\Gamma_1$ the subgraph of $\Gamma$ induced by $S_1$. Let $g=(b_1b_2)^n$ and choose the path $\gamma$ in $\Sigma^{(1)}_\Gamma$ connecting the identity element $e$ and $g$ that reads the word $(b_1b_2)^n$. Since two vertices $b_1$ and $b_2$ are not adjacent, the path $\gamma$ is a geodesic in $\Sigma^{(1)}_\Gamma$ by Remark \ref{rr}. Let $H_i$ be the hyperplane crossed by the $i^{th}$ edge of $\gamma$ in the direction from $e$ to $g$. Since the hyperplane $H_1$ is labelled by $b_1\notin S_1$, $H_1$ does not intersect $\Sigma^{(1)}_{\Gamma_1}$. Also the geodesic $\gamma$ is labelled by non-commuting elements $b_1$ and $b_2$ alternatively. This implies that $H_i$ does not intersect $H_j$ for $i\neq j$. In particular, $H_i$ does not intersect $H_1$ for $i\neq 1$. This implies that $H_i$ does not intersect $\Sigma^{(1)}_{\Gamma_1}$ for $i\neq 1$ either. By Lemma \ref{lleemmaa1}, the geodesic $\gamma$ is a shortest path that connects $(b_1b_2)^n$ and some point in $K$. Therefore, $d_S\bigl((b_1b_2)^n, K\bigr)=\ell(\gamma)=2n$. Using a similar argument we also obtain the equality $d_S\bigl((b_1b_2)^n b_1, K\bigr)=2n+1$.
\end{proof}

\begin{prop}
\label{pip1}
Let $\Gamma$ be a simplicial graph with vertex set $S$. Let $K$ be a special subgroup of $G_\Gamma$ generated by some subset $S_1$ of $S$. Assume that there is an induced 4--cycle $\sigma$ with two pairs of non-adjacent vertices $(a_1,a_2)$ and $(b_1,b_2)$ such that $a_1$, $a_2$ both lie in $S_1$ but $b_1$ does not. Then $K$ is not strongly quasiconvex in $G_\Gamma$.
\end{prop}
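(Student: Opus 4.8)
The plan is to contradict the defining property of a Morse (equivalently, strongly quasiconvex) subset directly: I will fix a single pair of quasi-geodesic constants and produce a sequence of quasi-geodesics with endpoints on $K$ that escape every bounded neighborhood of $K$. The geometric source is the flat spanned by $\sigma$. Writing the vertices of $\sigma$ in cyclic order as $a_1,b_1,a_2,b_2$, the special subgroup $G_\sigma=\gen{a_1,a_2,b_1,b_2}$ is the right-angled Coxeter group on a $4$--cycle, hence the direct product $\gen{a_1,a_2}\times\gen{b_1,b_2}$ of two infinite dihedral groups. Its Cayley graph $F=\Sigma^{(1)}_\sigma$ is therefore an $\ell^1$ grid, and by Remark \ref{rr} it embeds isometrically in the Cayley graph $\Sigma^{(1)}_\Gamma$ of $G_\Gamma$; thus distances and quasi-geodesic constants computed inside $F$ remain valid in $G_\Gamma$. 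I use the horizontal direction for the $\gen{a_1,a_2}$--axis and the vertical direction for the $\gen{b_1,b_2}$--axis, so that $(a_1a_2)^k$ sits at horizontal coordinate $2k$ and $(b_1b_2)^k$ at vertical coordinate $2k$.

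For each $n$ I set $x=(a_2a_1)^n$ and $y=(a_1a_2)^n$; both are words in $a_1,a_2\in S_1$, hence lie on $K$, and $d_S(x,y)=4n$. I then take $\gamma_n$ to be the ``rectangular detour'': the concatenation of the geodesic from $x$ reading $(b_1b_2)^n$ (straight up, length $2n$), the geodesic reading $(a_1a_2)^{2n}$ (straight across at height $2n$, length $4n$), and the geodesic reading $(b_2b_1)^n$ (straight down, length $2n$). Using that every $a_i$ commutes with every $b_j$, one checks that the far endpoint of $\gamma_n$ is exactly $y$ and that the midpoint of the horizontal segment is $(a_2a_1)^n(a_1a_2)^n(b_1b_2)^n=(b_1b_2)^n$. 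By Lemma \ref{ld} we have $d_S\bigl((b_1b_2)^n,K\bigr)=2n$, so $\gamma_n$ passes through a point at distance $2n$ from $K$.

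It remains to bound the quasi-geodesic constants of $\gamma_n$ uniformly in $n$, and this is the one step needing care. Because $F$ is an $\ell^1$ grid, $\gamma_n$ is the standard up--across--down path between the grid points $(-2n,0)$ and $(2n,0)$, and a short case analysis on two points $u,v\in\gamma_n$ (both on one vertical leg, both on the horizontal segment, one on a leg and one on the segment, or on the two opposite legs) shows $d_{\gamma_n}(u,v)\le 2\,d_S(u,v)$ in every case, the constant $2$ being attained only by the two endpoints. Hence each $\gamma_n$ is a $(2,0)$--quasi-geodesic in $F$, and by the isometric embedding it is a $(2,0)$--quasi-geodesic in $\Sigma^{(1)}_\Gamma$ with both endpoints on $K$. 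Since $\gamma_n$ reaches distance $2n\to\infty$ from $K$, no single $M=M(2,0)$ can contain all the $\gamma_n$ in the $M$--neighborhood of $K$; therefore $K$ is not Morse, i.e.\ not strongly quasiconvex in $G_\Gamma$. (Equivalently, these detours exhibit paths outside large neighborhoods of $K$ whose length is linear in their size along the sequence $r=2n$, so $div(G_\Gamma,K)$ is at most linear and in particular not completely super linear, whereupon Theorem \ref{th2} again gives the conclusion.)
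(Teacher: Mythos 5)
Your proof is correct. The geometric engine is the same one the paper uses --- the flat $\langle a_1,a_2\rangle\times\langle b_1,b_2\rangle$ spanned by the induced $4$--cycle, a detour passing through $(b_1b_2)^n$, and Lemma \ref{ld} to certify that this point is genuinely at distance $2n$ from $K$ --- but the logical packaging is different. The paper routes through the divergence characterization: it places the long $(a_1a_2)$--segment at height $m\approx r$, attaches short connecting paths down to $\partial N_r(K)$, deduces $\sigma^n_\rho(r)\le(4n+2)r$, and then invokes Theorem \ref{th2} to conclude $K$ is not Morse. You instead close the rectangle at points of $K$ itself and contradict the Morse definition directly for the single parameter pair $(2,0)$; your case analysis for the quasi-geodesic constant checks out (the leg-to-horizontal case gives ratio exactly $1$, the leg-to-leg case gives $8n-s-t\le 8n+2|s-t|$, and the constant $2$ is attained only at the endpoints), and the isometric embedding of $\Sigma^{(1)}_\sigma$ from Remark \ref{rr} transports the constant to $\Sigma^{(1)}_\Gamma$ since the quasi-geodesic condition only involves distances between points on the path. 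What your version buys is independence from Theorem \ref{th2}, hence from Propositions \ref{p1} and \ref{p2}; what the paper's version buys is the explicit linear upper bound on $div(G_\Gamma,K)$, which is what feeds into condition (3) of Theorem \ref{ith}. Your closing parenthetical recovers that bound along the sequence $r=2n$, so nothing is lost either way.
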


\begin{proof}
By Theorem \ref{th2}, this is sufficient to prove that the lower relative divergence of $G_\Gamma$ with respect to $K$ is not completely super linear. Let $\{\sigma^n_{\rho}\}$ be the lower relative divergence of $\Sigma^{(1)}_{\Gamma}$ with respect to $K$. We claim that for each $n\geq 2$ and $\rho\in(0,1]$ \[\sigma^n_{\rho}(r)\leq (4n+2)r \text{ for each $r>1$}.\]

Indeed, for each $r>1$ choose an integer $m$ in $[r,2r]$. If $m=2k$ for some integer $k$, we choose $x=(b_1b_2)^k$. Otherwise, $m=2k+1$ for some integer $k$ and we choose $x=(b_1b_2)^kb_1$. Therefore, $d_S(x,K)=m$ by Lemma \ref{ld}. We choose $y=(a_1a_2)^{mn}x$. Then \[d_S(y,K)=d_S\bigl((a_1a_2)^{mn}x,K\bigr)=d_S\bigl(x,(a_1a_2)^{-mn}K\bigr)=d_S(x,K)=m.\] Since $x$ commutes with both $a_1$ and $a_2$, $y=x(a_1a_2)^{mn}$. This implies that there is a geodesic $\gamma$ connecting $x$, $y$ and $\gamma$ traces the word $(a_1a_2)^{mn}$. Obviously, $d_S(x,y)=\ell(\gamma)=2mn$.

By the construction, each vertex of $\gamma$ is element of the form $x(a_1a_2)^i$ or $x\bigl((a_1a_2)^i a_1\bigr)$ in $G_\Gamma$. Again, $x$ commutes with both $a_1$ and $a_2$. Therefore, \[d_S\bigl(x(a_1a_2)^{i},K\bigr)=d_S\bigl((a_1a_2)^i x,K\bigr)=d_S\bigl(x,(a_1a_2)^{-i}K\bigr)=d_S(x,K)=m>r\] and $d_S\bigl(x\bigl((a_1a_2)^i a_1\bigr),K\bigr)=m>r$ similarly. Therefore, the path $\gamma$ lies outside the $r$--neighborhood of $K$.

Since $r\leq d_S(x,K)=m\leq 2r$, there is a path $\gamma_1$ in $\Sigma^{(1)}_{\Gamma}$ connecting $x$ and some point $u$ in $\partial N_r(K)$ such that $\gamma_1$ lies outside the $r$--neighborhood of $K$ and its length is bounded above by $r$. In particular, $d_S(x,u)\leq r$. Similarly, there is a path $\gamma_2$ in $\Sigma^{(1)}_{\Gamma}$ connecting $y$ and some point $v$ in $\partial N_r(K)$ such that $\gamma_2$ lies outside the $r$--neighborhood of $K$ and its length is bounded above by $r$. In particular, $d_S(x,u)\leq r$. Therefore,
\[d_S(u,v)\geq d_S(x,y)-d_S(x,u)-d_S(y,v)\geq 2mn-r-r\geq (2n-2)r\geq nr.\]

We observe that $\bar{\gamma}=\gamma_1\cup\gamma\cup\gamma_2$ is the path outside the $r$--neighborhood of $K$ that connects two points $u$, $v$ in $\partial N_r(K)$. Also, 
\[\ell(\bar{\gamma})=\ell(\gamma_1)+\ell(\gamma)+\ell(\gamma_2)\leq r+2mn+r\leq (4n+2)r.\]

Therefore, $\sigma^n_{\rho}(r)\leq (4n+2)r$ for each $r>1$. This implies that the lower relative divergence of $G_\Gamma$ with respect to $K$ is not completely super linear. Therefore, $K$ is not strongly quasiconvex in $G_\Gamma$ by Theorem \ref{th2}.

\end{proof}

\begin{prop}
\label{pip2}
Let $\Gamma$ be a simplicial, triangle free graph with vertex set $S$ and $K$ subgroup of $G_\Gamma$ generated by some subset $S_1$ of $S$. We assume that if $S_1$ contains two non-adjacent vertices of an induced 4--cycle $\sigma$, then $S_1$ contains all vertices of $\sigma$. If $K$ is an infinite subgroup of $G_\Gamma$, then the lower relative divergence of $G_\Gamma$ with respect to $K$ is at least quadratic
\end{prop}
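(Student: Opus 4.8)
The plan is to work in the Davis complex $\Sigma_\Gamma$, which is a two-dimensional $\CAT(0)$ cube complex since $\Gamma$ is triangle free, and to realize $K$ as the convex subcomplex $\Sigma^{(1)}_{\Gamma_1}$. By the definition of the family $\{\sigma^n_\rho\}$ it suffices to fix $\rho\in(0,1]$ and $n\geq 3$, take arbitrary $x_1,x_2\in\partial N_r(K)$ with $d_S(x_1,x_2)\geq nr$ joined by a path $\gamma$ lying outside $N_{\rho r}(K)$, and show $\ell(\gamma)\geq c(n,\rho)\,r^2$ for large $r$; this forces $\sigma^n_\rho(r)\geq c(n,\rho)\,r^2$ and hence that the lower relative divergence is at least quadratic. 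First I would let $p_i$ be the combinatorial projection (gate) of $x_i$ onto $K$, so that $d_S(p_1,p_2)\geq nr-2r=(n-2)r$. Lemma \ref{lleemmaa1} supplies the two standard facts I need: the hyperplanes separating $x_i$ from $p_i$ are exactly those separating $x_i$ from $K$, and none of them cross $K$; in particular any hyperplane that \emph{does} cross $K$ leaves $x_i$ and $p_i$ on the same side. Consequently each of the $N:=d_S(p_1,p_2)\geq(n-2)r$ hyperplanes $H_1,\dots,H_N$ that cross $K$ and separate $p_1$ from $p_2$ also separates $x_1$ from $x_2$, so $\gamma$ must cross every $H_k$.

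The quadratic estimate then comes from a hyperplane count. For each $k$ pick a crossing point $z_k\in\gamma\cap H_k$; since $\gamma$ avoids $N_{\rho r}(K)$ we have $d_S(z_k,K)\geq\rho r$. Let $T_k$ be the set of hyperplanes separating $z_k$ from $K$; a geodesic from $z_k$ to $K$ realizes $T_k$, so by Lemma \ref{lleemmaa1} we have $|T_k|=d_S(z_k,K)\geq\rho r$ and no hyperplane of $T_k$ crosses $K$. Writing $T_{x_1}$ for the set of hyperplanes separating $x_1$ from $K$ (of size $r$), every $W\in T_k\setminus T_{x_1}$ separates the far point $z_k$ from the near point $x_1$, and since both lie on $\gamma$ it is crossed by $\gamma$. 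Hence
\[ \ell(\gamma)\;\geq\;\Bigl|\bigcup_{k=1}^{N}T_k\Bigr|-r, \]
and the whole problem reduces to proving that $\bigcup_k T_k$ has cardinality of order $r^2$.

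The heart of the argument — and the step I expect to be the main obstacle — is a \emph{bounded overlap} statement: there is a constant $B=B(\Gamma)$ such that each hyperplane $W$ lies in at most $B$ of the sets $T_k$. Granting this, counting incidences gives $\bigl|\bigcup_k T_k\bigr|\geq B^{-1}\sum_k|T_k|\geq B^{-1}(n-2)r\cdot\rho r$, which is quadratic in $r$ for fixed $n,\rho$; combined with the displayed inequality this yields $\sigma^n_\rho(r)\geq\ell(\gamma)\geq c(n,\rho)\,r^2$, so $\{\sigma^n_\rho\}\succeq r^2$.

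To prove bounded overlap I would argue by contradiction, using the hypothesis on $\Gamma$. Suppose a single hyperplane $W$, of type $w$, separates from $K$ two crossing points $z_k,z_l$ whose $K$-shadows $q_k=H_k\cap K$ and $q_l=H_l\cap K$ are far apart along $[p_1,p_2]$. Since $q_k,q_l\in K$ lie on the $K$-side of $W$ while $z_k\in H_k$ and $z_l\in H_l$ lie on the far side, $W$ separates two points of $H_k$ and two points of $H_l$, so $W$ crosses both $H_k$ and $H_l$; thus $w$ commutes with the types $v_k,v_l\in S_1$ of $H_k,H_l$. Because $q_k,q_l$ are far apart the hyperplanes $H_k,H_l$ are disjoint and do not cross, so $v_k,v_l$ are non-adjacent, and triangle-freeness constrains the adjacencies of $w$ along the strip that $W$ spans between $H_k$ and $H_l$. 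Tracking the squares dual to $W$ across this strip should produce an induced $4$-cycle of $\Gamma$ containing the non-adjacent pair $v_k,v_l\in S_1$ together with a vertex (playing the role of $w$) that is \emph{not} in $S_1$, precisely because the hyperplanes of that type do not cross $K$. This contradicts the standing hypothesis that $S_1$ contain all four vertices of any induced $4$-cycle two of whose non-adjacent vertices lie in $S_1$. Making this passage precise — exhibiting the fourth vertex, checking that the $4$-cycle is induced, and verifying that it meets the complement of $S_1$ — is the technical crux, and is exactly where both triangle-freeness and the $4$-cycle hypothesis are indispensable; in the excluded configuration of Proposition \ref{pip1} the same kind of hyperplane shadows the entire escaping line, which is why the overlap there is unbounded and the divergence only linear.
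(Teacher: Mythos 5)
Your framework up to the displayed inequality $\ell(\gamma)\geq\bigl|\bigcup_k T_k\bigr|-r$ is sound and close in spirit to the paper's argument, but the proof stops exactly where the content lies: the bounded-overlap claim is not established, and the sketch you give for it does not go through as written. Three concrete problems. First, from ``$H_k$ and $H_l$ are disjoint and do not cross'' you infer that $v_k,v_l$ are non-adjacent; this is false, since two translates of hyperplanes with adjacent (or equal) types need not cross, so non-crossing gives no information about the types. You must separately dispose of the cases $v_k=v_l$ and $v_k$ adjacent to $v_l$ (the latter is killed by triangle-freeness because $w$ is adjacent to both, but only once you know $w\neq v_k,v_l$, which requires the observation that two distinct hyperplanes of the same type never cross). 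Second, and more seriously, you assert that the offending vertex of the induced $4$--cycle lies outside $S_1$ ``precisely because the hyperplanes of that type do not cross $K$'' --- but a hyperplane that fails to cross $K$ can perfectly well have type in $S_1$ (only the converse holds: a hyperplane dual to an edge of $K$ has type in $S_1$). Your $W$ is an arbitrary element of $T_k$, so you have no control over whether $w\in S_1$, and without that the $4$--cycle hypothesis yields no contradiction. Third, the fourth vertex of the $4$--cycle is never exhibited; ``tracking the squares dual to $W$ across this strip should produce an induced $4$--cycle'' is precisely the step that needs proof, and it is not clear what plays the role of a second common neighbour of $v_k$ and $v_l$.

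The paper's proof (Proposition \ref{pip2}) sidesteps all three difficulties by localizing. At each crossing point $g_i$ it builds a \emph{shortest} path $\beta_i'$ from $K$ out to $g_i$ and compares the hyperplane bundle of $\beta_i'$ only with that of the adjacent bundle $\beta_{i+1}'$, showing that at most one hyperplane dual to $\beta_i'$ can reach $\beta_{i+1}'$, so the remaining $\geq\rho r-1$ of them must cross the piece of $\gamma$ between $g_i$ and $g_{i+1}$; summing over $i$ gives the quadratic bound directly. The point of using a shortest path to $K$ is that its first edge is forced (by Lemma \ref{lleemmaa1}) to carry a label outside $S_1$ --- exactly the fact your sketch lacks --- and if two hyperplanes of $\beta_i'$ reached $\beta_{i+1}'$, their types $a,b$ would both lie in $Lk(s_i)\cap Lk(s_{i+1})$, exhibiting the induced $4$--cycle $a,s_i,b,s_{i+1}$ explicitly with $a\notin S_1$. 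To salvage your global bounded-overlap statement you would need to reorganize so that the hyperplanes you count come from such shortest paths to $K$, rather than being arbitrary members of the separating sets $T_k$.
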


\begin{proof}
Let $\{\sigma^n_{\rho}\}$ be the lower relative divergence of $\Sigma^{(1)}_{\Gamma}$ with respect to $K$. We claim that for each $n\geq 3$ and $\rho\in(0,1]$ \[\sigma^n_{\rho}(r)\geq (r-1)(\rho r-1) \text{ for each $r>0$}.\]
In fact, if $\sigma^n_{\rho}(r)=\infty$, then the above inequality is true obviously. Otherwise, let $\gamma$ be an arbitrary path outside $N_{\rho r}(K)$ that connects two points $x$ and $y$ in $\partial N_r(K)$ such that $d_S(x,y)\geq nr$. Let $\gamma'_1$ be a shortest geodesic in $\Sigma^{(1)}_{\Gamma}$ that connects $x$ and some point $k_1$ in $K$. Similarly, let $\gamma'_2$ be a shortest geodesic in $\Sigma^{(1)}_{\Gamma}$ that connects $y$ and some point $k_2$ in $K$. Therefore, $d_S(x,k_1)=d_S(y,k_2)=r$ obviously. This implies that \[d_S(k_1,k_2)\geq d_S(x,y)-d_S(x,k_1)-d_S(y,k_2)\geq (n-2)r\geq r.\] 

Let $\Gamma_1$ the subgraph of $\Gamma$ induced by $S_1$. We connect $k_1$ and $k_2$ by a geodesic $\alpha=e_1e_2\cdots e_m$ of length $m$ in the Cayley graph $\Sigma^{(1)}_{\Gamma_1}$ of $K$. Obviously $m\geq r$ and each edge $e_i$ of $\alpha$ is labelled by some vertex $s_i$ in $S_1$. Let $H_i$ be the hyperplane crossed by the edge $e_i$ of $\alpha$. Then each $H_i$ must intersect $\gamma'_1\cup\gamma\cup\gamma'_2$. However, $H_i$ does not intersect $\gamma'_1\cup\gamma'_2$ by Lemma \ref{lleemmaa1}. Therefore, each hyperplane $H_i$ must intersect some edge $f_i$ of $\gamma$. 

Let $h_0,h_1, h_2,\cdots,h_m$ be all vertices of $\alpha$ such that $h_{i-1}$ and $h_i$ are endpoints of $e_i$. For $1\leq i\leq m$ let $g_i$ be the endpoint of the edge $f_i$ of $\gamma$ that lies in the same component of $\Sigma_{\Gamma}-H_i$ containing $h_i$. Let $\alpha_i$ be a the geodesic in the support of $H_i$ that connects $h_i$ and $g_i$. By the convexity of the graph of $\Sigma^{(1)}_{\Gamma_1}$ in the graph $\Sigma^{(1)}_{\Gamma}$ each $\alpha_i$ is the concatenation of two geodesic $\beta_i$ and $\beta'_i$ such that $\beta_i$ lies entirely in $\Sigma^{(1)}_{\Gamma_1}$ and $\beta'_i$ intersects $\Sigma^{(1)}_{\Gamma_1}$ only at the common endpoint $h'_i\in K$ of $\beta_i$ and $\beta'_i$. 

We set $m_i=\ell(\beta'_i)$ for $1\leq i \leq m$. Then $m_i\geq \rho r$ since $g_i$ lies outside the $\rho r$--neighborhood of $K$. We assume that $\beta'_i=e^i_1e^i_2\cdots e^i_{m_i}$ then each edge $e^i_j$ of $\beta'_i$ is labelled by an element in $Lk(s_i)$ and the first edge $e^i_1$ is labelled by a vertex not in $S_1$. We claim that for each $1\leq i\leq m-1$ there is at least $(m_i-1)$ hyperplanes crossed by $\beta'_i$ intersect the subpath $\gamma_i$ of $\gamma$ connecting $g_i$, $g_{i+1}$. Since $\Gamma$ is triangle free, no pair of different vertices in $Lk(s_i)$ are adjacent. This implies that no pairs of hyperplanes crossed by two different edges of $\beta'_i$ intersects. Since the first edge $e^i_1$ of $\beta'_i$ is labelled by a vertex not in $S_1$, the hyperplane crossed by $e^i_1$ does not in intersect $\Sigma^{(1)}_{\Gamma_1}$. Therefore, no subsequent hyperplane crossed by $\beta'_i$ intersects $\Sigma^{(1)}_{\Gamma_1}$ either. We now consider the possibility a hyperplane crossed by $\beta'_i$ intersects $\beta'_{i+1}$. 

If the hyperplane $H_{e^i_1}$ crossed by the first edge $e^i_1$ of $\beta'_i$ does not in intersect $\beta'_{i+1}$, no subsequent hyperplane crossed by $\beta'_i$ intersects $\beta'_{i+1}$ either (otherwise, some hyperplane crossed by an edge of $\beta'_i$ intersects the hyperplane $H_{e^i_1}$ which is a contradiction). Therefore, all $m_i$ hyperplanes crossed by $\beta'_i$ intersect the subpath $\gamma_i$ of $\gamma$. If the hyperplane crossed by $e^i_1$ intersect $\beta'_{i+1}$, then edge $e^i_1$ is labelled by a vertex $a$ in $Lk(s_i)\cap LK(s_{i+1})$. Since $\Gamma$ is triangle free, two consecutive vertices $s_i$ and $s_{i+1}$ are never adjacent. We claim that the hyperplane crossed by the second edge $e^i_2$ of $\beta'_i$ does not intersect $\beta'_{i+1}$. Otherwise, the second edge $e^i_2$ of $\beta'_i$ is labelled by a vertex $b$ other than $a$ in $Lk(s_i)\cap LK(s_{i+1})$. Therefore, four points $a$, $b$, $s_i$, and $s_{i+1}$ are all vertices of an induced 4--cycle $\sigma$ in $\Gamma$. Moreover, two non-adjacent vertices $s_i$ and $s_{i+1}$ both lie in $S_1$ but $a$ does not. This is a contradiction. Therefore, the hyperplane crossed by the second edge $e^i_2$ of $\beta'_i$ does not intersect $\beta'_{i+1}$. This also implies that hyperplanes crossed edge $e^i_j$ ($2 \leq j \leq m$) of $\beta'_i$ does not intersect $\beta'_{i+1}$. Thus, all these hyperplanes must intersect the subpath $\gamma_i$ of $\gamma$. Therefore, there is exactly $(m_i-1)$ hyperplanes crossed by $\beta'_i$ intersect the subpath $\gamma_i$ of $\gamma$. 

For $1\leq i \leq m-1$ the length of the subpath $\gamma_i$ of $\gamma$ is at least $(m_i-1)$ because there is at least $(m_i-1)$ hyperplanes crossed by $\beta'_i$ intersect the subpath $\gamma_i$. This implies that \[\ell(\gamma)\geq \Sigma_{i=1}^{m-1} \ell(\gamma_i)\geq (m-1)(m_i-1)\geq (r-1)(\rho r-1).\]
Therefore, $\sigma^n_{\rho}(r)\geq (r-1)(\rho r-1)$ for each $r>0$. This implies that the lower relative divergence of $G_\Gamma$ with respect to $K$ is at least quadratic.
 
\end{proof}

The following theorem establishes characterizations of strongly quasiconvex subgroups in two dimensional right-angled Coxeter groups. This theorem is a direct result of Propositions \ref{pip1}, \ref{pip2} and Theorem \ref{th2}. This is also the main theorem of this section.

\begin{thm}
\label{ith}
Let $\Gamma$ be a simplicial, triangle free graph with vertex set $S$ and $K$ subgroup of $G_\Gamma$ generated by some subset $S_1$ of $S$. Then the following conditions are equivalent:
\begin{enumerate}
\item The subgroup $K$ is strongly quasiconvex in $G_\Gamma$.
\item If $S_1$ contains two non-adjacent vertices of an induced 4--cycle $\sigma$, then $S_1$ contains all vertices of $\sigma$.
\item Either $\abs{K}<\infty$ or the lower relative divergence of $G_\Gamma$ with respect to $K$ is at least quadratic. 
\end{enumerate}
\end{thm}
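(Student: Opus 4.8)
The plan is to obtain the equivalence by assembling the two computational Propositions \ref{pip1} and \ref{pip2} with the divergence characterization of Theorem \ref{th2}, proving the cycle $(1)\Rightarrow(2)\Rightarrow(3)\Rightarrow(1)$ and treating the finite and infinite cases of $K$ separately. The separation is forced on us: both the lower relative divergence and Theorem \ref{th2} presuppose an infinite subgroup (indeed $K$ has infinite diameter in $\Sigma^{(1)}_\Gamma$ exactly when $K$ is infinite).

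For $(1)\Rightarrow(2)$ I would argue contrapositively. If (2) fails, there is an induced $4$--cycle $\sigma$ of which $S_1$ contains two non-adjacent vertices but not all four. Labeling $\sigma$ cyclically as $a_1,b_1,a_2,b_2$, its two non-adjacent pairs are $(a_1,a_2)$ and $(b_1,b_2)$; by hypothesis $S_1$ contains one such pair, say $a_1,a_2$, while omitting at least one vertex of $\sigma$, which must then be $b_1$ or $b_2$—say $b_1\notin S_1$. This is precisely the configuration in the hypothesis of Proposition \ref{pip1}, which then gives that $K$ is not strongly quasiconvex in $G_\Gamma$, so (1) fails.

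For $(2)\Rightarrow(3)$ I split on the cardinality of $K$: if $K$ is finite then (3) holds by its first clause, and if $K$ is infinite then Proposition \ref{pip2} applies directly under hypothesis (2) to show that $div(G_\Gamma,K)$ is at least quadratic. For $(3)\Rightarrow(1)$ I split the same way. A finite $K$ is strongly quasiconvex since any $(K,C)$--quasi-geodesic joining two points of the bounded set $K$ has uniformly bounded length and hence stays in a uniform neighborhood of $K$. If instead $K$ is infinite and $div(G_\Gamma,K)$ is at least quadratic, I note that a quadratic function $q$ is completely super linear—for each $C>0$ the set $\{x\geq 0 : q(x)\leq Cx\}$ is bounded—so the constant family equal to $q$ is completely super linear and is dominated by $div(G_\Gamma,K)$; by the Remark on families this forces $div(G_\Gamma,K)$ to be completely super linear, and Theorem \ref{th2} then yields that $K$ is strongly quasiconvex.

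Since Propositions \ref{pip1} and \ref{pip2} carry all the geometric content, no step here is genuinely hard; the only points needing care are the clean extraction of the Proposition \ref{pip1} configuration from the negation of (2) and the bookkeeping of the finite case, where neither the divergence nor Theorem \ref{th2} is available and one must verify strong quasiconvexity of finite subgroups by hand.
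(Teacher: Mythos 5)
Your proposal is correct and follows essentially the same route as the paper, which deduces the theorem directly from Propositions \ref{pip1}, \ref{pip2} and Theorem \ref{th2}; your extraction of the Proposition \ref{pip1} configuration from the negation of (2) and your separate handling of the finite case are exactly the details the paper leaves implicit.
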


\begin{ques}
Is the lower relative divergence of a $\CAT(0)$ group with respect to an infinite strongly quasiconvex subgroup at least quadratic?
\end{ques}

The following corollary is a direct result of Theorem \ref{ith}. The corollary establishes a characterization of stable subgroups in two dimensional right-angled Coxeter groups.

\begin{cor}
\label{cococo1}
Let $\Gamma$ be a simplicial, triangle free graph with vertex set $S$ and $K$ subgroup of $G_\Gamma$ generated by some subset $S_1$ of $S$. Then the following conditions are equivalent:
\begin{enumerate}
\item The subgroup $K$ is stable in $G_\Gamma$.
\item The set $S_1$ does not contain a pair of non-adjacent vertices of an induced 4--cycle in $\Gamma$.
\end{enumerate}
\end{cor}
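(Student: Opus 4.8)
The plan is to combine Theorem~\ref{th3}---which asserts that $K$ is stable in $G_\Gamma$ if and only if $K$ is simultaneously strongly quasiconvex and hyperbolic---with the characterization of strong quasiconvexity from Theorem~\ref{ith} and the standard hyperbolicity criterion for right-angled Coxeter groups. Writing $\Gamma_1$ for the subgraph of $\Gamma$ induced by $S_1$, so that $K=G_{\Gamma_1}$, I would first recall the well-known fact that a right-angled Coxeter group on a triangle free graph is hyperbolic if and only if that graph contains no induced $4$--cycle; indeed an induced $4$--cycle produces a $D_\infty\times D_\infty$ subgroup and hence a copy of $\Z^2$, while the absence of such cycles yields hyperbolicity via Moussong's criterion. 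Combining this with Theorems~\ref{th3} and~\ref{ith}, condition (1) becomes equivalent to the conjunction of two combinatorial statements: (A) if $S_1$ contains two non-adjacent vertices of an induced $4$--cycle $\sigma$ of $\Gamma$, then $S_1$ contains all vertices of $\sigma$; and (B) $\Gamma_1$ contains no induced $4$--cycle. It then remains to prove that ``(A) and (B)'' is equivalent to condition (2).

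For the direction (2) $\Rightarrow$ ``(A) and (B)'', I would observe that under (2) the hypothesis appearing in (A) can never be met, so (A) holds vacuously; and that any induced $4$--cycle of $\Gamma_1$ is automatically an induced $4$--cycle of $\Gamma$ whose two diagonal pairs both lie in $S_1$, a configuration that (2) forbids, so (B) holds as well.

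For the converse, assuming (A) and (B), I would suppose toward a contradiction that (2) fails, so that $S_1$ contains a non-adjacent pair $\{a_1,a_2\}$ of some induced $4$--cycle $\sigma$ of $\Gamma$. Applying (A) forces all four vertices of $\sigma$ into $S_1$; since $\Gamma_1$ is the induced subgraph on $S_1$ and $\sigma$ is induced in $\Gamma$, the cycle $\sigma$ is then an induced $4$--cycle of $\Gamma_1$, contradicting (B). Hence (2) holds, completing the equivalence. The only nonroutine ingredient is the hyperbolicity criterion for right-angled Coxeter groups; the remaining argument is a purely logical manipulation, the one subtlety being the repeated use of the fact that adjacency among vertices of $S_1$ is detected identically in $\Gamma$ and in the induced subgraph $\Gamma_1$.
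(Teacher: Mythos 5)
Your proposal is correct and follows essentially the same route as the paper: both reduce stability to ``strongly quasiconvex and hyperbolic'' via Theorem~\ref{th3}, invoke Theorem~\ref{ith} for the strong quasiconvexity half, and use the standard no-induced-4-cycle hyperbolicity criterion for right-angled Coxeter groups (the paper cites Corollary 12.6.3 of Davis's book where you cite Moussong) for the other half; the remaining bookkeeping is the same case analysis, merely packaged as an explicit logical equivalence rather than two implication chains.
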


\begin{proof}
Assume that $S_1$ does not contain any pair of non-adjacent vertices of an induced 4-cycle of $\Gamma$. Then the subgroup $K$ is hyperbolic (see Corollary 12.6.3 in \cite{MR2360474}). Also the subgroup $K$ is strongly quasiconvex in $G_\Gamma$ by Theorem \ref{ith}. Therefore, $K$ is stable subgroup by Theorem \ref{th3}.

We now assume that $S_1$ contains a pair of non-adjacent vertices of an induced 4-cycle $\sigma$ of $\Gamma$. We will prove that $K$ is not stable in $G_\Gamma$. 
If $K$ is not strongly quasiconvex in $G_\Gamma$, then $K$ is not stable in $G_\Gamma$ by Theorem \ref{th3}. Otherwise, $S_1$ contains all vertices of the induced 4--cycle $\sigma$ by Theorem \ref{ith}. This implies that $K$ is not hyperbolic by Corollary 12.6.3 in \cite{MR2360474}. Thus, $K$ is not stable in $G_\Gamma$.
\end{proof}

\begin{ques}
Can we characterize all strongly quasiconvex (stable) subgroups of right-angled Coxeter groups using defining graphs?
\end{ques}

We guess that the recent work of Abbott-Behrstock-Durham \cite{ABD} can help us characterize all stable subgroups of right-angled Coxeter groups using defining graphs. However, characterizing all strongly quasiconvex subgroups of right-angled Coxeter groups using defining graphs is still a difficult question.

The following corollary give a criterion on two dimensional right-angled Coxeter groups to possess Morse boundaries which are not totally disconnected. We remark that the first example of a right-angled Coxeter group whose Morse boundary is not totally disconnected was constructed by Behrstock in \cite{B}. The following corollary generalizes his example in some sense.

\begin{cor}
If the simplicial, triangle free graph $\Gamma$ contains an induced loop $\sigma$ of length greater than $4$ such that the vertex set of $\sigma$ does not contain a pair of non-adjacent vertices of an induced 4--cycle in $\Gamma$, then the Morse boundary of right-angled Coxeter group $G_\Gamma$ is not totally disconnected.
\end{cor}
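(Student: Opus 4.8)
The plan is to locate inside $\partial_M G_\Gamma$ a topologically embedded copy of a connected space with more than one point; the existence of such a subset immediately shows that $\partial_M G_\Gamma$ is not totally disconnected. The connected set will be the limit set of the special subgroup generated by the vertices of $\sigma$.

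First I would set $S_1$ to be the vertex set of the induced loop $\sigma$ and let $K = G_{\Gamma_1}$ be the special subgroup of $G_\Gamma$ generated by $S_1$, where $\Gamma_1$ is the subgraph of $\Gamma$ induced by $S_1$. Since $\sigma$ is induced, $\Gamma_1$ is exactly a cycle $C_n$ with $n > 4$. By hypothesis $S_1$ contains no pair of non-adjacent vertices of an induced $4$-cycle of $\Gamma$, so Corollary~\ref{cococo1} shows that $K$ is stable in $G_\Gamma$. In particular, by Theorem~\ref{th3}, $K$ is a hyperbolic group, so its Morse boundary agrees with its Gromov boundary, $\partial_M K = \partial K$.

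Next I would identify $\partial K$ as a circle. Because $n > 4$, a regular right-angled $n$-gon exists in $\HH^2$ (its angle sum $n\pi/2$ is strictly less than $(n-2)\pi$ precisely when $n > 4$), and $K = G_{C_n}$ is the group generated by the reflections in the sides of this polygon, acting properly and cocompactly on $\HH^2$. Hence $\partial K = \partial \HH^2 = S^1$, which is connected and has more than one point. Equivalently, $C_n$ is connected with no separating clique when $n > 4$, so $K$ is one-ended, and a one-ended hyperbolic group has connected Gromov boundary.

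Finally I would transport $\partial_M K$ into $\partial_M G_\Gamma$. Since $K$ is strongly quasiconvex in $G_\Gamma$, Theorem~\ref{intro_Morse_boundary} furnishes a topological embedding $\hat{i}\colon \partial_M K \to \partial_M G_\Gamma$ with image the limit set $\Lambda K$. A topological embedding preserves connectedness, so $\Lambda K$ is homeomorphic to $S^1$; in particular it is a connected subset of $\partial_M G_\Gamma$ containing more than one point. Thus the connected component of $\partial_M G_\Gamma$ through any point of $\Lambda K$ contains all of $\Lambda K$ and is not a singleton, so $\partial_M G_\Gamma$ is not totally disconnected. The only delicate point is the middle step: one must know both that $K$ is hyperbolic (to pass from the Morse boundary to the Gromov boundary) and that $G_{C_n}$ is one-ended for $n > 4$ (so that $\partial K$ is a connected circle rather than a pair of points or a Cantor set); granting these, the result is a direct concatenation of Corollary~\ref{cococo1}, Theorem~\ref{th3}, and Theorem~\ref{intro_Morse_boundary}.
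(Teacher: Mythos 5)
Your proof is correct and follows essentially the same route as the paper: apply Corollary~\ref{cococo1} to see that the special subgroup on the vertices of $\sigma$ is stable, identify its Morse boundary with the Gromov boundary of the hyperbolic polygon reflection group (a circle), and topologically embed that circle into $\partial_M G_\Gamma$ via Theorem~\ref{thth1}. The only difference is that you justify the circle boundary explicitly via the existence of a regular right-angled $n$-gon in $\HH^2$ for $n>4$, where the paper simply cites that $G_\sigma$ is a two-dimensional hyperbolic orbifold group.
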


\begin{proof}
Since the vertex set $S_1$ of the loop $\sigma$ satisfies hypothesis of Corollary \ref{cococo1}, the subgroups $G_\sigma$ generated by $S_1$ is a stable subgroup in $G_\Gamma$. This implies that the Morse boundary of $G_\sigma$ is topologically embedded in the Morse boundary of $G_\Gamma$ by Theorem \ref{thth1}. Also, $G_\sigma$ is a 2--dimensional hyperbolic orbifold group. Therefore, the Morse boundary of $G_\sigma$ is also the Gromov boundary of $G_\sigma$ which is a topological circle. This implies that the Morse boundary of $G_\Gamma$ also contains a circle and therefore, it is not totally disconnected. 
\end{proof}

\begin{conj}
The Morse boundary of a right-angled Coxeter group $G_\Gamma$ is not totally disconnected if and only if the defining graph $\Gamma$ contains an induced loops $\sigma$ of length greater than $4$ such that the vertex set of $\sigma$ does not contain a pair of non-adjacent vertices of an induced 4--cycle in $\Gamma$.
\end{conj}

\subsection{Higher relative lower divergence in right-angled Coxeter groups} 

In this subsection, we construct right-angled Coxeter groups together with non-stable strongly quasiconvex subgroups whose lower relative divergence are arbitrary polynomials of degrees at least $2$. We remark that the author in \cite{Tran1} also constructed various examples of right-angled Coxeter groups whose lower relative divergence with respect to some subgroups are arbitrary polynomials of degrees at least $2$. However, all subgroups in those examples are stable and in this subsection we work on the cases of non-stable subgroups. 

We first establish a connection between lower relative divergence of certain pairs of right-angled Coxeter groups and divergence of some certain geodesics in their Davis complexes. This connection is a key ingredient for the main examples in this subsection.

\begin{lem}
\label{lelele1}
Let $\Gamma$ be a simplicial graph with the vertex $S$ and let $u$, $v$ be two non-adjacent vertices of $\Gamma$. Let $\Omega$ be a new graph by coning off two points $u$, $v$ of $\Gamma$ with a new vertex $t$ and let $\bar{S}=S\cup \{t\}$. Let $x$, $y$ be endpoints of a path $\gamma$ in $\Sigma^{(1)}_{\Omega}-N_1(G_\Gamma)$. If $g_1$, $g_2$ be two elements in $G_\Gamma$ such that $d_{\bar{S}}(x,G_\Gamma)=d_{\bar{S}}(x,g_1)$ and $d_{\bar{S}}(y,G_\Gamma)=d_{\bar{S}}(y,g_2)$, then $g_1^{-1}g_2$ lies in the subgroup $I$ generated by $u$ and $v$. 
\end{lem}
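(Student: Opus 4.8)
The plan is to work inside the Davis complex $\Sigma_\Omega$, whose $1$--skeleton is $\Sigma^{(1)}_\Omega$, and to control the hyperplanes that separate $g_1$ from $g_2$. Two structural features of $\Omega$ drive everything: the new vertex $t$ is adjacent only to $u$ and $v$, and $I=\langle u,v\rangle$. I would first record that $\Sigma^{(1)}_\Gamma$ is a convex subcomplex of $\Sigma^{(1)}_\Omega$ (Remark \ref{rr}), and assume $x,y$ are vertices of $\Sigma^{(1)}_\Omega$ (the endpoints of the combinatorial path $\gamma$). The guiding principle is that the word $g_1^{-1}g_2$, read along a geodesic inside $\Sigma^{(1)}_\Gamma$, uses only letters whose hyperplanes separate $g_1$ from $g_2$, so it suffices to show every such separating hyperplane has type $u$ or $v$.

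First I would fix a geodesic $\gamma_1$ from $x$ to $g_1$ and a geodesic $\gamma_2$ from $y$ to $g_2$. Because $g_1$ and $g_2$ realize $d_{\bar S}(x,G_\Gamma)$ and $d_{\bar S}(y,G_\Gamma)$, these are shortest paths from $x$ and $y$ to $K=G_\Gamma$ (taking $S_1=S$), so Lemma \ref{lleemmaa1} applies: every hyperplane crossed by $\gamma_1$ or $\gamma_2$ does not intersect $\Sigma^{(1)}_\Gamma$. Next I would take a geodesic $\delta$ from $g_1$ to $g_2$; by convexity it lies in $\Sigma^{(1)}_\Gamma$ and reads a word $s_1s_2\cdots s_m$ in $S$ with $g_1^{-1}g_2=s_1s_2\cdots s_m$. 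The hyperplanes crossed by $\delta$ are exactly those separating $g_1$ from $g_2$, each of a definite type $s_i\in S$, and each of them crosses $\Sigma^{(1)}_\Gamma$.

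The heart of the argument is to show each such separating hyperplane $H$ has type $u$ or $v$. Consider the path $P=\gamma_1\cup\gamma\cup\gamma_2$ connecting $g_1$ to $g_2$ through $x$ and $y$. Any hyperplane separating $g_1$ from $g_2$ must cross $P$; but $H$ crosses $\Sigma^{(1)}_\Gamma$, so by Lemma \ref{lleemmaa1} it is crossed by neither $\gamma_1$ nor $\gamma_2$, and therefore $H$ must be crossed by $\gamma$. Writing $H=gH_s$ with $g\in G_\Gamma$ and $s\in S$, the vertex set of its support is $gG_{St_\Omega(s)}$ (Remark \ref{rm11}). If $s\notin\{u,v\}$, then $t\notin \Lk_\Omega(s)$, so $St_\Omega(s)\subseteq S$ and all vertices of the support lie in $G_\Gamma$; hence every edge dual to $H$ has both endpoints in $G_\Gamma$ and so cannot be traversed by $\gamma$, which avoids $N_1(G_\Gamma)$ — a contradiction. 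Thus $s\in\{u,v\}$, every $s_i\in\{u,v\}$, and $g_1^{-1}g_2\in\langle u,v\rangle=I$.

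The main obstacle is exactly this last type analysis: one must correctly identify the support $N(gH_s)$ via Remark \ref{rm11} and then exploit that $t$ is adjacent only to $u$ and $v$ to force a $\Sigma^{(1)}_\Gamma$--crossing hyperplane reachable by a path outside $N_1(G_\Gamma)$ to be of type $u$ or $v$. Everything else is routine bookkeeping with Lemma \ref{lleemmaa1} and the convexity of $\Sigma^{(1)}_\Gamma$ in $\Sigma^{(1)}_\Omega$.
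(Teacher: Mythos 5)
Your argument is correct, but it takes a genuinely different route from the paper's. The paper's proof isolates a single hyperplane: since $\gamma_1$ is a shortest path from $x$ to $G_\Gamma$, its first edge (read from $g_1$) must be labelled by $t$, and the hyperplane $H$ dual to it has support with vertex set $g_1G_{St(t)}$, which lies in the $1$--neighborhood of $G_\Gamma$; hence $H$ meets neither $\gamma$ nor $\Sigma^{(1)}_{\Gamma}$, so it must cross $\gamma_2$, and in fact its first edge, forcing $g_2\in g_1G_{St(t)}$ and $g_1^{-1}g_2\in G_{St(t)}\cap G_\Gamma=I$. You instead examine every hyperplane separating $g_1$ from $g_2$: each is dual to an edge of a $\Sigma^{(1)}_{\Gamma}$--geodesic from $g_1$ to $g_2$, so by Lemma \ref{lleemmaa1} it is crossed by neither $\gamma_1$ nor $\gamma_2$ and therefore must meet $\gamma$; and a hyperplane $gH_s$ with $g\in G_\Gamma$ and $s\notin\{u,v\}$ has support with vertex set $gG_{St(s)}\subseteq G_\Gamma$ (because $t\notin Lk(s)$ in $\Omega$), so it cannot meet $\gamma\subseteq\Sigma^{(1)}_{\Omega}-N_1(G_\Gamma)$; hence every separating hyperplane has type $u$ or $v$ and the reduced word for $g_1^{-1}g_2$ uses only the letters $u,v$. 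Both proofs rest on Lemma \ref{lleemmaa1} together with the description of hyperplane supports in Remark \ref{rm11}; yours trades the paper's somewhat delicate step of showing that the type--$t$ hyperplane must cross precisely the \emph{first} edge of $\gamma_2$ for the equally elementary observation that the letters of a geodesic word for $g_1^{-1}g_2$ are exactly the types of the hyperplanes separating $g_1$ from $g_2$. The paper's version additionally exhibits a common type--$t$ hyperplane through $g_1$ and $g_2$, but for the stated conclusion the two arguments are equally effective.
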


\begin{proof}
We can assume that $x$ and $y$ both lie in $G_\Omega$. Let $\gamma_1$ be a geodesic in $\Sigma^{(1)}_{\Omega}$ connecting $x$, $g_1$ and $\gamma_2$ be a geodesic in $\Sigma^{(1)}_{\Omega}$ connecting $y$, $g_2$. We can assume that $\gamma_1$ along the direction from $g_1$ to $x$ is the concatenation $\gamma_1=e_1e_2\cdots e_m$ of edges of $\Sigma^{(1)}_{\Omega}$. Similarly, $\gamma_2$ along the direction from $g_2$ to $y$ is the concatenation $\gamma_2=f_1f_2\cdots f_n$ of edges of $\Sigma^{(1)}_{\Omega}$. We claim that two edges $e_1$ and $f_1$ are crossed by the same hyperplane which is labelled by $t$. 

We first recall that $G_\Gamma$ is a subgroup generated by $S$ and $\bar{S}-S=\{t\}$. Also $\gamma_1$ is a shortest geodesic that connects $x$ and $G_\Gamma$. Then, the first edge $e_1$ of $\gamma_1$ is labelled by $t$ and we call $H$ the hyperplane crossed by $e_1$. By Remark \ref{rm11} the vertex set of the support of $H$ is $g_1G_{St(t)}$. Also, each element in $g_1G_{St(t)}$ has distance 0 or 1 from $G_\Gamma$. This implies that $H\cap\Sigma^{(1)}_{\Omega}$ lies in the 1--neighborhood of $G_\Gamma$. In particular, $H$ does not intersect $\gamma$. Also, $H$ does not intersect the Cayley graph $\Sigma^{(1)}_{\Gamma}$ of $G_\Gamma$. Thus, $H$ intersects the path $\gamma_2$. Moreover, $H$ must intersect the first edge $f_1$ of $\gamma_2$ because $H\cap\Sigma^{(1)}_{\Omega}$ lies in the 1--neighborhood of $G_\Gamma$. This fact implies that $g_1$, $g_2$ are both vertices of the support of $H$. In other word, $g_1$ and $g_2$ lie in the same left coset of $G_{St(t)}$ or $g_1^{-1}g_2$ lies in $G_{St(t)}$. Also, $g_1^{-1}g_2$ lies in $G_\Gamma$ and $G_\Gamma\cap G_{St(t)}=I$ which is the subgroup generated by $u$ and $v$. Thus, $g_1^{-1}g_2$ lies in the subgroup $I$.
\end{proof}

\begin{lem}
\label{lelele2}
Let $\Gamma$ be a simplicial graph with the vertex $S$ and let $u$, $v$ be two non-adjacent vertices of $\Gamma$. Let $\Omega$ be a new graph by coning off two points $u$, $v$ of $\Gamma$ with a new vertex $t$ and let $\bar{S}=S\cup \{t\}$. Let $I$ be the subgroup generated by $u$ and $v$. Then for each $x$ in $\Sigma^{(1)}_{\Gamma}$ $d_{\bar{S}}(tx,G_\Gamma)=d_S(x,I)+1$.
\end{lem}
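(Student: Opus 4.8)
The plan is to prove the two inequalities
$d_{\bar{S}}(tx,G_\Gamma)\le d_S(x,I)+1$ and $d_{\bar{S}}(tx,G_\Gamma)\ge d_S(x,I)+1$
separately. Throughout I will use three elementary facts about the coning construction. First, in $\Omega$ the new vertex $t$ is adjacent exactly to $u$ and $v$, so $t$ commutes with $u$ and $v$ and hence with every element of $I=\langle u,v\rangle$. Second, the star subgroup is $G_{St(t)}=\langle u,v,t\rangle=I\times\langle t\rangle$, whose vertex set is precisely $I\sqcup It$. Third, by Remark~\ref{rr} the Cayley graph $\Sigma^{(1)}_\Gamma$ is isometrically embedded in $\Sigma^{(1)}_\Omega$, so that $d_{\bar{S}}=d_S$ on $G_\Gamma$.

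For the upper bound, choose $k\in I$ realizing $d_S(x,I)=d_S(x,k)=:n$ and let $[x,k]$ be a geodesic of length $n$ in $\Sigma^{(1)}_\Gamma$. Left translating by $t$ gives a path $t\cdot[x,k]$ from $tx$ to $tk$ of length $n$. Since $t$ commutes with $k$ we have $tk=kt$, so $[tk,k]$ is a single $t$--labelled edge, and its endpoint $k$ lies in $I\subseteq G_\Gamma$. Concatenating produces a path from $tx$ to $G_\Gamma$ of length $n+1$, giving $d_{\bar{S}}(tx,G_\Gamma)\le d_S(x,I)+1$.

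For the lower bound, let $\gamma$ be a geodesic in $\Sigma^{(1)}_\Omega$ from $tx$ to a closest point $g\in G_\Gamma$, so that $\ell(\gamma)=d_{\bar{S}}(tx,G_\Gamma)$, and let $H_t$ be the standard hyperplane dual to the edge $[1,t]$. No edge of $\Sigma^{(1)}_\Gamma$ is labelled $t$, so $\Sigma^{(1)}_\Gamma$ lies entirely on the side of $H_t$ containing $1$; on the other hand the path $t\cdot[1,x]$ from $t$ to $tx$ uses only $S$--labelled edges and thus does not cross $H_t$, placing $tx$ on the opposite side. Hence $H_t$ separates $tx$ from $G_\Gamma$, and the geodesic $\gamma$ crosses $H_t$ in exactly one edge $e^*$. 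Because the support of $H_t$ has vertex set $G_{St(t)}=I\sqcup It$, this edge has the form $e^*=[k_0,k_0t]$ with $k_0\in I$. Writing $\gamma=\gamma'\ast e^*\ast\gamma''$, the subpath $\gamma'$ runs from $tx$ (on the $t$--side) to $k_0t$. Applying the isometry "left multiplication by $t$", and using $t\cdot tx=x$ and $t\cdot k_0t=k_0$ (valid since $tk_0=k_0t$), we get a path $t\gamma'$ from $x$ to $k_0$ of length $\ell(\gamma')$. As $x,k_0\in G_\Gamma$, the isometric embedding yields $d_S(x,k_0)=d_{\bar{S}}(x,k_0)\le\ell(\gamma')$, whence $d_S(x,I)\le d_S(x,k_0)\le\ell(\gamma')$. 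Therefore
\[ d_{\bar{S}}(tx,G_\Gamma)=\ell(\gamma)=\ell(\gamma')+1+\ell(\gamma'')\ge d_S(x,I)+1, \]
which together with the upper bound gives equality.

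The single point that needs care, and which is the real geometric content, is the identification of the crossing edge $e^*$. The computation $G_{St(t)}=I\times\langle t\rangle$ is exactly what forces both endpoints of $e^*$ to lie in $I\sqcup It$, so that the reflected path $t\gamma'$ terminates back inside the subgroup $I$ rather than merely in $G_\Gamma$; without this one only obtains the trivial estimate $d_{\bar{S}}(tx,G_\Gamma)\ge 1$. I would therefore concentrate on verifying the structure of $G_{St(t)}$ and on the fact that a geodesic meets the separating hyperplane $H_t$ in precisely one edge, both of which are consequences of the standard hyperplane theory in the $\CAT(0)$ cube complex $\Sigma_\Omega$ recalled in Remarks~\ref{rm11} and the surrounding discussion.
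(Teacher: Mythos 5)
Your proof is correct, but the harder direction is argued differently from the paper. The upper bound is the same in both: translate a shortest geodesic from $x$ to $I$ by $t$ and append the single $t$--edge at the $I$--end. For the lower bound, the paper does not argue on an arbitrary minimizing geodesic at all; instead it shows that the explicitly constructed path $\bar{\gamma}$ of length $d_S(x,I)+1$ is itself a shortest path from $tx$ to $G_\Gamma$, by verifying the hyperplane criterion of Lemma~\ref{lleemmaa1} (no hyperplane crossed by $\bar{\gamma}$ meets $\Sigma^{(1)}_{\Gamma}$), which it does via a commutation--and--rearrangement contradiction: if some crossed hyperplane met $\Sigma^{(1)}_{\Gamma}$, the corresponding letter would commute past the earlier ones and produce a path from $x$ to $I$ of length $d_S(x,I)-1$. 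You instead take an arbitrary geodesic realizing $d_{\bar{S}}(tx,G_\Gamma)$, observe that the type-$t$ hyperplane $H_t$ separates $tx$ from $G_\Gamma$, use the carrier computation $N(H_t)^{(0)}=G_{St(t)}=I\sqcup It$ to pin the unique crossing edge to the form $[k_0,k_0t]$ with $k_0\in I$, and reflect the initial subpath by $t$ to land back at $k_0\in I$. Your route is self-contained modulo standard cube-complex facts and bypasses Lemma~\ref{lleemmaa1} entirely; it is in fact closer in spirit to the paper's proof of the neighboring Lemma~\ref{lelele1}, which uses exactly this carrier-of-$H_t$ argument. What the paper's version buys is reuse of machinery already set up for the surrounding lemmas; what yours buys is a cleaner split into two inequalities and an explicit identification of which element of $I$ the geodesic exits through. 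Both hinge on the same two facts you correctly flag: the structure of $G_{St(t)}$ and the single crossing of a separating hyperplane by a geodesic.
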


\begin{proof}
We assume that $x$ lies in $G_\Gamma$ and let $n=d_S(x,I)$. Let $\gamma$ be a geodesic of length $n$ in $\Sigma^{(1)}_{\Gamma}$ that connects $x$ to some point $h$ in $I$. We can express $\gamma=e_1e_2\cdots e_n$ as a concatenation of edges in $\Sigma^{(1)}_{\Gamma}$, where each edge $e_i$ is labelled by some element $s_i\in S$. Therefore, $x=h(s_1s_2\cdots s_n)$. Let $\gamma_1=t\gamma$ then $\gamma_1$ is a geodesic that connects $th$ and $tx$. Moreover, we can express $\gamma_1=f_1f_2\cdots f_n$ where each edge $f_i$ is the translation of edge $e_i$ of $\gamma$ by $t$. In particular, each edge $f_i$ is also labelled by $s_i$. Since $th=ht$, there is an edge $e'_1$ labelled by $t$ with endpoints $h$ and $th$. We construct $\bar{\gamma}=e'_1\cup\gamma_1$ and we claim that $\bar{\gamma}$ is a shortest geodesic that connects $tx$ and $G_\Gamma$. 

We first prove that $\bar{\gamma}$ is a geodesic in $\Sigma^{(1)}_{\Omega}$. Since the inclusion $\Sigma^{(1)}_{\Gamma}\inclusion\Sigma^{(1)}_{\Omega}$ is an isometric embedding, $\gamma$ is also a geodesic in $\Sigma^{(1)}_{\Omega}$. Therefore, $\gamma_1=t\gamma$ is a geodesic in $\Sigma^{(1)}_{\Omega}$. Since $\gamma_1$ is labelled by elements in $S$ and the first edge $e'_1$ of $\bar{\gamma}$ is labelled by $t\notin S$, the hyperplane crossed by $e'_1$ does not intersect $\gamma_1$. Therefore, $\bar{\gamma}$ is a goedesic in $\Sigma^{(1)}_{\Omega}$.

We now prove that $\bar{\gamma}$ is a shortest geodesic that connects $tx$ and $G_\Gamma$. By Lemma \ref{lleemmaa1}, it is sufficient to prove that each hyperplane crossed by some edge of $\bar{\gamma}$ does not intersect the Cayley graph $\Sigma^{(1)}_{\Gamma}$ of $G_\Gamma$. Since $G_\Gamma$ is the subgroup generated by $S$ and the hyperplane $H$ crossed by the first edge $e'_1$ of $\bar{\gamma}$ is labelled by $t$, $H$ does not intersect $\Sigma^{(1)}_{\Gamma}$. Assume for the contradiction that hyperplane $H_i$ crossed by some edge $f_i$ of $\bar{\gamma}$ intersects $\Sigma^{(1)}_{\Gamma}$. We choose $i$ is the smallest number with that property. In particular, $H_i$ must intersect $H$ and therefore, $s_i$ commutes to $t$. This implies that $s_i=u$ or $s_i=v$. Therefore, $s_i$ is an element in $I$.

By the choice of $i$, each hyperplane $H_j$ crossed by edge $f_j$ ($j<i$) does not intersect $\Sigma^{(1)}_{\Gamma}$. Therefore, each $H_j$ must intersect $H_i$. This implies that $s_i$ commutes to all $s_j$ for $j<i$. Therefore, \[x=h(s_1s_2\cdots s_{i-1}s_is_{i+1}\cdots s_n)=(hs_i)(s_1s_2\cdots s_{i-1}s_{i+1}\cdots s_n).\]
This implies that there is a path in $\Sigma^{(1)}_{\Gamma}$ with length $n-1$ that connects $x$ to the element $hs_i\in I$ which is a contradiction. Thus, each hyperplane crossed by some edge of $\bar{\gamma}$ does not intersect the Cayley graph $\Sigma^{(1)}_{\Gamma}$ of $G_\Gamma$. Therefore, the path $\bar{\gamma}$ is a shortest geodesic that connects $tx$ and $G_\Gamma$. This implies that \[d_{\bar{S}}(tx,G_\Gamma)=\ell(\bar{\gamma})=\ell(\gamma_1)+1=\ell(\gamma)+1=d_S(x,I)+1.\] 
\end{proof}

\begin{prop}
\label{pipipi1}
Let $\Gamma$ be a simplicial graph with the vertex $S$ and let $u$, $v$ be two non-adjacent vertices of $\Gamma$. Let $\Omega$ be a new graph by coning off two points $u$, $v$ of $\Gamma$ with a new vertex $t$ and let $\bar{S}=S\cup \{t\}$. Let $\alpha$ be the bi-infinite geodesic containing the identity $e$ labelled by $u$ and $v$ alternatively. Then \[Div_{\alpha}^{\Sigma^{(1)}_{\Omega}}\preceq div(G_\Omega,G_\Gamma)\preceq Div_{\alpha}^{\Sigma^{(1)}_{\Gamma}},\] where $Div_{\alpha}^{\Sigma^{(1)}_{\Omega}}$ and $Div_{\alpha}^{\Sigma^{(1)}_{\Gamma}}$ are geodesic divergence of $\alpha$ in $\Sigma^{(1)}_{\Omega}$ and $\Sigma^{(1)}_{\Gamma}$ respectively. 
\end{prop}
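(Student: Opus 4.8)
The plan is to sandwich the middle term by passing, in both directions, through the subgroup $I=\gen{u,v}$, whose axis is exactly $\alpha$. Two observations organize everything. First, since $uv$ acts on $\Sigma^{(1)}_\Gamma$ (and on $\Sigma^{(1)}_\Omega$) as a translation along $\alpha$, the geodesic $\alpha$ is periodic in both complexes, so by Remark~\ref{gddv} we have $ldiv_\alpha\sim Div_\alpha$ and $ldiv_\alpha\sim div(\cdot,\alpha)$ in each of $\Sigma^{(1)}_\Gamma$ and $\Sigma^{(1)}_\Omega$. Moreover the vertex set of $\alpha$ is $I$, so $\alpha$ and $I$ lie at finite Hausdorff distance and $div(\Sigma^{(1)}_\Gamma,\alpha)\sim div(\Sigma^{(1)}_\Gamma,I)$. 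Second, by Remark~\ref{rr} the inclusion $\Sigma^{(1)}_\Gamma\inclusion\Sigma^{(1)}_\Omega$ is isometric, so $d_S$ and $d_{\bar S}$ agree on $G_\Gamma$. Thus it suffices to prove the two estimates $Div_\alpha^{\Sigma^{(1)}_\Omega}\preceq div(G_\Omega,G_\Gamma)$ and $div(G_\Omega,G_\Gamma)\preceq div(\Sigma^{(1)}_\Gamma,I)$.

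For the upper estimate I would lift optimal relative-divergence configurations for $(\Sigma^{(1)}_\Gamma,I)$ into the coset $tG_\Gamma$ by left multiplication by $t$, an isometry of $\Sigma^{(1)}_\Omega$. Given a near-optimal pair $w_1,w_2\in\partial N_{r-1}(I)$ in $\Sigma^{(1)}_\Gamma$ together with a path $\delta$ avoiding $N_{\rho'(r-1)}(I)$, Lemma~\ref{lelele2} shows that $x_i=tw_i$ satisfy $d_{\bar S}(x_i,G_\Gamma)=d_S(w_i,I)+1=r$, and that every vertex $tz$ of the lifted path $t\delta$ has $d_{\bar S}(tz,G_\Gamma)=d_S(z,I)+1$; hence $t\delta$ avoids $N_{\rho r}(G_\Gamma)$ once $\rho'$ is chosen slightly larger than $\rho$. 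The isometric embedding gives $d_{\bar S}(x_1,x_2)=d_S(w_1,w_2)$, so after a harmless adjustment of the index $n$ the pair $(x_1,x_2)$ is a legitimate configuration for $(G_\Omega,G_\Gamma)$ at radius $r$, and $\ell(t\delta)=\ell(\delta)$ bounds the corresponding $\sigma^n_\rho(r)$ above by the relevant value of $div(\Sigma^{(1)}_\Gamma,I)$. These index and radius shifts are exactly the $L,M$ adjustments permitted in the definition of domination of families.

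The lower estimate is the substantive direction, and it is where Lemma~\ref{lelele1} does the work. Starting from an arbitrary near-optimal configuration $p_1,p_2\in\partial N_r(G_\Gamma)$ for $(G_\Omega,G_\Gamma)$ with connecting path $\beta$ outside $N_{\rho r}(G_\Gamma)$, I would let $g_1,g_2\in G_\Gamma$ be nearest points to $p_1,p_2$. Since $\beta$ avoids $N_1(G_\Gamma)$, Lemma~\ref{lelele1} forces $g_1^{-1}g_2\in I$, so after translating by $g_1^{-1}$ the path $\beta'=g_1^{-1}\beta$ runs from a point within $r$ of $\alpha(0)$ to a point within $r$ of some $\alpha(j)\in I$, while avoiding $N_{\rho r}(G_\Gamma)\supseteq N_{\rho r}(\alpha)$. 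The separation hypothesis gives $(n-2)r\le j\le \ell(\beta)+2r$. Capping $\beta'$ at both ends with short geodesics to $\alpha(0)$ and $\alpha(j)$, and with the two arcs of $\alpha$ joining $\alpha(j/2-\rho r)$ to $\alpha(0)$ and $\alpha(j)$ to $\alpha(j/2+\rho r)$, I obtain a path from $\alpha(j/2-\rho r)$ to $\alpha(j/2+\rho r)$ that stays outside the ball $B(\alpha(j/2),\rho r)$; for $n$ large (say $n\ge 6$) all capping pieces automatically avoid this ball because $j/2$ exceeds $\rho r+r$. Its length is at most $2\ell(\beta)+4r$, so $\rho_\alpha(\rho r,j/2)\le 2\sigma^n_\rho(r)+4r+2$ and hence $ldiv_\alpha^{\Sigma^{(1)}_\Omega}(\rho r)\le 2\sigma^n_\rho(r)+4r+2$. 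Rescaling $r$ converts this into $ldiv_\alpha^{\Sigma^{(1)}_\Omega}\preceq div(G_\Omega,G_\Gamma)$, which together with $ldiv_\alpha^{\Sigma^{(1)}_\Omega}\sim Div_\alpha^{\Sigma^{(1)}_\Omega}$ yields the claim.

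The main obstacle is controlling the geometry of the \emph{arbitrary} complementary path $\beta$ in the lower estimate: without hyperbolicity one cannot directly compare $\beta$ to a geodesic, and it is precisely Lemma~\ref{lelele1} that replaces this, pinning the two projection points $g_1,g_2$ to the single line $I=\alpha$ and thereby converting an excursion away from $G_\Gamma$ into an excursion around a point of $\alpha$. The remaining effort is bookkeeping: matching the additive ``$+1$'' from Lemma~\ref{lelele2}, the factors $\rho$ versus $\rho'$, and the index shift $n\mapsto Mn$ against the tolerances built into the relations $\preceq$ and $\sim$ for families.
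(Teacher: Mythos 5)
Your proposal is correct and follows essentially the same route as the paper: both directions are reduced to the subgroup $I=\gen{u,v}$ (whose Cayley graph is $\alpha$), with Lemma~\ref{lelele1} pinning the two nearest-point projections to a single coset of $I$ for the lower bound and Lemma~\ref{lelele2} plus translation by $t$ giving the upper bound. The only cosmetic difference is that the paper deduces the lower bound from the pointwise inequality $div(\Sigma^{(1)}_{\Omega},I)\leq div(\Sigma^{(1)}_{\Omega},G_\Gamma)$ and then invokes Remark~\ref{gddv}, whereas you unwind that last step by explicitly building the detour path that bounds $ldiv_\alpha$.
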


\begin{proof}
Let $I$ be a subgroup generated by $u$ and $v$. Then the geodesic $\alpha$ is the Cayley graph of $I$ in ${\Sigma^{(1)}_{\Omega}}$ and ${\Sigma^{(1)}_{\Gamma}}$. Therefore, the lower relative divergence of ${\Sigma^{(1)}_{\Omega}}$ and ${\Sigma^{(1)}_{\Omega}}$ with respect to the path $\alpha$ are equivalent to the lower relative divergence of these spaces with respect to $I$. Also by Remark \ref{gddv}, the lower relative divergence of a geodesic space with respect to a periodic bi-infinite geodesic is equivalent to the divergence of the geodesic in the space. Therefore, this is sufficient to prove \[div(\Sigma^{(1)}_{\Omega},I)\preceq div(\Sigma^{(1)}_{\Omega},G_\Gamma)\preceq div(\Sigma^{(1)}_{\Gamma},I).\]

Let $\{\sigma^n_{\rho}\}$ be the lower relative divergence of $\Sigma^{(1)}_{\Omega}$ with respect to $G_\Gamma$, let $\{\bar{\sigma^n_{\rho}}\}$ be the lower relative divergence of $\Sigma^{(1)}_{\Omega}$ with respect to $I$, and let $\{\bar{\bar{\sigma^n_{\rho}}}\}$ be the lower relative divergence of $\Sigma^{(1)}_{\Gamma}$ with respect to $I$. We will prove that for each $n\geq 2$ and $\rho\in(0,1]$
\[\bar{\sigma^n_{\rho}}(r)\leq \sigma^n_{\rho}(r)\leq \bar{\bar{\sigma^{2n}_{\rho}}}(r)+2r\ \text{ for each $r>1$}.\] 

We first prove the left inequality. If $\sigma^n_{\rho}(r)=\infty$, the left inequality is true obviously. Otherwise, let $x$, $y$ be an arbitrary point in $\partial N_r(G_\Gamma)$ such that there is a path outside $N_r(G_\Gamma)$ in $\Sigma^{(1)}_{\Omega}$ that connects $x$ and $y$ and $d_{\bar{S}}(x,y)\geq nr$. Let $g_1$, $g_2$ be two elements in $G_\Gamma$ such that $d_{\bar{S}}(x,G_\Gamma)=d_{\bar{S}}(x,g_1)$ and $d_{\bar{S}}(y,G_\Gamma)=d_{\bar{S}}(y,g_2)$. Then $g_1, g_2$ both lie in the same left coset $gI$ of the subgroup $I$ by Lemma \ref{lelele1}. 

Since element $g$ in $G_\Omega$ acts isometrically on $\Sigma^{(1)}_{\Omega}$, $div(\Sigma^{(1)}_{\Omega},gI)=div(\Sigma^{(1)}_{\Omega},I)=\{\bar{\sigma^n_{\rho}}\}.$ This is obvious that \[d_{\bar{S}}(x,gI)=d_{\bar{S}}(x,G_\Gamma)=r \text{ and } d_{\bar{S}}(y,gI)=d_{\bar{S}}(y,G_\Gamma)=r.\] Also, any path that lies outside some $s$--neighborhood of $G_\Gamma$ also lies outside the $s$--neighborhood of $gI$ because $gI$ is a subset of $G_\Gamma$. Therefore, $\bar{\sigma^n_{\rho}}(r)\leq \sigma^n_{\rho}(r)$. This implies that $div(\Sigma^{(1)}_{\Omega},I)\preceq div(\Sigma^{(1)}_{\Omega},G_\Gamma)$.

We now prove the second inequality $\sigma^n_{\rho}(r)\leq \bar{\bar{\sigma^{2n}_{\rho}}}(r)+2r$ for each $r>1$. If $\bar{\bar{\sigma^{2n}_{\rho}}}(r)=\infty$, then the inequality is true obviously. Otherwise, let $x_1$, $y_1$ be arbitrary two points in $\partial N_r(I)\subset\Sigma^{(1)}_{\Gamma}$ such that there is a path outside $N_r(I)$ connecting $x_1$, $y_1$ and $d_S(x_1,y_1)\geq (2n)r$. Since the inclusion $\Sigma^{(1)}_{\Gamma}\inclusion\Sigma^{(1)}_{\Omega}$ is an isometric embedding, $d_{\bar{S}}(x_1,y_1)=d_S(x_1,y_1)\geq (2n)r$.

Let $\beta$ be an arbitrary path outside $N_{\rho r}(I)$ in $\Sigma^{(1)}_{\Gamma}$ connecting $x_1$ and $y_1$. Then the path $\beta_1=t\beta$ connects two points $tx_1$ and $ty_1$. By Lemma \ref{lelele2}, $\beta_1$ lies outside the $(\rho r+1)$--neighborhood of $G_\Gamma$ in $\Sigma^{(1)}_{\Omega}$ and \[d_{\bar{S}}(tx_1,G_\Gamma)=d_S(x_1,I)+1=r+1 \text{ and } d_{\bar{S}}(ty_1,G_\Gamma)=d_S(y_1,I)+1=r+1.\]
This implies that there is a geodesic $\beta_2$ with length 1 that lies outside the $r$--neighborhood of $G_\Gamma$ in $\Sigma^{(1)}_{\Omega}$ and $\beta_2$ connects $tx_1$ with some point $x_2 \in N_r(G_\Gamma)$. Similarly, there is a geodesic $\beta_3$ with length 1 that lies outside the $r$--neighborhood of $G_\Gamma$ in $\Sigma^{(1)}_{\Omega}$ and $\beta_3$ connects $ty_1$ with some point $y_2 \in N_r(G_\Gamma)$. 
Moreover, \begin{align*} d_{\bar{S}}(x_2,y_2)&\geq d_{\bar{S}}(tx_1,ty_1)-d_{\bar{S}}(tx_1,x_2)-d_{\bar{S}}(ty_1,y_2)\\&\geq d_{\bar{S}}(x_1,y_1)-d_{\bar{S}}(tx_1,x_2)-d_{\bar{S}}(ty_1,y_2)\\&\geq 2nr-1-1\geq nr.\end{align*} 
Also $\bar{\beta}=\beta_2\cup\beta_1\cup\beta_3$ is the path outside the $\rho r$--neighborhood of $G_\Gamma$ connecting $x_2$ and $y_2$. Therefore, 
\[\sigma^n_{\rho}(r)\leq \ell(\bar{\beta})\leq \ell(\beta_2)+\ell(\beta_1)+\ell(\beta_3)\leq 1+\ell(\beta)+1\leq \ell(\beta)+2r.\]
This implies that $\sigma^n_{\rho}(r)\leq \bar{\bar{\sigma^{2n}_{\rho}}}(r)+2r$. Therefore, $div(\Sigma^{(1)}_{\Omega},G_\Gamma)\preceq div(\Sigma^{(1)}_{\Gamma},I)$. Thus, \[Div_{\alpha}^{\Sigma^{(1)}_{\Omega}}\preceq div(G_\Omega,G_\Gamma)\preceq Div_{\alpha}^{\Sigma^{(1)}_{\Gamma}}.\]

\end{proof}

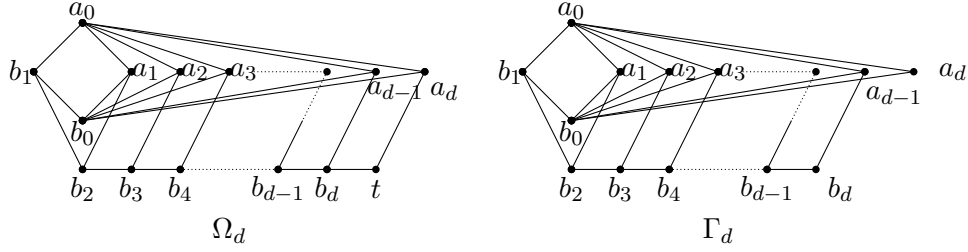
\begin{figure}
\begin{tikzpicture}[scale=0.65]

\draw (0,1) node[circle,fill,inner sep=1pt, color=black](1){} -- (1,2) node[circle,fill,inner sep=1pt, color=black](1){}-- (2,1) node[circle,fill,inner sep=1pt, color=black](1){}-- (1,0) node[circle,fill,inner sep=1pt, color=black](1){} -- (0,1) node[circle,fill,inner sep=1pt, color=black](1){}; 

\draw (1,2) node[circle,fill,inner sep=1pt, color=black](1){} -- (3,1) node[circle,fill,inner sep=1pt, color=black](1){}-- (1,0) node[circle,fill,inner sep=1pt, color=black](1){};

\draw (1,2) node[circle,fill,inner sep=1pt, color=black](1){} -- (4,1) node[circle,fill,inner sep=1pt, color=black](1){}-- (1,0) node[circle,fill,inner sep=1pt, color=black](1){};

\draw (1,2) node[circle,fill,inner sep=1pt, color=black](1){} -- (7,1) node[circle,fill,inner sep=1pt, color=black](1){}-- (1,0) node[circle,fill,inner sep=1pt, color=black](1){};

\draw (1,2) node[circle,fill,inner sep=1pt, color=black](1){} -- (8,1) node[circle,fill,inner sep=1pt, color=black](1){}-- (1,0) node[circle,fill,inner sep=1pt, color=black](1){};

\draw (3,1) node[circle,fill,inner sep=1pt, color=black](1){} -- (2,-1) node[circle,fill,inner sep=1pt, color=black](1){}-- (1,-1) node[circle,fill,inner sep=1pt, color=black](1){};

\draw (4,1) node[circle,fill,inner sep=1pt, color=black](1){} -- (3,-1) node[circle,fill,inner sep=1pt, color=black](1){}-- (2,-1) node[circle,fill,inner sep=1pt, color=black](1){};

\draw (7,1) node[circle,fill,inner sep=1pt, color=black](1){} -- (6,-1) node[circle,fill,inner sep=1pt, color=black](1){}-- (5,-1) node[circle,fill,inner sep=1pt, color=black](1){};

\draw (8,1) node[circle,fill,inner sep=1pt, color=black](1){} -- (7,-1) node[circle,fill,inner sep=1pt, color=black](1){}-- (6,-1) node[circle,fill,inner sep=1pt, color=black](1){};

\draw[densely dotted] (6,1) -- (5.5,0);

\draw (5.5,0) -- (5,-1);

\draw[densely dotted] (3,-1) node[circle,fill,inner sep=1pt, color=black](1){}-- (5,-1) node[circle,fill,inner sep=1pt, color=black](1){};

\draw[densely dotted] (4.5,1) -- (6,1) node[circle,fill,inner sep=1pt, color=black](1){};

\draw (0,1) node[circle,fill,inner sep=1pt, color=black](1){} -- (1,-1) node[circle,fill,inner sep=1pt, color=black](1){}-- (2,1) node[circle,fill,inner sep=1pt, color=black](1){};

\node at (1,-0.25) {$b_0$};

\node at (1,2.25) {$a_0$};

\node at (-0.25,1) {$b_1$};

\node at (2.3,1) {$a_1$};

\node at (3.3,1) {$a_2$};

\node at (4.3,1) {$a_3$};

\node at (7.4,0.6) {$a_{d-1}$};

\node at (8.4,0.6) {$a_d$};

\node at (1,-1.4) {$b_2$};

\node at (2,-1.4) {$b_3$};

\node at (3,-1.4) {$b_4$};

\node at (5,-1.4) {$b_{d-1}$};

\node at (6,-1.4) {$b_d$};

\node at (7,-1.4) {$t$};

\node at (4,-2.25) {$\Omega_d$};


\draw (10,1) node[circle,fill,inner sep=1pt, color=black](1){} -- (11,2) node[circle,fill,inner sep=1pt, color=black](1){}-- (12,1) node[circle,fill,inner sep=1pt, color=black](1){}-- (11,0) node[circle,fill,inner sep=1pt, color=black](1){} -- (10,1) node[circle,fill,inner sep=1pt, color=black](1){}; 

\draw (11,2) node[circle,fill,inner sep=1pt, color=black](1){} -- (13,1) node[circle,fill,inner sep=1pt, color=black](1){}-- (11,0) node[circle,fill,inner sep=1pt, color=black](1){};

\draw (11,2) node[circle,fill,inner sep=1pt, color=black](1){} -- (14,1) node[circle,fill,inner sep=1pt, color=black](1){}-- (11,0) node[circle,fill,inner sep=1pt, color=black](1){};

\draw (11,2) node[circle,fill,inner sep=1pt, color=black](1){} -- (18,1) node[circle,fill,inner sep=1pt, color=black](1){}-- (11,0) node[circle,fill,inner sep=1pt, color=black](1){};

\draw (11,2) node[circle,fill,inner sep=1pt, color=black](1){} -- (17,1) node[circle,fill,inner sep=1pt, color=black](1){}-- (11,0) node[circle,fill,inner sep=1pt, color=black](1){};

\draw (13,1) node[circle,fill,inner sep=1pt, color=black](1){} -- (12,-1) node[circle,fill,inner sep=1pt, color=black](1){}-- (11,-1) node[circle,fill,inner sep=1pt, color=black](1){};

\draw (14,1) node[circle,fill,inner sep=1pt, color=black](1){} -- (13,-1) node[circle,fill,inner sep=1pt, color=black](1){}-- (12,-1) node[circle,fill,inner sep=1pt, color=black](1){};

\draw (17,1) node[circle,fill,inner sep=1pt, color=black](1){} -- (16,-1) node[circle,fill,inner sep=1pt, color=black](1){}-- (15,-1) node[circle,fill,inner sep=1pt, color=black](1){};

\draw[densely dotted] (16,1) -- (15.5,0);

\draw (15.5,0) -- (15,-1);

\draw[densely dotted] (13,-1) node[circle,fill,inner sep=1pt, color=black](1){}-- (15,-1) node[circle,fill,inner sep=1pt, color=black](1){};

\draw[densely dotted] (14.5,1) -- (16,1) node[circle,fill,inner sep=1pt, color=black](1){};

\draw (10,1) node[circle,fill,inner sep=1pt, color=black](1){} -- (11,-1) node[circle,fill,inner sep=1pt, color=black](1){}-- (12,1) node[circle,fill,inner sep=1pt, color=black](1){};

\node at (11,-0.25) {$b_0$};

\node at (11,2.25) {$a_0$};

\node at (9.75,1) {$b_1$};

\node at (12.3,1) {$a_1$};

\node at (13.3,1) {$a_2$};

\node at (14.3,1) {$a_3$};

\node at (17.6,0.5) {$a_{d-1}$};

\node at (18.8,1) {$a_d$};

\node at (11,-1.4) {$b_2$};

\node at (12,-1.4) {$b_3$};

\node at (13,-1.4) {$b_4$};

\node at (15,-1.4) {$b_{d-1}$};

\node at (16.5,-1.4) {$b_{d}$};




\node at (14,-2.25) {$\Gamma_{d}$};

\end{tikzpicture}

\caption{Graph $\Omega_d$ is constructed from graph $\Gamma_d$ by coning off two non-adjacent vertices $a_d$, $b_d$ by a new vertex $t$.}
\label{asecond}
\end{figure}

We are now ready to construct the main examples for this subsection and we start with defining graphs. More precisely, for each integer $d\geq 2$ a graph $\Omega_d$ and its subgraph $\Gamma_d$ are constructed as in Figure \ref{asecond}. We remark that the graph $\Gamma_d$ was originally constructed by Dani-Thomas \cite{MR3314816} to study divergence of right-angled Coxeter groups and the graph $\Omega_d$ is a variation of $\Gamma_d$. However, we are going to use these pairs of graphs to construct corresponding pairs of right-angled Coxeter groups with desired lower relative divergence in this paper.

\begin{prop}[Proposition 3.19 in \cite{MR3451473}] 
\label{propo1}
For each $d\geq 2$, let $\Gamma_d$ be the graph as in Figure \ref{asecond}. Let $\alpha_d$ be a bi-infinite geodesic containing $e$ and labeled by $\cdots a_db_da_db_d\cdots$. Then the divergence of $\alpha_d$ in $\Sigma^{(1)}_{\Gamma_d}$ is equivalent to the polynomial of degree $d$.
\end{prop}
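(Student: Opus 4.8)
The plan is to prove, by induction on $d$, matching polynomial bounds $\Div_{\alpha_d}\sim r^d$ for the divergence of $\alpha_d$ in $\Sigma^{(1)}_{\Gamma_d}$. Since $\alpha_d$ is the Cayley graph of the infinite dihedral subgroup $\langle a_d,b_d\rangle$, it is periodic (invariant under left translation by $a_db_d$), so Remark \ref{gddv} gives $\Div_{\alpha_d}\sim\ldiv_{\alpha_d}\sim div(\Sigma^{(1)}_{\Gamma_d},\alpha_d)$; I will therefore work with whichever of these is most convenient and estimate the lengths of paths joining points of $\alpha_d$ while avoiding a ball about the center. The recursion is driven by the nested chain of special subgroups $G_{\Gamma_1}\le G_{\Gamma_2}\le\cdots\le G_{\Gamma_d}$ and the geodesics $\alpha_k$ along $a_k,b_k$. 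The relations I read off from $\Gamma_d$ and use repeatedly are that the hub vertices $a_0,b_0$ commute with every $a_i$, and that $a_{k-1}$ commutes with $b_k$ (the edge $a_{k-1}b_k$), which together let one ``descend'' from level $k$ to level $k-1$.

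For the upper bound I would build the detour as a staircase. A subsegment of $\alpha_d$ of combinatorial length on the order of $r$ lies inside $B(e,r)$ and must be rerouted; I would replace it by a path that takes on the order of $r$ steps along a direction commuting with the top-level letters, where each individual step is obstructed and is bypassed by a level-$(d-1)$ detour living in a translate of $G_{\Gamma_{d-1}}$. By the inductive hypothesis each such sub-detour has length $O(r^{d-1})$ and can be taken outside the ball, so the $\sim r$ steps times $O(r^{d-1})$ per step assemble into a path of length $O(r^d)$. The base case is a single induced square, for which $G_{\Gamma_1}\cong D_\infty\times D_\infty$ is a flat and $\alpha_1$ has linear divergence; each additional level raises the exponent by one.

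The lower bound is the crux and the main obstacle. I would show, again by induction, that every path $\gamma$ joining $\alpha_d(-r)$ to $\alpha_d(r)$ outside $B(e,r)$ has length $\gtrsim r^d$. Using Remark \ref{rm11} I translate lengths into counts of separating hyperplanes, and using Lemma \ref{lleemmaa1} I understand shortest paths from points of $\gamma$ to cosets of the special subgroup $G_{\Gamma_{d-1}}$ in terms of which hyperplanes meet those cosets. The aim is to force $\gamma$ to cross on the order of $r$ pairwise ``independent'' level-$(d-1)$ obstructions --- cosets of $G_{\Gamma_{d-1}}$ cut by the type-$a_d$ or type-$b_d$ hyperplanes that $\gamma$ is obliged to cross --- and to observe that, since $\gamma$ avoids $B(e,r)$, each crossing confines a subpath to such a coset at distance $\gtrsim r$ from its own center, which by induction costs length $\gtrsim r^{d-1}$. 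Multiplying the $\gtrsim r$ forced crossings by the $\gtrsim r^{d-1}$ cost apiece yields $\gtrsim r^d$, closing the induction.

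The delicate point in the lower bound is the bookkeeping that keeps the per-level detours genuinely disjoint and keeps the inductive hypothesis applied to honestly lower-dimensional pieces. The threat is precisely the hub vertices $a_0,b_0$: because they are adjacent to every $a_i$, a path could in principle shortcut between two level-$(d-1)$ obstructions through a hub, collapsing the nested cost. The resolution I would pursue is that the hubs sit at bounded distance from $e$, so staying outside $B(e,r)$ quarantines them inside the forbidden ball; made precise through the hyperplane description of Lemma \ref{lleemmaa1} and the distance formula of Lemma \ref{ld}, this prevents the shortcut and lets the $r$ obstructions contribute additively. Once this separation is verified the two bounds meet and give $\Div_{\alpha_d}\sim r^d$.
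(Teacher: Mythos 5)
First, a point of comparison: the paper does not actually prove this proposition. It is imported from \cite{MR3451473} (Proposition 3.19), whose proof in turn rests on Dani--Thomas \cite{MR3314816}; the only place the present paper touches the argument is in Proposition \ref{propo2}, where it quotes the lower bound $\frac{1}{2^{d(d+1)}}r^d$ directly from the proof of Proposition 5.3 of \cite{MR3314816}. So what you are reconstructing is the Dani--Thomas divergence computation. Your upper bound has the right shape and matches theirs: a staircase of $O(r)$ steps, each resolved by a level-$(d-1)$ detour of length $O(r^{d-1})$ inside an isometrically embedded translate of $\Sigma^{(1)}_{\Gamma_{d-1}}$, with the base case a flat. (One correction: there is no single direction commuting with both top-level letters, since $\Lk(a_d)=\{a_0,b_0\}$ and $\Lk(b_d)=\{a_{d-1},b_{d-1}\}$ are disjoint; the staircase must alternate between these two commuting sets, which is why it is a staircase.)

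The genuine gap is in the lower bound, at precisely the step you defer to ``bookkeeping.'' You assert that each forced crossing ``confines a subpath to such a coset at distance $\gtrsim r$ from its own center, which by induction costs length $\gtrsim r^{d-1}$,'' and neither half of this is available. A path in $\Sigma^{(1)}_{\Gamma_d}$ avoiding $B(e,r)$ is in no way confined to a coset of $G_{\Gamma_{d-1}}$ between consecutive obstructions, and crossing all hyperplanes separating two points of such a coset only bounds the length below by their distance, not by the divergence inside the coset. Worse, your inductive hypothesis concerns paths that avoid a ball centered at a point of $\alpha_{d-1}$ near the identity, whereas the ball each sub-detour would need to avoid is centered at a translate $g$ far from $e$; avoiding $B(e,r)$ says nothing about avoiding $B(g,\cdot)$. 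Closing this requires strengthening the inductive statement to a lower bound for arbitrary ambient paths avoiding a ball centered at an arbitrary point of the relevant geodesic, with constants that survive $d$ iterations, together with an argument that the $\sim r$ sub-detours are crossed by disjoint families of hyperplanes --- this re-centering and disjointness is the entire technical content of Proposition 5.3 in \cite{MR3314816}. Your proposed fix for the hub-shortcut threat also fails as stated: only the star cosets $G_{St(a_0)}$, $G_{St(b_0)}$ through points near $e$ are quarantined by $B(e,r)$, while the infinitely many translates $gG_{St(a_0)}$ with $g$ outside the ball remain available as shortcuts and must be excluded by a hyperplane-combinatorics argument (in the spirit of Lemma \ref{c3} for the Artin case), not by the excluded ball.
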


We remark that the above proposition was proved in \cite{MR3451473} but the proof was based mostly on the work of Dani-Thomas in \cite{MR3314816}. 
Similarly, we also obtain an analogous proposition as follows. 

\begin{prop}
\label{propo2}
For each $d\geq 2$, let $\Omega_d$ be the graph as in Figure \ref{asecond}. Let $\alpha_d$ be a bi-infinite geodesic containing $e$ and labeled by $\cdots a_db_da_db_d\cdots$. Then the divergence of $\alpha_d$ in $\Sigma^{(1)}_{\Omega_d}$ is equivalent to the polynomial of degree $d$.
\end{prop}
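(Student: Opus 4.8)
The plan is to identify $\alpha_d$ with the Cayley graph of the infinite dihedral subgroup $I=\langle a_d,b_d\rangle$, which sits as a periodic bi-infinite geodesic in both $\Sigma^{(1)}_{\Gamma_d}$ and $\Sigma^{(1)}_{\Omega_d}$ through the isometric embedding of Remark \ref{rr}. Since $a_d$ and $b_d$ are non-adjacent and $\Omega_d$ is obtained from $\Gamma_d$ by coning them off with the vertex $t$, Proposition \ref{pipipi1} applies verbatim with $u=a_d$, $v=b_d$, and gives
\[\Div_{\alpha_d}^{\Sigma^{(1)}_{\Omega_d}}\preceq div(G_{\Omega_d},G_{\Gamma_d})\preceq \Div_{\alpha_d}^{\Sigma^{(1)}_{\Gamma_d}}.\]
By Proposition \ref{propo1} the right-hand side is equivalent to a polynomial of degree $d$, so the upper bound $\Div_{\alpha_d}^{\Sigma^{(1)}_{\Omega_d}}\preceq r^d$ is immediate. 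Thus the real content of the statement is the matching lower bound $\Div_{\alpha_d}^{\Sigma^{(1)}_{\Omega_d}}\succeq r^d$, i.e. that coning off the two vertices of the line $I$ creates no large-scale shortcut around it.

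First I would reduce the lower bound to $\Sigma^{(1)}_{\Gamma_d}$ via the closest-point (gate) projection $p\colon \Sigma^{(1)}_{\Omega_d}\to\Sigma^{(1)}_{\Gamma_d}$ onto the convex special subcomplex, which is $1$--Lipschitz and fixes $G_{\Gamma_d}$ pointwise. The key structural fact, obtained by the hyperplane arguments of Lemmas \ref{lleemmaa1} and \ref{lelele2}, is that $G_{\Gamma_d}$ lies on a single side of every hyperplane of type $t$; combined with the gate property this yields the exact splitting
\[d_{\Omega_d}(g,I)=d_{\Omega_d}\bigl(g,G_{\Gamma_d}\bigr)+d_{\Gamma_d}\bigl(p(g),I\bigr)\]
for every vertex $g$, where the first term counts exactly the $t$--hyperplanes separating $g$ from $G_{\Gamma_d}$. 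Given a path $\beta$ outside $N_{\rho r}(I)$ joining two points of $\partial N_r(I)$, I would push it to $p(\beta)\subset\Sigma^{(1)}_{\Gamma_d}$, which has the same endpoints and no greater length, and use the splitting to control its distance to $I$.

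The main obstacle is precisely the error term $d_{\Omega_d}(g,G_{\Gamma_d})$: at points where $\beta$ makes a deep excursion across many $t$--hyperplanes, the projected point $p(g)$ may fall well inside $N_{\rho r}(I)$, so $p(\beta)$ need not stay outside a comparable neighborhood of $I$ and Proposition \ref{propo1} cannot be invoked directly. I expect to resolve this by a length-accounting dichotomy. If $\beta$ never reaches $t$--depth $\rho r/2$, then the splitting shows $p(\beta)$ avoids $N_{\rho r/2}(I)$, and Proposition \ref{propo1} forces $\ell(\beta)\ge\ell\bigl(p(\beta)\bigr)\succeq r^d$. Otherwise $\beta$ crosses at least $\rho r/2$ distinct $t$--hyperplanes and, since its endpoints lie on $I\subset G_{\Gamma_d}$, must recross each of them to return; here one shows that the non-$t$ travel forced between consecutive $t$--hyperplane crossings already reproduces a Dani--Thomas hyperplane count of degree $d$.

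Equivalently, and this is the cleanest fallback should the excursion bookkeeping become unwieldy, I would rerun the Dani--Thomas lower-bound argument underlying Proposition \ref{propo1} directly inside $\Sigma^{(1)}_{\Omega_d}$. The only new hyperplanes are of type $t$, each with support contained in the $1$--neighborhood of a translate $gI$ of $I$; since these contribute nothing to the asymptotic separation estimate that produces the degree-$d$ bound in $\Gamma_d$, the same counting yields $\Div_{\alpha_d}^{\Sigma^{(1)}_{\Omega_d}}\succeq r^d$, which together with the upper bound proves equivalence to a polynomial of degree $d$.
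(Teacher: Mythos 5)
Your upper bound is fine, though indirect: the paper gets it in one line from the isometric embedding $\Sigma^{(1)}_{\Gamma_d}\inclusion\Sigma^{(1)}_{\Omega_d}$, since any ball-avoidant path in the subcomplex is ball-avoidant in the ambient complex, so $\Div_{\alpha_d}^{\Sigma^{(1)}_{\Omega_d}}\preceq \Div_{\alpha_d}^{\Sigma^{(1)}_{\Gamma_d}}$ directly. The gap is in the lower bound, which is the entire content of the proposition, and neither of your two routes closes it. In the gate-projection route, the identification of $d_{\Omega_d}(g,G_{\Gamma_d})$ with the number of $t$--hyperplanes separating $g$ from $G_{\Gamma_d}$ is false: already for $g=ta_0$ the hyperplane of type $a_0$ dual to the edge from $t$ to $ta_0$ has carrier with vertex set $tG_{St(a_0)}$, which is disjoint from $G_{\Gamma_d}$, so it too separates $g$ from $G_{\Gamma_d}$; hence bounding the $t$--depth of $\beta$ does not bound $d(\cdot,G_{\Gamma_d})$ along $\beta$, and your first case does not force $p(\beta)$ to avoid $N_{\rho r/2}(I)$. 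You are also inconsistent about the endpoints (first on $\partial N_r(I)$, later "on $I$"): for the admissible competitors the endpoints lie on $\partial N_r(I)$ and may project onto $I$ itself, so $p(\beta)$ need not be an admissible path for Proposition \ref{propo1} at all, and the "must recross each $t$--hyperplane" claim does not apply to them. Most importantly, the second case of your dichotomy --- that the travel forced between consecutive $t$--hyperplane crossings "already reproduces a Dani--Thomas hyperplane count of degree $d$" --- is precisely the hard assertion; nothing in the proposal establishes it. The fallback has the same defect: asserting that the new $t$--hyperplanes "contribute nothing to the asymptotic separation estimate" is a claim that the Dani--Thomas induction survives the coning-off, which is exactly what must be verified.

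The paper avoids all of this with one observation you missed: $\Omega_d$ is an induced subgraph of $\Gamma_{d+2}$ (identify $t$ with $b_{d+1}$, which in the induced subgraph on $\{a_0,\dots,a_d,b_0,\dots,b_d,b_{d+1}\}$ is adjacent to exactly $a_d$ and $b_d$). Hence $\Sigma^{(1)}_{\Omega_d}$ embeds isometrically into $\Sigma^{(1)}_{\Gamma_{d+2}}$, every ball-avoidant path in $\Sigma^{(1)}_{\Omega_d}$ is ball-avoidant in $\Sigma^{(1)}_{\Gamma_{d+2}}$, and so the divergence of $\alpha_d$ in $\Sigma^{(1)}_{\Omega_d}$ dominates its divergence in $\Sigma^{(1)}_{\Gamma_{d+2}}$. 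In $\Sigma^{(1)}_{\Gamma_{d+2}}$ the ray of $\alpha_d$ lies in the support of a hyperplane of type $b_{d+1}$, and the proof of Proposition 5.3 of Dani--Thomas applies as published to give a lower bound of $\frac{1}{2^{d(d+1)}}r^d$ on the relevant subpath. Replacing both of your lower-bound routes with this embedding is the way to repair the argument.
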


\begin{proof}
We observe that $\Gamma_d$ is an induced subgraph of $\Omega_d$. Therefore, the graph $\Sigma^{(1)}_{\Gamma_d}$ embeds isometrically in the graph $\Sigma^{(1)}_{\Omega_d}$. This implies that for each $r>0$ any path in $\Sigma^{(1)}_{\Gamma_d}$ that stays outside the ball centered at the identity element $e$ with radius $r$ in $\Sigma^{(1)}_{\Gamma_d}$ also stays outside the ball with the same center and the same radius in $\Sigma^{(1)}_{\Omega_d}$. Therefore, the divergence of $\alpha_d$ in $\Sigma^{(1)}_{\Omega_d}$ is dominated by the divergence of $\alpha_d$ in $\Sigma^{(1)}_{\Gamma_d}$. This implies that the divergence of $\alpha_d$ in $\Sigma^{(1)}_{\Omega_d}$ is dominated by $r^d$ by Proposition~\ref{propo1}.

We now prove that the divergence of $\alpha_d$ in $\Sigma^{(1)}_{\Omega_d}$ dominates $r^d$. We observe that $\Omega_d$ is an induced subgraph of $\Gamma_{d+2}$. Using the identical proof as in the first paragraph the divergence of $\alpha_d$ in $\Sigma^{(1)}_{\Omega_d}$ dominates the divergence of $\alpha_d$ in $\Sigma^{(1)}_{\Gamma_{d+2}}$. Therefore, it is sufficient to prove that the divergence of $\alpha_d$ in $\Sigma^{(1)}_{\Gamma_{d+2}}$ dominates $r^d$. In fact, for each $r>0$ let $\gamma$ be an arbitrary path connecting $\alpha_d(-r)$ and $\alpha_d(r)$ that lies outside the ball centered at $e$ with radius $r$. Let $\alpha$ be the ray obtained from $\alpha_d$ containing $\alpha_d(r)$ with the initial point $e$. Then $\alpha$ lies in the support of a hyperplane crossed by an edge labelled by $b_{d+1}$. Since the hyperplane $H$ crossed by the first edge of $\alpha$ intersects $\gamma$, there is a geodesic $\beta$ in the support of $H$ connecting $e$ and some point $x$ in $\gamma$. Let $\gamma_1$ be the subpath of $\gamma$ connecting $x$ and $\alpha_d(r)$. Then the length of $\gamma_1$ is at least $\frac{1}{2^{d(d+1)}}r^d$ by the proof of Proposition 5.3 in \cite{MR3314816}. Therefore, the length of $\gamma$ is also at least $\frac{1}{2^{d(d+1)}}r^d$. This implies that the divergence of $\alpha_d$ in $\Sigma^{(1)}_{\Gamma_{d+2}}$ dominates $r^d$. Therefore, the divergence of $\alpha_d$ in $\Sigma^{(1)}_{\Omega_d}$ dominates $r^d$.  
\end{proof}

We now state the main theorem of this subsection.

\begin{thm}
For each $d\geq 2$ we construct the graph $\Omega_d$ and the subgraph $\Gamma_d$ as in Figure \ref{asecond}. Then $G_{\Gamma_d}$ is a non-stable strongly quasiconvex subgroup of $G_{\Omega_d}$ and the lower relative divergence of $G_{\Omega_d}$ with respect to $G_{\Gamma_d}$ is equivalent to the polynomial of degree $d$. 
\end{thm}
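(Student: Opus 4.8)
The plan is to combine the relative-divergence sandwich of Proposition~\ref{pipipi1} with the two geodesic-divergence computations of Propositions~\ref{propo1} and~\ref{propo2}, and then feed the result into the characterizations of Theorems~\ref{th2} and~\ref{th3}. First I would record that, by construction, $\Omega_d$ is obtained from $\Gamma_d$ by coning off the non-adjacent vertices $a_d$ and $b_d$ with the new vertex $t$, and that the bi-infinite geodesic $\alpha_d$ labeled by $\cdots a_db_da_db_d\cdots$ is precisely the periodic geodesic $\alpha$ appearing in the hypothesis of Proposition~\ref{pipipi1}, with $u=a_d$ and $v=b_d$. Applying Proposition~\ref{pipipi1} then yields
\[
Div_{\alpha_d}^{\Sigma^{(1)}_{\Omega_d}}\preceq div\bigl(G_{\Omega_d},G_{\Gamma_d}\bigr)\preceq Div_{\alpha_d}^{\Sigma^{(1)}_{\Gamma_d}}.
\]
By Proposition~\ref{propo1} the upper bound $Div_{\alpha_d}^{\Sigma^{(1)}_{\Gamma_d}}$ is equivalent to the polynomial $r^d$, and by Proposition~\ref{propo2} the lower bound $Div_{\alpha_d}^{\Sigma^{(1)}_{\Omega_d}}$ is also equivalent to $r^d$. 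Since $\preceq$ is transitive and $\sim$ is the associated equivalence relation, the squeeze forces $div(G_{\Omega_d},G_{\Gamma_d})\sim r^d$, which establishes the assertion about the lower relative divergence.

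Next I would deduce strong quasiconvexity. The subgroup $G_{\Gamma_d}$ is infinite, since its defining graph contains non-adjacent vertices (for example $a_0,b_0$), and the polynomial $r^d$ with $d\geq 2$ is completely super linear: for each $C>0$ the inequality $r^d\leq Cr$ holds only for $r\leq C^{1/(d-1)}$, a bounded set. Hence $div(G_{\Omega_d},G_{\Gamma_d})$ is completely super linear, and Theorem~\ref{th2} gives that $G_{\Gamma_d}$ is strongly quasiconvex in $G_{\Omega_d}$. To see non-stability I would invoke Theorem~\ref{th3}, which reduces the claim to showing that $G_{\Gamma_d}$ is not hyperbolic; but $\Gamma_d$ contains an induced $4$--cycle, namely the central square on $\{a_0,a_1,b_0,b_1\}$, so $G_{\Gamma_d}$ is not hyperbolic by Corollary~12.6.3 of~\cite{MR2360474}. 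Being strongly quasiconvex but not hyperbolic, $G_{\Gamma_d}$ is therefore not stable in $G_{\Omega_d}$, completing the argument.

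Essentially all of the real work has been front-loaded into Propositions~\ref{pipipi1},~\ref{propo1}, and~\ref{propo2}, so this final theorem is almost purely an assembly step. The only genuinely delicate point is checking that the present situation matches the hypotheses of Proposition~\ref{pipipi1}, that is, verifying the non-adjacency of $a_d$ and $b_d$ in $\Gamma_d$ and the identification of $\alpha_d$ with the required periodic bi-infinite geodesic, and then chaining the domination relations correctly through the sandwich; the completely-super-linear estimate and the non-hyperbolicity step via the induced square are routine.
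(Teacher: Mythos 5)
Your proposal is correct and follows essentially the same route as the paper: the paper likewise cites Propositions~\ref{pipipi1}, \ref{propo1}, and \ref{propo2} to sandwich $div(G_{\Omega_d},G_{\Gamma_d})$ between two copies of $r^d$, invokes Theorem~\ref{th2} for strong quasiconvexity, and concludes non-stability from the induced $4$--cycle in $\Gamma_d$ via non-hyperbolicity. Your write-up merely spells out the hypothesis checks (non-adjacency of $a_d,b_d$, complete super linearity of $r^d$) that the paper leaves implicit.
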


\begin{proof}
By Propositions \ref{pipipi1}, \ref{propo1}, and \ref{propo2}, the lower relative divergence of $G_{\Omega_d}$ with respect to $G_{\Gamma_d}$ is equivalent to the polynomial of degree $d$. In particular, $G_{\Gamma_d}$ is a strongly quasiconvex subgroup of $G_{\Omega_d}$ by Theorem \ref{th2}. However, the subgroup $G_{\Gamma_d}$ is not hyperbolic because the graph $\Gamma_d$ contains an induced 4--cycle. This implies that $G_{\Gamma_d}$ is not a stable subgroup of $G_{\Omega_d}$. 
\end{proof}

\section{Strong quasiconvexity, stability, and lower relative divergence in right-angled Artin groups}
\label{sec8}

In this section, we prove that two notions of strong quasiconvexity and stability are equivalent in the right-angled Artin group $A_\Gamma$ (except for the case of finite index subgroups). We also characterize non-trivial strongly quasiconvex subgroups of infinite index in $A_\Gamma$ (i.e. non-trivial stable subgroups in $A_\Gamma$) by quadratic lower relative divergence. These results strengthen the work of Koberda-Mangahas-Taylor \cite{KMT} on characterizing purely loxodromic subgroups of right-angled Artin groups.

\subsection{Some backgrounds in right-angled Artin groups}

\begin{defn}
Given a finite simplicial graph $\Gamma$, the associated \emph{right-angled Artin group} $A_{\Gamma}$ has generating set $S$ the vertices of $\Gamma$, and relations $st = ts$ whenever $s$ and $t$ are adjacent vertices.

Let $S_1$ be a subset of $S$. The subgroup of $A_{\Gamma}$ generated by $S_1$ is a right-angled Artin group $A_{\Gamma_1}$, where $\Gamma_1$ is the induced subgraph of $\Gamma$ with vertex set $S_1$ (i.e.~$\Gamma_1$ is the union of all edges of $\Gamma$ with both endpoints in $S_1$). The subgroup $A_{\Gamma_1}$ is called a \emph{special subgroup} of $A_{\Gamma}$. 

A \emph{reduced word} for a group element $g$ in $A_{\Gamma}$ is a minimal length word in the free group $F(S)$ representing $g$.
\end{defn}

\begin{defn}
Let $\Gamma_1$ and $\Gamma_2$ be two graphs, the \emph{join} of $\Gamma_1$ and $\Gamma_2$ is a graph obtained by connecting every vertex of $\Gamma_1$ to every vertex of $\Gamma_2$ by an edge.

Let $J$ be an induced subgraph of $\Gamma$ which decomposes as a nontrivial join. We call $A_J$ a \emph{join subgroup} of $A_{\Gamma}$. A reduced word $w$ in $A_\Gamma$ is called a \emph{join word} if $w$ represents element in some join subgroup. If $\beta$ is a subword of $w$, we will say that $\beta$ is a \emph{join subword} of $w$ when $\beta$ is itself a join word.
\end{defn}

\begin{defn}
Let $\Gamma$ be a simplicial, finite, connected graph such that $\Gamma$ does not decompose as a nontrivial join. A group element $g$ in $A_{\Gamma}$ is \emph{loxodromic} if $g$ is not conjugate into a join subgroup. If every nontrivial group element in a subgroup $H$ of $A_{\Gamma}$ is loxodromic, then $H$ is \emph{purely loxodromic}.
\end{defn}

\begin{defn}
Let $\Gamma$ be a finite simplicial graph with the set $S$ of vertices. Let $T$ be a torus of dimension $\abs{S}$ with edges labeled by the elements of $S$. Let $X_{\Gamma}$ denote the subcomplex of $T$ consisting of all faces whose edge labels span a complete subgraph in $\Gamma$ (or equivalently, mutually commute in $A_{\Gamma}$). $X_{\Gamma}$ is called the \emph{Salvetti complex}.
\end{defn}

\begin{rem}
The fundamental group of $X_{\Gamma}$ is $A_{\Gamma}$. The universal cover $\tilde{X}_{\Gamma}$ of $X_{\Gamma}$ is a $\CAT(0)$ cube complex with a free, cocompact action of $A_{\Gamma}$. Obviously, the 1-skeleton $\tilde{X}^{(1)}_{\Gamma}$ of $\tilde{X}_{\Gamma}$ is the Cayley graph of $A_{\Gamma}$ with respect to the generating set $S$. Consequently, reduced words in $A_\Gamma$ correspond to geodesics in $\tilde{X}^{(1)}_{\Gamma}$, which we call \emph{combinatorial geodesics}. We refer to distance in $\tilde{X}^{(1)}_{\Gamma}$ as \emph{combinatorial distance}. 
If $\Gamma_1$ is an induced subgraph of a $\Gamma$, then there is a natural isometric embeddings of the 1-skeleton $\tilde{X}^{(1)}_{\Gamma_1}$ of $\tilde{X}_{\Gamma_1}$ into the 1-skeleton $\tilde{X}^{(1)}_{\Gamma}$ of $\tilde{X}_{\Gamma}$. 
\end{rem}

\begin{thm}[Theorem 1.1, Theorem 5.2, and Corollary 6.2 in \cite{KMT}]
\label{th}
Let $\Gamma$ be a simplicial, finite, connected graph such that $\Gamma$ does not decompose as a nontrivial join. Let $H$ be a finitely generated subgroup of $A_\Gamma$. Then the following are equivalent.
\begin{enumerate}
\item $H$ is a purely loxodromic subgroup.
\item $H$ is stable.
\item There exists a positive number $N = N(H)$ such that for any reduced word $w$ representing $h\in H$, and any join subword $w'$ of $w$, we have $\ell(w')\leq N$. 
\end{enumerate}
\end{thm}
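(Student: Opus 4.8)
The plan is to prove the three conditions equivalent by running the cycle $(2)\Rightarrow(1)\Rightarrow(3)\Rightarrow(2)$, with the \emph{extension graph} $\Gamma^e$ of Kim--Koberda (vertices the conjugates of the standard generators, edges recording commutation) as the central geometric model. This graph is hyperbolic, $A_\Gamma$ acts on it, and a nontrivial element acts loxodromically exactly when it is loxodromic in the sense of this section, i.e.\ not conjugate into a join subgroup. Throughout I would invoke Theorem~\ref{th3}, which reduces stability of $H$ to strong quasiconvexity plus hyperbolicity, the transitivity of strong quasiconvexity from Proposition~\ref{pp2}(2), and the fact (Behrstock--Charney) that for $\Gamma$ connected and not a join an element of $A_\Gamma$ has a Morse (contracting) axis in $\tilde X_\Gamma$ precisely when it is loxodromic.

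For $(2)\Rightarrow(1)$ I would argue as follows. If $H$ is stable then by Theorem~\ref{th3} it is hyperbolic and strongly quasiconvex in $A_\Gamma$. Let $g\in H$ be nontrivial; since $A_\Gamma$ is torsion free, $g$ has infinite order, so $\gen{g}$ is quasiconvex, hence Morse, in the hyperbolic group $H$. By Proposition~\ref{pp2}(2), combined with strong quasiconvexity of $H$ in $A_\Gamma$, the subgroup $\gen{g}$ is strongly quasiconvex, i.e.\ Morse, in $A_\Gamma$; equivalently the axis of $g$ is Morse. By the Behrstock--Charney dichotomy $g$ must then be loxodromic, so $H$ is purely loxodromic.

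For $(1)\Rightarrow(3)$ I would prove the contrapositive. If $(3)$ fails there are $h_n\in H$ whose reduced words contain join subwords $\beta_n$ lying in join subgroups $A_{J_n}$ with $\ell(\beta_n)\to\infty$; since $\Gamma$ has only finitely many induced join subgraphs I may pass to a subsequence with all $J_n$ equal to a fixed join $J$. The key geometric input is that join subgroups, being elliptic for the action on $\Gamma^e$, have uniformly bounded orbits there, so each long join subword is a long combinatorial subpath whose image in $\Gamma^e$ has bounded diameter; hence the orbit map $H\to\Gamma^e$ cannot be a quasi-isometric embedding. A limiting argument on the geodesic words $w_n$, organised by the product-region structure of $\tilde X_\Gamma$, then upgrades these uniformly long sojourns in translates of $A_J$ into a single nontrivial element of $H$ whose axis is parallel to a flat, i.e.\ a non-loxodromic element, contradicting $(1)$. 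For $(3)\Rightarrow(2)$ I would run this correspondence in reverse: the uniform bound $N$ on join subwords forces combinatorial geodesics between points of $H$ to project to uniform unparametrised quasigeodesics in $\Gamma^e$, so by hyperbolicity of $\Gamma^e$ the orbit map of $H$ is a quasi-isometric embedding with quasiconvex image, whence $H$ is hyperbolic, and pulling back the resulting fellow-travelling control gives that geodesics with endpoints on $H$ stay uniformly close to $H$, i.e.\ $H$ is strongly quasiconvex. Theorem~\ref{th3} then yields stability.

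The hard part will be the combinatorial--geometric translation underlying $(1)\Leftrightarrow(3)$, not the soft reductions through Theorem~\ref{th3}. Concretely, two technical points carry the weight: first, establishing that product regions (join subgroups) map to uniformly bounded subsets of $\Gamma^e$ with constants independent of the region, which requires disk-diagram bookkeeping in $\tilde X_\Gamma$; and second, the limiting step in $(1)\Rightarrow(3)$ that converts unbounded join subwords spread across \emph{different} elements $h_n$ into one genuine non-loxodromic element of $H$. The latter cannot be done by compactness alone and needs projection/bounded-geodesic-image machinery for right-angled Artin groups to guarantee that the long join sojourns persist and align after translation; this is where I expect the main difficulty to lie.
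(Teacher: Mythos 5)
This statement is not proved in the paper at all: it is quoted verbatim from Koberda--Mangahas--Taylor (Theorem 1.1, Theorem 5.2, and Corollary 6.2 of \cite{KMT}) and used as a black box, so there is no internal proof to compare your proposal against. Measured against the actual source, your outline does follow the same broad strategy --- the extension graph $\Gamma^e$ as the central model, the join-busting condition (3) as the combinatorial pivot, and the soft reduction of stability to ``Morse plus hyperbolic.'' Your $(2)\Rightarrow(1)$ argument (stable $\Rightarrow$ hyperbolic and strongly quasiconvex by Theorem~\ref{th3}, cyclic subgroups then Morse in $A_\Gamma$ by Proposition~\ref{pp2}, hence loxodromic by the Behrstock--Charney dichotomy) is essentially correct and is close in spirit to how Corollary 6.2 of \cite{KMT} is obtained. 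One small caveat: Theorem~\ref{th3} and Proposition~\ref{pp2} appear \emph{later} in this paper than the point where Theorem~\ref{th} is invoked, so using them here is fine logically but should be flagged as relying on results independent of \cite{KMT}.

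The genuine gap is in $(1)\Rightarrow(3)$. You correctly identify it as the hard direction, but what you offer --- ``a limiting argument on the geodesic words $w_n$ \dots upgrades these uniformly long sojourns in translates of $A_J$ into a single nontrivial element of $H$ whose axis is parallel to a flat'' --- is precisely the content that needs proving, and no mechanism is given for producing an element \emph{of $H$} (rather than of $A_\Gamma$) that is conjugate into a join subgroup. Unbounded join subwords occurring in different elements $h_n$ do not, by any soft limiting or compactness argument, concatenate into a non-loxodromic element of the subgroup; in \cite{KMT} this step is a delicate combinatorial argument using finite generation of $H$, normal forms, and the structure of centralizers/stars, and it is the technical heart of their Theorem 1.1. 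Likewise, in $(3)\Rightarrow(2)$ the claim that bounded join subwords force the orbit map to $\Gamma^e$ to be an unparametrised quasigeodesic with the stated fellow-travelling control is exactly KMT's Theorem 5.2 and requires the disk-diagram/projection bookkeeping you defer. So the proposal is a faithful roadmap of the source's architecture, but the two steps that carry all the weight are named rather than executed.
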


The following corollary is a direct consequence of the above theorem. 

\begin{cor}
\label{c1}
Let $\Gamma$ be a simplicial, finite, connected graph with the vertex set $S$ such that $\Gamma$ does not decompose as a nontrivial join. Let $H$ be a non-trivial finitely generated purely loxodromic subgroup. Then there is a positive constant $M$ such that every geodesic in the Cayley graph $\tilde{X}^{(1)}_{\Gamma}$ connecting points in $H$ lies in the $M$--neighborhood of $H$.
\end{cor}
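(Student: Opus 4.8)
The plan is to read the corollary directly off Theorem~\ref{th} together with the Morse characterization of stable subgroups established earlier in the excerpt. The key observation is that a geodesic is simply a $(1,0)$--quasi-geodesic, so the bounded-neighborhood conclusion we want is nothing but the Morse property specialized to the parameters $K=1$, $C=0$. Thus the whole task reduces to upgrading ``purely loxodromic'' to ``Morse'' and then evaluating the Morse gauge at the trivial quasi-geodesic constants.

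First I would record that $H$ is infinite. Since $\Gamma$ is as in the hypotheses and $A_\Gamma$ is torsion-free, any non-trivial subgroup is infinite, so $H$ is a non-trivial, finitely generated, infinite, purely loxodromic subgroup of $A_\Gamma$. By the equivalence $(1)\iff(2)$ of Theorem~\ref{th}, the subgroup $H$ is stable in $A_\Gamma$.

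Next I would pass from stability to strong quasiconvexity using Theorem~\ref{th3} (equivalently Proposition~\ref{p3}): a stable subgroup of a finitely generated group is in particular strongly quasiconvex. By the definition of strong quasiconvexity for subgroups, this means precisely that $H$ is a Morse subset of the Cayley graph $\tilde{X}^{(1)}_{\Gamma}$; that is, for every $K\geq 1$ and $C\geq 0$ there is a constant $M(K,C)$ such that every $(K,C)$--quasi-geodesic with endpoints on $H$ is contained in the $M(K,C)$--neighborhood of $H$.

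Finally I would specialize to $K=1$ and $C=0$. Every combinatorial geodesic in $\tilde{X}^{(1)}_{\Gamma}$ is a $(1,0)$--quasi-geodesic, so setting $M=M(1,0)$ gives that every geodesic connecting two points of $H$ lies in the $M$--neighborhood of $H$, which is the desired conclusion. I do not expect any genuine obstacle here: all the substantive work is already packaged in Theorem~\ref{th} and in the identification of strong quasiconvexity with the Morse property, and the only point requiring attention is the (routine) remark that geodesics occur as the trivial quasi-geodesic parameters, so that a single constant $M=M(1,0)$ suffices.
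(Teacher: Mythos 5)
Your proposal is correct and follows essentially the same route as the paper, which presents this corollary as a direct consequence of Theorem~\ref{th}: purely loxodromic implies stable, stability gives strong quasiconvexity (i.e.\ the Morse property for $H$) via Theorem~\ref{th3}, and geodesics are $(1,0)$--quasi-geodesics, so $M=M(1,0)$ works. Your extra remarks (torsion-freeness to guarantee $H$ is infinite so Theorem~\ref{th3} applies) are fine and only make explicit what the paper leaves implicit.
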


The properties of hyperplanes of $\tilde{X}_{\Gamma}$ in the following remark were observed by Behrstock-Charney in \cite{MR2874959}.

\begin{rem}
Each hyperplane in $\tilde{X}_{\Gamma}$ separates $\tilde{X}_{\Gamma}$ into two convex sets. It follows that the combinatorial distance between a pair of vertices equals the number of hyperplanes in $\tilde{X}_{\Gamma}$ separating those vertices.

For a generator $v$, let $e_v$ denote the edge from the basepoint 1 to the vertex $v$. Any edge in $\tilde{X}_{\Gamma}$ determines a unique hyperplane, namely the hyperplane containing the midpoint of that edge. Denote by $H_v$ the hyperplane containing the midpoint of $e_v$.

For a cube in $\tilde{X}_{\Gamma}$, all of the parallel edges are labeled by the same generator $v$. It follows that all of the edges crossing a hyperplane $H$ have the same label $v$, and we call this a hyperplane of \emph{type $v$}. Since $A_\Gamma$ acts transitively on edges labeled $v$, a hyperplane is of type $v$ if and only if it is a translate of the standard hyperplane $H_v$.

For a vertex $v$ of the graph $\Gamma$ let $lk(v)$ denote the subgraph of $\Gamma$ spanned by the vertices adjacent to $v$ and let $st(v)$ denote the subgraph spanned by $v$ and $lk(v)$. Obviously, $A_{St(v)}$ is a join subgroup and $A_{lk(v)}$ is a subgroup of $A_{St(v)}$. We call such subgroup $A_{St(v)}$ \emph{star subgroup}. 
\end{rem}

\begin{lem}[Lemma 3.1 in \cite{MR2874959}]
\label{c3}
Let $H_1 = g_1H_v$ and $H_2 = g_2H_w$. Then
\begin{enumerate}
\item $H_1$ intersects $H_2$ if and only if $v$, $w$ commute and $g_1^{-1}g_2 \in A_{lk(v)}A_{lk(w)}$.
\item There is a hyperplane $H_3$ intersecting both $H_1$ and $H_2$ if and only if there is $u$ in $st(v)\cap st(w)$ such that $g_1^{-1}g_2 \in A_{lk(v)}A_{lk(u)}A_{lk(w)}$.
\end{enumerate}
\end{lem}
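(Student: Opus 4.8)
The plan is to work entirely in the $\CAT(0)$ cube complex $\tilde{X}_\Gamma$, using the description of standard hyperplanes from the preceding remark, and to reduce both statements to a single combinatorial fact: the edges dual to the standard hyperplane $H_v$ are exactly the edges $\{h,hv\}$ with $h\in A_{lk(v)}$ (equivalently, $H_v$ is dual to an edge whose endpoint $x$ lies on its ``$A_{lk(v)}$-side'' precisely when $x\in A_{lk(v)}$). This follows by starting from the edge $e_v=\{1,v\}$ and sliding across squares: a square containing $e_v$ is spanned by $v$ and some $u\in lk(v)$, and sliding replaces $\{1,v\}$ by $\{u,uv\}$, so the dual edges are obtained from $e_v$ by left-translation by $A_{lk(v)}$. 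Consequently a type-$v$ hyperplane $g_1H_v$ is dual to $\{g_1h,g_1hv\}$ for $h\in A_{lk(v)}$. Throughout I would left-translate by $g_1^{-1}$, which carries $H_1,H_2$ to $H_v$ and $gH_w$ with $g=g_1^{-1}g_2$ and transforms the cosets equivariantly, so it suffices to treat $g_1=1$.

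For (1), I use that two hyperplanes of distinct types cross if and only if they occur as the two midcubes of a common square, while two hyperplanes of the same type are always disjoint; this forces $v\neq w$ with $v,w$ adjacent, i.e. $v,w$ commute. A crossing square must have vertex set $\{x,xv,xw,xvw\}$ with its type-$v$ edge $\{x,xv\}$ dual to $H_v$ and its type-$w$ edge $\{x,xw\}$ dual to $gH_w$. By the dual-edge description this happens exactly when $x\in A_{lk(v)}$ and $x\in gA_{lk(w)}$ simultaneously, so such a square exists iff $A_{lk(v)}\cap gA_{lk(w)}\neq\emptyset$, which rearranges to $g\in A_{lk(v)}A_{lk(w)}$. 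Translating back by $g_1$ gives $g_1^{-1}g_2\in A_{lk(v)}A_{lk(w)}$.

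For (2), I would write a candidate common crosser as $H_3=cH_u$ of type $u$ and apply part (1) twice: $H_3$ crosses $H_v$ iff $u$ commutes with $v$ and $c\in A_{lk(v)}A_{lk(u)}$, and $H_3$ crosses $gH_w$ iff $u$ commutes with $w$ and $g^{-1}c\in A_{lk(w)}A_{lk(u)}$. Hence a common crosser of type $u$ exists iff $u\in st(v)\cap st(w)$ and $A_{lk(v)}A_{lk(u)}\cap gA_{lk(w)}A_{lk(u)}\neq\emptyset$; writing a common element as $ap=gbq$ with $a\in A_{lk(v)}$, $b\in A_{lk(w)}$, $p,q\in A_{lk(u)}$ and using that $A_{lk(u)}$ is a subgroup (so $pq^{-1}\in A_{lk(u)}$) yields $g=a(pq^{-1})b^{-1}\in A_{lk(v)}A_{lk(u)}A_{lk(w)}$, and the converse is the same computation read backwards. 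The main obstacle, and the step demanding the most care, is the bookkeeping at the boundary of this argument: genuine crossing of distinct hyperplanes needs $u\in lk(v)\cap lk(w)$, whereas the clean algebraic statement is phrased with $st(v)\cap st(w)$, so one must verify that the degenerate choices $u=v$ and $u=w$ (which collapse $A_{lk(v)}A_{lk(u)}A_{lk(w)}$ to $A_{lk(v)}A_{lk(w)}$) are correctly accounted for, and that the failure of $A_{lk(v)}A_{lk(w)}$ to be a subgroup does not spoil the rearrangement — this is exactly where one must be scrupulous about left versus right cosets and about which endpoint of each dual edge lies in which factor.
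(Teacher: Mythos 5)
First, note that the paper gives no proof of this lemma: it is imported verbatim from Behrstock--Charney (Lemma 3.1 of \cite{MR2874959}) and used as a black box, so there is no in-paper argument to compare against. On its own merits, your dual-edge description of $H_v$ (the edges dual to $H_v$ are exactly $\{h,hv\}$ with $h\in A_{lk(v)}$, obtained from $e_v$ by square-sliding) is correct, and your proof of part (1) is complete: a crossing square normalizes to $\{x,xv,xw,xvw\}$, duality of its $v$-edge to $H_1$ and of its $w$-edge to $H_2$ translate to $x\in g_1A_{lk(v)}\cap g_2A_{lk(w)}$, and nonemptiness of that intersection is exactly $g_1^{-1}g_2\in A_{lk(v)}A_{lk(w)}$. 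The reduction of part (2) to two applications of part (1), together with the rearrangement $ap=gbq\Rightarrow g=a(pq^{-1})b^{-1}$ using that $A_{lk(u)}$ is a subgroup, is also correct and complete for $u\in lk(v)\cap lk(w)$.

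The one genuine gap is precisely the point you flagged and then deferred: the degenerate choices $u=v$ and $u=w$. Your argument proves the statement with $lk(v)\cap lk(w)$ in place of $st(v)\cap st(w)$, and the two versions are \emph{not} equivalent under the strict reading in which $H_3$ must be a third hyperplane transverse to both. Concretely, let $\Gamma$ be a single edge on $\{v,w\}$ and $g_1=g_2=1$: then $u=v\in st(v)\cap st(w)$ and $1\in A_{lk(v)}A_{lk(v)}A_{lk(w)}$, but $\tilde{X}_\Gamma\cong\RR^2$ and every hyperplane other than $H_v$ and $H_w$ is parallel to one of them, so no third hyperplane meets both. The statement with $st$ is correct only under the reading that $H_3$ is permitted to coincide with $H_1$ or $H_2$ (equivalently, the lemma characterizes when $H_1$ and $H_2$ are at distance at most $2$ in the crossing graph). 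With that reading the degenerate cases reduce to part (1): for $u=v$ the product collapses to $A_{lk(v)}A_{lk(w)}$, so the condition says exactly that $H_1$ and $H_2$ cross (or, when $v=w$, that $g_1^{-1}g_2\in A_{lk(v)}$, i.e.\ $H_1=H_2$), and in either case $H_3=H_1$ intersects both. You should state this interpretation explicitly and carry out that two-line reduction; as written, ``one must verify that the degenerate choices are correctly accounted for'' is the entire content of the $st$-versus-$lk$ discrepancy. Note also that the only use of part (2) in this paper (in the proof of Proposition \ref{p'2}) is the forward implication for a genuine third hyperplane of type $b\in lk(a_{i-1})\cap lk(a_i)$, which your argument does fully establish.
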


\subsection{Lower relative divergence of a right-angled Artin group with respect to a purely loxodromic subgroup}

In this section, we compute the lower relative divergence of a right-angled Artin group with respect to a purely loxodromic subgroup (i.e. a stable subgroup). We first prove the quadratic upper bound for the lower relative divergence of a right-angled Artin group with respect to a loxodromic subgroup and we need the following lemmas.

\begin{lem}
\label{llll1}
Let $\Gamma$ be a simplicial, finite, connected graph with the vertex set $S$ such that $\Gamma$ does not decompose as a nontrivial join. Let $H$ be a non-trivial, finitely generated purely loxodromic subgroup of $A_{\Gamma}$. There is a positive number $K$ such that for each element $g$ in $A_\Gamma$ and each pair of commuting generators $(s_1,s_2)$ in $S$ \[d_S(gs_1^{i}s_2^{j},H)\geq \frac{\abs{i}+\abs{j}}{K}-\abs{g}_S-1.\] 
\end{lem}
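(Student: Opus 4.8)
The plan is to reduce the statement to the combinatorial characterization of purely loxodromic subgroups in Theorem \ref{th}, namely that there is a constant $N=N(H)$ so that every join subword of every reduced word representing an element of $H$ has length at most $N$. I will take $K=N+1$ and show this choice works uniformly in $g$, in the commuting pair $(s_1,s_2)$, and in $i,j$.

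First I would fix $g$, a commuting pair $s_1,s_2$, and exponents $i,j$, and choose $h\in H$ realizing $m:=d_S(gs_1^is_2^j,H)$ (the infimum is attained since distances are integers). Writing $u=(gs_1^is_2^j)^{-1}h$, we have $h=g\,s_1^is_2^j\,u$ with $\abs u_S=m$. Form the word $W=\underline{g}\cdot s_1^is_2^j\cdot\underline{u}$ obtained by concatenating reduced words for $g$, the ``core'' $s_1^is_2^j$, and $u$, and let $\hat W$ be a reduced word for $h$ obtained from $W$ by shuffles and cancellations.

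The key bookkeeping step is to track the core letters through the reduction. Since all $s_1$-letters of the core have the same sign and likewise all $s_2$-letters, no two core letters are mutually inverse, so any cancellation destroying a core letter must pair it with one of the $\abs g_S+m$ letters coming from $g$ or $u$. Hence at most $\abs g_S+m$ core letters are cancelled, and therefore at least $(\abs i+\abs j)-(\abs g_S+m)$ core letters survive in $\hat W$. Now partition $\hat W$ into its maximal contiguous subwords all of whose letters lie in $\{s_1^{\pm1},s_2^{\pm1}\}$; call these blocks. Every block is a contiguous subword lying in $A_{\{s_1,s_2\}}\subseteq A_{st(s_1)}$, and since $s_1$ and $s_2$ are adjacent, $st(s_1)=\{s_1\}\cup lk(s_1)$ decomposes as a nontrivial join, so $A_{st(s_1)}$ is a join subgroup; thus each block is a join subword, and Theorem \ref{th} bounds its length by $N$. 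The blocks are separated by letters not in $\{s_1^{\pm1},s_2^{\pm1}\}$, all of which come from $g$ or $u$, so there are at most $\abs g_S+m$ such separating letters and hence at most $\abs g_S+m+1$ blocks.

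Putting these together, all surviving core letters lie inside blocks, so
\[(\abs i+\abs j)-(\abs g_S+m)\le N(\abs g_S+m+1).\]
Rearranging gives $\abs i+\abs j\le (N+1)(\abs g_S+m)+N$, whence $m\ge \frac{\abs i+\abs j}{N+1}-\abs g_S-1$, which is the claim with $K=N+1$. The main obstacle is exactly the cancellation bookkeeping: one must argue carefully that core letters can only be annihilated against the boundedly many letters of $g$ and $u$, so that a definite proportion of the long join word $s_1^is_2^j$ persists in any reduced representative of $h$. The one pleasant subtlety that makes the block argument go through even when $i$ or $j$ vanishes is that a pure power $s_1^k$ is already a join word, because the star $st(s_1)$ of any vertex with a neighbor is a nontrivial join; this means I never need a surviving block to contain both generators.
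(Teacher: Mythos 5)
Your proposal is correct and follows essentially the same route as the paper: both invoke Theorem \ref{th}(3) to get the bound $N$ on join subwords, set $K=N+1$, write $h=gs_1^is_2^ju$ with $\abs{u}_S=m$, and derive the inequality $\abs{i}+\abs{j}-\abs{g}_S-m\leq N\bigl(\abs{g}_S+m+1\bigr)$. The only difference is that the paper obtains this by bounding $\ell(w)$ above as a product of at most $\abs{g}_S+m+1$ join subwords and below by the triangle inequality, whereas your cancellation and block bookkeeping spells out that decomposition more explicitly; the substance is the same.
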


\begin{proof}
By Theorem \ref{th}, there is a positive integer $N$ such that for any reduced word $w$ representing $h\in H$, and any join subword $w'$ of $w$, we have $\ell(w')\leq N$. Let $K=N+1$ and we will prove that \[d_S(gs_1^{i}s_2^{j},H)\geq \frac{\abs{i}+\abs{j}}{K}-\abs{g}_S-1.\] 

Let $m=d_S(gs_1^{i}s_2^{j},H)$. Then there is an element $g_1$ in $A_\Gamma$ with $\abs{g_1}_S=m$ and $h$ in $H$ such that $h=gs_1^{i}s_2^{j}g_1$. Since $s_1^{i}s_2^{j}$ is an element in some join subgroup of $A_\Gamma$ and $\abs{g_1}_S=m$, then $h$ can be represented by a reduced word $w$ that is a product of at most $(\abs{g}_S+1+m)$ join subwords. Also, the length of each join subword of $w$ is bounded above by $N$. Therefore, the length of $w$ is bounded above by $N\bigl(\abs{g}_S+m+1\bigr)$. Also, \[\ell(w)\geq \abs{s_1^{i}s_2^{j}}_S-\abs{g_1}_S-\abs{g}_S\geq \abs{i}+\abs{j}-m-\abs{g}_S.\] 

This implies that \[\abs{i}+\abs{j}-m-\abs{g}_S\leq N\bigl(\abs{g}_S+m+1\bigr).\]

Therefore, \[d_S(gs_1^{i}s_2^{j},H)=m\geq \frac{\abs{i}+\abs{j}}{N+1}-\abs{g}_S-\frac{N}{N+1}\geq \frac{\abs{i}+\abs{j}}{K}-\abs{g}_S-1.\]
\end{proof}

\begin{lem}
\label{l'2}
Let $\Gamma$ be a simplicial, finite, connected graph with the vertex set $S$ such that $\Gamma$ does not decompose as a nontrivial join. Let $H$ be a on-trivial, finitely generated purely loxodromic subgroup of $A_{\Gamma}$ and $h$ an arbitrary element in $H$. There is a number $L\geq 1$ such that for each positive integer $m\geq L^2$ and generator $s$ in $S$ there is a path $\alpha$ outside the $(m/L-L)$--neighborhood of $H$ connecting $s^m$ and $hs^m$ with the length bounded above by $Lm$. 
\end{lem}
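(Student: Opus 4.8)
The plan is to exhibit an explicit, short path obtained by multiplying $s^m$ on the left, one generator at a time, along a reduced word for $h$, and then to certify that it avoids the required neighborhood of $H$ by invoking Lemma~\ref{llll1}.

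First I would fix the constants. Let $K$ be the constant produced by Lemma~\ref{llll1}, and set $L = K + \abs{h}_S + 2$; note that $L$ depends only on $H$ and on the fixed element $h$, not on $m$ or $s$. Write a reduced word $h = t_k t_{k-1}\cdots t_1$ with $k = \abs{h}_S$, and for $0 \le j \le k$ put $p_j = t_j t_{j-1}\cdots t_1$ (so $p_0 = e$ and $p_k = h$). Define $\alpha$ to be the edge-path through the vertices $v_j = p_j s^m$, $j = 0,\dots,k$. Since $v_j = t_j v_{j-1}$, consecutive vertices are joined by a single edge, while $v_0 = s^m$ and $v_k = h s^m$, so $\alpha$ connects the two required points and has length exactly $k = \abs{h}_S \le L \le Lm$ (using $m \ge 1$). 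This settles the length bound immediately.

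Next I would show that $\alpha$ stays outside the $(m/L - L)$--neighborhood of $H$. Because $\Gamma$ is connected with at least two vertices (the one-vertex case gives $A_\Gamma \cong \Z$, which is hyperbolic and lies outside the relevant non-hyperbolic setting), the generator $s$ has a neighbor $t$ in $\Gamma$; thus $s$ and $t$ commute and $s^m = s^m t^0$ lies in the join subgroup $A_{\{s,t\}}$. Applying Lemma~\ref{llll1} with $g = p_j$, commuting pair $(s_1,s_2) = (s,t)$, and exponents $(m,0)$ yields
\[ d_S(v_j, H) = d_S(p_j s^m, H) \ge \frac{m}{K} - \abs{p_j}_S - 1 \ge \frac{m}{K} - \abs{h}_S - 1, \]
since $\abs{p_j}_S \le j \le k = \abs{h}_S$. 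Any point interior to an edge of $\alpha$ lies within distance $1$ of one of its endpoints, so every point $x$ of $\alpha$ satisfies $d_S(x, H) \ge m/K - \abs{h}_S - 2$. Finally, since $L \ge K$ we have $m/K \ge m/L$, and since $L \ge \abs{h}_S + 2$ we have $-\abs{h}_S - 2 \ge -L$; combining these,
\[ d_S(x,H) \ge \frac{m}{K} - \abs{h}_S - 2 \ge \frac{m}{L} - L, \]
so $\alpha$ lies outside $N_{m/L - L}(H)$, as required. The hypothesis $m \ge L^2$ only serves to make the radius $m/L - L$ nonnegative, so the statement is non-vacuous.

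The construction is direct, and the only genuine care lies in the constant bookkeeping together with verifying that Lemma~\ref{llll1} really applies to the pure power $s^m$. The crucial observation is that $s^m$ must be recognized as a join word, which requires $s$ to have a neighbor; this is where I rely on $\Gamma$ being connected with more than one vertex, and it is the single place where the degenerate rank-one case must be quarantined. Beyond that, the argument is just one application of Lemma~\ref{llll1} at each vertex of a path whose length is independent of $m$, so I do not expect any substantial obstacle.
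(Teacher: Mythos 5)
Your proposed path is not actually a path in the Cayley graph, and this is a fatal gap. Edges of $\tilde{X}^{(1)}_{\Gamma}$ join $g$ to $gs$ for $s\in S$ (right multiplication); the word metric is $d_S(g_1,g_2)=\abs{g_1^{-1}g_2}_S$. Your consecutive vertices satisfy $v_j=t_jv_{j-1}$, i.e.\ they differ by \emph{left} multiplication by a generator, so
\[d_S(v_{j-1},v_j)=\abs{s^{-m}\,p_{j-1}^{-1}t_j\,p_{j-1}\,s^m}_S,\]
which is the length of a conjugate of $t_j$ and is in general of order $2m$, not $1$. Concretely, in the example following Theorem \ref{th'1} ($\Gamma$ the path $a$--$b$--$c$--$d$, $h=ada$, $s=b$) one has $d_S(b^m,\,ada\,b^m)=\abs{b^{-m}adab^m}_S=2m+3$, so no path of length $\abs{h}_S$ from $s^m$ to $hs^m$ exists; your claimed constant length bound would contradict this. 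Moreover the obvious geodesic realizing $s^{-m}p_{j-1}^{-1}t_jp_{j-1}s^m$ passes through the identity, hence through $H$, so it cannot be used either. Your applications of Lemma \ref{llll1} to the individual points $p_js^m$ are fine; the missing content of the lemma is precisely how to \emph{connect} them while staying far from $H$.

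The paper's proof supplies exactly the mechanism you skipped. To insert the next letter $s_i^{\epsilon_i}$ of $h$ on the left, it first transports the "buffer" power from $s^m$ to $t_i^m$ for some $t_i\in St(s_i)$, via a chain $u_0=s,u_1,\dots,u_n=t_i$ of consecutively adjacent generators (this is where connectivity of $\Gamma$ is used), moving along corners $u_j^m\to u_j^mu_{j+1}^m\to u_{j+1}^m$; every vertex on these segments has the form $gu_j^iu_{j+1}^{j'}$ with $i+j'\geq m$ and commuting $u_j,u_{j+1}$, so Lemma \ref{llll1} applies. Once the buffer is $t_i^m$ with $t_i$ commuting with $s_i$, the identity $w_is_i^{\epsilon_i}t_i^m=w_it_i^ms_i^{\epsilon_i}$ turns the left multiplication into a single honest edge. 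Each transport costs length about $2(M+1)m$ with $M=\diam\Gamma$, which is why the resulting bound is $Lm$ (genuinely linear in $m$) rather than a constant. To repair your argument you would need to replace each of your fictitious edges with such a commuting chain, at which point you have reconstructed the paper's proof.
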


\begin{proof}
Let $M=\diam{\Gamma}$, $K$ the positive integer as in Lemma \ref{llll1} and $k=\abs{h}_S$. Let $L=2(k+1)(M+1)+K+k+M+1$. Choose a reduced word \[w=s_1^{\epsilon_1}s_2^{\epsilon_2}\cdots s_k^{\epsilon_k} \text{, where $s_i\in S$ and $\epsilon_i \in \{-1,1\}$}\] that represents element $h$. For each $i\in \{1,2,\cdots, k\}$ let $t_i$ be a vertex in $St(s_i)$ and $w_i=s_1^{\epsilon_1}s_2^{\epsilon_2}\cdots s_i^{\epsilon_i}$. Then the length of each word $w_i$ is bounded above by $k$, $w_{i+1}=w_is_i^{\epsilon_i}$, and $w_k=w$ that represents element $h$. 

We now construct a path $\alpha_0$ outside the $(m/L-L)$--neighborhood of $H$ connecting $s^m$ and $w_1t_1^m$ with the length bounded above by $2(M+1)m$. Since $M=\diam{\Gamma}$, we can choose positive integer $n\leq M$ and $n+1$ generators $u_0, u_1, \cdots, u_n$ in $S$ such that the following conditions hold:
\begin{enumerate}
\item $u_0=s$ and $u_n=t_1$.
\item $u_j$ and $u_{j+1}$ commutes where $j\in \{0, 1, 2, \cdots, n-1\}$.
\end{enumerate}
For each $j\in \{0, 1, 2, \cdots, n-1\}$ let $\beta_j$ be a path connecting $u_j^m$ and $u_{j+1}^m$ of length $2m$ with vertices \[u_j^m, u_j^mu_{j+1}, u_j^mu_{j+1}^2, \cdots,u_j^mu_{j+1}^m, u_j^{m-1}u_{j+1}^m,u_j^{m-2}u_{j+1}^m, \cdots,u_{j+1}^m.\] 
By Lemma \ref{llll1}, the above vertices must lie outside the $(m/K-1)$--neighborhood of $H$. Therefore, these vertices also lies outside the $(m/L-L)$--neighborhood of $H$. Therefore, $\beta_j$ is a path outside the $(m/L-L)$--neighborhood of $H$ connecting $u_j^m$ and $u_{j+1}^m$. Since $w_1t_1^m=s_1^{\epsilon_1}t_1^m=t_1^ms_1^{\epsilon_1}$, then we can connect $t_1^m$ and $w_1t_1^m$ by an edge $\beta_n$ labelled by $s_1$. Let $\alpha_0=\beta_0\cup\beta_1\cup\cdots\cup \beta_n$. Then, it is obvious that the path $\alpha_0$ outside the $(m/L-L)$--neighborhood of $H$ connecting $s^m$ and $w_1t_1^m$ with the length bounded above by $2(M+1)m$. 

By similar constructions as above, for each $i\in \{1,2,\cdots, k-1\}$ there is a path $\alpha_i$ outside the $(m/L-L)$--neighborhood of $H$ connecting $w_it_i^m$ and $w_{i+1}t_{i+1}^m$ with the length bounded above by $2(M+1)m$. We can also construct a path $\alpha_k$ outside the $(m/L-L)$--neighborhood of $H$ connecting $ht_k^m$ and $hs^m$ with the length bounded above by $2Mm$. Let $\alpha=\alpha_0\cup\alpha_1\cup\cdots\cup \alpha_k$. Then, it is obvious that the path $\alpha$ outside the $(m/L-L)$--neighborhood of $H$ connecting $s^m$ and $hs^m$ with the length bounded above by $2(k+1)(M+1)m$. By the choice of $L$ we observe that the length of $\alpha$ is also bounded above by $Lm$.
\end{proof}

\begin{prop}
\label{p'1}
Let $\Gamma$ be a simplicial, finite, connected graph with the vertex set $S$ such that $\Gamma$ does not decompose as a nontrivial join. Let $H$ be a non-trivial, finitely generated purely loxodromic subgroup of $A_{\Gamma}$. Then the lower relative divergence of $A_{\Gamma}$ with respect to $H$ is at most quadratic. 
\end{prop}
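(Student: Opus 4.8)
The plan is to prove the upper bound directly from the definition of lower relative divergence, by exhibiting for each $n\geq 2$ and $\rho\in(0,1]$ an explicit pair of points on $\partial N_r(H)$ that lie at distance at least $nr$ but are joined by a path of length $O(r^2)$ avoiding a suitable neighborhood of $H$. Writing $\{\sigma^n_\rho\}$ for the lower relative divergence of $\tilde{X}^{(1)}_{\Gamma}$ with respect to $H$, it suffices to produce a constant $\lambda\in(0,1]$ with $\sigma^n_{\lambda\rho}(r)\leq Cr^2$ for all large $r$: by the definition of domination of families (taking the constant family $r\mapsto r^2$ and $M=1$), this gives $\{\sigma^n_\rho\}\preceq r^2$, i.e. the divergence is at most quadratic. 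The freedom to replace $\rho$ by $\lambda\rho$ is exactly what will let the cheap connecting path avoid the relevant neighborhood.

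Fix a nontrivial $h\in H$ and a generator $s\in S$, and let $K$, $L$ be the constants from Lemmas \ref{llll1} and \ref{l'2}. Given $r$, choose $j$ with $d_S(s^j,H)=r$; such $j$ exists since $d_S(s^0,H)=0$, consecutive values differ by at most $1$, and $d_S(s^l,H)\geq l/K-1$ by Lemma \ref{llll1} (with $g=e$ and exponent $0$ on a commuting partner of $s$). As $d_S(s^l,H)\leq l$, we get $r\leq j\leq K(r+1)$. Put $q=s^j\in\partial N_r(H)$ and $p_i=h^i q$. Since $h^i\in H$, left translation by $h^i$ is an isometry fixing $H$ setwise, so each $p_i$ again lies on $\partial N_r(H)$. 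By Lemma \ref{l'2} there is a path $\alpha$ from $q$ to $hq$ of length at most $Lj$ lying outside $N_{j/L-L}(H)$; translating by $h^i$ yields a path $\alpha^{(i)}=h^i\alpha$ from $p_i$ to $p_{i+1}$ of the same length, still outside $N_{j/L-L}(H)$. Concatenating $\alpha^{(0)},\dots,\alpha^{(N-1)}$ gives a path $P$ from $p_0=q$ to $p_N=h^N q$ with $\ell(P)\leq NLj$ and $P\cap N_{j/L-L}(H)=\emptyset$.

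Two estimates then finish the count. First, since $A_\Gamma$ acts freely and cocompactly on the $\CAT(0)$ cube complex $\tilde{X}_\Gamma$, the infinite-order element $h$ is a hyperbolic isometry with translation length $\tau>0$, so $\abs{h^N}_S\geq\tau N$ and hence $d_S(p_0,p_N)=\abs{s^{-j}h^N s^j}_S\geq\abs{h^N}_S-2j\geq\tau N-2K(r+1)$; choosing $N=\lceil(nr+2K(r+1))/\tau\rceil=O(r)$ forces $d_S(p_0,p_N)\geq nr$, and then $\ell(P)\leq NLj=O(r)\cdot O(r)=O(r^2)$. Second, fix any $\lambda\in(0,1/L)$; using $j\geq r$ we get $j/L-L\geq r/L-L\geq\lambda\rho r$ for all large $r$, so $P$ avoids $N_{\lambda\rho r}(H)$ and therefore $d_{\lambda\rho r}(p_0,p_N)\leq\ell(P)\leq Cr^2$. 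Provided the pair $(p_0,p_N)$ is admissible for $\sigma^n_{\lambda\rho}(r)$, this yields $\sigma^n_{\lambda\rho}(r)\leq Cr^2$.

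The main obstacle is the admissibility condition $d_r(p_0,p_N)<\infty$: the index set defining $\sigma^n_\rho$ requires the two boundary points to be joined by \emph{some} path avoiding the full open neighborhood $N_r(H)$, whereas $P$ is only guaranteed to avoid the smaller $N_{\lambda\rho r}(H)$ and so only certifies the $d_{\lambda\rho r}$ bound. I would handle this separately using that $A_\Gamma$ is one-ended (here $\Gamma$ is connected and, being join-free, has at least two vertices) together with the infinite index of $H$: one pushes each $p_i$ radially outward along $s^l$ to depth $l\geq K(r+1)$, where $d_S(s^l,H)\geq l/K-1\geq r$ keeps the deep endpoints outside $N_r(H)$, and then joins the deep points in the far region. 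Verifying that the outward segments and the far-region connection genuinely remain in the complement of $N_r(H)$ is the delicate geometric point, and it is here, rather than in the quadratic count itself, that care is required.
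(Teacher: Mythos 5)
Your overall strategy is the same as the paper's: iterate the Lemma \ref{l'2} path along the powers $h^i$ of a fixed $h\in H$, use undistortedness of $\langle h\rangle$ to make the endpoints $nr$ apart with $O(r)$ iterations, and count $O(r)\cdot O(r)=O(r^2)$. The difference is where you anchor the construction, and that is exactly where the gap sits. You place the base point $q=s^j$ on $\partial N_r(H)$ itself, so $j\leq K(r+1)$ and the Lemma \ref{l'2} path only avoids $N_{j/L-L}(H)$, a neighborhood of radius roughly $r/L$. This certifies the bound on $d_{\lambda\rho r}(p_0,p_N)$ but, as you yourself note, does not certify the admissibility condition $d_r(p_0,p_N)<\infty$ that the definition of $\sigma^n_{\lambda\rho}(r)$ requires of the pair over which the infimum is taken. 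If no path between $p_0$ and $p_N$ avoids the full $N_r(H)$, the pair simply does not enter the infimum and your estimate proves nothing; and if \emph{no} admissible pair exists at radius $r$ then $\sigma^n_{\lambda\rho}(r)=\infty$ and the quadratic upper bound fails outright. So this is not a technicality one can wave at with one-endedness of $A_\Gamma$: one-endedness does not imply that the complement of $N_r(H)$ is connected (think of $\mathbb{Z}\leq\mathbb{Z}^2$), and the finiteness of $d_r$ is precisely the content that must be produced constructively.

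Your sketched repair --- push each $p_i$ radially out along powers of $s$ and connect in the far region --- has two unverified points, and completing them essentially reproduces the paper's proof. First, the radial segment $s^j,s^{j+1},\dots,s^l$ need not stay outside $N_r(H)$: Lemma \ref{llll1} only gives $d_S(s^i,H)\geq i/K-1$, so between your $j$ (where the distance equals $r$) and $i=K(r+1)$ the distance may dip below $r$; you would need to choose $j$ as the \emph{last} crossing of level $r$, or abandon exact placement on $\partial N_r(H)$. Second, the far-region connections between $h^is^l$ and $h^{i+1}s^l$ must again come from Lemma \ref{l'2}, and they avoid $N_r(H)$ only if $l/L-L\geq r$, i.e.\ $l\geq L(L+r)$. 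The paper does exactly this from the start: it anchors at $s^m$ with $m\in[L(L+r),2L(L+r)]$, so the concatenated path $\alpha_0\cup h\alpha_0\cup\cdots\cup h^{k-1}\alpha_0$ avoids $N_r(H)$ outright, and then it reaches $\partial N_r(H)$ by following geodesics from $s^m$ and $h^ks^m$ toward $H$, which stay outside $N_r(H)$ because the distance to $H$ decreases monotonically along them. With that rearrangement a single path witnesses both $d_r<\infty$ and the quadratic bound on $d_{\rho r}$, and no rescaling of $\rho$ is needed. As written, your argument is incomplete at the point you flag.
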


\begin{proof}
Let $h$ be an arbitrary group element in $H$ and $L\geq 1$ a constant as in Lemma \ref{l'2}. Since each cyclic subgroup in a $\CAT(0)$ group is undistorted (see Corollary III.$\Gamma$.4.8 and Theorem III.$\Gamma$.4.10 in \cite{MR1744486}), there is a positive integer $L_1$ such that \[\abs{h^k}_S\geq \frac{\abs{k}}{L_1}-L_1 \text{ for each integer $k$}.\] Let $\{\sigma^n_{\rho}\}$ be the lower relative divergence of $\tilde{X}^{(1)}_{\Gamma}$ with respect to $H$. We will prove that function $\sigma^n_{\rho}(r)$ is bounded above by some quadratic function for each $n\geq 2$ and $\rho \in (0,1]$.

Choose a positive integer $m\in \bigl[L(L+r),2L(L+r)\bigr]$ and a generator $s$ in $S$. Then, there is a path $\alpha_0$ outside the $(m/L-L)$--neighborhood of $H$ connecting $s^m$ and $hs^m$ with the length bounded above by $Lm$. It is obvious that the path $\alpha_0$ also lies outside the $r$--neighborhood of $H$ by the choice of $m$. Choose a positive integer $k$ which lies between $L_1\bigl(nr+8L(L+r)+L_1\bigr)$ and $L_1\bigl(nr+8L(L+r)+L_1+1\bigr)$. Let $\alpha=\alpha_0\cup h\alpha_0\cup h^2\alpha_0\cup\cdots\cup h^{k-1}\alpha_0$. Then, $\alpha$ is a path outside the $r$--neighborhood of $H$ connecting $s^m$, $h^ks^m$ with the length bounded above by $kLm$. By the choice of $k$ and $m$, the length of $\alpha$ is bounded above by $2L_1L^2(L+r)\bigl(nr+8L(L+r)+L_1+1\bigr)$.

Since $r\leq d_S(s^m, H)\leq m$, then there is a path $\gamma_1$ outside $N_r(H)$ connecting $s^m$ and some point $x\in \partial N_r(H)$ such that the length of $\gamma_1$ is bounded above by $m$. By the choice of $m$, the length of $\gamma_1$ is also bounded above by $2L(L+r)$. Similarly, there is a path $\gamma_2$ outside $N_r(H)$ connecting $h^ks^m$ and some point $y\in \partial N_r(H)$ such that the length of $\gamma_2$ is bounded above by $2L(L+r)$. Let $\bar{\alpha}=\gamma_1\cup\alpha\cup\gamma_2$ then $\bar{\alpha}$ is a path outside $N_r(H)$ connecting $x$, $y$ and the length of $\bar{\alpha}$ is bounded above by $2L_1L^2(L+r)\bigl(nr+8L(L+r)+L_1+1\bigr)+4L(L+r)$. Therefore, for each ~$\rho \in (0,1]$
\[d_{\rho r}(x,y)\leq 2L_1L^2(L+r)\bigl(nr+8L(L+r)+L_1+1\bigr)+4L(L+r).\]

Also, \begin{align*}d_S(x,y)&\geq d_S(s^m, h^ks^m)-d_S(s^m, x)-d_S(h^ks^m,y)\\&\geq \bigl(\abs{h^k}_S-2m\bigr)-2L(L+r)-2L(L+r)\geq \frac{k}{L_1}-L_1-8L(L+r)\\&\geq \bigl(nr+8L(L+r)\bigr)-8L(L+r)\geq nr.\end{align*}

 Thus, for each ~$\rho \in (0,1]$
\[\sigma_{\rho}^n(r)\leq 2L_1L^2(L+r)\bigl(nr+8L(L+r)+L_1+1\bigr)+4L(L+r).\] 

This implies that the lower relative divergence of $A_{\Gamma}$ with respect to $H$ is at most quadratic. 

\end{proof}

We now establish the quadratic lower bound for the lower relative divergence of a right-angled Artin group with respect to a loxodromic subgroup.
\begin{prop}
\label{p'2}
Let $\Gamma$ be a simplicial, finite, connected graph with the vertex set $S$ such that $\Gamma$ does not decompose as a nontrivial join. Let $H$ be a non-trivial, finitely generated purely loxodromic subgroup of $A_{\Gamma}$. Then the lower relative divergence of $A_{\Gamma}$ with respect to $H$ is at least quadratic. 
\end{prop}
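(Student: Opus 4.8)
The plan is to establish a quadratic lower bound for the functions $\sigma^n_\rho$ in the lower relative divergence $\{\sigma^n_\rho\}$ of $\tilde{X}^{(1)}_{\Gamma}$ with respect to $H$, following the same philosophy as the two dimensional right-angled Coxeter computation in Proposition \ref{pip2}, but replacing the special-subgroup tools used there by the quasiconvexity of purely loxodromic subgroups from Corollary \ref{c1} together with the hyperplane intersection criterion of Lemma \ref{c3}. Fix $n\geq 5$ and $\rho\in(0,1]$, and let $M$ be the quasiconvexity constant of $H$ from Corollary \ref{c1}. Given $r>0$ and any pair $x,y\in\partial N_r(H)$ admitting a path $\gamma$ outside $N_{\rho r}(H)$ with $d_S(x,y)\geq nr$, I would show that $\ell(\gamma)$ is bounded below by a fixed quadratic function of $r$; since $x$, $y$, and $\gamma$ are arbitrary, this bounds $\sigma^n_\rho(r)$ below by a quadratic and hence shows the divergence is at least quadratic.

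First I would set up the \emph{horizontal} direction. Choose $k_1,k_2\in H$ realizing $d_S(x,H)=d_S(x,k_1)=r$ and $d_S(y,H)=d_S(y,k_2)=r$ via combinatorial geodesics $\nu_1$ from $x$ to $k_1$ and $\nu_2$ from $y$ to $k_2$, so that $d_S(k_1,k_2)\geq d_S(x,y)-2r\geq(n-2)r$. Let $\mu$ be a combinatorial geodesic from $k_1$ to $k_2$; by Corollary \ref{c1} it lies in $N_M(H)$, and it crosses exactly $L:=d_S(k_1,k_2)\geq(n-2)r$ hyperplanes $\mathcal{H}_1,\dots,\mathcal{H}_L$, one per edge. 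The concatenation $\nu_1\mu\nu_2\gamma$ is a closed edge loop, so every hyperplane crosses it an even number of times. Each $\mathcal{H}_i$ crosses the geodesic $\mu$ exactly once, while the geodesics $\nu_1,\nu_2$ together cross at most $\ell(\nu_1)+\ell(\nu_2)=2r$ hyperplanes. Hence all but at most $2r$ of the $\mathcal{H}_i$ must cross $\gamma$, leaving at least $(n-4)r$ \emph{good} hyperplanes crossing both $\mu$ and $\gamma$.

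Next comes the \emph{vertical} direction, which produces the second factor of $r$. For each good hyperplane $\mathcal{H}_i$ of type $v_i$, let $h_i$ be an endpoint of the crossing edge on $\mu$ (so $d_S(h_i,H)\leq M$) and let $g_i$ be an endpoint, on the same side of $\mathcal{H}_i$, of an edge of $\gamma$ crossed by $\mathcal{H}_i$ (so $d_S(g_i,H)\geq\rho r$). Both $h_i$ and $g_i$ are vertices of the carrier of $\mathcal{H}_i$, which is convex, so a combinatorial geodesic $\delta_i$ from $h_i$ to $g_i$ stays in the carrier and has length $d_S(h_i,g_i)\geq d_S(g_i,H)-d_S(h_i,H)\geq\rho r-M$. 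Each of the at least $\rho r-M$ hyperplanes crossing $\delta_i$ has type in $lk(v_i)$ and crosses $\mathcal{H}_i$. Arguing as in Proposition \ref{pip2}, I would assign to each good $\mathcal{H}_i$ a subpath $\gamma_i$ of $\gamma$, show that at least $\rho r-M-1$ of these carrier hyperplanes cross $\gamma_i$, and arrange that distinct good hyperplanes give edge-disjoint subpaths. Summing over the at least $(n-4)r$ good hyperplanes then gives $\ell(\gamma)\geq\sum_i\ell(\gamma_i)\geq(n-4)r\,(\rho r-M-1)$, a quadratic lower bound. Combined with Proposition \ref{p'1}, this shows the lower relative divergence of $A_\Gamma$ with respect to $H$ is exactly quadratic.

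The main obstacle is the final disjointness step. In the two dimensional right-angled Coxeter setting, triangle freeness of $\Gamma$ guarantees that the hyperplanes in one vertical comb neither cross one another nor get reused by a neighboring comb, which is precisely what keeps the subpaths $\gamma_i$ disjoint and each of length of order $\rho r$. For a general right-angled Artin group the carriers of the $\mathcal{H}_i$ can host genuine flats, so a single hyperplane may cross many good $\mathcal{H}_i$ and the combs can overlap. Controlling this overlap is where the purely loxodromic hypothesis becomes essential: through Theorem \ref{th} it bounds the length of join subwords along geodesics with endpoints in $H$, and through Lemma \ref{c3} it limits how many mutually crossing hyperplanes can occur in the carriers along $\mu$. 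I expect the bulk of the work to be a careful bookkeeping of these intersections, using Lemma \ref{c3} to charge the carrier hyperplanes of distinct good $\mathcal{H}_i$ to disjoint portions of $\gamma$ up to a bounded multiplicity depending only on $\Gamma$ and $H$.
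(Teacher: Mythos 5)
Your outline matches the paper's proof in its broad architecture: a ``horizontal'' count of hyperplanes dual to a geodesic running through $N_M(H)$ between points of $H$ near $x$ and $y$, followed by a ``vertical'' count, inside each such hyperplane's carrier, of the hyperplanes separating the geodesic from $\gamma$, each of which must be crossed by $\gamma$. You have also correctly diagnosed where the difficulty lies. The problem is that the proposal stops exactly at that difficulty. The inequality $\ell(\gamma)\geq\sum_i\ell(\gamma_i)$ needs the subpaths $\gamma_i$ to be edge-disjoint, i.e.\ the vertical combs to have multiplicity one; this fails whenever two consecutive letters of $\mu$ commute (the two carriers then share hyperplanes of type in the common link), and more generally the overlap is governed by the join-subword constant $N$ of Theorem \ref{th}. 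Your final bound $(n-4)r(\rho r-M-1)$ contains no dependence on $N$, and the sentence ``I expect the bulk of the work to be a careful bookkeeping of these intersections'' defers precisely the step that is the mathematical content of the proposition. As written, the argument is not complete.

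The paper's resolution is a concrete device you did not supply: instead of using all $\sim nr$ hyperplanes dual to the geodesic, it takes a middle subpath $\alpha_1$ of length about $r$ avoiding the $2r$--balls around the endpoints, writes it as $e_0w_1e_1w_2\cdots e_mw_m$ with spacer subwords $w_i$ of length exactly $3N$, and uses only the $m+1\approx r/(3N+1)$ hyperplanes $H_i$ dual to the edges $e_i$. The key claim is that \emph{no} hyperplane crosses both $H_{i-1}$ and $H_i$: if a hyperplane of type $b$ did, Lemma \ref{c3}(2) would force the length-$(3N+1)$ word $e_{i-1}w_i$ to represent an element of $A_{lk(a_{i-1})}A_{lk(b)}A_{lk(a_i)}$, hence to be a product of three join words one of which has length greater than $N$, contradicting Theorem \ref{th}(3). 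Consecutive combs are therefore literally disjoint, each contributes at least $\rho r-3N$ hyperplanes forced onto the subpath $\beta_i$ of $\gamma$ between consecutive crossing points, and the product of the two linear counts still yields the quadratic bound $\bigl(\frac{r-1}{3N+1}\bigr)(\rho r-3N)-2r$. To complete your version you would need either this spacing trick or an actual proof (again via Lemma \ref{c3}(2) and Theorem \ref{th}(3)) that a single hyperplane can cross at most $3N+1$ consecutive good hyperplanes along $\mu$, and then divide your count by that multiplicity.
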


\begin{proof}
By Corollary \ref{c1}, there is a positive integer $M$ such that every geodesic in the Cayley graph $\tilde{X}^{(1)}_{\Gamma}$ connecting points in $H$ lies in $M$--neighborhood of $H$. By Theorem \ref{th}, there is another positive integer $N$ such that for any reduced word $w$ representing $h\in H$, and any join subword $w'$ of $w$, we have $\ell(w')\leq N$. Let $\{\sigma^n_{\rho}\}$ be the lower relative divergence of $\tilde{X}^{(1)}_{\Gamma}$ with respect to $H$. We will prove that for each $n\geq 9$ and $\rho \in (0,1]$
\[\sigma^n_{\rho}(r)\geq \bigl(\frac{r-1}{3N+1}\bigr)\bigl(\rho r-3N)-2r \text{ for each $r>\frac{2M+3N+2}{\rho}$.}\]

Let $u$ and $v$ be an arbitrary pair of points in $\partial N_r(H)$ such that $d_r(u, v)<\infty$ and $d_S(u,v)\geq nr$. Let $\gamma$ be an arbitrary path that lies outside the $\rho r$--neighborhood of $H$ connecting $u$ and $v$. We will prove that the length of $\gamma$ is bounded below by $\bigl(\frac{r-1}{3N+1}\bigr)\bigl(\rho r-3N)-2r$.

Let $\gamma_1$ be a geodesic of length $r$ in $\tilde{X}^{(1)}_{\Gamma}$ connecting $u$ and some point $x$ in $H$. Let $\gamma_2$ be another geodesic of length $r$ in $\tilde{X}^{(1)}_{\Gamma}$ connecting $v$ and some point $y$ in $H$. Let $\alpha$ be a geodesic in $\tilde{X}^{(1)}_{\Gamma}$ connecting $x$ and $y$. Then $\alpha$ lies in the $M$--neighborhood of $H$. Choose a positive integer $m$ such that $r<(3N+1)m+1<2r$. 

Since $d_S(x,y)\geq d_S(u,v)-2r\geq (n-2)r\geq 7r$, there is a subpath $\alpha_1$ with length $(3N+1)m+1$ of $\alpha$ such that $\alpha_1\cap\bigl(B(x,2r)\cup B(y,2r)\bigr)=\emptyset$. Also, the lengths of $\gamma_1$ and $\gamma_2$ are both $r$. This implies that $(\gamma_1\cup \gamma_2)\cap N_r(\alpha_1)=\emptyset$. Since $\gamma \cap N_{\rho r}(H)=\emptyset$ and $\alpha_1\subset N_M(H)$, then $\gamma \cap N_{\rho r-M}(\alpha_1)=\emptyset$. Also, $\rho r-M>\rho r/2$. Thus, $\gamma \cap N_{\rho r/2}(\alpha_1)=\emptyset$. Let $\bar{\gamma}=\gamma_1\cup\gamma\cup\gamma_2$. Then, $\bar{\gamma} \cap N_{\rho r/2}(\alpha_1)=\emptyset$.

We assume that $\alpha_1=e_0w_1e_1w_2\cdots e_m w_m$, where each $e_i$ is an edge labelled by some generator $a_i$ in $S$, each $w_i$ is a subpath of $\alpha_1$ of length exactly $3N$. For each $i\in \{0,1,2,\cdots,m\}$ let $H_i$ be the hyperplane intersecting $e_i$ and $v_i$ a point in $H_i\cap \bar{\gamma}$. For each $i\in \{1,2,\cdots,m\}$ let $\beta_i$ be the subpath of $\bar{\gamma}$ connecting $v_{i-1}$ and $v_i$.

If there is some hyperplane of type $b$ that intersects two hyperplanes $H_{i-1}$ and $H_i$ for some $i\in \{1,2,\cdots,m\}$, then $e_{i-1}w_i$ corresponds to a word $w$ that represents an element in $A_{lk(a_{i-1})}A_{lk(b)}A_{lk(a_i)}$ by Lemma \ref{c3}. Also, the length of $w$ is $3N+1$. Then $w$ 
is a product of 3 join words and so one of the join words 
has length greater than $N$. This contradicts the choice of $N$. Therefore, no hyperplane intersects both $H_{i-1}$ and $H_i$ for each $i\in \{1,2,\cdots,m\}$. Also, $\bar{\gamma}$ lies outside the $(\rho r/2)$--neighborhood of $\alpha_1$. The number of hyperplanes that intersect $H_{i-1}\cup H_i$ is bounded below by $2(\rho r/2)=\rho r$. Moreover, each of these hyperplanes either intersects $w_i$ or $\beta_i$. We note that the number of hyperplanes that intersect $w_i$ is exactly $\ell(w_i)=3N$ since $w_i$ is a geodesic. This implies that the number of hyperplanes that intersect $\beta_i$ is bounded below by $\rho r-3N$. Thus,
\[\ell(\beta_i)\geq \rho r-3N.\] 

Therefore, \[\ell(\gamma)=\ell(\bar{\gamma})-2r\geq \sum_{i=1}^{m} \ell(\beta_i)-2r\geq m(\rho r-3N)-2r\geq \bigl(\frac{r-1}{3N+1}\bigr)\bigl(\rho r-3N)-2r.\]

Thus, \[\sigma^n_{\rho}(r)\geq \bigl(\frac{r-1}{3N+1}\bigr)\bigl(\rho r-3N)-2r \text{ for each $r>\frac{2M+3N+2}{\rho}$.}\] or the lower relative divergence of $G$ with respect to $H$ is at least quadratic.
\end{proof}

The following theorem is obtained from Propositions \ref{p'1} and \ref{p'2}.

\begin{thm}
\label{th'1}
Let $\Gamma$ be a simplicial, finite, connected graph with the vertex set $S$ such that $\Gamma$ does not decompose as a nontrivial join. Let $H$ be a non-trivial, finitely generated purely loxodromic subgroup of $A_{\Gamma}$. Then the lower relative divergence of $A_{\Gamma}$ with respect to $H$ is exactly quadratic. 
\end{thm}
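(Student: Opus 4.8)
The plan is straightforward: the final statement, Theorem~\ref{th'1}, asserts that the lower relative divergence of $A_\Gamma$ with respect to a non-trivial, finitely generated, purely loxodromic subgroup $H$ is exactly quadratic, and this is obtained by combining the two preceding propositions. First I would invoke Proposition~\ref{p'1}, which gives that the lower relative divergence $\{\sigma^n_\rho\}$ of $A_\Gamma$ with respect to $H$ is \emph{at most quadratic}, meaning $\{\sigma^n_\rho\}\preceq f$ where $f$ is a quadratic function. Then I would invoke Proposition~\ref{p'2}, which gives the complementary bound that $\{\sigma^n_\rho\}$ is \emph{at least quadratic}, i.e. $f\preceq\{\sigma^n_\rho\}$ for a quadratic $f$. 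Since domination is transitive and the equivalence $\sim$ is defined precisely as two-sided domination, the two bounds together yield $\{\sigma^n_\rho\}\sim f$ with $f$ quadratic, which is exactly the assertion that the lower relative divergence is quadratic.

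The only subtlety worth addressing carefully is the bookkeeping in the definition of domination for \emph{families} $\{\delta^n_\rho\}$ rather than single functions, since the divergence is a family indexed over $\rho\in(0,1]$ and $n\geq 2$. The remark following the definition of family domination tells us that a family is compared to a single function $f$ by setting $\delta'^n_\rho = f$ for all $\rho,n$; so "at most quadratic" and "at least quadratic" in the two propositions should already be interpreted in that sense, and the proof reduces to transitivity of $\preceq$. Concretely, Proposition~\ref{p'1} establishes an upper bound $\sigma^n_\rho(r)\leq 2L_1L^2(L+r)(nr+8L(L+r)+L_1+1)+4L(L+r)$, which as a function of $r$ (with $n$, $\rho$ fixed) is a genuine quadratic polynomial, giving $\{\sigma^n_\rho\}\preceq r^2$. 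Proposition~\ref{p'2} establishes, for $n\geq 9$ and all $\rho$, the lower bound $\sigma^n_\rho(r)\geq \bigl(\tfrac{r-1}{3N+1}\bigr)(\rho r - 3N) - 2r$ for $r$ large, which is quadratic in $r$, giving $r^2 \preceq \{\sigma^n_\rho\}$.

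There is essentially no hard part here; the theorem is a formal consequence of the two propositions via the definitions. The proof reads:

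\begin{proof}
By Proposition~\ref{p'1}, the lower relative divergence of $A_\Gamma$ with respect to $H$ is at most quadratic, i.e. $\{\sigma^n_\rho\}\preceq f$ where $f$ is a quadratic function. By Proposition~\ref{p'2}, the lower relative divergence of $A_\Gamma$ with respect to $H$ is at least quadratic, i.e. $f\preceq\{\sigma^n_\rho\}$ for a quadratic function $f$. Since the two bounds hold simultaneously, we obtain $\{\sigma^n_\rho\}\sim f$ with $f$ quadratic. Therefore the lower relative divergence of $A_\Gamma$ with respect to $H$ is exactly quadratic.
\end{proof}

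If I wanted to be more careful about the family-versus-function comparison, I would explicitly note that "at most quadratic" from Proposition~\ref{p'1} means $\{\sigma^n_\rho\}$ is dominated by the constant family equal to the quadratic $f$, and dually for Proposition~\ref{p'2}, so that the equivalence follows directly from the definition of $\sim$ for a family and a single function as recorded in the remark on family domination.
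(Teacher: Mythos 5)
Your proposal is correct and matches the paper exactly: the paper states that Theorem~\ref{th'1} "is obtained from Propositions \ref{p'1} and \ref{p'2}," which is precisely your argument of combining the quadratic upper bound with the quadratic lower bound to conclude equivalence. Your extra care about the family-versus-function comparison in the definition of domination is sound but not something the paper spells out.
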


We now construct an example of a right-angled Artin group $A_\Gamma$ together with a non-virtually cyclic subgroup $H$ such that the lower relative divergence of the pair $(A_\Gamma,H)$ is exactly quadratic. 

\begin{figure}
\begin{tikzpicture}[scale=1]


\draw (-3,0) node[circle,fill,inner sep=2pt](a){} -- (-1,0) node[circle,fill,inner sep=2pt](a){} -- (1,0) node[circle,fill,inner sep=2pt](a){} -- (3,0) node[circle,fill,inner sep=2pt](a){}; 

\node at (-3,-0.35) {$a$};\node at (-1,-0.35) {$b$};\node at (1,-0.35) {$c$};\node at (3,-0.35) {$d$};

\end{tikzpicture}
\caption{}
\label{a1}
\end{figure}

\begin{cor}
Let $\Gamma$ be the graph as in Figure \ref{a1}. Let $A_\Gamma$ be the associated right-angled Artin group and $H$ the subgroup of $A_\Gamma$ generated by two elements $ada$ and $dad$. Then $H$ is a free subgroup of rank 2, and the lower relative divergence of $A_\Gamma$ with respect to $H$ is exactly quadratic.
\end{cor}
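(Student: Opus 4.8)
The plan is to verify the hypotheses of Theorem~\ref{th'1} for the pair $(A_\Gamma,H)$, after recording the combinatorial structure of the defining graph $\Gamma$ (the path $a$--$b$--$c$--$d$) and establishing separately that $H$ is free of rank $2$ and purely loxodromic. First I would check the structural hypotheses on $\Gamma$: it is a finite simplicial path, hence connected, and it does not decompose as a nontrivial join, since its complement has edges $ac$, $ad$, $bd$ and is connected. I would also enumerate the induced subgraphs that are nontrivial joins: the three edges $\{a,b\}$, $\{b,c\}$, $\{c,d\}$ together with the two paths on $\{a,b,c\}$ and $\{b,c,d\}$ (which are the joins $\{b\}\ast\{a,c\}$ and $\{c\}\ast\{b,d\}$). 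The crucial feature is that \emph{no} join subgraph of $\Gamma$ contains both $a$ and $d$.

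The key computation is a no-cancellation property of the generating set $\{x,y\}=\{ada,dad\}$. Writing $x=a\,d\,a$, $x^{-1}=a^{-1}d^{-1}a^{-1}$, $y=d\,a\,d$, $y^{-1}=d^{-1}a^{-1}d^{-1}$, one checks that at every boundary occurring in a reduced word in $\{x^{\pm1},y^{\pm1}\}$ the last letter of one syllable and the first letter of the next are never inverse, because cancellation would force a non-reduced pair $g_i g_i^{-1}$. Hence every nontrivial reduced word $g_1\cdots g_n$ in $x,y$ expands to a freely reduced word of length $3n$ in $a,d$; in particular it is nontrivial, so $\{x,y\}$ is a free basis and $H\cong F_2$. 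Since $a$ and $d$ are non-adjacent, no commutation shuffle is available, so this word is also the unique geodesic word for the corresponding element of $A_\Gamma$, and its support is contained in $\{a,d\}$.

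It remains to show $H$ is purely loxodromic, which I would do through condition (3) of Theorem~\ref{th}. Using the retract property that $g\in A_J$ if and only if $\mathrm{supp}(g)\subseteq J$, any join subword $w'$ of the reduced word $w$ for an element of $H$ has $\mathrm{supp}(w')\subseteq\{a,d\}$ and lies in some join subgroup $A_J$; since no join subgraph contains both $a$ and $d$, the support of $w'$ is contained in $\{a\}$ or in $\{d\}$, so $w'$ is a run of a single generator. Inspecting the block structure, consecutive $a^{\pm1}$-letters occur only across an $xx$ or $x^{-1}x^{-1}$ boundary and consecutive $d^{\pm1}$-letters only across a $yy$ or $y^{-1}y^{-1}$ boundary, so every such run has length at most $2$. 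Thus $N=2$ bounds all join subwords, and Theorem~\ref{th} gives that $H$ is purely loxodromic (indeed stable). Applying Theorem~\ref{th'1} to the non-trivial, finitely generated, purely loxodromic subgroup $H$ then yields that $\operatorname{div}(A_\Gamma,H)$ is exactly quadratic.

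The main obstacle is the purely loxodromic verification, and everything there hinges on two linked facts: that no join subgraph of $\Gamma$ sees both endpoints $a,d$, and that the syllables $ada,dad$ interlock so that runs of a single generator have length at most $2$. This is precisely what rules out the arbitrarily long single-generator join subwords one would get from the naive generators $a,d$ themselves (for which $a^{k}$ already lies in the join subgroup $A_{\{a,b\}}$), and it is the reason the specific generators $ada$ and $dad$ are needed.
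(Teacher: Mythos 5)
Your proposal is correct and follows essentially the same route as the paper: identify $H$ as a rank-2 free subgroup of $\langle a,d\rangle$, bound all join subwords of reduced words for elements of $H$ by length $2$ (the paper simply asserts they lie in $\{a^{\pm1},a^{\pm2},d^{\pm1},d^{\pm2}\}$), invoke Theorem~\ref{th} to conclude $H$ is purely loxodromic, and then apply Theorem~\ref{th'1}. The only difference is that you supply the details the paper labels ``obvious'' — the enumeration of join subgraphs, the no-cancellation argument for the basis $\{ada,dad\}$, and the run-length analysis — which is a more careful write-up of the same argument rather than a different one.
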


\begin{proof}
This is obvious that the subgroup $K$ generated by two elements $a$ and $d$ is a free group of rank 2. Since $H$ is a subgroup of $K$ generated by two elements $ada$ and $dad$, $H$ is also a free group of rank 2. It is obvious that any join subword of a reduced word in $A_\Gamma$ representing a non-trivial group element in $H$ is an element in the set $\{a, a^{-1},a^2, a^{-2}, d, d^{-1}, d^2, d^{-2}\}$. Therefore, $H$ is a loxodromic subgroup by Theorem \ref{th}. This implies that the lower relative divergence of $A_\Gamma$ with respect to $H$ is exactly quadratic by the main theorem.
\end{proof}

\subsection{Main theorem}

\begin{lem}
\label{sonice}
Let $K$ be a simplicial, finite graph ($K$ is not necessarily connected) and $\Gamma$ the coned-off graph of $K$ with cone point $v$. If $H$ is a strongly quasiconvex subgroup of the right-angled Artin group $A_\Gamma$, then $H$ is trivial or $H$ has finite index in $A_\Gamma$.
\end{lem}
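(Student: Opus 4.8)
The plan is to show that a nontrivial strongly quasiconvex subgroup $H$ must be coarsely dense in $A_\Gamma$, and then invoke the standard fact that a coarsely dense subgroup of a finitely generated group has finite index. First I record the structure of $A_\Gamma$: since the cone point $v$ is joined to every vertex of $K$, the generator $v$ commutes with all of $S$, so $A_\Gamma=\gen{v}\times A_K$ with $Z:=\gen{v}\cong\Z$ central, and in the standard generating set the word metric is additive, $d\bigl((v^a,w_1),(v^b,w_2)\bigr)=|a-b|+d_{A_K}(w_1,w_2)$, where I write $(a,w)$ for $v^aw$. Let $\pi_Z,\pi_K$ be the two projections and $\bar H=\pi_K(H)\le A_K$. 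By Theorem~\ref{vuiqua} the subgroup $H$ is infinite, undistorted, and Morse with some gauge $\mu$. If $\pi_K(H)$ is bounded, then $H$ lies in a bounded neighborhood of $Z$, hence is at finite Hausdorff distance from $Z$; a short detour in the $A_K$--direction shows $Z$ fails to be Morse once $A_K$ is infinite, forcing $K=\emptyset$ and $A_\Gamma=\Z$, where the statement is immediate. So I may assume $\bar H$ is infinite.

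The first main step is to prove $Z\subseteq N_M(H)$ with $M=\mu(3,0)$. Given $N\ge 0$, I would choose $h\in H$ with $\pi_Z(h)\ge 0$ and $|\pi_K(h)|\ge N$ (replacing $h$ by $h^{-1}$ if needed, using that $\bar H$ is infinite), and consider the path $(0,1)\to(N,1)\to(N+a,w)\to(a,w)$, where $h=(a,w)$; that is, a central segment of length $N$, a translate of a geodesic $[1,h]$, and a central segment of length $N$. Using additivity of the metric together with the sign condition $a\ge 0$ and $N\le|w|$, a direct check of the finitely many ``corner'' pairs shows this is a $(3,0)$--quasi-geodesic with endpoints in $H$, so the Morse condition places $v^N$ within $M$ of $H$; the symmetric choice handles $N\le 0$. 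Because $Z$ is central and left multiplication is an isometry preserving $H$, this upgrades to $HZ\subseteq N_M(H)$, so $H\subseteq\tilde H\subseteq N_M(H)$ where $\tilde H:=HZ=\Z\times\bar H$. Thus $H$ and $\tilde H$ are at finite Hausdorff distance, and since the Morse property is invariant under finite Hausdorff distance, $\tilde H=\Z\times\bar H$ is strongly quasiconvex.

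The second main step is to show $\bar H$ is coarsely dense in $A_K$; coarse density being equivalent to finite index for a subgroup of a finitely generated group, this gives $[A_K:\bar H]<\infty$, so $\tilde H$ has finite index in $A_\Gamma$, hence $\tilde H$ is coarsely dense, hence so is $H$, and therefore $[A_\Gamma:H]<\infty$. Arguing by contrapositive, I suppose $\bar H$ is not coarsely dense, so there exist $u\in A_K$ with $d_{A_K}(u,\bar H)$ arbitrarily large; I also pick $b\in\bar H$ with $|b|\ge 2|u|$. I then build the path from $(0,1)$ to $(0,b)$ (both in $\tilde H$) given by $(0,1)\to(0,u)\to(2|u|,1)\to(0,b)$, whose $A_K$--shadow is $1\to u\to 1\to b$. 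The decisive point is that the backtracking of the shadow through $1$ is absorbed by the central coordinate moving up to $2|u|$ and back, so no two far-apart parameters are sent close together; checking corners as before shows this is a $(7,0)$--quasi-geodesic with endpoints in $\tilde H$ passing through $(0,u)$, which lies at distance $d_{A_K}(u,\bar H)$ from $\tilde H$. Letting this distance tend to infinity contradicts that $\tilde H$ is Morse, completing the plan.

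The main obstacle I expect is the second step: the essential idea is that in the product $\Z\times A_K$ the central coordinate can absorb an entire backtrack of the $A_K$--coordinate, so a path whose projection is far from a quasi-geodesic in $A_K$ is nevertheless a genuine quasi-geodesic in $A_\Gamma$; making this rigorous requires the uniform corner estimates and the normalization $|b|\ge 2|u|$ to keep the quasi-geodesic constants independent of the excursion. A secondary technical point is the sign-bookkeeping in the first step, ensuring the central detour does not fight the vertical component of $h$, which is why I select $h$ with $\pi_Z(h)$ of the correct sign and large horizontal part. I note that both steps may alternatively be phrased through lower relative divergence via Theorem~\ref{th2} (the detours exhibit linear, hence not completely super linear, divergence), and that Proposition~\ref{pp3} gives directly that $\bar H=\tilde H\cap A_K$ is strongly quasiconvex in $A_K$, which can streamline the passage between $\tilde H$ and $\bar H$.
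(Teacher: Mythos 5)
Your overall strategy---prove $\gen{v}\subseteq N_M(H)$, upgrade to $\tilde H=H\gen{v}=\gen{v}\times\bar{H}$ being Morse (where $\bar{H}=\pi_K(H)$), then show $\bar{H}$ must be coarsely dense in $A_K$---is viable and genuinely different from the paper's, but there is a gap exactly where you predicted one, and ``checking corners'' is not the tool that closes it. A concatenation of geodesics in the $\ell^1$-product $\gen{v}\times A_K$ can fail to be a quasi-geodesic because of interior points, and whether it does depends on how each constituent geodesic interleaves its $v$-letters with its $A_K$-letters, which your description leaves unspecified. Concretely, in Step 2 the segment from $(0,u)$ to $(2|u|,1)$ contains the reverse of a geodesic spelling of $u$ and the segment from $(2|u|,1)$ to $(0,b)$ contains a geodesic spelling of $b$; if a geodesic word for $b$ begins with a geodesic word for $u$ (e.g.\ $b=ub'$ reduced, which you cannot exclude since $b$ is only required to satisfy $b\in\bar H$, $|b|\ge 2|u|$) and each segment performs its $A_K$-letters while sitting at height $2|u|$, the path contains the full backtrack $u\to 1\to u$ at a single height: two points at distance $0$ with path-distance $2|u|$, so the path is not a $(K,L)$-quasi-geodesic for any constants. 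Your claimed mechanism---that the climb to height $2|u|$ separates the two passes near $1\in A_K$---is therefore not automatic; it holds only for carefully chosen (``diagonal'') shuffles, and that choice plus the resulting all-pairs estimate is the missing content. The same issue occurs in milder, fixable form in Step 1: if the translate of $[1,h]$ spells $w$ before $v^a$, its junction with the final central descent backtracks by up to $\min(a,N)$ with nothing in between, and the ratio is unbounded; one must decree that the middle geodesic spends its $v$-letters first.

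Two remarks on repairs. First, once Step 1 gives $\tilde H=\gen{v}\times\bar H$ Morse, Step 2 needs no quasi-geodesics at all: if $\bar H$ is not coarsely dense, pick $u\in A_K$ with $d_{A_K}(u,\bar H)=r$ for arbitrarily large $r$ and note that the vertical path $t\mapsto v^tu$ stays at constant distance $r$ from $\tilde H$, has endpoints on $\partial N_r(\tilde H)$ at distance $\ge nr$, and has length $nr$; this makes the lower relative divergence of $(A_\Gamma,\tilde H)$ linear, contradicting Theorem~\ref{th2}. Second, the paper's own proof bypasses all metric estimates: since $v$ is central, if $H\cap\gen{v}$ were trivial then the conjugates $v^nHv^{-n}=H$ would be infinitely many essentially distinct conjugates with infinite intersection, contradicting finite height (Theorem~\ref{lacloi1}); hence some $v^n\neq 1$ lies in $H$, and then infinitely many distinct cosets $g_i(H\cap A_K)$ with $g_i\in A_K$ would yield essentially distinct conjugates $g_iHg_i^{-1}$ all containing the infinite central subgroup $\gen{v^n}$, contradicting finite height again. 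That argument is a few lines and is the route I would recommend.
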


\begin{proof}
We assume that $H$ is nontrivial and we will prove that $H$ has finite index in $A_\Gamma$. Since $A_\Gamma=A_K \times \langle v \rangle$, it is sufficient to prove that $H\cap A_K$ has finite index in $A_K$ and $v^n$ lies in $H$ for some $n\neq 0$. We first assume for the contradiction that $H \cap \langle v \rangle$ is a trivial group. Since $v$ commutes with all vertices of $\Gamma$, $v$ commutes with all elements in $H$. Therefore, $v^nHv^{-n}=H$ for all $n$. Also, $v^iH\neq v^j H$ for $i\neq j$. This is a contradiction by Theorem \ref{lacloi1}. Therefore, $v^n$ lies in $H$ for some $n\neq 0$.

We now assume again for the contradiction that $H\cap A_K$ has infinite index in $A_K$. Then there is an infinite sequence $(g_n)$ of distinct elements in $A_K$ such that $g_i(H\cap A_K)\neq g_j(H\cap A_K)$ for $i\neq j$. Therefore, $g^{-1}_ig_j$ is not a group element in $H\cap A_K$ for $i\neq j$. This implies that $g^{-1}_ig_j$ is not a group element in $H$ since $g_i$ and $g_j$ are already elements of $A_K$. Thus, we see that $g_i H\neq g_j H$ for $i\neq j$. Since $v^n$ lies in $H$ and $v^n$ commutes with all $g_i$, then the infinite cyclic group generated by $v^n$ is a subgroup of each $g_iHg_i^{-1}$. Therefore, $\bigcap g_iHg_i^{-1}$ is an infinite subgroup. This again contradicts to Theorem \ref{lacloi1}. Therefore, $H\cap A_K$ has finite index in $A_K$. This implies that $H$ has finite index in $A_\Gamma$.
\end{proof}

\begin{lem}
\label{verynice}
Let $\Gamma$ be a simplicial, finite, connected graph and $K$ be an induced star subgraph $st(v)$. For each $g_1$ and $g_2$ in $A_\Gamma$ there is a finite sequence of conjugates of star subgroups $g_1A_Kg_1^{-1}=Q_0,Q_1,\cdots, Q_m=g_2A_Kg_2^{-1}$ such that $Q_{i-1}\cap Q_i$ is infinite for each $i\in \{1,2,\cdots,m\}$. 
\end{lem}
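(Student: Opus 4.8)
The plan is to work with the relation on the set of all conjugates of star subgroups of $A_\Gamma$ defined by declaring two such subgroups \emph{linked} if their intersection is infinite, and to show that any two conjugates of star subgroups can be joined by a finite chain of pairwise linked subgroups. Specializing to $Q_0=g_1A_Kg_1^{-1}$ and $Q_m=g_2A_Kg_2^{-1}$ (both conjugates of the star subgroup $A_{st(v)}$) then gives the lemma, the intermediate $Q_i$ being allowed to be conjugates of star subgroups $A_{st(w)}$ for arbitrary vertices $w$. Two elementary structural facts will drive the argument. First, for any vertex $w$ the generator $w$ lies in $st(w)$, so $w\in A_{st(w)}$ and hence $wA_{st(w)}w^{-1}=A_{st(w)}$. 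Second, if $v$ and $w$ are adjacent vertices then $v\in st(w)$ and $w\in st(v)$, so $\gen{v}\subseteq A_{st(v)}\cap A_{st(w)}$ and this intersection is infinite; conjugating by any $g$, the subgroups $gA_{st(v)}g^{-1}$ and $gA_{st(w)}g^{-1}$ are linked.

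For each $g\in A_\Gamma$ let $\mathcal{C}_g$ denote the family $\{\,gA_{st(w)}g^{-1} : w\in S\,\}$. Using the second fact together with connectivity of $\Gamma$, I would first show that the members of $\mathcal{C}_g$ are pairwise chain-connected: given vertices $w_0$ and $w_k$, choose an edge-path $w_0,w_1,\dots,w_k$ in $\Gamma$ (which exists since $\Gamma$ is connected), and the subgroups $gA_{st(w_0)}g^{-1},\dots,gA_{st(w_k)}g^{-1}$ then form a chain of linked subgroups. Next I would connect $\mathcal{C}_g$ to $\mathcal{C}_{gs}$ for each generator $s\in S$: the subgroup $gA_{st(s)}g^{-1}$ belongs to $\mathcal{C}_g$, while by the first fact $gsA_{st(s)}s^{-1}g^{-1}=gA_{st(s)}g^{-1}$ also belongs to $\mathcal{C}_{gs}$, so the two families share a common member. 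Hence every subgroup in $\mathcal{C}_g\cup\mathcal{C}_{gs}$ can be joined by a chain of linked conjugates of star subgroups.

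To finish, I would write $g_1^{-1}g_2=s_1s_2\cdots s_n$ as a word in the generators and apply the previous step inductively along $g_1,\,g_1s_1,\,g_1s_1s_2,\dots,g_2$: consecutive families $\mathcal{C}_{g_1s_1\cdots s_{j-1}}$ and $\mathcal{C}_{g_1s_1\cdots s_j}$ share a member, so all of them lie in a single chain-connected family. Since $g_1A_Kg_1^{-1}\in\mathcal{C}_{g_1}$ and $g_2A_Kg_2^{-1}\in\mathcal{C}_{g_2}$, concatenating the chains produces the desired sequence $Q_0,\dots,Q_m$. The argument is largely bookkeeping, and the one point requiring care—which I regard as the crux—is the verification that the linking intersections are genuinely infinite: this rests on the observation that for adjacent vertices both $v$ and $w$ lie in $st(v)\cap st(w)$, so that $gA_{st(v)}g^{-1}\cap gA_{st(w)}g^{-1}$ contains the infinite cyclic group $g\gen{v}g^{-1}$ (infinite because vertices have infinite order in a right-angled Artin group). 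Once this is in place, connectivity of $\Gamma$ propagates the linking across all vertices of a fixed conjugator, and the identity $sA_{st(s)}s^{-1}=A_{st(s)}$ propagates it across all conjugators.
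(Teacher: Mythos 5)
Your proposal is correct and follows essentially the same route as the paper: the paper's proof is an induction on $\abs{g_1^{-1}g_2}_S$ whose $n=1$ step is exactly your combination of chain-connecting the family $\{gA_{st(w)}g^{-1}\}_{w\in S}$ via edge-paths in the connected graph $\Gamma$ (with adjacent vertices giving infinite intersections) and using $u^{\pm 1}A_{st(u)}u^{\mp 1}=A_{st(u)}$ to pass from conjugator $g$ to $gu^{\pm 1}$. The only cosmetic difference is that you concatenate chains directly along a word for $g_1^{-1}g_2$ rather than phrasing it as an induction, and you should allow the letters $s_j$ to be inverse generators, which changes nothing since $s^{-1}\in A_{st(s)}$.
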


\begin{proof}
Let $S$ be the vertex set of $\Gamma$ and $n=\abs{g_1^{-1}g_2}_S$. We will prove the above lemma by induction on $n$. If $n=0$, then $g_1=g_2$. Therefore, the conclusion is true obviously. We now assume that $n=1$. Then there is a vertex $u$ such that $g_2=g_1 u^{\epsilon}$, where $\epsilon=1$ or $\epsilon =-1$. Since $\Gamma$ is finite and connected, there is a finite sequence of vertices of $u=u_0,u_1, u_2,\cdots, u_{\ell}=v$ such that $u_{i-1}$ is adjacent to $u_i$ for each $i\in\{1,2,3,\cdots,\ell\}$. Let $P_i=g_1A_{St(u_i)}g_1^{-1}$ and $P'_i=g_2A_{St(u_i)}g_2^{-1}$ for each $i\in\{0,1,2,3,\cdots,\ell\}$. This is not hard to check $P_{i-1}\cap P_i$ and $P'_{i-1}\cap P'_i$ are infinite for each $i\in\{1,2,3,\cdots,\ell\}$. Moreover, $P_0=P'_0$ because $g_2=g_1 u^{\epsilon}$, where $\epsilon=1$ or $\epsilon =-1$. Therefore, the conclusion is true for $n=1$ obviously.

Assume the conclusion is true for all $n\leq k$ for some $k\geq 1$. We will prove that the conclusion is true for $n=k+1$. In fact, if $\abs{g_1^{-1}g_2}_S=k+1$, then there is $g_3$ in $A_\Gamma$ such that $\abs{g_1^{-1}g_3}_S=k$ and $\abs{g_3^{-1}g_2}_S=1$. By the inductive hypothesis, there is a finite sequence of conjugates of star subgroups $g_1A_Kg_1^{-1}=L_0,L_1,\cdots, L_{m_1}=g_3A_Kg_3^{-1}$ such that $L_{i-1}\cap L_i$ is infinite for each $i\in \{1,2,\cdots,m_1\}$. Similarly, there is a finite sequence of conjugates of star subgroups $g_3A_Kg_3^{-1}=L'_0,L'_1,\cdots, L'_{m_2}=g_2A_Kg_2^{-1}$ such that $L'_{i-1}\cap L'_i$ is infinite for each $i\in \{1,2,\cdots,m_2\}$. Therefore, there is a finite sequence of conjugates of star subgroups $g_1A_Kg_1^{-1}=Q_0,Q_1,\cdots, Q_m=g_2A_Kg_2^{-1}$ such that $Q_{i-1}\cap Q_i$ is infinite for each $i\in \{1,2,\cdots,m\}$. This implies that the conclusion is true for $n=k+1$.
\end{proof}

\begin{prop}
\label{kaka}
Let $\Gamma$ be a simplicial, finite, connected graph such that $\Gamma$ does not decompose as a nontrivial join. Let $H$ be a non-trivial, infinite index subgroup of the right-angled Artin group $A_\Gamma$. If $H$ is a strongly quasiconvex subgroup, then $H$ is free.
\end{prop}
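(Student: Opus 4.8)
The plan is to show that a nontrivial, infinite--index, strongly quasiconvex subgroup $H\le A_\Gamma$ must intersect every conjugate of every star subgroup trivially, and then to extract freeness from the resulting action on the Salvetti cube complex $\tilde X_\Gamma$. First I would record, via Theorem~\ref{vuiqua}, that $H$ is finitely generated and undistorted, so the intersection machinery of Section~\ref{haha} applies. The key observation is that for each vertex $v$ the induced subgraph $st(v)$ is exactly the graph obtained from $lk(v)$ by coning off with the vertex $v$, so $A_{st(v)}=A_{lk(v)}\times\langle v\rangle$ is precisely the situation of Lemma~\ref{sonice}. Since each conjugate $gA_{st(v)}g^{-1}$ is an undistorted subgroup of $A_\Gamma$, Proposition~\ref{pp3} shows that $H\cap gA_{st(v)}g^{-1}$ is strongly quasiconvex in $gA_{st(v)}g^{-1}$, and Lemma~\ref{sonice} then yields the dichotomy that this intersection is either trivial or of finite index in $gA_{st(v)}g^{-1}$.

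The next step is to globalize this dichotomy. Using Lemma~\ref{verynice} (together with the fact that stars of adjacent vertices already intersect in an infinite subgroup, so that conjugates of stars of \emph{all} vertices form a single cluster linked by infinite pairwise intersections), I would show the finite--index alternative propagates: if $H\cap Q$ has finite index in a conjugate star $Q$ and $Q\cap Q'$ is infinite, then $H\cap Q\cap Q'$ has finite index in $Q\cap Q'$, hence is infinite, so $H\cap Q'$ is infinite and therefore, by the dichotomy, of finite index in $Q'$. Thus either $H$ meets every conjugate star trivially, or $H$ meets every conjugate star in a finite--index subgroup, with no intermediate possibility.

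Ruling out the second alternative is the step I expect to be the main obstacle, and it is exactly where the hypothesis that $\Gamma$ is \emph{not} a nontrivial join must enter (for $\Gamma=st(v)$ a join one gets counterexamples such as an infinite--index $F_2$ inside $A_{st(v)}\cong F_2\times\Z$, so the statement is genuinely false without this hypothesis). Here I would first feed the finite--index case into finite height: in that case a fixed power $v^{k}$ of each generator lies in $H$, and since $A_{st(v)}$ commutes with $v^{k}$, every conjugate $gHg^{-1}$ with $g\in A_{st(v)}$ contains the infinite--order element $v^{k}$; Theorem~\ref{lacloi1} then bounds the number of essentially distinct such conjugates and forces $[A_{st(v)}:H\cap A_{st(v)}]\le N$, with the same bound $N$ (the height of $H$) for every conjugate star subgroup. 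The hard part is converting this \emph{uniform} coarse containment of all conjugate star subgroups into coarse density of $H$ in $\tilde X_\Gamma$ (hence cocompactness and finite index, contradicting the hypothesis); the delicate point is the standard mismatch that hyperplane stabilizers are the conjugate subgroups $gA_{st(v)}g^{-1}$ while hyperplane carriers are the cosets $gA_{st(v)}$. I would attack this using the non--join hypothesis to produce, inside $H$, loxodromic elements whose Morse axes together with the uniformly coarsely contained flats force $H$ to be coarsely dense, deriving the contradiction with infinite index.

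Finally, granted that $H$ meets every conjugate of every star subgroup trivially, I would conclude freeness through the action of $H$ on $\tilde X_\Gamma$. The stabilizer in $A_\Gamma$ of a hyperplane of type $v$ is a conjugate of the star subgroup $A_{st(v)}$, so the trivial--intersection conclusion says precisely that $\mathrm{Stab}_H(\mathfrak h)=\{1\}$ for every hyperplane $\mathfrak h$. Since $H$ is strongly quasiconvex, hence quasiconvex, it acts cocompactly on the combinatorial convex hull of an orbit, so there are only finitely many $H$--orbits of hyperplanes crossing this hull. Passing to the Bass--Serre tree dual to a single such hyperplane orbit gives an action of $H$ with trivial edge stabilizers, i.e.\ a splitting of $H$ as a nontrivial free product (or $\Z$); iterating on the freely indecomposable factors, each of which inherits a free, cocompact, trivial--hyperplane--stabilizer action, I would conclude by a Stallings--Dunwoody type argument that every freely indecomposable factor is infinite cyclic or trivial, so that $H$ is free.
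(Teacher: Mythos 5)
Your opening moves coincide with the paper's: Proposition \ref{pp3} together with Lemma \ref{sonice} gives the trivial-or-finite-index dichotomy for $H\cap gA_{st(v)}g^{-1}$, and Lemma \ref{verynice} propagates the finite-index alternative to every conjugate of every star subgroup. The genuine gap is in how you rule out that alternative. You declare this step ``the main obstacle,'' guess that the non-join hypothesis must enter here, and propose an uncompleted program (producing Morse axes in $H$ and combining them with the uniformly coarsely contained flats to force coarse density of $H$, hence finite index). None of that is needed, and you never actually carry it out. The contradiction is immediate from ingredients you already have on the table: let $n$ be the height of $H$ furnished by Theorem \ref{lacloi1}. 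Since $H$ has infinite index, there are $g_1,\dots,g_{n+1}$ with pairwise distinct cosets $g_iH$. By the propagated finite-index alternative, each $g_iHg_i^{-1}\cap A_{st(v)}$ has finite index in $A_{st(v)}$, so $\bigcap_i g_iHg_i^{-1}$ contains a finite-index subgroup of the infinite group $A_{st(v)}$ and is therefore infinite, contradicting height at most $n$. The non-join hypothesis plays no role in this step; what does the work is the infinite-index hypothesis together with finite height. Your partial version of this (restricting to $g\in A_{st(v)}$ and concluding only a bound on $[A_{st(v)}:H\cap A_{st(v)}]$) stops one sentence short of the conclusion and then veers off into the unfinished coarse-density argument.

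Your endgame also diverges from the paper's. Having shown that $gHg^{-1}\cap A_{st(v)}$ is trivial for all $g$ and $v$, the paper simply observes that $H$ is star-free and invokes Theorem 1.2 of Koberda--Mangahas--Taylor, which is where the hypotheses that $\Gamma$ is connected and not a nontrivial join are actually consumed. Your alternative route --- cocompact convex hulls, dual trees with trivial edge stabilizers, and a Stallings--Dunwoody iteration --- is plausible, but it leans on cubical convex-cocompactness of $H$ and an accessibility argument, neither of which is established in this paper, and as written it is only a sketch. Since the star-free characterization of freeness is already available in the cited literature, you should use it rather than reprove it.
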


\begin{proof}
We are going to prove $H$ is \emph{star-free} (i.e. for each vertex $v$ of $\Gamma$ and $g\in A_\Gamma$ the subgroup $gHg^{-1}\cap A_{st(v)}$ is trivial) and then $H$ is a free group by Theorem 1.2 in \cite{KMT}. We first observe that $H$ is finitely generated and each conjugate of $H$ is also a strongly quasiconvex subgroup. We now prove that for each vertex $v$ of $\Gamma$ and $g\in A_\Gamma$ the subgroup $gHg^{-1}\cap A_{st(v)}$ is trivial. We assume for the contradiction that $g_0Hg_0^{-1}\cap A_{st(v)}$ is not trivial for some vertex $v$ and some $g_0\in A_\Gamma$. We claim that $gHg^{-1}\cap A_{st(v)}$ has finite index in $A_{st(v)}$ for all $g\in A_\Gamma$

In fact, since $g_0Hg_0^{-1}$ is a strongly quasiconvex subgroup and $A_{st(v)}$ is an undistorted subgroup, then $g_0Hg_0^{-1}\cap A_{st(v)}$ is a strongly quasiconvex subgroup of $A_{st(v)}$ by Proposition \ref{pp3}. Therefore, $g_0Hg_0^{-1}\cap A_{st(v)}$ has finite index in $A_{st(v)}$ by Lemma \ref{sonice}. 

We now prove that $gHg^{-1}\cap A_{st(v)}$ has finite index in $A_{st(v)}$ for all $g\in A_\Gamma$. By Lemma \ref{verynice}, there is a finite sequence of conjugates of star subgroups $g_0^{-1}A_{st(v)}g_0=Q_0,Q_1,\cdots, Q_m=g^{-1}A_{st(v)}g$ such that $Q_{i-1}\cap Q_i$ is infinite for each $i\in \{1,2,\cdots,m\}$. Since $g_0Hg_0^{-1}\cap A_{st(v)}$ has finite index in $A_{st(v)}$, $H \cap g_0^{-1}A_{st(v)}g_0$ has finite index in $Q_0=g_0^{-1}A_{st(v)}g_0$. Also, subgroup $Q_0\cap Q_1$ is infinite. Then, $H \cap Q_1$ is not trivial. Using a similar argument as above, we obtain $H \cap Q_1$ has finite index in $Q_1$. Repeating this process, we have $H\cap g^{-1}A_{st(v)}g$ has finite index in $g^{-1}A_{st(v)}g$. In other word, $gHg^{-1}\cap A_{st(v)}$ has finite index in $A_{st(v)}$. 

By Theorem \ref{lacloi1}, there is a number $n$ such that the intersection of any $(n+1)$ essentially distinct conjugates of $H$ is finite. Since $H$ has infinite index in $A_\Gamma$, there is $n+1$ distinct element $g_1, g_2,\cdots g_{n+1}$ such that $g_iH\neq g_j H$ for each $i\neq j$. Also, $g_i H g_i^{-1} \cap A_{st(v)}$ has finite index in $A_{st(v)}$ for each $i$. Then $(\cap g_i H g_i^{-1}) \bigcap A_{st(v)}$ also has finite index in $A_{st(v)}$. In particular, $\bigcap g_i H g_i^{-1}$ is infinite which is a contradiction. Therefore, for each vertex $v$ of $\Gamma$ and $g\in A_\Gamma$ the subgroup $gHg^{-1}\cap A_{st(v)}$ is trivial. This implies that $H$ is a free group by Theorem 1.2 in \cite{KMT}.
\end{proof}

We are now ready for the main theorem in this section.

\begin{thm}
Let $\Gamma$ be a simplicial, finite, connected graph such that $\Gamma$ does not decompose as a nontrivial join. Let $H$ be a non-trivial, infinite index subgroup of the right-angled Artin group $A_\Gamma$. Then the following are equivalent:
\begin{enumerate}
\item $H$ is strongly quasiconvex.
\item $H$ is stable.
\item The lower relative divergence of $A_\Gamma$ with respect to $H$ is quadratic.
\item The lower relative divergence of $A_\Gamma$ with respect to $H$ is completely super linear.
\end{enumerate}
\end{thm}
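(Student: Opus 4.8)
The plan is to establish the cycle of implications $(1)\Rightarrow(2)\Rightarrow(3)\Rightarrow(4)\Rightarrow(1)$, since each arrow follows cleanly from machinery already in place. A preliminary observation I would record first is that $A_\Gamma$ is torsion-free, so the non-trivial subgroup $H$ is automatically infinite. This is what makes statements (3) and (4) well-posed (the lower relative divergence requires $H$ to have infinite diameter) and what licenses the use of Theorems \ref{th2} and \ref{th3}, both of which are stated for infinite subgroups.

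For $(3)\Rightarrow(4)$ I would simply note that the constant family $r^2$ is completely super linear as a function, so the equivalence $\{\sigma^n_\rho\}\sim r^2$ of families, combined with the Remark that complete super linearity is preserved under $\sim$, forces $\{\sigma^n_\rho\}$ to be completely super linear. For $(4)\Rightarrow(1)$ I would apply Theorem \ref{th2} verbatim: since $H$ is infinite, completely super linear lower relative divergence of $A_\Gamma$ with respect to $H$ is precisely the assertion that $H$ is strongly quasiconvex.

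The substantive direction is $(1)\Rightarrow(2)$. Assuming $H$ strongly quasiconvex, I would first invoke Theorem \ref{vuiqua}(1) to conclude that $H$ is finitely generated. Then, because $\Gamma$ does not decompose as a nontrivial join and $H$ is non-trivial of infinite index, Proposition \ref{kaka} gives that $H$ is free, hence hyperbolic. Feeding hyperbolicity together with the standing hypothesis of strong quasiconvexity into Theorem \ref{th3} yields that $H$ is stable. This is the one place where the infinite-index and join-free hypotheses are genuinely needed: they are exactly the input of Proposition \ref{kaka}, and they are what rules out a finite-index $H$, which would be quasi-isometric to the (generally non-hyperbolic) group $A_\Gamma$ and so need not be stable.

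Finally, for $(2)\Rightarrow(3)$: a stable subgroup is by definition finitely generated, so Theorem \ref{th} of Koberda--Mangahas--Taylor identifies it with a purely loxodromic subgroup, and Theorem \ref{th'1} then computes the lower relative divergence of $A_\Gamma$ with respect to such a subgroup to be \emph{exactly} quadratic. Because the genuinely hard content has already been carried out earlier — the quadratic estimate of Theorem \ref{th'1} (both the upper and lower bounds) and the freeness result of Proposition \ref{kaka} — I do not expect a further obstacle at this stage; the only point requiring care is the bookkeeping that keeps each of the four statements well-posed, namely tracking finite generation along the cycle and recording at the outset that torsion-freeness makes $H$ infinite.
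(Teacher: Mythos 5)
Your proposal is correct and follows essentially the same cycle of implications as the paper: $(1)\Rightarrow(2)$ via Proposition \ref{kaka} and Theorem \ref{th3}, $(2)\Rightarrow(3)$ via Theorem \ref{th'1}, $(3)\Rightarrow(4)$ trivially, and $(4)\Rightarrow(1)$ from the characterization of strong quasiconvexity by completely super linear lower relative divergence. Your added bookkeeping (torsion-freeness of $A_\Gamma$ making $H$ infinite, and citing Theorem \ref{th2} rather than Theorem \ref{th3} for $(4)\Rightarrow(1)$) is sound and, if anything, slightly more careful than the paper's own one-paragraph proof.
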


\begin{proof}
The implication ``$(1)\implies (2)$'' is obtained from Theorem \ref{th3} and Proposition \ref{kaka}. The implication ``$(2)\implies (3)$'' is deduced from Theorem \ref{th'1}. The implication ``$(3)\implies (4)$'' is trivial and the implication ``$(4)\implies (1)$'' follows Theorem \ref{th3}.
\end{proof}

\bibliographystyle{alpha}
\bibliography{Tran}
\end{document}